\documentclass[a4paper, 11pt]{amsart}
\usepackage[utf8]{inputenc}
\usepackage{a4wide}
\usepackage{lmodern}
\usepackage{mathtools}
\usepackage{amsthm}
\usepackage{amsfonts}
\usepackage{amssymb}
\usepackage{tikz}
\usepackage{tikz-cd}
\usetikzlibrary{tikzmark}
\usepackage{enumitem}
\usepackage{hyperref}
\usepackage{xcolor}
\usepackage{array}
\usepackage{booktabs}
\usepackage{colortbl}
\usepackage{wrapfig}
\usepackage{ifthen}
\usepackage{tensor}
\usepackage{cutwin}
\usepackage{tabularx}
\usepackage{float}
\usepackage{xspace} %% For the \tmf and \mmf commands defined below to look nice
\usepackage{longtable}
\usepackage{comment}
\usepackage{arydshln}

\theoremstyle{plain}
\newtheorem{Th}{Theorem}[section]
\newtheorem{Lemma}[Th]{Lemma}
\newtheorem{Cor}[Th]{Corollary}
\newtheorem{Prop}[Th]{Proposition}

\theoremstyle{definition}
\newtheorem{Def}[Th]{Definition}

\newtheorem{Rem}[Th]{Remark}
\newtheorem{?}[Th]{Problem}

\newtheorem{Conv}[Th]{Convention}
\newtheorem{Not}[Th]{Notation}

%%% Colors

\definecolor{chartgray}{gray}{0.4}
\definecolor{darkgreen}{rgb}{0, 0.7, 0}
\definecolor{darkcyan}{rgb}{0, 0.7, 0.7}

%%% Macros

%to have the \midrule command not create any whitespace above and below
\setlength\aboverulesep{0pt}
\setlength\belowrulesep{0pt}

%typesets tmf (or mmf) using a little less space than $tmf$
%the \xspace makes the spacing on the right look nicer
\newcommand{\tmf}{\textit{tmf}\xspace}
\newcommand{\mmf}{\textit{mmf}\xspace}

\newcommand{\Ext}{\text{Ext}\xspace}
\newcommand{\Sq}{\text{Sq}\xspace}
\newcommand{\cl}{\text{cl}\xspace}

\newcommand{\D}{\Delta\xspace}
\newcommand{\Dc}{\Delta c\xspace}
\newcommand{\Du}{\Delta u\xspace}
\newcommand{\Dh}{\Delta h_1\xspace}

%to put the (s, f, w) degree into each lemma. Made it a new command so we can change every occurrence later at once. Have to use renewcommand because deg is already a command that typesets "deg".
\renewcommand{\deg}[3]{\textcolor{blue}{$(#1, #2, #3)$}}

\title{The cohomology of $\mathbb{R}$-motivic $\mathcal{A}(2)$}
\author{Konstantin Emming}
\address{Department of Mathematics, Wayne State University, Detroit, MI 48202, USA}
\email{konstantin.emming@gmail.com}

\keywords{$\mathbb{R}$-motivic homotopy theory, $\rho$-Bockstein spectral sequence, topological modular forms, motivic Adams spectral sequence}

\subjclass{14F42, 55S10, 55T15}

\begin{document}

\begin{abstract}
    We compute the cohomology of the quotient algebra $\mathcal{A}(2)$ of the $\mathbb{R}$-motivic dual Steenrod algebra. We do so by running a $\rho$-Bockstein spectral sequence whose input is the cohomology of $\mathbb{C}$-motivic $\mathcal{A}(2)$. The purpose of our computation is that the cohomology of $\mathcal{A}(2)$ is the input to an Adams spectral sequence of a hypothetical $\mathbb{R}$-motivic modular forms spectrum. This Adams spectral sequence computes the homotopy groups of such an $\mathbb{R}$-motivic modular forms spectrum, which in turn can be used to make inferences about the homotopy groups of the $\mathbb{R}$-motivic sphere spectrum and eventually about the classical stable stems.
\end{abstract}

\maketitle

\section{Introduction}\label{section: introduction}

The aim of this article is to compute the tri-graded cohomology of $\mathbb{R}$-motivic $\mathcal{A}(2)$, meaning $\Ext_{\mathcal{A}(2)}(\mathbb{M}_2, \mathbb{M}_2)$. Here, $\mathcal{A}(2)$ is dual\footnote{We avoid using additional subscripts or superscripts for dual objects. Everything is phrased in terms of the dual Steenrod algebra.} to the subalgebra of the $\mathbb{R}$-motivic mod $2$ Steenrod algebra generated by $\Sq^1$, $\Sq^2$, and $\Sq^4$, and $\mathbb{M}_2 = \mathbb{F}_2[\rho, \tau]$ is the $\mathbb{R}$-motivic homology of a point with $\mathbb{F}_2$-coefficients.

Recent breakthroughs in stable homotopy theory \cite{Isa09} \cite{IWX} have been achieved by making computations in motivic homotopy theory. In \cite{IWX} the authors employ the $\mathbb{C}$-motivic modular forms spectrum $\mmf^{\,\,\mathbb{C}}$ as constructed in \cite{GIKR} and use its $\mathbb{C}$-motivic stable homotopy groups to approximate the $\mathbb{C}$-motivic stable homotopy groups of spheres. One way to compute the stable homotopy groups of $\mmf^{\,\,\mathbb{C}}$ is by running the $\mathbb{C}$-motivic Adams spectral sequence \cite{Isa18}. The input to this spectral sequence is the cohomology of $\mathbb{C}$-motivic $\mathcal{A}(2)$, as computed in \cite{Isa09}. The logical next step that we take here is to mimic these computations over $\mathbb{R}$. If we were to assume the existence of an $\mathbb{R}$-motivic modular forms spectrum $\mmf^{\,\,\mathbb{R}}$, then we could follow the very same program and eventually further our knowledge of the classical stable homotopy groups of spheres. We should emphasize that, as of writing this article, an $\mathbb{R}$-motivic modular forms spectrum has not yet been constructed. However, we believe that our computation should serve as further motivation to do so.

As we will see, the cohomology of $\mathbb{R}$-motivic $\mathcal{A}(2)$ has an extremely rich and complicated structure. To compare, the cohomology of classical $\mathcal{A}(2)^\cl$ has a presentation as a bigraded $\mathbb{F}_2$-algebra with $13$ generators, see e.g. \cite[section 8]{SI67}. The cohomology of $\mathbb{C}$-motivic $\mathcal{A}(2)^\mathbb{C}$ can be given a presentation with $16$ generators \cite[table 7]{Isa09}\footnote{The table in question contains $15$ elements because the generator $\tau$ is not listed explicitly.}. In contrast, table \ref{tab:Indecomposables on E_infty} shows that the cohomology of $\mathbb{R}$-motivic $\mathcal{A}(2)$ has $56$ generators.

We should mention previous work on the cohomology of the $\mathbb{R}$-motivic Steenrod algebra: In \cite{Hill} the cohomology of $\mathcal{A}(1)$ is computed in its entirety. In \cite{DI} and \cite{BI} the cohomology of $\mathcal{A}$ is computed in a range of degrees.

\subsection{The \texorpdfstring{$\rho$}{rho}-Bockstein spectral sequence}
We will use the $\rho$-Bockstein spectral sequence to compute the cohomology of $\mathbb{R}$-motivic $\mathcal{A}(2)$. This type of spectral sequence has been employed many times before \cite{Hill} \cite{DI} \cite{BI} to compute the cohomology of $\mathbb{R}$-motivic $\mathcal{A}(1)$ and $\mathcal{A}$. To provide a short derivation of this spectral sequence, we can filter $\mathcal{A}(2)$ by powers of the ideal generated by an element called $\rho$. This extends to a filtration on the cobar complex which computes the cohomology of $\mathcal{A}(2)$. The spectral sequence associated to this filtration is called the $\rho$-Bockstein spectral sequence. Its signature is
\[E_1^{s,f,w} = \Ext_{\mathcal{A}(2)^{\mathbb{C}}}^{s,f,w}(\mathbb{M}_2^{\mathbb{C}},\mathbb{M}_2^{\mathbb{C}})[\rho] \implies \Ext_{\mathcal{A}(2)}^{s,f,w}(\mathbb{M}_2,\mathbb{M}_2),\]
i.e. the input is the tri-graded cohomology of $\mathbb{C}$-motivic $\mathcal{A}(2)^{\mathbb{C}}$ with a variable $\rho$ freely adjoined, and the output is the tri-graded cohomology of $\mathbb{R}$-motivic $\mathcal{A}(2)$.

Our task is to find the differentials in this spectral sequence. We will use two methods:
\begin{itemize}
    \item $\rho$-localization: This idea was first introduced in \cite[Section 4]{DI} to study the cohomology of $\mathbb{R}$-motivic $\mathcal{A}$. In our case, localizing the Hopf algebroid $\mathcal{A}(2)$ with respect to $\rho$ leads to a splitting. This makes it possible to compute the $\rho$-localized cohomology of $\mathcal{A}(2)$ before running the $\rho$-Bockstein spectral sequence. It turns out that the $\rho$-localized cohomology of $\mathcal{A}(2)$ is isomorphic to a shifted version of the cohomology of classical $\mathcal{A}(1)^{\cl}$, together with a freely adjoined generator $\tau^8$.

    \item Internal coweight method: This method was already used in \cite[Strategy 5.3]{BI} for $\mathbb{R}$-motivic $\mathcal{A}$. The idea is to fix a specific linear combination of degrees, the so-called internal coweight. The $\rho$-Bockstein differentials as well as $\rho$-multiplication preserve the internal coweight by design. Therefore, the $\rho$-Bockstein spectral sequence splits into smaller spectral sequences, each indexed by an internal coweight. These smaller spectral sequences can often be solved individually, for example by appealing to the $\rho$-localization.
\end{itemize}
Due to our knowledge of the $\rho$-localization of the cohomology of $\mathcal{A}(2)$ we will find that the $\rho$-Bockstein spectral sequence ends with $E_{11} \cong E_\infty$, or in other words the longest differential is a $d_{10}$. This is not apparent from the description of the $\rho$-localization as given in Corollary \ref{representation of rho-localized target}, but it is apparent once we have computed $E_{11}$ and compare it with the $\rho$-localization.

After having computed $E_\infty$ we need to consider hidden extensions. The multiplicative structure of the $E_\infty$-page only determines the multiplicative structure on the cohomology of $\mathcal{A}(2)$ up to higher $\rho$-filtration. Due to the fact that we have $56$ indecomposables on $E_\infty$, finding all hidden extensions is a daunting task. We will restrict ourselves to only a few indecomposables and describe hidden extensions by them. In particular, we will not give an explicit presentation for the cohomology of $\mathcal{A}(2)$. We do have all of the indecomposables as they coincide with those of $E_\infty$ for formal reasons, see table \ref{tab:Indecomposables on E_infty}. However, giving all relations involves solving all hidden extensions. While the author believes it to be possible to resolve most hidden extensions using the methods of section \ref{section: hidden extensions}, it would take a prohibitive amount of time.

Two interesting hidden extensions are $\tau^8 \cdot h_1^4 = \rho^4 \tau^4 P$, and $\tau^8 \cdot \rho^6 g^2  = \rho^{14} \D^2$. For one, the elements involved in these relations are integral to the computation of the Adams spectral sequence for $\tmf$. The element $h_1$ classically represents the Hopf fibration $\eta$. The elements $P$, $g$, and $\D^2$ govern the structure of the Adams spectral sequence for $\tmf$, see \cite[Part 2, Chapter 5]{BR21}\footnote{Our names $P$, $g$, and $\D^2$ translate to $w_1$, $g$, and $w_2$ in \cite{BR21}.}. Secondly, these examples show that the existence of the elements $\tau$ and $\rho$ in $\mathbb{R}$-motivic homotopy theory leads to relations in the cohomology of $\mathcal{A}(2)$ which have no classical, or even $\mathbb{C}$-motivic, analog. The first hidden extension $\tau^8 \cdot h_1^4 = \rho^4 \tau^4 P$ will prove extremely useful when determining other hidden extensions. It implies that any element which has a non-trivial $\rho^4 \tau^4 P$-multiple on $E_\infty$ also needs to support four $h_1$-multiplications, and each one of these $h_1$-multiples has to support a $\tau^8$-multiplication. Oftentimes, the targets of these multiplications are determined for degree reasons. If they turn out to be in higher $\rho$-filtration, then the multiplication in question is in fact a hidden extension. This happens quite frequently.

\subsection{An Adams spectral sequence?}
In the last section, we make the assumption that an $\mathbb{R}$-motivic modular forms spectrum exists, or more precisely that there exists an object in the $\mathbb{R}$-motivic homotopy category such that its mod $2$ cohomology is $\mathcal{A}//\mathcal{A}(2)$. By a standard change of rings argument the $E_2$-page of the Adams spectral sequence associated to this spectrum is the cohomology of $\mathcal{A}(2)$. We compute the $d_2$-differentials on all indecomposables in this hypothetical Adams spectral sequence. The main method is a comparison to the Adams $d_2$-differentials for the $\mathbb{C}$-motivic modular forms spectrum. In principle, once we have the $d_2$-differentials on the indecomposables, all other $d_2$-differentials follow from the Leibniz rule. But since the Leibniz rule involves multiplication, computing the $d_2$-differentials of all elements of the Adams $E_2$-page means solving all hidden extensions. Therefore, we opt not to push the computation of the Adams spectral sequence further.

\subsection{Charts}
There is a sizable amount of charts attached to this article. They depict the $\rho$-Bockstein spectral sequence, as well as the $\rho$-localization and certain quotients of the cohomology of $\mathcal{A}(2)$. Due to their size and number, these charts are not contained in this manuscript but instead can be found at \cite{Charts}. Nevertheless, they are integral to our computation. More details on charts are in section \ref{section: charts}.

\subsection{Outline}
We begin in section \ref{section: notation} by setting up the relevant notation and defining the algebraic objects of interest.

In section \ref{section: the rho-Bockstein spectral sequence} we set up the $\rho$-Bockstein spectral sequence and prove some structural theorems. We also compute the $\rho$-localization of the cohomology of $\mathcal{A}(2)$.

In section \ref{section: charts} we provide a guide on how to read the charts attached to this manuscript. This section serves as a reference and should only be consulted once a chart becomes relevant.

Section \ref{section: rho-Bockstein differentials} contains a computation of all differentials in the $\rho$-Bockstein spectral sequence.

In section \ref{section: hidden extensions} we determine some hidden extensions on the $\rho$-Bockstein $E_\infty$-page. We are mainly concerned with hidden extensions by $\tau^8$, $h_0$, $h_1$, and $h_2$.

Section \ref{section: An Adams spectral sequence?} contains a computation of a hypothetical Adams spectral sequence for an $\mathbb{R}$-motivic modular forms spectrum. We determine the Adams $d_2$-differentials on all indecomposables.

\subsection{Future directions}
There are several interesting open problems arising from the work carried out in this manuscript.

\begin{?}
    Construct an $\mathbb{R}$-motivic modular forms spectrum $\mmf^{\,\,\mathbb{R}}$ whose cohomology is $\mathcal{A}//\mathcal{A}(2)$. In particular, the $E_2$-page of its Adams spectral sequence would be the cohomology of $\mathcal{A}(2)$. Possible methods for this construction include
    \begin{itemize}
        \item abstracting the original construction of $\tmf$ due to Goerss, Hopkins, and Miller, as outlined by Behrens in \cite[Chapter 12]{DFHH} to the motivic setting,
        
        \item abstracting the construction of $\tmf$ due to Lurie \cite{Lur09} \cite{LurEC1} \cite{LurEC2} to the motivic setting,
        
        \item use Galois reconstruction as described by Burklund, Hahn, and Senger in \cite{BHS22}. This would require a $C_2$-equivariant $\tmf$, which has also not been constructed yet.
    \end{itemize}
    
\end{?}

\begin{?}
    Fully describe the multiplicative structure of the cohomology of $\mathbb{R}$-motivic $\mathcal{A}(2)$. Given the size of our computation this will probably involve machine computation.
\end{?}

\begin{?}
    An interesting construction in $\mathbb{C}$-motivic homotopy theory is the cofiber of $\tau$ \cite{GWX}. See \cite{Bae24} for an Adams spectral sequence computation of the $\mathbb{C}$-motivic cofiber of $\tau$ for $\mmf$. In $\mathbb{R}$-motivic homotopy theory, the element $\tau$ and its powers can support $\rho$-Bockstein differentials and therefore not represent maps of spectra. For example, when computing the cohomology of the full $\mathbb{R}$-motivic Steenrod algebra $\mathcal{A}$, every element of the form $\tau^n$, $n \geq 1$, supports a $\rho$-Bockstein differential \cite[Proposition 3.2]{DI}. So there is no map corresponding to $\tau^n$ for the $\mathbb{R}$-motivic sphere spectrum for any $n \geq 1$. As we will see, the situation for $\mathcal{A}(2)$ is slightly different. Because the elements $h_i$ are zero for $i \geq 3$, we have that $\tau^{8}$ and its powers survive the $\rho$-Bockstein spectral sequence. If we assume the existence of an $\mathbb{R}$-motivic modular forms spectrum, then there is a map corresponding to $\tau^8$. So we could take the cofiber of $\tau^8$, written $\mmf^{\,\,\mathbb{R}}/\tau^8$. This leads to multiple natural questions: What does the homotopy of $\mmf^{\,\,\mathbb{R}}/\tau^8$ look like? What is its relation to the $\mathbb{C}$-motivic cofiber of $\tau$? Does the Adams spectral sequence for $\mmf^{\,\,\mathbb{R}}/\tau^8$ admit an algebraic description similar to the $\mathbb{C}$-motivic cofiber of $\tau$, or more generally is there an algebraic description of the category of $\mmf^{\,\,\mathbb{R}}/\tau^8$-modules?
\end{?}

\subsection{Acknowledgements}

The author would like to thank Dan Isaksen for sharing this problem with him and for many helpful conversations and comments. The author would also like to thank Joey Beauvais-Feisthauer for creating the software (most notably the SeqSee package \cite{SeqSee}) used for making the charts accompanying this manuscript.

\section{Notation}\label{section: notation}

Let $\mathbb{M}_2$ denote the $\mathbb{R}$-motivic homology of a point with $\mathbb{F}_2$ coefficients. Recall from \cite{Voe03} that $\mathbb{M}_2 \cong \mathbb{F}_2[\rho, \tau]$ where the degree of $\rho$ is $(-1, -1)$ and the degree of $\tau$ is $(0, -1)$.

Let $\mathcal{A}$ denote the $\mathbb{R}$-motivic dual Steenrod algebra. Recall from \cite{Voe10} that $\mathcal{A}$ is described as an algebra by
\[\mathcal{A} \cong \frac{\mathbb{M}_2[\tau_0, \tau_1, \dotsc, \xi_0,  \xi_1, \dotsc]}{(\xi_0 = 1, \tau_k^2 = \tau \xi_{k+1} + \rho \tau_{k+1} + \rho \tau_0 \xi_{k+1})},\]
where the degree of $\tau_k$ is $(2^{k+1}-1, 2^k-1)$ and the degree of $\xi_k$ is $(2^{k+1}-2, 2^k-1)$. Furthermore, $\mathcal{A}$ has the structure of a Hopf algebroid via
\[\eta_L(\tau) = \tau,~\,\eta_R(\tau) = \tau + \rho \tau_0,~\,\eta_L(\rho) = \eta_R(\rho) = \rho,\]
\[\Delta(\tau_k) = \tau_k \otimes 1 + \sum \xi_{k-i}^{2^i} \otimes \tau_{i},\]
\[\Delta(\xi_k) = \sum \xi_{k-i}^{2^i} \otimes \xi_{i}.\]
Note that $\Delta(\tau) = \tau \otimes 1$ and $\Delta(\rho) = \rho \otimes 1$ are forced by the axioms of a Hopf algebroid.

Then $\mathcal{A}(2)$ is defined to be the Hopf algebroid obtained by quotienting out the ideal $(\tau_3, \tau_4, \dotsc, \xi_1^4, \xi_2^2, \xi_3, \dotsc)$. This quotient is dual to the subalgebra of the $\mathbb{R}$-motivic Steenrod algebra generated by $\Sq^1$, $\Sq^2,$ and $\Sq^4$. As an algebra $\mathcal{A}(2)$ has the structure
\[\mathcal{A}(2) \cong \frac{\mathbb{M}_2[\tau_0, \tau_1, \tau_2, \xi_0, \xi_1, \xi_2]}{(\xi_0 = 1, \xi_1^4 = 0, \xi_2^2 = 0, \tau_k^2 = \tau \xi_{k+1} + \rho \tau_{k+1} + \rho \tau_0 \xi_{k+1})},\]
and it is again a Hopf algebroid via the same formulas for the units and coproduct as for $\mathcal{A}$. In the relations, treat $\tau_3$ and $\xi_3$ as zero.

We write a superscript $\mathbb{C}$ for the same objects in the $\mathbb{C}$-motivic world, e.g. $\mathbb{M}_2^\mathbb{C}$ is the $\mathbb{C}$-motivic homology of a point. Then all of the objects $\mathbb{M}_2^\mathbb{C}$, $\mathcal{A}^\mathbb{C}$, and $\mathcal{A}(2)^\mathbb{C}$ can be obtained from the above formulas by setting $\rho$ equal to $0$.

We write a superscript $\cl$ for the same classical objects, e.g. $\mathcal{A}^\cl$ is the classical dual Steenrod algebra. The classical objects can be obtained from the above by setting $\rho$ equal to $0$ and $\tau$ equal to $1$.

\section{The \texorpdfstring{$\rho$}{rho}-Bockstein spectral sequence}\label{section: the rho-Bockstein spectral sequence}

We recall some basics about the construction of the $\rho$-Bockstein spectral sequence. For more details see \cite{Hill} or \cite{DI}. Take the cobar complex that computes the cohomology of $\mathbb{R}$-motivic $\mathcal{A}(2)$. Filter it by powers of the ideal generated by $\rho$. This gives rise to a spectral sequence. Because of $\mathcal{A}(2)^{\mathbb{C}} \cong \mathcal{A}(2)/\rho$ and $\mathbb{M}_2^{\mathbb{C}} \cong \mathbb{M}_2/\rho$ and the fact that $\rho$-multiplication is injective on $\mathcal{A}(2)$ and $\mathbb{M}_2$, the $E_1$-page of this spectral sequence can be identified with $\Ext_{\mathcal{A}(2)^{\mathbb{C}}}(\mathbb{M}_2^{\mathbb{C}},\mathbb{M}_2^{\mathbb{C}})[\rho]$. So the signature of the spectral sequence is
\[E_1^{s,f,w} = \Ext_{\mathcal{A}(2)^{\mathbb{C}}}^{s,f,w}(\mathbb{M}_2^{\mathbb{C}},\mathbb{M}_2^{\mathbb{C}})[\rho] \implies \Ext_{\mathcal{A}(2)}^{s,f,w}(\mathbb{M}_2,\mathbb{M}_2).\]

\begin{Not}[Gradings]
    We explain the gradings used in the above signature. Recall from the introduction that our goal is to eventually make computations involving an Adams spectral sequence for an alleged $\mathbb{R}$-motivic modular forms spectrum. The target $\Ext_{\mathcal{A}(2)}^{s,f,w}(\mathbb{M}_2,\mathbb{M}_2)$ of our $\rho$-Bockstein spectral sequence is the $E_2$-page of that Adams spectral sequence. The gradings $s$, $f$, and $w$ correspond to the stem, Adams filtration, and motivic weight on that Adams spectral sequence $E_2$-page. So in an Adams-type chart of our spectral sequence the $x$-axis is $s$ and the $y$-axis is $f$.
\end{Not}

Note that we do not explicitly record the grading arising from the $\rho$-filtration as another superscript. As a consequence, all $d_r$-differentials have degree $(-1, 1, 0)$. Note also that by definition of the $\rho$-Bockstein spectral sequence the $d_r$-differentials increase the $\rho$-filtration by $r$. So for example if $x \in \Ext_{\mathcal{A}(2)^{\mathbb{C}}}^{s,f,w}(\mathbb{M}_2^{\mathbb{C}},\mathbb{M}_2^{\mathbb{C}})$ then $d_r(x) = \rho^r y$ for some $y \in \Ext_{\mathcal{A}(2)^{\mathbb{C}}}^{s+r-1,f+1,w+r}(\mathbb{M}_2^{\mathbb{C}},\mathbb{M}_2^{\mathbb{C}})$. This $y$ may of course very well be zero. Additionally each $d_r$ satisfies a Leibniz rule with respect to the multiplication on the $E_r$-page. If $a$ and $b$ are elements on the $E_r$-page then
\[d_r(a \cdot b) = a \cdot d_r(b) + d_r(a) \cdot b.\]
Note that we are working modulo the prime $2$ so we do not have to worry about signs.

We list some very important structural Propositions about this spectral sequence.

\begin{Prop}{\cite[Lemma 3.4]{DI}}\label{only differentials between rho-free elements}
    Every $\rho$-power-torsion element on the $\rho$-Bockstein $E_r$-page is annihilated by $\rho^{r-1}$. As a consequence, if $d_r(x)$ is non-trivial in the $\rho$-Bockstein spectral sequence, then $x$ and $d_r(x)$ are both $\rho$-torsion free on the $E_r$-page. 
\end{Prop}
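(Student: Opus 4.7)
The plan is to prove the first statement by induction on $r$ and derive the second from it by a short formal argument. Two recurring tools will be needed throughout. The first is the $\rho$-linearity of $d_r$: since $\rho \in \mathbb{M}_2$ lies in the ground ring and the cobar differential kills it, $\rho$ is a permanent cycle in the spectral sequence, and the Leibniz rule forces $d_r(\rho \cdot a) = \rho \cdot d_r(a)$ for every $a \in E_r$. The second is a \emph{divisibility lemma} stating that every element in the image of $d_r$ on $E_r$ has the form $\rho^r w$ for some $w \in E_r$; I would establish it first. Given a cobar representative $z^\ast \in \rho^k C$ of a class $z$ defining an $E_r$-element, the requirement $d(z^\ast) \in \rho^{k+r} C$ together with the injectivity of $\rho$-multiplication on the cobar complex let me set $w^\ast := d(z^\ast)/\rho^r \in \rho^k C$, and a direct check shows that $w^\ast$ is a cocycle defining a class $w \in E_r$ with $d_r(z) = \rho^r w$.

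The base case $r = 1$ is immediate: $E_1 \cong \Ext_{\mathcal{A}(2)^{\mathbb{C}}}(\mathbb{M}_2^{\mathbb{C}}, \mathbb{M}_2^{\mathbb{C}})[\rho]$ is polynomial in $\rho$, hence $\rho$-torsion-free, so its only $\rho$-power-torsion element is zero. For the inductive step, I take a $\rho$-power-torsion $y \in E_{r+1}$ and represent it by $\tilde y \in \ker(d_r) \subseteq E_r$. If $\tilde y$ is itself $\rho$-power-torsion on $E_r$, the inductive hypothesis gives $\rho^{r-1}\tilde y = 0$ and hence $\rho^r y = 0$ on $E_{r+1}$. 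Otherwise $\tilde y$ is $\rho$-torsion-free on $E_r$, and there exists $n \geq 1$ with $\rho^n \tilde y = d_r(z)$ on $E_r$; the task reduces to showing $n \leq r$. Should $n > r$, a $\rho$-filtration count on both sides forces any cobar representative $z^\ast$ of $z$ to lie in filtration $\geq 1$, so $z^\ast/\rho$ is itself a legitimate cobar representative of a class $z/\rho \in E_r$, and $\rho$-injectivity on the cobar complex yields $d_r(z/\rho) = \rho^{n-1}\tilde y$. Iterating this reduction drives $n$ down to a value at most $r$, producing the required relation $\rho^r y = 0$.

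For the consequence, suppose $d_r(x) \neq 0$ and, for contradiction, that $x$ is $\rho$-power-torsion. The first statement gives $\rho^{r-1}x = 0$, and $\rho$-linearity yields $\rho^{r-1}d_r(x) = 0$. Writing $d_r(x) = \rho^r w$ via the divisibility lemma gives $\rho^{2r-1}w = 0$, so $w$ is itself $\rho$-power-torsion, and applying the first statement to $w$ gives $\rho^{r-1}w = 0$; hence $d_r(x) = \rho^r w = 0$, a contradiction. A parallel manipulation (starting from $d_r(x)$ being $\rho$-power-torsion and invoking the divisibility lemma directly) rules out the remaining case. The hard part of the whole argument will be the divisibility lemma, or rather the careful filtration bookkeeping on the cobar complex needed to verify that $w^\ast$ and $z^\ast/\rho$ genuinely represent classes on $E_r$; once this setup is in place, every remaining step reduces to a brief algebraic manipulation using only $\rho$-linearity of $d_r$ and the inductive hypothesis.
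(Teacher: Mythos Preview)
Your proof is correct and follows the same approach as the paper: induction on $r$ using that $\operatorname{im}(d_r) \subseteq \rho^r E_r$ and that $E_1$ is $\rho$-torsion free, followed by a short formal argument for the consequence via $\rho$-linearity of $d_r$. The paper's version is far terser---it treats the divisibility of $d_r(x)$ by $\rho^r$ as essentially definitional and leaves the inductive step implicit---whereas you have correctly identified and unpacked the cobar-level filtration bookkeeping that makes the argument rigorous.
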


\begin{proof}
    By definition, if $x$ is an element on the $\rho$-Bockstein $E_r$-page then $d_r(x)$ is a $\rho^r$-multiple. Since the $\rho$-Bockstein $E_1$-page is $\rho$-torsion free, it follows inductively that any $\rho$-power-torsion element on $E_r$ is annihilated by $\rho^{r-1}$.
    
    Now assume $d_r(x)$ is non-trivial. Then $d_r(x)$ must be $\rho$-torsion free as there are no non-trivial $\rho$-power-torsion elements on $E_r$ that are $\rho^r$-multiples. Then note that $\rho$ is a cycle in the cobar complex as its cobar complex differential is $\eta_L(\rho) + \eta_R(\rho) = \rho + \rho = 0$. Consequently, $\rho$ must also be a permanent cycle in the $\rho$-Bockstein spectral sequence. In particular, the $\rho$-Bockstein differential is $\rho$-linear. Now because $d_r(x)$ is $\rho$-torsion free, $x$ also has to be $\rho$-torsion free.
\end{proof}

The second Proposition is a slight variant of \cite[Theorem 4.1]{DI}.

\begin{Prop}\label{rho-local isomorphism}
    There is an isomorphism
    \[\text{\rm Ext}_{\mathcal{A}(1)^{\text{\rm cl}}}(\mathbb{F}_2, \mathbb{F}_2)[\rho^{\pm 1}, \tau^8] \cong \text{\rm Ext}_{\mathcal{A}(2)}(\mathbb{M}_2,\mathbb{M}_2)[\rho^{-1}]\]
    that takes elements of degree $(s, f)$ in $\text{\rm Ext}_{\mathcal{A}^{\text{\rm cl}}(1)}(\mathbb{F}_2, \mathbb{F}_2)$ to elements of degree $(2s + f, f, s + f)$ in $\text{\rm Ext}_{\mathcal{A}(2)}(\mathbb{M}_2,\mathbb{M}_2)$.
\end{Prop}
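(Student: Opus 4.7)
The plan is to exhibit a weak equivalence of Hopf algebroids between $(\rho^{-1}\mathbb{M}_2, \rho^{-1}\mathcal{A}(2))$ and $(R, R \otimes_{\mathbb{F}_2} \mathcal{A}(1)^{\cl})$, where $R := \mathbb{F}_2[\rho^{\pm 1}, \tau^8]$ plays the role of a ring of constants. Because $\mathcal{A}(2)$ is free (hence $\rho$-torsion-free) over $\mathbb{M}_2$, the cobar complex is compatible with $\rho$-localization and $\Ext_{\mathcal{A}(2)}(\mathbb{M}_2, \mathbb{M}_2)[\rho^{-1}] \cong \Ext_{\rho^{-1}\mathcal{A}(2)}(\rho^{-1}\mathbb{M}_2, \rho^{-1}\mathbb{M}_2)$; the cohomology of $(R, R \otimes \mathcal{A}(1)^{\cl})$ is simply $\Ext_{\mathcal{A}(1)^{\cl}}(\mathbb{F}_2, \mathbb{F}_2) \otimes_{\mathbb{F}_2} R$, so once the weak equivalence is established the claimed isomorphism drops out.

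First I would simplify $\rho^{-1}\mathcal{A}(2)$ as an algebra. The relation $\tau_k^2 = \tau \xi_{k+1} + \rho \tau_{k+1} + \rho \tau_0 \xi_{k+1}$ lets me solve inductively for $\tau_1$ and then $\tau_2$ in terms of $\tau_0, \xi_1, \xi_2, \tau, \rho^{\pm 1}$ once $\rho$ is invertible. Since $\tau_3 = \xi_3 = 0$ in $\mathcal{A}(2)$, the relation at $k=2$ becomes $\tau_2^2 = 0$; substituting the expression for $\tau_2$ and using $\xi_1^4 = \xi_2^2 = 0$ together with the Frobenius in characteristic two collapses this to $\rho^{-6}\tau_0^8 = 0$, and hence $\tau_0^8 = 0$. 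The resulting presentation is
\[\rho^{-1}\mathcal{A}(2) \cong \rho^{-1}\mathbb{M}_2[\tau_0, \xi_1, \xi_2]/(\tau_0^8, \xi_1^4, \xi_2^2).\]

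With $\tau_0^8 = 0$, the right unit sends $\tau^8 \mapsto (\tau + \rho\tau_0)^8 = \tau^8 + \rho^8 \tau_0^8 = \tau^8$, so $R$ is fixed by both units. Thus the inclusion $R \hookrightarrow \rho^{-1}\mathbb{M}_2$ extends to a map of Hopf algebroids $(R, R[\xi_1, \xi_2]/(\xi_1^4, \xi_2^2)) \to (\rho^{-1}\mathbb{M}_2, \rho^{-1}\mathcal{A}(2))$, and because the coproducts of $\xi_1$ and $\xi_2$ in $\mathcal{A}(2)$ match those of their classical namesakes verbatim, the source is exactly $(R, R \otimes_{\mathbb{F}_2} \mathcal{A}(1)^{\cl})$. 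The crux is to show that the induced multiplication
\[\rho^{-1}\mathbb{M}_2 \otimes_R (R \otimes \mathcal{A}(1)^{\cl}) \otimes_R \rho^{-1}\mathbb{M}_2 \longrightarrow \rho^{-1}\mathcal{A}(2),\qquad \tau^a \otimes \xi_1^b \xi_2^c \otimes \tau^d \mapsto \tau^a \xi_1^b \xi_2^c (\tau + \rho\tau_0)^d,\]
is an isomorphism, which reduces to the observation that $\{(\tau + \rho\tau_0)^d\}_{d=0}^{7}$ is a $\rho^{-1}\mathbb{M}_2$-basis of $\rho^{-1}\mathbb{M}_2[\tau_0]/\tau_0^8$ (the change-of-basis matrix to the monomial basis $\{\tau_0^d\}$ is upper triangular with invertible diagonal entries $\rho^d$). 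The degree formula $(s, f) \mapsto (2s + f, f, s + f)$ then drops out by tracking the motivic tri-degrees $(2, 1)$ of $\xi_1$ and $(6, 3)$ of $\xi_2$ through the correspondence.

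The hardest part will be verifying that this base-change isomorphism is genuinely compatible with the full Hopf algebroid structure (units, coproduct, antipode), so that a standard Morita/descent theorem applies to yield equality of cohomologies; the preliminary computation forcing $\tau_0^8 = 0$ out of $\tau_2^2 = 0$ also requires care, since this is precisely what makes $\tau^8$, rather than a smaller power of $\tau$, the correct constant in the target.
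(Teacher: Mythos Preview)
Your argument is correct and takes a somewhat different route from the paper's. Both proofs begin identically: localize the cobar complex at $\rho$, eliminate $\tau_1$ and $\tau_2$ using the relations, and extract $\tau_0^8 = 0$ from $\tau_2^2 = 0$ together with $\xi_1^4 = \xi_2^2 = 0$. From there the paper \emph{splits} the resulting Hopf algebroid as a tensor product
\[
(\mathbb{M}_2[\rho^{-1}], \mathcal{A}(2)[\rho^{-1}]) \cong (\mathbb{M}_2[\rho^{-1}], \mathbb{M}_2[\rho^{-1}][\tau_0]/\tau_0^8) \otimes_{\mathbb{F}_2} (\mathbb{F}_2, \mathcal{A}(1)^{\cl})
\]
and then computes the cohomology of the first factor by running a small $x$-Bockstein spectral sequence (the Lemma following the Proposition), obtaining $\mathbb{F}_2[\rho^{\pm 1},\tau^8]$. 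You instead exhibit the inclusion $(R, R \otimes \mathcal{A}(1)^{\cl}) \hookrightarrow (\rho^{-1}\mathbb{M}_2, \rho^{-1}\mathcal{A}(2))$ as a weak equivalence of Hopf algebroids by checking the base-change criterion $A \otimes_R \Gamma \otimes_R A \cong \Gamma'$ directly, which comes down to the triangular change-of-basis argument for $\{(\tau+\rho\tau_0)^d\}_{d=0}^{7}$. Your approach is more conceptual and bypasses the auxiliary Bockstein computation, at the cost of invoking a general Morita/descent theorem for Hopf algebroids (e.g.\ Ravenel A1.3.12); the paper's approach is more elementary and self-contained but needs the extra lemma. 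The point you flag as ``hardest'' (compatibility with the full Hopf algebroid structure) is in fact routine once $\tau_0^8 = 0$ is known, since the coproducts on $\xi_1, \xi_2$ are literally the classical ones and $\eta_R(\tau^8) = \tau^8$ is automatic.
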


\begin{proof}
    The proof is almost identical to that of \cite[Theorem 4.1]{DI}. Localizing at $\rho$ commutes with taking cohomology since localization is exact. Note that in $\mathcal{A}(2)[\rho^{-1}]$ we have
    \[\tau_{k+1} = \rho^{-1} \tau_k^2 + \rho^{-1} \tau \xi_{k+1} + \tau_0 \xi_{k+1}.\]
    So $\tau_1$ and $\tau_2$ become decomposable:
    \[\tau_1 = \rho^{-1} \tau_0^2 + \rho^{-1} \tau \xi_{1} + \tau_0 \xi_{1},\]
    \[\tau_2 = \rho^{-1} \tau_1^2 + \rho^{-1} \tau \xi_{2} + \tau_0 \xi_{2}.\]
    Use $\tau_2^2 = 0$, $\xi_2^2 = 0$, and $\xi_1^4 = 0$ to get
    \[0 = \tau_2^2 = \rho^{-2} \tau_1^4 + \rho^{-2} \tau^2 \xi_{2}^2 + \tau_0^2 \xi_{2}^2 = \rho^{-2} (\rho^{-4} \tau_0^8 + \rho^{-4} \tau^4 \xi_{1}^4 + \tau_0^4 \xi_{1}^4) = \rho^{-6} \tau_0^8.\]
    So the Hopf algebroid $(\mathbb{M}_2[\rho^{-1}], \mathcal{A}(2)[\rho^{-1}])$ is described by
    \begin{align*}
        &\mathcal{A}(2)[\rho^{-1}] = \frac{\mathbb{M}_2[\rho^{-1}][\tau_0, \xi_0, \xi_1, \xi_2]}{(\tau_0^8 = 0,\, \xi_0 = 1,\, \xi_1^4 = 0,\, \xi_2^2 = 0)},\\
        &\eta_L(\tau) = \tau,~\,\eta_R(\tau) = \tau + \rho \tau_0,~\,\eta_L(\rho) = \eta_R(\rho) = \rho,\\
        &\Delta(\tau_0) = \tau_0 \otimes 1 + 1 \otimes \tau_0,\\
        &\Delta(\xi_k) = \sum \xi_{k-i}^{2^i} \otimes \xi_i.
    \end{align*}
    Notice how $\tau_0$ and $\xi_k$ do not appear in the same formulas together. This means there is a splitting
    \[(\mathbb{M}_2[\rho^{-1}], \mathcal{A}(2)[\rho^{-1}]) \cong (\mathbb{M}_2[\rho^{-1}], \mathcal{A}(2)') \otimes_{\mathbb{F}_2} (\mathbb{F}_2, \mathcal{A}(2)''),\]
    where $(\mathbb{M}_2[\rho^{-1}], \mathcal{A}(2)')$ is the Hopf algebroid
    \begin{align*}
        &\mathcal{A}(2)' = \mathbb{M}_2[\rho^{-1}][\tau_0]/(\tau_0^8),\\
        &\eta_L(\tau) = \tau,~\eta_R(\tau) = \tau + \rho \tau_0,\\
        &\Delta(\tau_0) = \tau_0 \otimes 1 + 1 \otimes \tau_0,
    \end{align*}
    and $(\mathbb{F}_2, \mathcal{A}(2)'')$ is the Hopf algebra
    \begin{align*}
        &\mathcal{A}(2)'' = \mathbb{F}_2[\xi_0, \xi_1, \xi_2]/(\xi_0 = 1, \xi_1^4 = 0, \xi_2^2 = 0),\\
        &\Delta(\xi_k) = \sum \xi_{k-i}^{2^i} \otimes \xi_i.
    \end{align*}
    The Hopf algebra $(\mathbb{F}_2, \mathcal{A}(2)'')$ is up to a degree shift exactly the same as $\mathcal{A}(1)^{\cl}$. So its cohomology agrees up to a degree shift with the known cohomology of $\mathcal{A}(1)^{\cl}$. For $(\mathbb{M}_2[\rho^{-1}], \mathcal{A}(2)')$, setting $x = \rho \tau_0$ yields
    \[(\mathbb{M}_2[\rho^{-1}], \mathcal{A}(2)') \cong \mathbb{F}_2[\rho^{\pm 1}] \otimes_{\mathbb{F}_2} (\mathbb{F}_2[\tau], \mathbb{F}_2[\tau, x]/(x^8)).\]
    The following Lemma (which is a slight variant of \cite[Lemma 4.3]{DI}) then shows that the cohomology of the second tensor factor is $\mathbb{F}_2[\tau^8]$.
\end{proof}

\begin{Lemma}
    Consider the Hopf algebroid $(\mathbb{F}_2[t], \mathbb{F}_2[t,x]/(x^8))$ with the structure maps
    \begin{align*}
        \eta_L(t) = t,~\eta_R(t) = t + x,~\Delta(x) = x \otimes 1 + 1 \otimes x.
    \end{align*}
    Then the cohomology of this Hopf algebroid is $\mathbb{F}_2[t^8]$.
\end{Lemma}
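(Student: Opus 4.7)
The plan is to identify the Hopf algebroid $(A, \Gamma) := (\mathbb{F}_2[t], \mathbb{F}_2[t,x]/(x^8))$ with the canonical descent Hopf algebroid $(A, A \otimes_{A_0} A)$ for $A_0 := \mathbb{F}_2[t^8]$, and then to conclude by faithfully flat descent. The starting observation is that $t^8$ is a cocycle, because in characteristic two
\[\eta_R(t^8) = (t+x)^8 = t^8 + x^8 = t^8 = \eta_L(t^8).\]
More generally, $\eta_R(t^n) - \eta_L(t^n) = \sum_{k=1}^{7} \binom{n}{k} t^{n-k} x^k$, and by Lucas' theorem all of these binomial coefficients vanish modulo two precisely when $8 \mid n$. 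Hence $\Ext^0 = \mathbb{F}_2[t^8]$.

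For the vanishing of $\Ext$ in positive degrees, I would construct an algebra isomorphism
\[\phi : A \otimes_{A_0} A \xrightarrow{\sim} \Gamma, \qquad \phi(t\otimes 1) = t, \quad \phi(1 \otimes t) = t + x.\]
Well-definedness requires that $t^8 \otimes 1$ and $1 \otimes t^8$ have the same image, which follows from $(t+x)^8 = t^8$ in characteristic two. The map is bijective because both sides are free $A$-modules of rank $8$. A direct calculation on the generator $t$ then verifies that $\phi$ is an isomorphism of Hopf algebroids once $(A, A \otimes_{A_0} A)$ is equipped with its canonical Hopf algebroid structure ($\eta_L = -\otimes 1$, $\eta_R = 1 \otimes -$, and $\Delta$ the map $a \otimes b \mapsto a \otimes 1 \otimes b$); in particular the relation $(t+x) \otimes 1 = 1 \otimes t$ in $\Gamma \otimes_A \Gamma$ makes $\Delta(t+x) = (t+x) \otimes 1 + 1 \otimes x = 1 \otimes (t+x)$ match what is required on the descent side.

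Since $A$ is free of rank $8$ over $A_0$ on the basis $\{1, t, \ldots, t^7\}$, the inclusion $A_0 \hookrightarrow A$ is faithfully flat. The (normalized) cobar complex computing $\Ext^*_{A \otimes_{A_0} A}(A, A)$ coincides with the Amitsur complex of this faithfully flat extension, and by faithfully flat descent the Amitsur complex is a resolution of $A_0$. Consequently $\Ext^*_\Gamma(A, A) = A_0 = \mathbb{F}_2[t^8]$, concentrated in cohomological degree zero.

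I expect the main obstacle to be not any serious computation but rather the clean invocation of descent. For readers not already comfortable with it, one can avoid descent entirely by building an explicit contracting chain homotopy on the cobar complex in positive degrees from the section $s : \Gamma \to A$ with $s(x) = 0$, which satisfies $s \eta_L = s \eta_R = \mathrm{id}_A$. Alternatively, since the lemma is advertised as a slight variant of \cite[Lemma 4.2]{DI}, one could simply cite that result after observing that its argument adapts verbatim with $8$ in place of $2$.
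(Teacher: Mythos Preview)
Your proof is correct and takes a genuinely different route from the paper's. The paper runs an $x$-Bockstein spectral sequence: the $E_1$-page is $\mathbb{F}_2[t, h_0, h_1, h_2]$ with $h_i = [x^{2^i}]$, and the differentials $d_1(t) = h_0$, $d_2(t^2) = h_1$, $d_4(t^4) = h_2$ collapse it to $\mathbb{F}_2[t^8]$. Your argument instead recognizes $(\mathbb{F}_2[t], \mathbb{F}_2[t,x]/(x^8))$ as the descent Hopf algebroid for the faithfully flat extension $\mathbb{F}_2[t^8] \hookrightarrow \mathbb{F}_2[t]$ and invokes the acyclicity of the Amitsur complex. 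This is cleaner and more conceptual: it explains in one stroke why the higher $\Ext$ groups vanish, and it generalizes immediately to $x^{2^n}$ for any $n$ without tracking a chain of differentials. The paper's approach, by contrast, mirrors the $\rho$-Bockstein machinery used throughout the article, so it is thematically consistent and makes the parallel with $d_1(\tau) = \rho h_0$, $d_2(\tau^2) = \rho^2 \tau h_1$, $d_4(\tau^4) = \rho^4 \tau^2 h_2^2$ transparent. Your aside about the contracting homotopy built from the section $s$ with $s(x)=0$ is exactly the standard proof of Amitsur acyclicity, so it is a good fallback for readers who prefer to avoid citing descent.
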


\begin{proof}
    Run the $x$-Bockstein spectral sequence to compute the cohomology of $(\mathbb{F}_2[t], \mathbb{F}_2[t,x]/(x^8))$. Arguments similar to \cite[Lemma 4.3]{DI} show that the $E_1$-page is $\mathbb{F}_2[t, h_0, h_1, h_2]$, where $h_i = [x^{2^i}]$. Again following \cite[Lemma 4.3]{DI} the differentials in this spectral sequence are $d_1(t) = h_0$, $d_2(t^2) = h_1$, and $d_4(t^4) = h_2$. So the $E_\infty$-page is $\mathbb{F}_2[t^8]$ and there is no room for hidden extensions.
\end{proof}

From this point onward we will freely make use of the computation \cite{Isa09} of the cohomology of $\mathcal{A}(2)^{\mathbb{C}}$, meaning $\Ext_{\mathcal{A}(2)^{\mathbb{C}}}(\mathbb{M}_2^{\mathbb{C}}, \mathbb{M}_2^{\mathbb{C}})$. In the time that has passed since that computation was carried out, there have been slight changes in notation. We will use the names of the generators as presented in \cite[Table 1]{Isa18}. The only changes are that $\alpha$ became $a$ and $\nu$ became $n$. We reproduce a table of these generators below in Table \ref{tab:Generators of the cohomology of A(2)^C}. The first column provides the name of the generator, the second column describes its stem $s$, Adams filtration $f$, and weight $w$.

\begin{longtable}{ll}
\caption{Multiplicative generators of the cohomology of $\mathcal{A}(2)^{\mathbb{C}}$
\label{tab:Generators of the cohomology of A(2)^C}
} \\
\toprule
generator & $(s,f,w)$ \\
\midrule \endfirsthead
\caption[]{Multiplicative generators of the cohomology of $\mathcal{A}(2)^{\mathbb{C}}$} \\
\toprule
generator & $(s,f,w)$ \\
\midrule \endhead
\bottomrule \endfoot
    $\tau$ & $(0, 0, -1)$ \\
    $h_0$ & $(0, 1, 0)$ \\
    $h_1$ & $(1, 1, 1)$ \\
    $h_2$ & $(3, 1, 2)$ \\
    $c$ & $(8, 3, 5)$ \\
    $P$ & $(8, 4, 4)$ \\
    $u$ & $(11, 3, 7)$ \\
    $a$ & $(12, 3, 6)$ \\
    $d$ & $(14, 4, 8)$ \\
    $n$ & $(15, 3, 8)$ \\
    $e$ & $(17, 4, 10)$ \\
    $g$ & $(20, 4, 12)$ \\
    $\Dh$ & $(25, 5, 13)$ \\
    $\Dc$ & $(32, 7, 17)$ \\
    $\Du$ & $(35, 7, 19)$ \\
    $\D^2$ & $(48, 8, 24)$ \\
\end{longtable}

The relations that these generators satisfy are spelled out in \cite[Theorem 4.13]{Isa09}. Since the $E_1$-page of the $\rho$-Bockstein spectral sequence is isomorphic to $\Ext_{\mathcal{A}(2)^{\mathbb{C}}}(\mathbb{M}_2^{\mathbb{C}},\mathbb{M}_2^{\mathbb{C}})[\rho]$, the generators and relations referred to above also describe that $E_1$-page.

Next, we provide a presentation and a table of generators for the $\rho$-localized cohomology of $\mathcal{A}(2)$. The generators $h_1, h_2, u,$ and $g$ listed below are the degree-shifted versions (according to Proposition \ref{rho-local isomorphism}) of the classical generators of the cohomology of $\mathcal{A}(1)^{\cl}$. Similarly, the relations are given by taking the relations from $\mathcal{A}(1)^{\cl}$ and substituting $h_1, h_2, u,$ and $g$ for the usual variables. A chart of this algebra is available at \cite{Charts} in the folder \texttt{Cohomology of A(2) rho-localized}. For a guide on how to read it see section \ref{section: charts} and specifically section \ref{charts: cohomology of A(2) rho-localized}.

\begin{Cor}\label{representation of rho-localized target}
    Under the localization map
    \[\text{\rm Ext}_{\mathcal{A}(2)}(\mathbb{M}_2,\mathbb{M}_2) \to \text{\rm Ext}_{\mathcal{A}(2)}(\mathbb{M}_2,\mathbb{M}_2)[\rho^{-1}] \cong \text{\rm Ext}_{\mathcal{A}(1)^{\text{\rm cl}}}(\mathbb{F}_2, \mathbb{F}_2)[\rho^{\pm 1}, \tau^8]\]
    the four elements of $\text{\rm Ext}_{\mathcal{A}(2)}(\mathbb{M}_2,\mathbb{M}_2)$ represented by $h_1$, $h_2$, $u$, and $g$ on the $\rho$-Bockstein $E_1$-page map to the four indecomposables of $\text{\rm Ext}_{\mathcal{A}(1)^{\text{\rm cl}}}(\mathbb{F}_2, \mathbb{F}_2)$. If we denote their images by the same symbols, then there is an isomorphism of algebras
    \[\text{\rm Ext}_{\mathcal{A}(2)}(\mathbb{M}_2,\mathbb{M}_2)[\rho^{-1}] \cong \frac{\mathbb{F}_2[\rho^{\pm 1}, \tau^8, h_1, h_2, u, g]}{(h_1 h_2, h_2^3, h_2 u, u^2 + h_1^2 g)},\]
    where the degree of each generator is listed in table \ref{tab:rho-localized cohomology of A(2)}.

    \begin{longtable}{ll}
    \caption{Multiplicative generators of $\text{\rm Ext}_{\mathcal{A}(2)}(\mathbb{M}_2,\mathbb{M}_2)[\rho^{-1}]$
    \label{tab:rho-localized cohomology of A(2)}
    } \\
    \toprule
    \rm{generator} & $(s,f,w)$\\
    \midrule \endfirsthead
    \caption[]{Multiplicative generators of $\text{\rm Ext}_{\mathcal{A}(2)}(\mathbb{M}_2,\mathbb{M}_2)[\rho^{-1}]$} \\
    \toprule
    \rm{generator} & $(s,f,w)$\\
    \midrule \endhead
    \bottomrule \endfoot
        $\rho$ & $(-1, 0, -1)$\\
        $\tau^8$ & $(0, 0, -8)$\\
        $h_1$ & $(1, 1, 1)$\\
        $h_2$ & $(3, 1, 2)$\\
        $u$ & $(11, 3, 7)$\\
        $g$ & $(20, 4, 12)$\\
    \end{longtable}
\end{Cor}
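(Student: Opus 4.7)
The plan is to derive the corollary as a direct unpacking of Proposition \ref{rho-local isomorphism}, combined with the well-known multiplicative structure of the cohomology of classical $\mathcal{A}(1)^{\cl}$.

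First I would recall the standard presentation
\[\text{Ext}_{\mathcal{A}(1)^{\cl}}(\mathbb{F}_2, \mathbb{F}_2) \cong \frac{\mathbb{F}_2[h_0^{\cl}, h_1^{\cl}, \alpha, \beta]}{\bigl(h_0^{\cl} h_1^{\cl},~(h_1^{\cl})^3,~h_1^{\cl} \alpha,~\alpha^2 + (h_0^{\cl})^2 \beta\bigr)},\]
with $(s,f)$-degrees $(0,1)$, $(1,1)$, $(4,3)$, $(8,4)$ respectively; this is a classical computation (see e.g. \cite{SI67}). Applying the isomorphism of Proposition \ref{rho-local isomorphism} together with the degree translation $(s,f) \mapsto (2s+f,\, f,\, s+f)$, and freely adjoining the invertible $\rho$ (of tridegree $(-1,0,-1)$) and $\tau^8$ (of tridegree $(0,0,-8)$), the four classical generators transport to indecomposables of tridegrees $(1,1,1)$, $(3,1,2)$, $(11,3,7)$, $(20,4,12)$. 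The four relations translate verbatim since the isomorphism of \ref{rho-local isomorphism} is multiplicative, and in characteristic $2$ the sign in $\alpha^2 + (h_0^{\cl})^2 \beta$ is immaterial.

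Second, I would verify that under the relabeling $h_0^{\cl} \leftrightarrow h_1$, $h_1^{\cl} \leftrightarrow h_2$, $\alpha \leftrightarrow u$, $\beta \leftrightarrow g$, the images of the four classes already named $h_1, h_2, u, g$ on the $\rho$-Bockstein $E_1$-page (i.e., the $\mathbb{C}$-motivic classes inherited from Isaksen's presentation of $\text{Ext}_{\mathcal{A}(2)^{\mathbb{C}}}(\mathbb{M}_2^{\mathbb{C}}, \mathbb{M}_2^{\mathbb{C}})$ in \cite{Isa09}) are exactly these four transported generators. Their tridegrees already match by inspection, so the only content is nontriviality of each image under the composite of $\rho$-localization and the splitting used in the proof of \ref{rho-local isomorphism}.

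I expect this matching step to be the main obstacle, since a priori a named class could become decomposable or even zero in the $\rho$-localized target. To carry it out, I would take explicit cobar representatives of $h_1, h_2, u, g$ from \cite{Isa09}, push them through the substitutions $\tau_1 = \rho^{-1}\tau_0^2 + \rho^{-1}\tau\xi_1 + \tau_0\xi_1$ and $\tau_2 = \rho^{-1}\tau_1^2 + \rho^{-1}\tau\xi_2 + \tau_0 \xi_2$ that appear in the proof of \ref{rho-local isomorphism}, and project onto the second tensor factor $(\mathbb{F}_2, \mathcal{A}(2)'')$. Because $\mathcal{A}(2)''$ is a degree-shifted copy of $\mathcal{A}(1)^{\cl}$, this reduces each identification to a short cobar-level computation in a small Hopf algebra. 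Once each image is shown to be nonzero, the tridegree constraint forces it to coincide with the corresponding classical generator, and the stated algebra isomorphism follows.
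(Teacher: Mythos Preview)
Your plan is correct and covers all the necessary ingredients, but the paper handles the matching step more economically by reversing the direction of the check. Instead of starting from the $\mathbb{C}$-motivic cobar representatives of $h_1, h_2, u, g$ in \cite{Isa09} and pushing them forward through the substitutions for $\tau_1, \tau_2$ (which for $u$ and $g$ would be a nontrivial computation, since their $\mathbb{C}$-motivic representatives involve the $\tau_i$), the paper starts on the classical side: it writes down the well-known cobar representatives $[\xi_1]$, $[\xi_1^2]$, $[\xi_1 | \bar\xi_2 | \bar\xi_2]$, $[\bar\xi_2 | \bar\xi_2 | \bar\xi_2 | \bar\xi_2]$ for the four indecomposables of $\Ext_{\mathcal{A}(1)^{\cl}}$, interprets them in the tri-graded Hopf algebra $\mathcal{A}(2)''$, and reads off their tridegrees $(1,1,1)$, $(3,1,2)$, $(11,3,7)$, $(20,4,12)$. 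Since these representatives only involve the $\xi_i$, they lift verbatim to cocycles in the cobar complex of $\mathcal{A}(2)$, hence define classes in $\Ext_{\mathcal{A}(2)}(\mathbb{M}_2,\mathbb{M}_2)$ that map to the four indecomposables under localization. The paper then observes that each of those tridegrees contains a \emph{unique} class on the $\rho$-Bockstein $E_1$-page, namely $h_1$, $h_2$, $u$, $g$ respectively, so these must be the classes in question. This avoids any substitution or explicit tracking of $\tau_i$-terms; your approach would work but is more laborious for $u$ and $g$.
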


\begin{proof}
    By Proposition \ref{rho-local isomorphism} we have a degree-shifting isomorphism
    \[\text{\rm Ext}_{\mathcal{A}(1)^{\text{\rm cl}}}(\mathbb{F}_2, \mathbb{F}_2)[\rho^{\pm 1}, \tau^8] \cong \text{\rm Ext}_{\mathcal{A}(2)}(\mathbb{M}_2,\mathbb{M}_2)[\rho^{-1}].\]
    Computing $\text{\rm Ext}_{\mathcal{A}(1)^{\text{\rm cl}}}(\mathbb{F}_2, \mathbb{F}_2)$ is classical, see for example \cite[3.1.25. Theorem]{Rav86}. Applying the computation \cite[3.1.25. Theorem]{Rav86} to the Hopf algebra $(\mathbb{F}_2, \mathcal{A}(2)'')$ from the proof of Proposition \ref{rho-local isomorphism} will give us cobar level representatives of the indecomposables in its cohomology. Recall that $(\mathbb{F}_2, \mathcal{A}(2)'')$ is given by
    \begin{align*}
        &\mathcal{A}(2)'' = \mathbb{F}_2[\xi_0, \xi_1, \xi_2]/(\xi_0 = 1, \xi_1^4 = 0, \xi_2^2 = 0),\\
        &\Delta(\xi_k) = \sum \xi_{k-i}^{2^i} \otimes \xi_i.
    \end{align*}
    Then computing its cohomology can be done in exactly the same way as \cite[3.1.25. Theorem]{Rav86} using a Cartan-Eilenberg spectral sequence. The resulting cohomology will be an algebra on four indecomposables, represented on the cobar level by $\xi_1$, $\xi_1^2$, $\xi_1 \otimes \bar{\xi}_2 \otimes \bar{\xi}_2$, and $\bar{\xi}_2 \otimes \bar{\xi}_2 \otimes \bar{\xi}_2 \otimes \bar{\xi}_2$. Here, a bar means applying the antipode in the Hopf algebra. Using the gradings provided in section \ref{section: notation}, one finds that these four elements are located in an Adams-type chart in the degrees $(1, 1, 1)$, $(3, 1, 2)$, $(11, 3, 7)$, and $(20, 4, 12)$. Comparing this with our $\rho$-Bockstein $E_1$-page, we see that there is only one element in each given degree, namely $h_1$, $h_2$, $u$, and $g$, respectively. So these four elements must map to the four indecomposables in $\text{\rm Ext}_{\mathcal{A}(1)^{\text{\rm cl}}}(\mathbb{F}_2, \mathbb{F}_2)$ under the localization map
    \[\text{\rm Ext}_{\mathcal{A}(2)}(\mathbb{M}_2,\mathbb{M}_2) \to \text{\rm Ext}_{\mathcal{A}(2)}(\mathbb{M}_2,\mathbb{M}_2)[\rho^{-1}] \cong \text{\rm Ext}_{\mathcal{A}(1)^{\text{\rm cl}}}(\mathbb{F}_2, \mathbb{F}_2)[\rho^{\pm 1}, \tau^8].\]
    
    The claimed presentation for $\text{\rm Ext}_{\mathcal{A}(2)}(\mathbb{M}_2,\mathbb{M}_2)[\rho^{-1}]$ is obtained by taking the classical presentation of $\text{\rm Ext}_{\mathcal{A}(1)^{\text{\rm cl}}}(\mathbb{F}_2, \mathbb{F}_2)$ and replacing the names of the generators.
\end{proof}

\begin{Cor}\label{t^8, h1, h2, u, g are permanent cycles}
    The $\rho$-Bockstein $E_1$-page elements $\tau^8$, $h_1$, $h_2$, $u$, and $g$ are permanent cycles in the $\rho$-Bockstein spectral sequence, i.e. they do not support any differentials.
\end{Cor}

\begin{proof}
    By Corollary \ref{representation of rho-localized target} the four elements in question represent non-zero cohomology classes after $\rho$-localization. Since localization commutes with taking cohomology the claim follows.
\end{proof}

\begin{Rem}
    From the multiplicative structure of the $\rho$-localization of the cohomology of $\mathcal{A}(2)$ given in Corollary \ref{representation of rho-localized target} we can already see that there will be many hidden extensions on the $\rho$-Bockstein $E_\infty$-page. For example, many $E_1$-page elements that are $\tau$-power-torsion need to represent a $\tau^8$-torsion free element. Specific examples include $h_1^4$, $u$, $h_1^2 g$, $h_1 g^2$, and all of their $h_1$-multiples. There will be hidden $\tau^8$-extensions on all of these classes on $E_\infty$.

    In particular, Corollary \ref{representation of rho-localized target} does not tell us the exact degree of the $\rho$-torsion free classes that will represent these products on $E_\infty$. What it does provide us is the value for $f$ because $\rho$-multiplication preserves $f$. It also yields a lower bound for $s$ and $w$ which is given by summing up the values of $s$ and $w$ of the factors as given in table \ref{tab:rho-localized cohomology of A(2)}.
\end{Rem}

\begin{Rem}
    Note that Corollary \ref{representation of rho-localized target} does not imply that multiples of $\tau^8$, $h_1$, $h_2$, $u$, and $g$ cannot be involved in differentials. For example we will see in Remark \ref{d_6 hitting tau^8 g^2} that $\rho^6 \cdot \tau^8 \cdot g^2$ has to get hit by a differential for multiplicative reasons. That means $\tau^8 \cdot g^2$ is a $\rho$-power-torsion element in our spectral sequence. But by Corollary \ref{representation of rho-localized target} we know that it maps to a non-zero element in the $\rho$-localized cohomology of $\mathcal{A}(2)$. This implies that there must be a hidden extension in the $\rho$-Bockstein spectral sequence turning $\tau^8 \cdot g^2$ into a $\rho$-torsion free element. For degree reasons, that hidden extension will have to involve $\Delta^2$.
\end{Rem}

Just as in \cite[Proposition 3.2]{DI}, we can find the $\rho$-Bockstein differentials on the powers of $\tau$ by making cobar complex computations.

\begin{Lemma}\label{Bockstein d_r on powers of tau}
    The $\rho$-Bockstein differentials on the powers of $\tau$ are given by
    \begin{align*}
        d_1(\tau) &= \rho h_0,\\
        d_2(\tau^2) &= \rho^2 \tau h_1,\\
        d_4(\tau^4) &= \rho^4 \tau^2 h_2,\\
        d_r(\tau^8) &= 0 \text{ for all $r \geq 1$}.
    \end{align*}
\end{Lemma}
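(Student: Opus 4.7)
The plan is to compute these differentials directly in the cobar complex for $\mathcal{A}(2)$, following the same strategy as \cite[Proposition 3.2]{DI}. The starting observation is that $d(\tau) = \eta_R(\tau) + \eta_L(\tau) = \rho \tau_0$ in the cobar, by the formulas listed in section \ref{section: notation}. Since we work modulo $2$, Frobenius commutes with sums and we obtain
\[ d(\tau^{2^k}) = \eta_R(\tau)^{2^k} + \eta_L(\tau)^{2^k} = (\tau + \rho\tau_0)^{2^k} + \tau^{2^k} = \rho^{2^k}\,\tau_0^{2^k} \]
for every $k \geq 0$. The task reduces to identifying the term of smallest $\rho$-filtration in $\rho^{2^k}\,\tau_0^{2^k}$ after reducing $\tau_0^{2^k}$ modulo the defining relations of $\mathcal{A}(2)$.

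For $k = 0, 1, 2$, I would iteratively apply $\tau_0^2 = \tau \xi_1 + \rho \tau_1 + \rho \tau_0 \xi_1$. One squaring yields $\tau_0^4 = \tau^2 \xi_1^2 + \rho^2 \tau_1^2 + \rho^2 \tau_0^2 \xi_1^2$, so the lowest-$\rho$ summands of $\rho^{2^k}\,\tau_0^{2^k}$ are $\rho \tau_0$, $\rho^2 \tau \xi_1$, and $\rho^4 \tau^2 \xi_1^2$ respectively; the remaining terms live in strictly higher $\rho$-filtration and so do not contribute to $d_{2^k}(\tau^{2^k})$. Interpreting $[\tau_0]$, $[\xi_1]$, and $[\xi_1^2]$ as the appropriate generators on the Bockstein $E_1$-page yields the first three formulas in the statement.

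The new phenomenon at $k = 3$ is that $\tau_0^8 = 0$ outright in $\mathcal{A}(2)$, so $d(\tau^8) = 0$ already at the cobar level. Squaring the expression for $\tau_0^4$ gives
\[ \tau_0^8 = \tau^4 \xi_1^4 + \rho^4 \tau_1^4 + \rho^4 \tau_0^4 \xi_1^4, \]
and both $\xi_1^4$ summands vanish since $\xi_1^4 = 0$ in $\mathcal{A}(2)$. The middle term requires one further reduction: squaring $\tau_1^2 = \tau \xi_2 + \rho \tau_2 + \rho \tau_0 \xi_2$ gives $\tau_1^4 = \tau^2 \xi_2^2 + \rho^2 \tau_2^2 + \rho^2 \tau_0^2 \xi_2^2$, and each summand vanishes in $\mathcal{A}(2)$ because $\xi_2^2 = 0$ and $\tau_2^2 = \tau \xi_3 + \rho \tau_3 + \rho \tau_0 \xi_3 = 0$ (treating $\xi_3 = \tau_3 = 0$). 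Hence $d_r(\tau^8) = 0$ for all $r$. The only genuine subtlety in the argument is this last piece of bookkeeping --- verifying that every term surviving into $\tau_0^8$ actually dies under the $\mathcal{A}(2)$ quotient relations --- so I expect this to be the main obstacle; the rest is a straightforward adaptation of \cite[Proposition 3.2]{DI}.
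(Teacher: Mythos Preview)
Your approach is correct and essentially the same as the paper's: both invoke the cobar computations of \cite[Proposition 3.2]{DI} for $\tau$, $\tau^2$, $\tau^4$ and then argue that $\tau_0^8 = 0$ in $\mathcal{A}(2)$ (you spell out the latter more carefully than the paper does). One small discrepancy to flag: your cobar representative $[\xi_1^2]$ is $h_2$, not $h_2^2$, so your computation actually gives $d_4(\tau^4) = \rho^4 \tau^2 h_2$; the $h_2^2$ in the Lemma's displayed formula is a typo, as confirmed both by the degree $(s,f,w) = (-1,1,-4)$ of $d_4(\tau^4)$ and by the paper's Table~\ref{tab:Bockstein d_r}, which records $\rho^4 \tau^2 h_2$.
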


\begin{proof}
    The same cobar complex computations as in \cite[Proposition 3.2]{DI} prove the differentials on $\tau$, $\tau^2$, and $\tau^4$. Then $\tau^8$ must be a permanent cycle for degree reasons (or because its cobar differential is $\eta_L(\tau^8) + \eta_R(\tau^8) = \tau^8 + \tau^8 = 0$).

    Alternatively, one could also consider Corollary \ref{representation of rho-localized target} which implies that the degrees which contain the elements $\tau$, $\tau^2$, and $\tau^4$ cannot have any $\rho$-torsion free elements on $E_\infty$. The claimed differentials are the only way for that to happen.
\end{proof}

\begin{Not}
    We will make use of compound names throughout. We already did this in Corollary \ref{representation of rho-localized target} by calling an indecomposable $\tau^8$, or in Lemma \ref{Bockstein d_r on powers of tau} by referring to $\tau h_1$ even though this element is indecomposable on the $E_2$-page. Whenever it is helpful to know an explicit multiplicative decomposition for an element we will indicate it. Then instead of $x y$ we will write $x \cdot y$ to indicate that $x y$ is the product of $x$ and $y$.
\end{Not}

\section{Charts}\label{section: charts}

For a computation of this size, charts are indispensable. Due to the large number of charts needed, they will not be included in this manuscript. Instead, the charts can be found at \cite{Charts}.

This section is meant to serve as a reference. We chose to put it here to notify the reader that from now on charts will become relevant. In practice, this section can be skipped until the charts in question are needed.

\subsection{Generalities}

Let us first collect some general properties that hold true for all charts:
\begin{itemize}
    \item The horizontal axis describes the stem $s$,
    \item the vertical axis describes the Adams filtration $f$.
\end{itemize}
Consequently, we have to plot $\tau$-multiples implicitly since $\tau$ has both stem and Adams filtration equal to $0$. Most often, we do the following:
\begin{itemize}
    \item Colors indicate behavior with respect to $\tau$-multiplication. The exact meaning of each color depends on the individual chart and is discussed in the sections below.
\end{itemize}
Another important element is $\rho$. By Proposition \ref{only differentials between rho-free elements} we can work exclusively with $\rho$-torsion free classes until we reach $E_{\infty}$. Therefore, we do the following:
\begin{itemize}
    \item We try to avoid plotting $\rho$-multiples whenever possible.
\end{itemize}
As a consequence, the only charts where $\rho$-multiples are plotted explicitly are the $\rho$-power-torsion charts and the charts for the $\rho$-free quotient of the cohomology of $\mathcal{A}(2)$.

For each chart there are two versions available: A pdf version and an html version. The html version always provides more information. Hovering over a dot in an html chart shows its name, degrees and other data. In particular, we can quickly find the weight of an element in the chart, assuming we know its stem and Adams filtration. The drawback of the html charts is that the name of each element is not directly printed onto the chart, so it might be harder to find a desired element at first glance. The pdf version has the name of each element\footnote{Except for $h_0$-, $h_1$-, and $h_2$-multiples, but the fact that they are multiples is typically visualized by a line in the chart so their name can be inferred easily from the dot that they are connected to via that line.} printed on the chart. That can make it easier to find a desired element, as long as there aren't too many elements in the same stem and Adams filtration. In summary, the pdf charts are simpler to use when unfamiliar with the names of elements and their degrees, but the html charts are a more efficient tool because the weights are integral to our computation. Ultimately, which version to use depends on the specific chart and personal preference.

We will now give detailed instructions on how to read each set of charts.

\subsection{Cohomology of \texorpdfstring{$\mathcal{A}(2)$}{A(2)} \texorpdfstring{$\rho$}{rho}-localized}\label{charts: cohomology of A(2) rho-localized}

This chart shows the $\rho$-localization of the cohomology of $\mathbb{R}$-motivic $\mathcal{A}(2)$ as computed in Corollary \ref{representation of rho-localized target}. It contains the following information:
\begin{itemize}
    \item A dot represents the module $\mathbb{F}_2[\rho^{\pm 1}, \tau^8]$,
    \item lines of slope $1$ represent $h_1$-multiplication, lines of slope $1/3$ represent $h_2$-multiplication,
    \item an arrow of slope $1$ indicates that a given dot supports infinitely many $h_1$-multiplications.
\end{itemize}

\subsection{\texorpdfstring{$\rho$}{rho}-Bockstein \texorpdfstring{$E_r$}{E\_r}-pages \texorpdfstring{$\rho$}{rho}-free quotient}\label{charts: rho-Bockstein E_r-pages rho-free quotient}

By Proposition \ref{only differentials between rho-free elements} there are only $\rho$-Bockstein differentials between $\rho$-torsion free classes. As such, we can work in the $\rho$-free quotient $E_r/(\text{$\rho$-power-torsion})$. Since every element in this quotient is $\rho$-torsion free, we leave out all $\rho$-multiples in the charts.

Generally, the following will hold true for all pages:
\begin{itemize}
    \item Dots represent a quotient of $\mathbb{F}_2[\rho, \tau^n]$, where the power $n$ depends on the page,
    \item the color of a dot determines the $\tau^n$-torsion of that quotient,
    \item vertical lines represent $h_0$-multiplication, lines of slope $1$ represent $h_1$-multiplication, lines of slope $1/3$ represent $h_2$-multiplication,
    \item the color of a line simply corresponds to the color of its target dot, except when the line is colored magenta\footnote{Here is an example of what a magenta line looks like. \textcolor{magenta}{\begin{tikzcd}[ampersand replacement = \&]{}\ar[r,thick,-]\&{}\end{tikzcd}}}, which indicates that the given multiplication hits the $\tau^n$-multiple of the target dot,
    \item two lines originating from the same dot implies that the $h_i$-multiplication in question hits the sum of those two targets,
    \item arrows indicate that a given dot supports infinitely many multiplications of the type corresponding to the slope of the arrow, e.g. a vertical arrow indicates infinitely many $h_0$-multiples, an arrow of slope $1$ indicates infinitely many $h_1$-multiples,
    \item the color of an arrow indicates the $\tau^n$-torsion of the implied multiples.
\end{itemize}

\subsubsection{\texorpdfstring{$E_1$}{E\_1}}\label{charts: rho-Bockstein E_1-page rho-free quotient}

Recall that by construction the $\rho$-Bockstein $E_1$-page is $\Ext_{\mathcal{A}(2)^{\mathbb{C}}}(\mathbb{M}_2^{\mathbb{C}},\mathbb{M}_2^{\mathbb{C}})[\rho]$, which is entirely $\rho$-torsion free. Hence the $\rho$-free quotient of $E_1$ is just $E_1$ itself. Since we avoid plotting $\rho$-multiples, our chart will look like $\Ext_{\mathcal{A}(2)^{\mathbb{C}}}(\mathbb{M}_2^{\mathbb{C}},\mathbb{M}_2^{\mathbb{C}})$. Charts of this already exist in the literature, e.g. \cite[p. 11]{Isa18}. Our chart will be modeled exactly after this cited one\footnote{The cited chart also includes $\mathbb{C}$-motivic Adams differentials. Those will of course not be in our chart.}. We provide a table on how to interpret the colors in our chart.

\begin{longtable}{lcl}
\caption{Colors for the $\rho$-Bockstein $E_1$-page $\rho$-free quotient
\label{tab:Colors for rho-Bockstein E_1 rho-free quotient}
} \\
\toprule
Color & Example & Module\\
\midrule \endfirsthead
\caption[]{Colors for the $\rho$-Bockstein $E_1$-page $\rho$-free quotient} \\
\toprule
Color & Example & Module\\
\midrule \endhead
\bottomrule \endfoot
    gray & \textcolor{chartgray}{$\bullet$} & $\mathbb{F}_2[\rho, \tau]$\\
    red & \textcolor{red}{$\bullet$} & $\mathbb{F}_2[\rho, \tau]/\tau$\\
    blue & \textcolor{blue}{$\bullet$} & $\mathbb{F}_2[\rho, \tau]/\tau^2$\\
    green & \textcolor{darkgreen}{$\bullet$} & $\mathbb{F}_2[\rho, \tau]/\tau^3$\\
\end{longtable}

\subsubsection{\texorpdfstring{$E_2$}{E\_2}}\label{charts: rho-Bockstein E_2-page rho-free quotient}

By Lemma \ref{Bockstein d_r on powers of tau} there is no element named $\tau$ on $E_2$ because $\tau$ supports a $d_1$. However, we do still have $\tau^2$. So all dots will now represent $\mathbb{F}_2[\rho, \tau^2]$-modules. Sometimes, this leads to more dots at some coordinate in the chart than there were before. That is because not every $\tau$-multiple on $E_1$ has to support a differential. For example at the coordinates $(1, 1)$ there are two dots, namely $h_1$ and $\tau h_1$. The first one represents the "even exponent $\tau$-multiples" of $h_1$ and the second one the "odd exponent $\tau$-multiples" of $h_1$. Of course one should be careful when saying this as there is no $\tau$-multiplication on $E_2$, only $\tau^2$-multiplication.

We will use the same colors as for $E_1$, but replace $\tau$ by $\tau^2$ everywhere. For example, on $E_1$ red was $\tau$-torsion, so on $E_2$ red now stands for $\tau^2$-torsion. Also, a magenta line on $E_2$ now indicates that the corresponding multiplication hits the $\tau^2$-multiple of the target dot.

\begin{longtable}{lcl}
\caption{Colors for the $\rho$-Bockstein $E_2$-page $\rho$-free quotient
\label{tab:Colors for rho-Bockstein E_2 rho-free quotient}
} \\
\toprule
Color & Example & Module\\
\midrule \endfirsthead
\caption[]{Colors for the $\rho$-Bockstein $E_2$-page $\rho$-free quotient} \\
\toprule
Color & Example & Module\\
\midrule \endhead
\bottomrule \endfoot
    gray & \textcolor{chartgray}{$\bullet$} & $\mathbb{F}_2[\rho, \tau^2]$\\
    red & \textcolor{red}{$\bullet$} & $\mathbb{F}_2[\rho, \tau^2]/\tau^2$\\
    blue & \textcolor{blue}{$\bullet$} & $\mathbb{F}_2[\rho, \tau^2]/(\tau^2)^2$\\
\end{longtable}

\subsubsection{\texorpdfstring{$E_3$}{E\_3} and \texorpdfstring{$E_4$}{E\_4}}\label{charts: rho-Bockstein E_3- and E_4-page rho-free quotient}

The smallest power of $\tau$ that exists on these pages is $\tau^4$. We adjust the meaning of the colors just as we did when changing from $E_1$ to $E_2$. A magenta line now indicates that the corresponding multiplication hits the $\tau^4$-multiple of the target dot.

\begin{longtable}{lcl}
\caption{Colors for the $\rho$-Bockstein $E_3$-page and $E_4$-page $\rho$-free quotient
\label{tab:Colors for rho-Bockstein E_3 and E_4 rho-free quotient}
} \\
\toprule
Color & Example & Module\\
\midrule \endfirsthead
\caption[]{Colors for the $\rho$-Bockstein $E_3$-page and $E_4$-page $\rho$-free quotient} \\
\toprule
Color & Example & Module\\
\midrule \endhead
\bottomrule \endfoot
    gray & \textcolor{chartgray}{$\bullet$} & $\mathbb{F}_2[\rho, \tau^4]$\\
    red & \textcolor{red}{$\bullet$} & $\mathbb{F}_2[\rho, \tau^4]/\tau^4$\\
\end{longtable}

\subsubsection{\texorpdfstring{$E_r$}{E\_r} for \texorpdfstring{$r \geq 5$}{r greater or equal to 5}}\label{charts: rho-Bockstein E_r-page rho-free quotient for r >= 5}

The smallest power of $\tau$ that exists on these pages is $\tau^8$.

\begin{longtable}{lcl}
\caption{Colors for the $\rho$-Bockstein $E_r$-page $\rho$-free quotient, $r\geq 5$
\label{tab:Colors for rho-Bockstein E_r rho-free quotient, r>=5}
} \\
\toprule
Color & Example & Module\\
\midrule \endfirsthead
\caption[]{Colors for the $\rho$-Bockstein $E_r$-page $\rho$-free quotient, $r\geq 5$} \\
\toprule
Color & Example & Module\\
\midrule \endhead
\bottomrule \endfoot
    gray & \textcolor{chartgray}{$\bullet$} & $\mathbb{F}_2[\rho, \tau^8]$\\
    red & \textcolor{red}{$\bullet$} & $\mathbb{F}_2[\rho, \tau^8]/\tau^8$\\
\end{longtable}

\subsection{\texorpdfstring{$\rho$}{rho}-Bockstein internal coweight charts}\label{charts: rho-Bockstein internal coweight charts}

The internal coweight is defined in Definition \ref{definition of internal coweight}. We provide charts for fixed values of the internal coweight on various $E_r$-page $\rho$-free quotients. These charts cover all values relevant to the computations of $\rho$-Bockstein differentials as carried out in section \ref{section: rho-Bockstein differentials}. It should be noted that these charts can be re-used on higher pages. For example on $E_3$ we will refer to internal coweight $7$, so then we can re-use the $E_2$-chart for internal coweight $7$.

The charts contain the following information:
\begin{itemize}
    \item A dot represents the module $\mathbb{F}_2[\rho]$,
    \item the color of a dot follows the coloring scheme of the respective $E_r$-page the chart is based on,\footnote{But it should be noted that a dot does not represent any $\tau^n$-multiples here, as $\tau^n$ does not have internal coweight $0$.}
    \item an arrow pointing to the left indicates that the given dot is not involved in any differentials, i.e. it represents a $\rho$-torsion free class on $E_\infty$,
    \item a cyan line\footnote{Here is an example of what a cyan line looks like. \textcolor{darkcyan}{\begin{tikzcd}[ampersand replacement = \&]{}\ar[r,thick,-]\&{}\end{tikzcd}}} connecting two dots indicates a $\rho$-Bockstein differential. The length of the line (or equivalently the relative position of the two dots it connects) implies which page the differential occurs on. More precisely, a line of slope $1/(r-1)$ represents a $d_r$-differential from the dot in lower Adams filtration to the $\rho^r$-multiple of the dot in higher Adams filtration.
\end{itemize}

Some charts show all differentials, e.g. figure \ref{fig:Rho-Bockstein E_2 rho-free part s+f-w=5 with differentials}, and some charts do not show differentials, e.g. figure \ref{fig:Rho-Bockstein E_2 rho-free part s+f-w=5 without differentials}. We never assume knowledge of a completed chart as in figure \ref{fig:Rho-Bockstein E_2 rho-free part s+f-w=5 with differentials}. Instead, if an internal coweight chart is needed in section \ref{section: rho-Bockstein differentials} we will explicitly determine the differentials for the relevant elements on that chart.

\subsection{Cohomology of \texorpdfstring{$\mathcal{A}(2)$}{A(2)} \texorpdfstring{$\rho$}{rho}-free quotient}\label{charts: cohomology of A(2) rho-free quotient}

We provide two charts which combine into the $\rho$-free quotient of the cohomology of $\mathbb{R}$-motivic $\mathcal{A}(2)$, i.e. $\Ext_{\mathcal{A}(2)}(\mathbb{M}_2,\mathbb{M}_2)/(\text{$\rho$-power-torsion})$. The first chart shows elements of coweight $s - w$ congruent to $0$, $1$, and $2$ mod $8$. The second chart shows elements of coweight congruent to $4$ mod $8$. In particular, there are no $\rho$-torsion free elements in other coweights\footnote{That is a direct consequence of Corollary \ref{representation of rho-localized target}.}. The main reason we use coweights here is because $h_1$- and $\rho$-multiplication respect coweights. We consider them modulo $8$ so that $\tau^8$-multiples are on the same chart. The first chart combines the three coweights $0$, $1$, and $2$ so that all $h_2$-multiples are on the same chart.

The charts contain the following information:
\begin{itemize}
    \item A dot represents the module $\mathbb{F}_2[\tau^8]$,
    \item lines of slope $1$ represent $h_1$-multiplication, lines of slope $1/3$ represent $h_2$-multiplication, horizontal lines represent $\rho$-multiplication,
    \item horizontal arrows indicate that a given dot supports infinitely many $\rho$-multiplications,
    \item dotted lines indicate that a multiplication comes from a hidden extension,
    \item magenta lines indicate that a multiplication hits the $\tau^8$-multiple of the target dot. All magenta lines are consequences of hidden $\tau^8$-extensions.
\end{itemize}

There are multiplications that are not hidden on the generator of a dot itself, but on the $\tau^8$-multiple of that generator. We have chosen not to introduce a separate symbol for this phenomenon, and simply draw a solid line. These types of hidden multiplications are always consequences of other hidden extensions. For example, the $h_1$-multiplication from $h_1^3$ to $h_1^4$ is not hidden. However, the $h_1$-multiplication on $\tau^8 \cdot h_1^3$ is hidden with value $\rho^4 \cdot \tau^4 P$. It is a consequence of the horizontal magenta line connecting the dot $\rho^3 \cdot \tau^4 P$ to the dot $h_1^4$, indicating a hidden $\tau^8$-extension from $h_1^4$ to $\rho^4 \cdot \tau^4 P$.

\subsection{\texorpdfstring{$\rho$}{rho}-Bockstein \texorpdfstring{$E_\infty$}{E\_infinity}-page \texorpdfstring{$\rho$}{rho}-power-torsion}\label{charts: rho-Bockstein E_infinity-page rho-power-torsion}

There are eight charts combining into the $\rho$-power-torsion subalgebra of the $E_\infty$-page of the $\rho$-Bockstein spectral sequence. The charts are divided into coweights $s - w$ mod $8$. The main reason we use coweights here is because $h_1$- and $\rho$-multiplication respect coweights. We consider them modulo $8$ so that $\tau^8$-multiples are on the same chart. These charts do not show any hidden extensions, although some are discussed in section \ref{section: hidden extensions on rho-torsion elements}.

The charts contain the following information:
\begin{itemize}
    \item A dot represents a quotient of $\mathbb{F}_2[\tau^8]$,
    \item the color of a dot determines the $\tau^8$-torsion of that quotient, see table \ref{tab:Colors for rho-Bockstein E_infinity rho-torsion},
    \item vertical lines represent $h_0$-multiplication, lines of slope $1$ represent $h_1$-multiplication, lines of slope $1/3$ represent $h_2$-multiplication, horizontal lines represent $\rho$-multiplication,
    \item the color of a line simply corresponds to the color of its target dot,
    \item arrows indicate that a given dot supports infinitely many multiplications of the type corresponding to the slope of the arrow,
    \item the color of an arrow indicates the $\tau^8$-torsion of the implied multiples.
\end{itemize}

\begin{longtable}{lcl}
\caption{Colors for the $\rho$-Bockstein $E_\infty$-page $\rho$-power-torsion
\label{tab:Colors for rho-Bockstein E_infinity rho-torsion}
} \\
\toprule
Color & Example & Module\\
\midrule \endfirsthead
\caption[]{Colors for the $\rho$-Bockstein $E_\infty$-page $\rho$-power-torsion} \\
\toprule
Color & Example & Module\\
\midrule \endhead
\bottomrule \endfoot
    gray & \textcolor{chartgray}{$\bullet$} & $\mathbb{F}_2[\tau^8]$\\
    red & \textcolor{red}{$\bullet$} & $\mathbb{F}_2[\tau^8]/\tau^8$\\
\end{longtable}

\section{\texorpdfstring{$\rho$}{rho}-Bockstein differentials}\label{section: rho-Bockstein differentials}

In this section, we compute all differentials in the $\rho$-Bockstein spectral sequence converging to the cohomology of $\mathbb{R}$-motivic $\mathcal{A}(2)$. There are charts provided at \cite{Charts} which show the $\rho$-free quotient $E_r/(\text{$\rho$-power-torsion})$ of each $E_r$-page of the $\rho$-Bockstein spectral sequence. For a guide on how to read these charts see section \ref{section: charts}, and specifically section \ref{charts: rho-Bockstein E_r-pages rho-free quotient}.

For the reader's convenience and for future reference, we begin with a table collecting all differentials in the $\rho$-Bockstein spectral sequence on indecomposable elements. From left to right, the columns of table \ref{tab:Bockstein d_r} describe
\begin{enumerate}[label=\rm{(\arabic*)}]
    \item the name of the indecomposable supporting a differential,
    \item the degree of that indecomposable,
    \item the name of the target of the differential,
    \item the index of the page where the differential occurs,
    \item where the proof of that differential can be found.
\end{enumerate}
All elements listed are indecomposables on the respective $E_r$-pages where they support a $d_r$-differential. Unlisted indecomposables do not support a differential. In particular, by using the Leibniz rule all other differentials in the spectral sequence can be deduced from the differentials in this table. The rest of this section will then be dedicated to proving the existence of these differentials.

\begin{longtable}{llllc}
\caption{All non-zero differentials on indecomposables in the $\rho$-Bockstein spectral sequence
\label{tab:Bockstein d_r}
} \\
\toprule
$x$ & $(s,f,w)$ & $d_r(x)$ & $r$ & Proof\\
\midrule \endfirsthead
\caption[]{All non-zero differentials on indecomposables in the $\rho$-Bockstein spectral sequence} \\
\toprule
$x$ & $(s,f,w)$ & $d_r(x)$ & $r$ & Proof\\
\midrule \endhead
\bottomrule \endfoot
    $\tau$ & $(0, 0, -1)$ & $\rho h_0$ & $1$ & \ref{Bockstein d_r on powers of tau}\\
    $\Dc$ & $(32, 7, 17)$ & $\rho h_0 a g$ & $1$ & \ref{proof of d_1-differentials}\\
    $\Du$ & $(35, 7, 19)$ & $\rho h_0 n g$ & $1$ & \ref{proof of d_1-differentials}\\
    $\tau^2$ & $(0, 0, -2)$ & $\rho^2 \tau h_1$ & $2$ & \ref{Bockstein d_r on powers of tau}\\
    $\Dh$ & $(25, 5, 13)$ & $\rho^2 \tau h_2^2 g$ & $2$ & \ref{differential on Dh}\\
    $\tau^3 d e$ & $(31, 8, 15)$ & $\rho^2 \tau^2 h_1 d e$ & $2$ & \ref{differential on t^3 d e}\\
    $\tau^3 h_2^2$ & $(6, 2, 1)$ & $\rho^3 \tau c$ & $3$ & \ref{proof of d_3 differentials}\\
    $P$ & $(8, 4, 4)$ & $\rho^3 h_1^2 c$ & $3$ & \ref{proof of d_3 differentials}\\
    $\tau^3 h_1 c$ & $(9, 4, 3)$ & $\rho^3 P h_2$ & $3$ & \ref{proof of d_3 differentials}\\
    $\tau^4 \Dh$ & $(25, 5, 9)$ & $\rho^3 \tau^2 a n$ & $3$ & \ref{proof of d_3 differentials}\\
    $\Dh h_1$ & $(26, 6, 14)$ & $\rho^3 c g$ & $3$ & \ref{differential on Dh h1}\\
    $\tau^3 n^2$ & $(30, 6, 13)$ & $\rho^3 (\tau \Dc + \tau^2 a g)$ & $3$ & \ref{differential on t^3 n^2}\\
    $\tau n^2$ & $(30, 6, 15)$ & $\rho^3 a g$ & $3$ & \ref{proof of d_3 differentials}\\
    $\tau^5 d e$ & $(31, 8, 13)$ & $\rho^3 \tau P \Dh$ & $3$ & \ref{proof of d_3 differentials}\\
    $\Dc + \tau a g$ & $(32, 7, 17)$ & $\rho^3 d g$ & $3$ & \ref{proof of d_3 differentials}\\
    $\tau^2 (\Du + \tau n g)$ & $(35, 7, 17)$ & $\rho^3 \tau^2 e g$ & $3$ & \ref{proof of d_3 differentials}\\
    $\Du + \tau n g$ & $(35, 7, 19)$ & $\rho^3 e g$ & $3$ & \ref{proof of d_3 differentials}\\
    $\tau^5 e g$ & $(37, 8, 17)$ & $\rho^3 \tau \Dh d$ & $3$ & \ref{proof of d_3 differentials}\\
    $\tau^2 \Dh d$ & $(39, 9, 19)$ & $\rho^3 a^2 e$ & $3$ & \ref{proof of d_3 differentials}\\
    $\tau^5 g^2$ & $(40, 8, 19)$ & $\rho^3 \tau \Dh e$ & $3$ & \ref{proof of d_3 differentials}\\
    $\tau^3 a^2 e$ & $(41, 10, 19)$ & $\rho^3 \tau^2 P n g$ & $3$ & \ref{proof of d_3 differentials}\\
    $\tau a^2 e$ & $(41, 10, 21)$ & $\rho^3 P n g$ & $3$ & \ref{proof of d_3 differentials}\\
    $\tau^2 (\Dc g + \tau a g^2)$ & $(52, 11, 27)$ & $\rho^3 \tau^2 d g^2$ & $3$ & \ref{proof of d_3 differentials}\\
    $\tau^4$ & $(0, 0, -4)$ & $\rho^4 \tau^2 h_2$ & $4$ & \ref{Bockstein d_r on powers of tau}\\
    $\tau P h_2^2$ & $(14, 6, 7)$ & $\rho^4 h_1^3 d$ & $4$ & \ref{proof of d_4 differentials}\\
    $\tau^2 n$ & $(15, 3, 6)$ & $\rho^4 h_2 n$ & $4$ & \ref{differential on t^2 n}\\
    $\tau h_0^2 e$ & $(17, 6, 9)$ & $\rho^4 h_1^3 e$ & $4$ & \ref{differential on t h0^2 e}\\
    $P a$ & $(20, 7, 10)$ & $\rho^4 h_1 c d$ & $4$ & \ref{proof of d_4 differentials}\\
    $P n$ & $(23, 7, 12)$ & $\rho^4 h_1 c e$ & $4$ & \ref{differential on P n}\\
    $\tau^4 \Dh h_1$ & $(26, 6, 10)$ & $\rho^4 \tau^2 a e$ & $4$ & \ref{differential on t^4 Dh h1}\\
    $\tau^4 \Dc h_1$ & $(33, 8, 14)$ & $\rho^4 a^3$ & $4$ & \ref{differential on t^4 Dc h1}\\
    $\tau^4 P \Dh h_1$ & $(34, 10, 14)$ & $\rho^4 \tau^2 P a e$ & $4$ & \ref{differential on t^4 P Dh h1}\\
    $\tau^4 P \Dc h_1$ & $(41, 12, 18)$ & $\rho^4 P a^3$ & $4$ & \ref{differential on t^4 P Dc h1}\\
    $\tau^4 d$ & $(14, 4, 4)$ & $\rho^5 \tau^2 h_1 e$ & $5$ & \ref{differential on t^4 d}\\
    $\tau^2 d$ & $(14, 4, 6)$ & $\rho^5 h_1 e$ & $5$ & \ref{differential on t^2 d}\\
    $\tau^6 e$ & $(17, 4, 4)$ & $\rho^5 \tau^4 h_1 g$ & $5$ & \ref{differential on t^6 e}\\
    $\tau^4 e$ & $(17, 4, 6)$ & $\rho^5 \tau^2 h_1 g$ & $5$ & \ref{differential on t^4 e}\\
    $\tau^2 P e$ & $(25, 8, 12)$ & $\rho^5 h_1 d^2$ & $5$ & \ref{differential on t^2 P e}\\
    $\tau^8 \Dh h_1^2$ & $(27, 7, 7)$ & $\rho^5 \tau^6 d e$ & $5$ & \ref{differential on t^8 Dh h1^2}\\
    $\tau^4 \Dh h_1^2$ & $(27, 7, 11)$ & $\rho^5 \tau^2 d e$ & $5$ & \ref{differential on t^4 Dh h1^2}\\
    $\tau^8 \Dc h_1^2$ & $(34, 9, 11)$ & $\rho^5 \tau^4 P n^2$ & $5$ & \ref{differential on t^8 Dc h1^2}\\
    $\tau^4 \Dc h_1^2$ & $(34, 9, 15)$ & $\rho^5 P n^2$ & $5$ & \ref{differential on t^4 Dc h1^2}\\
    $P a n$ & $(35, 10, 18)$ & $\rho^5 u d^2$ & $5$ & \ref{differential on P a n}\\
    $\tau^4 P \Dh h_1^2$ & $(35, 11, 15)$ & $\rho^5 \tau^2 P d e$ & $5$ & \ref{differential on t^4 P Dh h1^2}\\
    $\tau^4 P \Dc h_1^2$ & $(42, 13, 19)$ & $\rho^5 P^2 n^2$ & $5$ & \ref{differential on t^4 P Dc h1^2}\\
    $\tau^4 h_1$ & $(1, 1, -3)$ & $\rho^6 \tau h_2^2$ & $6$ & \ref{proof of d_6 differentials}\\
    $\tau^2 P h_2$ & $(11, 5, 4)$ & $\rho^6 h_1^2 d$ & $6$ & \ref{proof of d_6 differentials}\\
    $\tau^4 a$ & $(12, 3, 2)$ & $\rho^6 \tau^2 e$ & $6$ & \ref{differential on t^4 a}\\
    $\tau^2 a$ & $(12, 3, 4)$ & $\rho^6 e$ & $6$ & \ref{differential on t^2 a}\\
    $\tau^6 n$ & $(15, 3, 2)$ & $\rho^6 \tau^4 g$ & $6$ & \ref{differential on t^6 n}\\
    $\tau^3 h_0^2 e$ & $(17, 6, 7)$ & $\rho^6 c d$ & $6$ & \ref{proof of d_6 differentials}\\
    $\tau^8 P n$ & $(23, 7, 4)$ & $\rho^6 \tau^6 P g$ & $6$ & \ref{differential on t^8 P n}\\
    $\tau^2 P n$ & $(23, 7, 10)$ & $\rho^6 d^2$ & $6$ & \ref{differential on t^2 P n}\\
    $\tau^6 P h_1 d$ & $(23, 9, 7)$ & $\rho^6 \tau^3 P h_0^2 g$ & $6$ & \ref{differential on t^6 P h1 d}\\
    $\tau^2 n^2$ & $(30, 6, 14)$ & $\rho^6 n g$ & $6$ & \ref{differential on t^2 n^2}\\
    $\tau^{12} d e$ & $(31, 8, 6)$ & $\rho^6 \tau^6 a^3$ & $6$ & \ref{differential on t^12 d e}\\
    $\tau^4 P^2 n$ & $(31, 11, 12)$ & $\rho^6 \tau^2 P^2 g$ & $6$ & \ref{differential on t^4 P^2 n}\\
    $\tau^8 P d e$ & $(39, 12, 14)$ & $\rho^6 \tau^2 P a^3$ & $6$ & \ref{differential on t^8 P d e}\\
    $\tau^6 g^2$ & $(40, 8, 18)$ & $\rho^6 \tau \Dh g$ & $6$ & \ref{differential on t^6 g^2}\\
    $\tau^4 h_1^2$ & $(2, 2, -2)$ & $\rho^7 c$ & $7$ & \ref{differential on t^4 h1^2}\\
    $\tau^5 h_2^2$ & $(6, 2, -1)$ & $\rho^7 a$ & $7$ & \ref{differential on t^5 h2^2}\\
    $\tau^4 c$ & $(8, 3, 1)$ & $\rho^7 d$ & $7$ & \ref{differential on t^4 c}\\
    $\tau^{10} P h_2$ & $(11, 5, -4)$ & $\rho^7 \tau^7 h_0^2 e$ & $7$ & \ref{differential on t^10 P h2}\\
    $\tau^{11} h_0^2 e$ & $(17, 6, -1)$ & $\rho^7 \tau^6 P n$ & $7$ & \ref{differential on t^11 h0^2 e}\\
    $\tau^9 h_0^2 e$ & $(17, 6, 1)$ & $\rho^7 \tau^4 P n$ & $7$ & \ref{differential on t^9 h0^2 e}\\
    $\tau^6 P^2 h_2$ & $(19, 9, 4)$ & $\rho^7 \tau^3 P h_0^2 e$ & $7$ & \ref{proof of d_7 differentials}\\
    $\tau^7 P h_0^2 e$ & $(25, 10, 7)$ & $\rho^7 \tau^2 P^2 n$ & $7$ & \ref{differential on t^7 P h0^2 e}\\
    $\tau^5 P h_0^2 e$ & $(25, 10, 9)$ & $\rho^7 P^2 n$ & $7$ & \ref{differential on t^5 P h0^2 e}\\
    $\tau^6 h_2 n$ & $(18, 4, 4)$ & $\rho^8 \tau \Dh$ & $8$ & \ref{differential on t^6 h2 n}\\
    $\tau^6 h_2^2$ & $(6, 2, -2)$ & $\rho^{10} n$ & $10$ & \ref{differential on t^6 h2^2}\\
\end{longtable}

By definition of the $\rho$-Bockstein spectral sequence, the differentials $d_r$ increase the $\rho$-filtration by $r$. Because of Proposition \ref{only differentials between rho-free elements} all indecomposables in the $\rho$-Bockstein spectral sequence have $\rho$-filtration zero. As a consequence, the values of their $d_r$-differentials will always contain $\rho^r$. This is also apparent when comparing the third and fourth column of table \ref{tab:Bockstein d_r}. Based on this observation, we will make the following notational convention.
    
\begin{Not}\label{leaving out powers of rho from Bockstein differentials}
    Assume $x$ is an element in the $\rho$-Bockstein spectral sequence of $\rho$-filtration zero and $d_r(x) = \rho^r y$. In this section, we leave out the power of $\rho$ from our notation, i.e. we write $d_r(x) = y$ instead. The power of $\rho$ is implicitly given by the index of the differential.
\end{Not}

\begin{Not}\label{lemmas have (s, f, w) degree stated in them}
    Many statements in the following sections will be of the form $d_r(x) = y$. To help the reader navigate the computation we have included the degree \deg{s}{f}{w} of the source $x$ of the differential whenever we make such a statement. See for example Lemma \ref{differential on t^4 h1^2}.
\end{Not}

\subsection{Chart guide}\label{section: Chart guide for section on rho-Bockstein differentials}

The relevant charts from \cite{Charts} for this section can be found in the folders \texttt{Rho-Bockstein E\_r-pages rho-free quotient} and \texttt{Rho-Bockstein internal coweight charts}.

The charts in the first-mentioned folder depict the $\rho$-free quotient of the $\rho$-Bockstein spectral sequence $E_r$-page. They aid the reader throughout the different sections by providing an overview of the elements still under consideration for differentials.

The charts in the second folder will be crucial to many arguments in this section. How to use these charts in conjunction with the text is spelled out in detail in section \ref{section: The internal coweight method} for the case of internal coweight $5$. The internal coweight is defined in Definition \ref{definition of internal coweight}. Whenever a proof in this section mentions an internal coweight, we encourage the reader to open an associated internal coweight chart. Since each internal coweight chart is sparse, we suggest using the pdf version. To give an example of where to use a chart, in Lemma \ref{differential on t^4 h1^2} we want to look at internal coweight $6$. Since Lemma \ref{differential on t^4 h1^2} is located in the $d_2$-differentials section, we can use the chart titled \texttt{Rho-Bockstein E\_2 rho-free quotient s+f-w=6}. This chart will aid the reader in following the proof of Lemma \ref{differential on t^4 h1^2}. Note that sometimes we will re-use internal coweight charts from lower pages. For example in Lemma \ref{differential on t^4 d} we consider internal coweight $14$, and we can use the chart \texttt{Rho-Bockstein E\_4 rho-free quotient s+f-w=14} for that purpose. Since Lemma \ref{differential on t^4 d} is in the $d_5$-differentials section, we can ignore elements in the given chart that are involved in $d_4$-differentials. For every internal coweight mentioned in the following sections there is an associated chart.

\subsection{\texorpdfstring{$d_1$}{d\_1}-differentials}

We continue by giving proofs of the differentials one page at a time. We will freely make use of \cite[Theorem 4.13]{Isa09}, which provides generators and relations for $\Ext_{\mathcal{A}(2)^{\mathbb{C}}}(\mathbb{M}_2^{\mathbb{C}},\mathbb{M}_2^{\mathbb{C}})$, i.e. for the cohomology of $\mathbb{C}$-motivic $\mathcal{A}(2)^{\mathbb{C}}$. We also recalled these generators earlier in table \ref{tab:Generators of the cohomology of A(2)^C}.

We start with table \ref{tab:Bockstein d_1} collecting all indecomposables on the $E_1$-page, together with the values of their $d_1$-differentials. The $d_1$-differentials on all other elements are then consequences of the Leibniz rule. An empty entry in the $d_1$-column of table \ref{tab:Bockstein d_1} implies that $d_1$ applied to that indecomposable is zero. We leave out the indecomposable $\rho$ as we know it is a permanent cycle since it is a cycle in the cobar complex. Similarly, following Notation \ref{leaving out powers of rho from Bockstein differentials} we will also leave out the factor $\rho$ everywhere. As an example, consider the element $\tau$ in table \ref{tab:Bockstein d_1}. The $d_1$-column entry is $h_0$, and that tells us $d_1(\tau) = \rho h_0$.

\begin{longtable}{lllc}
\caption{Indecomposables on $E_1$ and their $d_1$-differentials
\label{tab:Bockstein d_1}
} \\
\toprule
indecomposable & $(s,f,w)$ & $d_1$ & Proof \\
\midrule \endfirsthead
\caption[]{Indecomposables on $E_1$ and their $d_1$-differentials} \\
\toprule
indecomposable & $(s,f,w)$ & $d_1$ & Proof \\
\midrule \endhead
\bottomrule \endfoot
    $\tau$ & $(0, 0, -1)$ & $h_0$ & \ref{Bockstein d_r on powers of tau}\\
    $h_0$ & $(0, 1, 0)$ & & \\
    $h_1$ & $(1, 1, 1)$ & & \\
    $h_2$ & $(3, 1, 2)$ & & \\
    $c$ & $(8, 3, 5)$ & & \\
    $P$ & $(8, 4, 4)$ & & \\
    $u$ & $(11, 3, 7)$ & & \\
    $a$ & $(12, 3, 6)$ & & \\
    $d$ & $(14, 4, 8)$ & & \\
    $n$ & $(15, 3, 8)$ & & \\
    $e$ & $(17, 4, 10)$ & & \\
    $g$ & $(20, 4, 12)$ & & \\
    $\Dh$ & $(25, 5, 13)$ & & \\
    $\Dc$ & $(32, 7, 17)$ & $h_0 a g$ & \ref{proof of d_1-differentials}\\
    $\Du$ & $(35, 7, 19)$ & $h_0 n g$ & \ref{proof of d_1-differentials}\\
    $\D^2$ & $(48, 8, 24)$ & & \\
\end{longtable}

\begin{Prop}\label{proof of d_1-differentials}
    Table \ref{tab:Bockstein d_1} describes the $d_1$-differentials in the $\rho$-Bockstein spectral sequence on all indecomposables on $E_1$.
\end{Prop}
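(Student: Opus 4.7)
The plan is to verify each row of Table \ref{tab:Bockstein d_1} individually, and then invoke the Leibniz rule to propagate to all of $E_1$. The entry for $\tau$ is already Lemma \ref{Bockstein d_r on powers of tau}, so only the remaining indecomposables require attention.

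For an indecomposable $x$ of tridegree $(s,f,w)$ with an empty entry in the table, a nontrivial $d_1(x)$ would take the form $\rho y$ with $y$ a class on $E_1$ in tridegree $(s, f+1, w+1)$. For $h_0$, $c$, $P$, $a$, $d$, $n$, $e$, $\Dh$, and $\D^2$, I would verify by direct inspection of the generators and relations of $\Ext_{\mathcal{A}(2)^\mathbb{C}}(\mathbb{M}_2^\mathbb{C},\mathbb{M}_2^\mathbb{C})$ in \cite[Theorem 4.16]{Isa09} that this tridegree is empty on $E_1$, so $d_1(x) = 0$ for bidegree reasons alone. For the four classes $h_1$, $h_2$, $u$, and $g$, I would invoke Corollary \ref{t^8, h1, h2, u, g are infinite cycles}, which shows they are infinite cycles because they remain nonzero after $\rho$-localization as in Corollary \ref{representation of rho-localized target}.

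The substantive content is the two nontrivial differentials $d_1(\Dc) = \rho h_0 a g$ and $d_1(\Du) = \rho h_0 n g$. The names $\Dc$ and $\Du$ reflect that any cobar representative must carry a factor of $\tau$, since the formal symbol $\Delta$ is not itself a class in $\Ext_{\mathcal{A}(2)^\mathbb{C}}$ but only appears in $\tau$-multiples. The plan is to fix a specific cobar representative, apply the $\mathcal{A}(2)$-cobar differential, and extract the piece of $\rho$-filtration exactly one via $\eta_R(\tau) = \tau + \rho\tau_0$. The result is a cobar cycle in tridegree $(31,8,17)$ respectively $(34,8,19)$, which I would identify with $h_0 a g$ (resp.\ $h_0 n g$) by comparison against the multiplicative structure of $\Ext_{\mathcal{A}(2)^\mathbb{C}}$ from \cite[Theorem 4.16]{Isa09}.

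The main obstacle is the cobar-level bookkeeping for $\Dc$ and $\Du$: the representatives are nontrivial, and in principle each target bidegree may support additive corrections by other $E_1$-classes. The safeguard is that these two bidegrees are small enough to enumerate all candidate classes from the $\mathbb{C}$-motivic chart, after which the identification is essentially forced by the $h_0$-, $a$-, $g$-, and $n$-structure on $E_1$; any remaining ambiguity should be eliminable by pairing with $\rho$-multiplication and comparing against the $\rho$-localization of Corollary \ref{representation of rho-localized target}.
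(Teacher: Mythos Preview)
Your treatment of the zero differentials is fine, and in fact slightly over-engineered: the paper simply checks that for every indecomposable with an empty entry the target tridegree $(s,f+1,w+1)$ is vacant in $\Ext_{\mathcal{A}(2)^\mathbb{C}}$, so invoking Corollary~\ref{t^8, h1, h2, u, g are infinite cycles} for $h_1$, $h_2$, $u$, $g$ is unnecessary (though not wrong).

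The real divergence is in how you handle $\Dc$ and $\Du$. Your plan is a direct cobar computation: write down an explicit representative, apply the cobar differential, peel off the $\rho$-filtration-one part, and then identify the resulting cycle in the target bidegree. This would work in principle, but you correctly flag the bookkeeping as the main obstacle, and it is a genuine one---cobar representatives for $\Dc$ and $\Du$ in Adams filtration $7$ are unpleasant. The paper bypasses all of this with a two-line multiplicative argument. In $\Ext_{\mathcal{A}(2)^\mathbb{C}}$ one has the relation $h_0 \cdot \Dc = \tau \cdot h_0 \cdot a \cdot g$; applying the Leibniz rule and $d_1(\tau) = \rho h_0$ gives
\[
h_0 \cdot d_1(\Dc) = d_1(\tau) \cdot h_0 a g = \rho \, h_0^2 a g \neq 0,
\]
and since the only class in the target tridegree is $h_0 a g$, this forces $d_1(\Dc) = \rho\, h_0 a g$. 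The argument for $\Du$ is identical using $h_0 \cdot \Du = \tau \cdot h_0 \cdot n \cdot g$. So your approach trades a short algebraic manipulation for a substantial cobar calculation; the paper's route is both shorter and avoids the identification step you anticipate as delicate.
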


\begin{proof}
    For every indecomposable whose $d_1$-differential is claimed to be zero this follows for degree reasons, i.e. the target degree of $d_1$ does not contain any elements of the form $\rho \cdot x$ for $x \in \Ext_{\mathcal{A}(2)^{\mathbb{C}}}(\mathbb{M}_2^{\mathbb{C}},\mathbb{M}_2^{\mathbb{C}})$. The proof of the differential on $\tau$ was given in Lemma \ref{Bockstein d_r on powers of tau}.

    The differentials on $\Dc$ and $\Du$ are consequences of the one on $\tau$. For $\Dc$ use the relation $h_0 \cdot \Dc = \tau \cdot h_0 \cdot a \cdot g$. Then by the Leibniz rule
    \[h_0 \cdot d_1(\Dc) = h_0^2 \cdot a \cdot g \neq 0.\]
    The only way this can be true is if $d_1(\Dc) = h_0 \cdot a \cdot g$. The proof for $\Du$ is similar, using the relation $h_0 \cdot \Du = \tau \cdot h_0 \cdot n \cdot g$.
\end{proof}

\subsection{The internal coweight method}\label{section: The internal coweight method}

Before moving on to the next page, we will discuss another extremely potent method for deducing differentials in the $\rho$-Bockstein spectral sequence. This method is essentially the same as \cite[Strategy 5.3]{BI}, we're just giving it a name and make separate charts for it. Before going into details, we will collect all the different steps and ideas involved in this method:
\begin{enumerate}[label=\rm{(\arabic*)}]
    \item By Proposition \ref{only differentials between rho-free elements} it suffices to consider the $\rho$-free quotient $E_r/(\text{$\rho$-power-torsion})$ of each $E_r$-page when computing $\rho$-Bockstein differentials.
    \item The hyperplanes given by fixing a constant value for $s + f - w$ on each $E_r$-page form additive sub-spectral sequences. This is because the degree of $d_r$ is $(-1, 1, 0)$, so it preserves $s + f - w$.
    \item We already have complete knowledge of the amount of $\mathbb{F}_2[\rho]$-modules on the $\rho$-free quotient of $E_\infty$. This follows from Corollary \ref{representation of rho-localized target} and the fact that the kernel of $\rho$-localization is precisely the $\rho$-power-torsion elements. So the $\rho$-free quotient injects into the $\rho$-localization and in particular the $\mathbb{F}_2[\rho]$-modules on the $\rho$-free quotient of $E_\infty$ correspond to the $\mathbb{F}_2[\rho^{\pm 1}]$-modules in the $\rho$-localization.
\end{enumerate}
We will make some additional remarks: For the first point, it is worth noting that the $\rho$-free quotient of the $E_r$-page is obtained from the $E_1$-page by removing elements that either supported or were the target of a differential. For sources of differentials that is just the usual way a spectral sequence works, for targets of differentials that is because they are now $\rho$-power-torsion. For the second point, we choose $s + f - w$ specifically so that $\rho$-multiplication also preserves this linear combination. This makes it easier to create charts, as we can have a dot represent an entire $\mathbb{F}_2[\rho]$-module. Note that multiplication by any other indecomposable does not preserve this linear combination. For the third point, we even know the $f$-values of the $\mathbb{F}_2[\rho]$-modules on the $\rho$-free quotient because $\rho$-multiplication preserves $f$.

As we will have to refer to the linear combination $s + f - w$ a lot, we give it a name.

\begin{Def}\label{definition of internal coweight}
    For an element $x$ in degree $(s, f, w)$ we call the integer $s + f - w$ its \textbf{internal coweight}.
\end{Def}

To explain the name, the quantity $s - w$ is typically referred to as coweight, and the quantity $s + f$ describes the internal degree if one were to compute $\Ext$ via a projective resolution.

Calculating the internal coweight of each element in table \ref{tab:Bockstein d_1} shows that every indecomposable of $E_1$ (and hence of each $E_r$-page) has a non-negative internal coweight. More precisely, only $\rho$ has internal coweight $0$ and all other indecomposables have a strictly positive internal coweight.

We will illustrate the internal coweight method with an example in internal coweight $5$.
\begin{figure}[htp]
    \centering
    \includegraphics[width={.75\textwidth}]{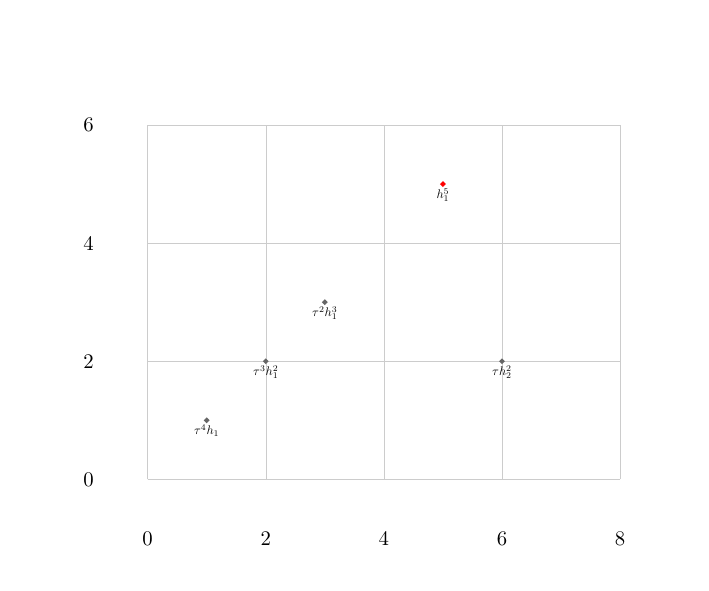}
    \caption{$\rho$-Bockstein $E_2$-page $\rho$-free quotient, internal coweight $5$}
    \label{fig:Rho-Bockstein E_2 rho-free part s+f-w=5 without differentials}
\end{figure}
In figure \ref{fig:Rho-Bockstein E_2 rho-free part s+f-w=5 without differentials} the horizontal axis represents the stem $s$ and the vertical axis represents the Adams filtration $f$. Each dot represents a free $\mathbb{F}_2[\rho]$-module with internal coweight $5$. The gray dots correspond to $\tau^2$-free classes whereas the red dot corresponds to a $\tau^2$-torsion class, but that is not relevant for the following arguments. By Corollary \ref{representation of rho-localized target} we have a presentation of the $\rho$-localized abutment of the spectral sequence. It is easy to check that the only element there with internal coweight $5$ is $h_1^5$ (and its $\rho$-multiples). We can also find a dot with that label in the provided figure. We see that $h_1^5$ cannot be involved in any differentials, as incoming differentials would have to come from filtration $4$ and outgoing differentials would end up in filtration $6$ and both of those are empty. That means $h_1^5$ survives the $\rho$-Bockstein spectral sequence and thus represents a $\rho$-torsion free class at $E_{\infty}$, namely the one that is called $h_1^5$ in Corollary \ref{representation of rho-localized target}. Next consider the other four gray dots. If one of them is not involved in any differentials in our spectral sequence, then it will represent a $\rho$-torsion free class on $E_\infty$. That $\rho$-torsion free class would map to a non-zero element in the $\rho$-localized cohomology of $\mathcal{A}(2)$. But as we have seen above, there is only one element with that property in internal coweight $5$, and that element is $h_1^5$. So all four gray dots must be involved in some differentials. There is now exactly two things that can happen to each dot: Either it supports a differential so that it disappears before $E_{\infty}$, or it gets hit by a differential and becomes $\rho$-power-torsion, hence zero in the $\rho$-localization. It is a quick and pleasant exercise to convince oneself that there is only one possible set of differentials. The solution is described in the next paragraph. The main point is that all $d_r$-differentials go one to the left and one up in the chart, but they also have to hit a $\rho^r$-multiple and $\rho$-multiplication moves one to the left.

For the readers' convenience we include another chart of internal coweight $5$ in figure \ref{fig:Rho-Bockstein E_2 rho-free part s+f-w=5 with differentials}.
\begin{figure}[htp]
    \centering
    \includegraphics[width=.75\textwidth]{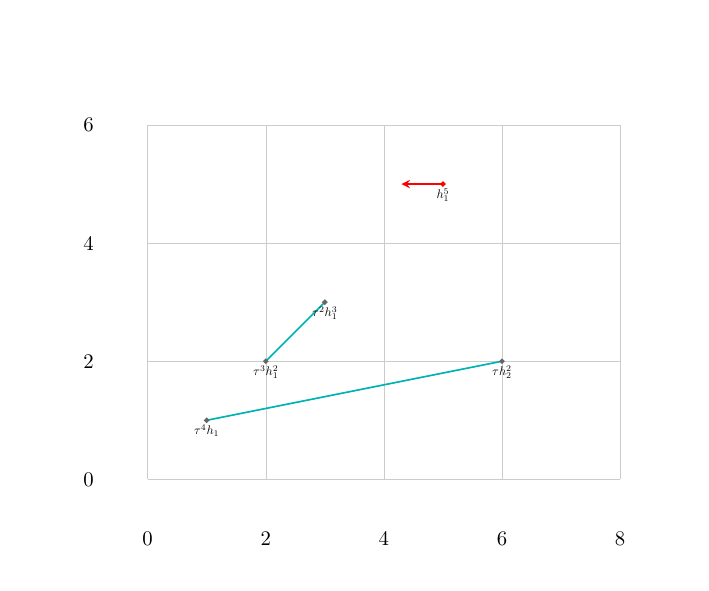}
    \caption{$\rho$-Bockstein $E_2$-page $\rho$-free quotient, internal coweight $5$, with differentials}
    \label{fig:Rho-Bockstein E_2 rho-free part s+f-w=5 with differentials}
\end{figure}
This chart shows each class in the $\rho$-free quotient of the $E_2$-page in internal coweight $5$, and describes their behavior in the $\rho$-Bockstein spectral sequence. The arrow on $h_1^5$ means that it survives the $\rho$-Bockstein spectral sequence, it is not hit by a differential and it does not support a differential. A line connecting two dots implies that there is a differential from the class that is lower on the $y$-axis to the other one. The length of the line determines the index of the differential and hence which power of $\rho$ is hit, but we will continue leaving out said power of $\rho$ from the notation, cf. Notation \ref{leaving out powers of rho from Bockstein differentials}. For example the line from $\tau^3 h_1^2$ to $\tau^2 h_1^3$ implies $d_2(\tau^3 h_1^2) = \tau^2 h_1^3$, where we have left out the factor $\rho^2$ on the right-hand side. To see that this differential has to occur, recall that $\tau^2 h_1^3$ has to be involved in some differential as there cannot be any $\rho$-torsion free classes on $E_\infty$ in internal coweight $5$ and Adams filtration $3$ by Corollary \ref{representation of rho-localized target}. Certainly $\tau^2 h_1^3$ cannot support a differential as the target would have to be in Adams filtration $4$. So some $\rho$-multiple of $\tau^2 h_1^3$ has to get hit by a differential. The source of that differential must be in Adams filtration $2$. A priori there are two elements there, $\tau^3 h_1^2$ and $\tau h_2^2$. But note that any $d_r$ on $\tau h_2^2$ would land at the coordinates $(5, 3)$ in this chart. Since the $\rho$-multiples of $\tau^2 h_1^3$ live to its own left in the chart, we see that $\tau h_2^2$ cannot hit them with a differential. Hence $\tau^3 h_1^2$ must support a differential hitting a $\rho$-multiple of $\tau^2 h_1^3$. Since $d_r(\tau^3 h_1^2)$ is at the coordinates $(1, 3)$ and $\tau^2 h_1^3$ is at the coordinates $(3, 3)$ and $3 - 1 = 2$, we must have $d_2(\tau^3 h_1^2) = \tau^2 h_1^3$. Now the other differential $d_6(\tau^4 h_1) = \tau h_2^2$ is forced.

In a completed internal coweight chart as in figure \ref{fig:Rho-Bockstein E_2 rho-free part s+f-w=5 with differentials}, every dot falls into exactly one of the two following categories:
\begin{itemize}
    \item There is a horizontal arrow pointing to its left, indicating that the class corresponding to this dot is not involved in any differentials in the $\rho$-Bockstein spectral sequence. Hence it is $\rho$-torsion free on $E_\infty$. The arrow points to where its $\rho$-multiples are.

    \item There is a line of height $1$ and positive slope connecting the given dot with exactly one other dot. This indicates that there is a differential from the class that is lower on the vertical axis to the other one. The index of the differential is implicitly given by the length of the line (or equivalently by the general position of the two dots connected by the line). More precisely, a line of slope $1/(r-1)$ from $x$ to $y$ implies $d_r(x) = \rho^r y$. Note that we only use the internal coweight method starting on $E_2$ and higher pages, so there will never be vertical lines which would indicate $d_1$'s.
\end{itemize}
There are charts for all relevant internal coweights at \cite{Charts}. For a guide on how to read them see section \ref{section: charts}, and specifically section \ref{charts: rho-Bockstein internal coweight charts}.

By playing the same game as we just did for other values of the internal coweight, we could already compute a lot of $d_r$-differentials for $r > 2$ on $E_2$. We will not do this. Instead, we will keep moving from page to page and only apply the internal coweight method when it is needed to prove existence or non-existence of some differential.

\begin{Conv}\label{Convention on targets of differentials being non-trivial}
    By using the internal coweight method we will be able to compute $d_r$-differentials on pages prior to $E_r$. For example in figure \ref{fig:Rho-Bockstein E_2 rho-free part s+f-w=5 with differentials} we were able to compute $d_6(\tau^4 h_1) = \tau h_2^2$ purely from information available on $E_2$. We will do this very often going forward. So we should be precise with interpreting our notation: When writing $d_6(\tau^4 h_1) = \tau h_2^2$ we are not only claiming that this differential exists, but also that it is a non-trivial differential on the $E_6$-page, i.e. the target $\tau h_2^2$ is not $\rho^6$-torsion on $E_6$. The running convention for all further sections computing $\rho$-Bockstein differentials is that whenever we write $d_r(x) = y$ and $y$ is not the symbol $0$, then we are implicitly claiming that the target $y$ is not $\rho^r$-torsion on $E_r$. In particular that means the claimed $d_r$-differential is non-trivial.
\end{Conv}

\subsection{\texorpdfstring{$d_2$}{d\_2}-differentials}

From this point onward we continue with describing the differentials page by page. Remember that we listed all non-zero differentials on indecomposables in table \ref{tab:Bockstein d_r}. Our goal is to prove that this list is both complete and correct.

Moving forward, we will not give tables of all indecomposables on each page as we did for $E_1$. Instead, inside of each Proposition we will only list those indecomposables that could possibly support a differential for degree reasons. Note that all other differentials on each page can then be deduced via the Leibniz rule. We argue separately for each indecomposable whether or not the given differential occurs.

The tables inside of each Proposition have columns describing
\begin{enumerate}[label=\rm{(\arabic*)}]
    \item the name of the indecomposable possibly supporting a differential,
    \item the degree of said indecomposable,
    \item the target of the differential (leaving out the factors of $\rho$ as in table \ref{tab:Bockstein d_1}),
    \item whether or not the given differential occurs,
    \item where the proof can be found.
\end{enumerate}

\begin{Prop}\label{proof of d_2 differentials}
    Table \ref{tab:Bockstein d_r} describes the non-zero $d_2$-differentials in the $\rho$-Bockstein spectral sequence on all indecomposables on $E_2$.
\end{Prop}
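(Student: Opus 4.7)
The plan is to enumerate all indecomposables on the $\rho$-Bockstein $E_2$-page and verify that table \ref{tab:Bockstein d_r} lists exactly the non-zero $d_2$'s. The indecomposables on $E_2$ are inherited from those of $E_1$ in table \ref{tab:Bockstein d_1} by removing $\tau$ (which supports a $d_1$) and $h_0$ (which is hit by $d_1(\tau)$), adjoining $\tau^2$ as the new lowest power of $\tau$, and accounting for any additional indecomposables forced by the collapse of relations on $E_2$ — most notably $\tau^3 de$, whose indecomposability is a consequence of $\tau$ no longer being present as a generator.

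For each candidate indecomposable $x$ in degree $(s,f,w)$, I would first inspect the $\rho$-free part of the target degree $(s-1,f+1,w)$ on $E_2$. When this group is zero, the differential vanishes automatically. For classes that $\rho$-localize to non-zero elements — namely $h_1, h_2, u, g$ together with their products and with $\tau^8$, as described in Corollary \ref{representation of rho-localized target} — Corollary \ref{t^8, h1, h2, u, g are infinite cycles} immediately rules out any differential. For the remaining candidates I would apply the internal coweight method: fix the internal coweight $s+f-w$, consult the corresponding chart described in section \ref{charts: rho-Bockstein internal coweight charts}, and use the known count and position of $\rho$-torsion-free survivors at $E_\infty$, as forced by \ref{representation of rho-localized target}, to either require or forbid each $d_2$.

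The three non-trivial $d_2$'s in the table are then handled as follows. The differential $d_2(\tau^2) = \rho^2 \tau h_1$ is already established in Lemma \ref{Bockstein d_r on powers of tau} by a direct cobar computation. The two remaining ones, on $\Dh$ and on $\tau^3 de$, are deferred to the forward lemmas \ref{differential on Dh} and \ref{differential on t^3 d e}; in each case the anticipated strategy is to argue that in the relevant internal coweight some $\rho^2$-multiple of the named target must be hit in order to match the count of survivors predicted by Corollary \ref{representation of rho-localized target}, and then to pin down the source multiplicatively using the presentation of $E_1$ inherited from \cite[Theorem 4.16]{Isa09}.

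The main obstacle is bookkeeping rather than technical difficulty: correctly identifying the new indecomposables created on $E_2$ is easy to mishandle, and among the many products of surviving $E_1$-generators one must check that no stray $d_2$ slips in. Once the candidate list is fixed, however, the internal coweight charts together with Corollary \ref{representation of rho-localized target} turn the remaining verifications into a finite and largely mechanical exercise.
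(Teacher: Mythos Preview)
Your overall structure matches the paper's: list the indecomposables that could support a $d_2$ for degree reasons, dispatch the trivial cases, and defer $\Dh$ and $\tau^3 d e$ to the forward lemma. Two points of comparison are worth recording.

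First, for the four zero differentials on $\tau h_1$, $\tau c$, $\tau \Dh$, and $\tau \Dc$, the paper does not use the internal coweight method. Instead it observes (Lemma \ref{d_2 differentials on t h1, t c, t Dh, t Dc}) that $h_1$ is a $d_2$-cycle, so $d_2$ is $h_1$-linear; each of these four sources is $h_1$-power-torsion on $E_2$ while its unique possible target ($h_1^2$, $h_1 c$, $\Dh h_1$, $\Dc h_1$ respectively) is $h_1$-torsion free, so the differential must vanish. Your internal coweight approach would also succeed here, but the $h_1$-linearity argument is a one-liner and avoids having to work out the full coweight charts.

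Second, your anticipated strategy for the forward lemma on $\Dh$ is not quite what the paper does. The paper (Lemma \ref{differential on Dh}) first uses the relation $\Dh \cdot a \cdot d = \tau^3 d e \cdot g$ to show the two $d_2$'s are equivalent, and then proves $d_2(\Dh) \neq 0$ indirectly by establishing $d_3(\Dh h_1) = c g$: since $c$ is hit by a $d_7$ and $a^2$ is an infinite cycle, $c g$ must be hit by a short differential with $\Dh h_1$ as the only possible source; but $c g$ is not $h_1$-divisible, so $\Dh h_1$ cannot be $h_1$-divisible on $E_3$, forcing $\Dh$ to have supported a $d_2$. This is a step removed from a direct coweight count at $\Dh$ itself.
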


\begin{proof}
    For degree reasons, the only indecomposables that can support a $d_2$-differential are given by the following table.

    \begin{longtable}{llllc}
    \caption{Possible non-zero $d_2$-differentials on indecomposable elements
    \label{tab:Possible Bockstein d_2}
    } \\
    \toprule
    $x$ & $(s,f,w)$ & $d_2(x)$ & Occurs & Proof\\
    \midrule \endfirsthead
    \caption[]{Possible non-zero $d_2$-differentials on indecomposable elements} \\
    \toprule
    $x$ & $(s,f,w)$ & $d_2(x)$ & Occurs & Proof\\
    \midrule \endhead
    \bottomrule \endfoot
        $\tau^2$ & $(0, 0, -2)$ & $\tau h_1$ & Yes & \ref{Bockstein d_r on powers of tau}\\
        $\tau h_1$ & $(1, 1, 0)$ & $h_1^2$ & No & \ref{d_2 differentials on t h1, t c, t Dh, t Dc}\\
        $\tau c$ & $(8, 3, 4)$ & $h_1 c$ & No & \ref{d_2 differentials on t h1, t c, t Dh, t Dc}\\
        $\tau \Dh$ & $(25, 5, 12)$ & $\Dh h_1$ & No & \ref{d_2 differentials on t h1, t c, t Dh, t Dc}\\
        $\Dh$ & $(25, 5, 13)$ & $\tau h_2^2 g$ & Yes & \ref{differential on Dh}\\
        $\tau^3 d e$ & $(31, 8, 15)$ & $\tau^2 h_1 d e$ & Yes& \ref{differential on t^3 d e}\\
        $\tau \Dc$ & $(32, 7, 16)$ & $\Dc h_1$ & No & \ref{d_2 differentials on t h1, t c, t Dh, t Dc}\\
    \end{longtable}

    The differential on $\tau^2$ was already proven in Lemma \ref{Bockstein d_r on powers of tau}. For all other differentials see the following series of Lemmas.
\end{proof}

\begin{Lemma}\label{d_2 differentials on t h1, t c, t Dh, t Dc}
    The $d_2$-differentials on the elements $\tau h_1$, $\tau c$, $\tau \Dh$, and $\tau \Dc$ are all zero.
\end{Lemma}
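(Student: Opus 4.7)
The plan is to identify, for each of the four elements, the unique potential non-zero value of $d_2$, and then to rule it out using \ref{representation of rho-localized target} together with the internal coweight method. By the constraint that $d_2$ has degree $(-1,1,0)$ and raises the $\rho$-filtration by $2$, a direct tri-degree inspection on $E_2$ shows that the only candidate value of $d_2(\tau x)$ for $x \in \{h_1,\, c,\, \Dh,\, \Dc\}$ is $\rho^2 (h_1 x)$, giving putative targets $\rho^2 h_1^2$, $\rho^2 h_1 c$, $\rho^2 \Dh h_1$, and $\rho^2 \Dc h_1$ respectively.

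For $\tau h_1$, the target $h_1^2$ is one of the indecomposable generators of the $\rho$-localized cohomology of $\mathcal{A}(2)$ listed in \ref{representation of rho-localized target}. Since $\rho$-localization commutes with taking cohomology, $h_1^2$ must survive as a $\rho$-torsion-free class on $E_\infty$. A non-trivial $d_2(\tau h_1) = \rho^2 h_1^2$ would make $h_1^2$ into a $\rho^2$-torsion class on $E_3$, hence a $\rho$-power-torsion class on $E_\infty$, contradicting \ref{representation of rho-localized target}. This forces $d_2(\tau h_1) = 0$.

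For the remaining three elements, the targets $h_1 c$, $\Dh h_1$, and $\Dc h_1$ all map to zero in the $\rho$-localization; an easy tally of monomials in $\mathbb{F}_2[\rho^{\pm 1}, \tau^8, h_1, h_2, u, g]/(h_1 h_2, h_2^3, h_2 u, u^2 + h_1^2 g)$ shows that there are no classes in the relevant tri-gradings (internal coweights $7$, $18$, and $25$ at the respective Adams filtrations). So the direct argument of the previous paragraph does not apply. Instead, the strategy is to inspect the $\rho$-free-quotient chart of $E_2$ in each of these internal coweights, enumerate the $\mathbb{F}_2[\rho]$-generators present, and compare with the count of $\rho$-torsion-free survivors demanded by \ref{representation of rho-localized target}. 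In each case, the targets $h_1 c$, $\Dh h_1$, and $\Dc h_1$ are forced to be available on $E_3$: $h_1 c$ because $c$ and $h_1$ are both permanent cycles so $h_1 c$ cannot support a differential and will only be killed by a hit of the right degree, while $\Dh h_1$ and $\Dc h_1$ must survive to $E_3$ in order to support the $d_3$- and $d_4$-differentials recorded in table \ref{tab:Bockstein d_r}. A bookkeeping check in each internal coweight then shows that if any of these classes were hit by $d_2$ from $\tau x$, then the resulting count of $\rho$-torsion-free generators on $E_\infty$ would be inconsistent with \ref{representation of rho-localized target}.

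The main obstacle is the apparent circularity with the later $d_r$-differentials ($r \geq 3$) on $\Dh h_1$, $\Dc h_1$, and their relatives, which are proved in subsequent lemmas. This is resolved by noting that the internal-coweight counting derived from \ref{representation of rho-localized target} depends only on the total number and tri-degrees of $\rho$-torsion-free survivors, not on the particular form of any individual $d_r$. Thus the vanishing of $d_2$ on $\tau h_1$, $\tau c$, $\tau \Dh$, and $\tau \Dc$ is pinned down independently of the higher differentials, as a purely combinatorial consequence of the $\rho$-localization.
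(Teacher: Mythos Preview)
Your argument for $\tau h_1$ is fine, but the arguments for $\tau c$, $\tau \Dh$, and $\tau \Dc$ have a genuine gap: you never actually carry out the internal-coweight count. You assert that ``a bookkeeping check in each internal coweight then shows'' the desired conclusion, but no such check is performed. For $\tau c$ you say only that $h_1 c$ is a permanent cycle, which does not by itself rule out $d_2(\tau c) = \rho^2 h_1 c$. For $\tau \Dh$ and $\tau \Dc$ you appeal to the later $d_3$- and $d_4$-differentials from table \ref{tab:Bockstein d_r}, which is circular since those lemmas (e.g.\ \ref{differential on Dh h1}) are proved \emph{after} this one and in fact use it. Your attempted resolution of the circularity is again just an assertion that counting would work, not an execution of the count. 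In principle an internal-coweight argument might be made to work, but it would require enumerating every $\mathbb{F}_2[\rho]$-generator in internal coweights $7$, $18$, and $25$ on the $E_2$-page and showing that no consistent pattern of differentials exists with $d_2(\tau x) \neq 0$; you have not done this.

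The paper's proof is a single line and avoids all of this: since $h_1$ is a $d_2$-cycle for degree reasons, $d_2$ is $h_1$-linear by the Leibniz rule. Each source $\tau h_1$, $\tau c$, $\tau \Dh$, $\tau \Dc$ is $h_1$-power-torsion on $E_2$ (because the relevant $h_1$-powers are $\tau$-torsion in $\Ext_{\mathcal{A}(2)^{\mathbb{C}}}$), while each candidate target $h_1^2$, $h_1 c$, $\Dh h_1$, $\Dc h_1$ lies on an infinite $h_1$-tower and is therefore $h_1$-torsion free. An $h_1$-linear map cannot send an $h_1$-power-torsion element to an $h_1$-torsion-free one, so all four differentials vanish. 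This multiplicative argument is both shorter and logically prior to any of the internal-coweight analyses in the paper.
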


\begin{proof}
    Note that $h_1$ is a $d_2$-cycle for degree reasons, so the $d_2$-differential is $h_1$-linear by the Leibniz rule. Thus, the $d_2$-differentials on $\tau h_1$, $\tau c$, $\tau \Dh$, and $\tau \Dc$ must be zero as these elements are all $h_1$-power-torsion but their possible targets are $h_1$-torsion free.
\end{proof}

It will be convenient now and at later stages to refer to the $\rho$-Bockstein differentials hitting $c$ and $a$, so we state them here. As it becomes relevant now, we remind the reader of Notation \ref{lemmas have (s, f, w) degree stated in them}. We also refer the reader back to the chart guide in section \ref{section: Chart guide for section on rho-Bockstein differentials}.

\begin{Lemma}\label{differential on t^4 h1^2}\label{differential hitting c}\deg{2}{2}{-2}
    $d_7(\tau^4 h_1^2) = c$.
\end{Lemma}

\begin{proof}
    In internal coweight $6$ the element $\tau^4 h_1^2$ must be involved in some differential by Corollary \ref{representation of rho-localized target} combined with degree reasons. But Lemma \ref{Bockstein d_r on powers of tau} implies $d_2(\tau^6) = d_2(\tau^2) \cdot \tau^4 = \tau^5 h_1$. Now the claimed $d_7$ is the only possible differential that $\tau^4 h_1^2$ can be involved in.
\end{proof}

\begin{Lemma}\label{differential on t^5 h2^2}\label{differential hitting a}\deg{6}{2}{-1}
    $d_7(\tau^5 h_2^2) = a$.
\end{Lemma}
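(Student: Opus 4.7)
The strategy is to mirror the proof of Lemma \ref{differential on t^4 h1^2}. I would work in internal coweight $9$, where both $\tau^5 h_2^2$ at tridegree $(6,2,-1)$ and $a$ at $(12,3,6)$ reside. By \ref{representation of rho-localized target} the $\rho$-localized cohomology is a polynomial algebra in $\rho^{\pm 1}, \tau^8, h_1, h_2, u, g$ modulo the listed relations, and a direct inspection shows that no monomial of the appropriate form can lie in either of the two tridegrees above. Hence both $\tau^5 h_2^2$ and $a$ become $\rho$-power-torsion on $E_\infty$, so by \ref{only differentials between rho-free elements} each must participate in some $\rho$-Bockstein differential.

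Next I would observe that $a$ cannot support a differential: any candidate target would sit at $f=4$ in internal coweight $9$, and hence be of the form $\tau^k P$, $\tau^k d$, $\tau^k e$, or $\tau^k g$ with $k$ a negative integer, which is impossible. So $a$ must be hit. A degree scan over potential sources at $f = 2$ in internal coweight $9$ leaves $\tau^5 h_2^2$, via a $d_7$, as the only candidate; this will force $d_7(\tau^5 h_2^2) = \rho^7 a$, provided we can check that $\tau^5 h_2^2$ is still alive on $E_7$.

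The only differential on an earlier page that could kill $\tau^5 h_2^2$ is of the form $d_4(\tau^7 h_2) = \rho^4 \tau^5 h_2^2$, because the other candidate sources in the correct tridegree, namely $\tau^8 h_0$ and $\tau^8 h_1$, are products of infinite cycles by \ref{t^8, h1, h2, u, g are infinite cycles} and \ref{proof of d_1-differentials} and hence support no differentials. To exclude this possibility I would factor $\tau^7 h_2 = \tau^4 \cdot \tau^3 h_2$ on $E_4$ and apply the Leibniz rule, using $d_4(\tau^4) = \rho^4 \tau^2 h_2$ from \ref{Bockstein d_r on powers of tau} together with the forced value of $d_4(\tau^3 h_2)$ (pinned down by an independent internal-coweight-$5$ analysis) to produce a cancellation of the two contributions modulo $2$.

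The main obstacle is the computation of $d_4(\tau^3 h_2)$: the element $\tau^3 h_2$ cannot be written as a product of two factors that both survive to $E_4$, so its $d_4$ is not directly accessible from prior differentials on $\tau$-powers and must instead be read off from the internal-coweight-$5$ bookkeeping. Once this auxiliary differential is in hand, the Leibniz cancellation rules out the incoming $d_4$, leaving $d_7(\tau^5 h_2^2) = \rho^7 a$ as the unique differential compatible with the degree constraints and the required vanishing of $a$ on $E_\infty$.
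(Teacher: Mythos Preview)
Your core idea—showing that $a$ cannot survive to the $\rho$-free part of $E_\infty$, cannot support a differential, and hence must be hit, with $\tau^5 h_2^2$ the unique source in internal coweight $9$ at $f=2$—is sound and yields the result. This is a slightly different route from the paper, which instead starts from $\tau^5 h_2^2$: the only element at $f=1$ in internal coweight $9$ on the $\rho$-free quotient is $\tau^8 h_1 = \tau^8 \cdot h_1$, a product of infinite cycles, so $\tau^5 h_2^2$ cannot be hit and must support a differential; its only possible targets are $\tau^3 c$ and $a$, and $\tau^3 c$ already supports $d_2(\tau^3 c) = d_2(\tau^2)\cdot \tau c = \rho^2 \tau^2 h_1 c$, leaving $a$ as the forced target. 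Both arguments work; yours avoids analyzing $\tau^3 c$, while the paper's avoids scanning $f=4$.

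However, your extra step of separately verifying that $\tau^5 h_2^2$ survives to $E_7$ is unnecessary: once $a$ must be hit and $\tau^5 h_2^2$ is the only candidate, survival is automatic. More seriously, your proposed verification is broken. The element $\tau^7 h_2$ does not exist on $E_4$: it already supports $d_1(\tau^7 h_2) = \rho\,\tau^6 h_0 h_2 \neq 0$ (since $h_0 h_2 \neq 0$ in $\Ext_{\mathcal{A}(2)^{\mathbb{C}}}$), so it is gone on $E_2$. Likewise $\tau^3 h_2$ supports $d_1(\tau^3 h_2) = \rho\,\tau^2 h_0 h_2 \neq 0$, so the factorization $\tau^7 h_2 = \tau^4 \cdot \tau^3 h_2$ makes no sense on $E_4$ and the Leibniz cancellation you sketch cannot be carried out. (Similarly, $\tau^8 h_0$ is $\rho$-torsion on $E_2$ and so not even present on the $\rho$-free chart; calling it a ``product of infinite cycles'' is true but beside the point.) Simply delete this paragraph: your argument is already complete without it.
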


\begin{proof}
   In internal coweight $9$ the element $\tau^5 h_2^2$ must be involved in some differential. It cannot get hit by any differentials since the only possible source would be $\tau^8 h_1 = \tau^8 \cdot h_1$ which is a product of permanent cycles by Corollary \ref{t^8, h1, h2, u, g are permanent cycles}. So $\tau^5 h_2^2$ must support a differential hitting either $\tau^3 c$ or $a$. But by Lemma \ref{d_2 differentials on t h1, t c, t Dh, t Dc} we know $d_2(\tau c) = 0$, and so $d_2(\tau^3 c) = d_2(\tau^2 \cdot \tau c) = d_2(\tau^2) \cdot \tau c = \tau^2 h_1 c$. In particular $\tau^3 c$ cannot get hit by any differentials. That means we must have $d_7(\tau^5 h_2^2) = a$.
\end{proof}

\begin{Lemma}\label{differential on Dh}\label{differential hitting t h2^2 g}\label{differential on t^3 d e}\label{differential hitting t^2 h1 d e}\label{differential on Dh h1}\label{differential hitting c g} \leavevmode
    \begin{enumerate}[label=\rm{(\arabic*)}]
        \item \deg{25}{5}{13} $d_2(\Dh) = \tau h_2^2 g$,
        \item \deg{31}{8}{15} $d_2(\tau^3 d e) = \tau^2 h_1 d e$,
        \item \deg{26}{6}{14} $d_3(\Dh h_1) = c g$.
    \end{enumerate}
\end{Lemma}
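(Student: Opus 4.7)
The plan is to dispatch the three differentials with a combination of the Leibniz rule and the internal coweight method, anchored by \ref{representation of rho-localized target} to certify which classes are $\rho$-power-torsion and hence must die. For part (1), the target $\tau h_2^2 g$ lies in tridegree $(26, 6, 15)$ and internal coweight $17$. I would first verify that no combination of the generators $\rho^{\pm 1}, \tau^8, h_1, h_2, u, g$ of the $\rho$-localized algebra lands in this tridegree, so that $\tau h_2^2 g$ maps to zero under $\rho$-localization and is therefore $\rho$-power-torsion. It must then either support or be the target of a $\rho$-Bockstein differential. Inspecting the internal coweight $17$ chart on $E_2$, I would rule out outgoing differentials on $\tau h_2^2 g$ (its candidate target degrees are empty on every page) and identify $\Dh$ as the unique possible source, forcing $d_2(\Dh) = \tau h_2^2 g$.

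For part (2), the class $\tau^3 de$ survives $d_1$ because its $d_1$-image is proportional to $\rho h_0 \tau^2 de$, which vanishes due to the relation $h_0 \cdot de = 0$ in $\Ext_{\mathcal{A}(2)^{\mathbb{C}}}(\mathbb{M}_2^{\mathbb{C}}, \mathbb{M}_2^{\mathbb{C}})$ from \cite[Theorem 4.16]{Isa09}. Writing $\tau^3 de = \tau^2 \cdot (\tau de)$ at the $E_2$-level, the Leibniz rule together with $d_2(\tau^2) = \tau h_1$ from \ref{Bockstein d_r on powers of tau} yields $d_2(\tau^3 de) = \tau h_1 \cdot \tau de + \tau^2 \cdot d_2(\tau de) = \tau^2 h_1 de$, the second term vanishing because $\tau de$ is a $d_2$-cycle for degree reasons. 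For part (3), part (1) combined with the relation $h_1 h_2 = 0$ in $\Ext_{\mathcal{A}(2)^{\mathbb{C}}}(\mathbb{M}_2^{\mathbb{C}}, \mathbb{M}_2^{\mathbb{C}})$ gives $d_2(\Dh \cdot h_1) = \tau h_2^2 g \cdot h_1 = 0$, so $\Dh \cdot h_1$ survives to $E_3$. For the target, $c$ lies in tridegree $(8, 3, 5)$, which under the isomorphism of \ref{representation of rho-localized target} corresponds to $\rho \cdot h_2^3$; since $h_2^3 = 0$ in the $\rho$-localized algebra, $c$ (and hence $cg$) is $\rho$-power-torsion. On the internal coweight $18$ chart at $E_3$, $\Dh \cdot h_1$ is the unique candidate source for a differential hitting $cg$, forcing $d_3(\Dh \cdot h_1) = cg$.

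The main obstacle is part (2): the class $\tau^3 de$ is not a naive cobar cycle, so one must carefully exhibit an $E_2$-level representative and verify that the side-term $d_2(\tau de)$ genuinely vanishes. Both issues can be handled either by a direct cobar computation or by an internal coweight $24$ argument that rules out alternative targets. Parts (1) and (3) then follow a clean template of identifying a $\rho$-power-torsion target via \ref{representation of rho-localized target} and pinning down the unique source via the appropriate internal coweight chart.
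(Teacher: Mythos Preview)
Your outline has the right flavor but contains two concrete gaps, and the paper's proof is organized quite differently.

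\textbf{Part (3): the missing candidate $a^2$.} In internal coweight $18$ and Adams filtration $6$ there is not just $\Dh h_1$ but also $a^2$, at $(24,6,12)$. A priori $d_5(a^2)=cg$ is perfectly possible, so your claim that $\Dh h_1$ is the \emph{unique} candidate source is false as stated. The paper handles this by first establishing Lemma~\ref{differential hitting a} ($d_7(\tau^5 h_2^2)=a$), which forces $a$ and hence $a^2$ to be infinite cycles; only then is $\Dh h_1$ isolated as the sole source.

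\textbf{Part (2): the side term $d_2(\tau de)$.} Your Leibniz argument writes $\tau^3 de=\tau^2\cdot(\tau de)$ and asserts $d_2(\tau de)=0$ ``for degree reasons''. But the target tridegree of $d_2(\tau de)$ contains $h_1 de$, so this is not vacuous. You flag this as an obstacle but do not actually close it; a direct cobar computation or icw-$24$ argument would still be real work. (The claim $h_0\cdot de=0$ also needs checking and is not entirely obvious from \cite{Isa09}.)

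\textbf{How the paper proceeds instead.} The paper reverses your logical order. It first proves (3): since $c$ is hit by a $d_7$ (Lemma~\ref{differential hitting c}) and $g$ is a permanent cycle, $cg$ must be hit by some $d_r$ with $r\le 7$; the candidate sources are $a^2$ and $\Dh h_1$, and $a^2$ is eliminated as above. Then (1) is deduced from (3): because $cg$ is not $h_1$-divisible on $E_3$, the class $\Dh h_1$ must itself be indecomposable on $E_3$, which is only possible if $\Dh$ already supported the $d_2$ in (1). Finally (2) is linked to (1) via the $E_2$-relation $\Dh\cdot a\cdot d=\tau^3 de\cdot g$, so the two $d_2$'s stand or fall together. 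This route sidesteps both of your sticking points: no Leibniz decomposition of $\tau^3 de$ is needed, and the elimination of $a^2$ is made explicit.
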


\begin{proof}
    Using the relation $\Dh \cdot a \cdot d = \tau^3 d e \cdot g$ and computing $d_2$ on both sides one quickly sees that the differentials on $\Dh$ and $\tau^3 d e$ are interdependent. So it suffices to show that one of the $d_2$-differentials occurs.
    
    We choose to compute the differential on $\Dh$. Consider internal coweight $18$ where $\Dh h_1$ lives. We will prove $d_3(\Dh h_1) = c g$. Then, since $c g$ is not $h_1$-divisible, this differential can only occur if $\Dh h_1$ is not $h_1$-divisible on $E_3$. That can only happen if $\Dh$ supports the given $d_2$.
    
    Now to see that $d_3(\Dh h_1) = c g$, recall from Lemma \ref{differential hitting c} that $d_7(\tau^4 h_1^2) = c$. By Corollary \ref{t^8, h1, h2, u, g are permanent cycles} $g$ is a permanent cycle, so it follows that $c g = c \cdot g$ has to get hit by a differential of length at most $7$. The possible sources are $a^2$ and $\Dh h_1$. But $a^2$ is a permanent cycle because $a$ is a permanent cycle by Lemma \ref{differential hitting a}.
\end{proof}

\subsection{\texorpdfstring{$d_3$}{d\_3}-differentials}

\begin{Prop}\label{proof of d_3 differentials}
    Table \ref{tab:Bockstein d_r} describes the non-zero $d_3$-differentials in the $\rho$-Bockstein spectral sequence on all indecomposables on $E_3$.
\end{Prop}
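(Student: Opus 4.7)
The plan is to imitate the structure of the previous Propositions: first enumerate all indecomposables on $E_3$ whose $d_3$-target degree $(s-1,f+1,w)$ admits some $\rho^3$-multiple of a class on $E_3$, then settle each candidate individually. I would compile a table analogous to Table \ref{tab:Possible Bockstein d_2}, listing for each candidate indecomposable $x$ the unique possible non-zero value (up to sum) of $d_3(x)$, since by \ref{leaving out powers of rho from Bockstein differentials} we suppress the factor $\rho^3$. The entries of Table \ref{tab:Bockstein d_r} claimed on the $E_3$-page form the list of candidates that survive this enumeration, plus a handful of indecomposables whose $d_3$ I must show is zero (for instance $\tau^2 h_1$, $\tau h_0 c$, and analogues that are $h_1$- or $h_2$-power-torsion but whose putative targets are not). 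The differential $d_3(\Dh h_1) = c g$ is already in hand from \ref{differential on Dh}, so it can be cited directly.

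For the vanishing cases, I would apply the same $h_1$- and $h_2$-linearity trick used in \ref{d_2 differentials on t h1, t c, t Dh, t Dc}: since $h_1$ and $h_2$ are infinite cycles by \ref{t^8, h1, h2, u, g are infinite cycles}, every Bockstein differential is $h_1$- and $h_2$-linear, so any $h_i$-power-torsion element whose only possible target is $h_i$-torsion-free has vanishing differential. For the non-vanishing entries I would combine three techniques. First, the internal coweight method of \ref{definition of internal coweight} forces several differentials: for each $\rho$-torsion-free class on $E_3$ that has no image in the $\rho$-localization computed in \ref{representation of rho-localized target}, I read off on the relevant internal coweight chart (from \cite{Charts}) the unique way it can be killed. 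This handles $\tau^3 h_2^2 \mapsto \tau c$, $\tau^3 h_1 c \mapsto P h_2$, the pair $\tau n^2$/$\tau^3 n^2$, and the $\Du + \tau n g$ differential, all of which sit in internal coweights whose $\rho$-localized content is already accounted for by infinite cycles in lower filtration. Second, for indecomposables related by a multiplicative relation in $\Ext_{\mathcal{A}(2)^{\mathbb{C}}}$ I would use the Leibniz rule to deduce one differential from another, exactly as in \ref{differential on Dh}; for instance $P \cdot h_2 = \tau h_1^2 c$ (or the analogous $\mathbb{C}$-motivic relation from \cite[Theorem 4.16]{Isa09}) ties together $d_3(P)$ and $d_3(\tau^3 h_1 c)$, while relations involving $\Dh$, $\Dc$, $\Du$, $d$, $e$, $g$ from \cite[Theorem 4.16]{Isa09} link the four differentials on $\tau^5 d e$, $\Dc + \tau a g$, $\tau^5 e g$, and $\tau^2 \Dh d$, and similarly for the $g^2$- and $a^2 e$-rows. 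Third, where a sum appears as source (as in $\Dc + \tau a g$ or $\Du + \tau n g$) or as value (as in $\tau \Dc + \tau^2 a g$), I would verify on the cobar level, or by playing off the already-computed $d_1$-differentials on $\Dc$ and $\Du$ from \ref{proof of d_1-differentials}, that the stated correction term is forced: $\Dc$ on its own cannot be a $d_3$-cycle because the $d_1$ has already made its obstruction visible on $E_2$, and the correction $\tau a g$ is precisely what is needed to produce a $d_3$-cycle.

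The main obstacle will be the middle block involving the $\mathbb{R}$-motivic correction terms $\tau a g$ and $\tau n g$ added to $\Dc$, $\Du$, $\tau \Dc$, and $\Dc g$. Distinguishing $\Dc$ from $\Dc + \tau a g$ as the correct $d_3$-cycle is not visible on the $E_1$- or $E_2$-charts: one has to track how the $d_1$ of \ref{proof of d_1-differentials} forces a $d_2$-cycle representative on $E_2$, and then how the $d_2$ on $\tau^3 d e$ of \ref{differential on t^3 d e} interacts with the relation $\Dh \cdot a \cdot d = \tau^3 d e \cdot g$ and its cousins. My strategy here is to push the Leibniz computation one page further than in \ref{differential on Dh}: writing each such indecomposable as a cobar representative, computing its Bockstein differential directly modulo $\rho^4$, and comparing with the already-known $d_3$'s on the other factors of the multiplicative relation in which it participates. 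The remaining $d_3$'s on $\tau^4 \Dh$, $P$, and the high-filtration multiples $\tau^4 P \cdot (\cdot)$ then follow by Leibniz and by the internal-coweight bookkeeping that no $\rho$-torsion-free survivors are allowed in their respective coweights beyond those listed in \ref{representation of rho-localized target}.
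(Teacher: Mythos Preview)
Your overall architecture matches the paper's, but you have misidentified where the difficulty lies, and this leads to a genuine gap.

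The ``correction term'' indecomposables $\Dc + \tau a g$, $\Du + \tau n g$, $\tau^2(\Du + \tau n g)$, $\tau^2(\Dc g + \tau a g^2)$ are \emph{not} the hard cases. The paper dispatches them in one line each via Leibniz: for instance, the $\mathbb{C}$-motivic relation $(\Dc + \tau a g)\cdot c = \Dh h_1 \cdot d$ together with the already-known $d_3(\Dh h_1) = cg$ forces $d_3(\Dc + \tau a g)\cdot c = cdg \neq 0$, and the target is then unique by degree. Almost every remaining non-zero $d_3$ is obtained in the same fashion from the single differential $d_3(\tau^4 \Dh) = \tau^2 a n$ (itself proved from $\Dc + \tau a g$ via another relation). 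No cobar computations are needed; your proposed ``push Leibniz one page further modulo $\rho^4$'' is unnecessary.

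The actual obstacle is $\tau^3 n^2$, which you lump with $\tau n^2$ as something the internal coweight method handles. It does not. The target degree of $d_3(\tau^3 n^2)$ is $2$-dimensional over $\mathbb{F}_2$, spanned by $\tau^2 a g$ and $\tau \Dc$, and you must show the value is the \emph{sum} $\tau \Dc + \tau^2 a g$, not either summand alone. That $\tau^2 a g$ is a summand follows from Leibniz and the relation $\tau^3 n^2 \cdot \tau^4 \Dh = \tau^8 \cdot \tau^2 n \cdot g^2$. Ruling out $d_3(\tau^3 n^2) = \tau^2 a g$ requires a separate contradiction argument: if this were the value, then (using an independently established $d_6(\tau^4 a n) = \tau^2 a g$) the element $\tau^4 a n$ would survive as a $\rho$-torsion-free class on $E_\infty$, contradicting \ref{representation of rho-localized target}. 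Making this work requires first pinning down the differentials on $\tau^4 a$, $\tau^4 h_1^3$, $\tau^6 a$, $\tau^4 e$, and $\tau^4 h_1 d$ in other internal coweights, so that the possible incoming differentials on $\tau^4 a n$ can be excluded. Your plan gives no indication of this argument or of the auxiliary differentials it rests on.
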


\begin{proof}
    For degree reasons, the only indecomposables that can support a $d_3$-differential are given by table \ref{tab:Possible Bockstein d_3}. If the proof column is empty, then the proof of that differential is contained in this Proposition.
    
    Note that $\tau^3 n^2$ appears twice in the first column of this table. That is because the target degree of its $d_3$ is $2$-dimensional as an $\mathbb{F}_2$-vector space generated by $\tau^2 a g$ and $\tau \Dc$. We will show that both of these generators are summands in $d_3(\tau^3 n^2)$.

    \begin{longtable}{llllc}
    \caption{Possible non-zero $d_3$-differentials on indecomposable elements
    \label{tab:Possible Bockstein d_3}
    } \\
    \toprule
    $x$ & $(s,f,w)$ & $d_3(x)$ & Occurs & Proof\\
    \midrule \endfirsthead
    \caption[]{Possible non-zero $d_3$-differentials on indecomposable elements} \\
    \toprule
    $x$ & $(s,f,w)$ & $d_3(x)$ & Occurs & Proof\\
    \midrule \endhead
    \bottomrule \endfoot
        $\tau^3 h_2^2$ & $(6, 2, 1)$ & $\tau c$ & Yes & \\
        $P$ & $(8, 4, 4)$ & $h_1^2 c$ & Yes & \\
        $\tau^3 h_1 c$ & $(9, 4, 3)$ & $P h_2$ & Yes & \\
        $\tau^4 \Dh$ & $(25, 5, 9)$ & $\tau^2 a n$ & Yes & \\
        $\Dh h_1$ & $(26, 6, 14)$ & $c g$ & Yes & \ref{differential on Dh h1}\\
        $\tau^3 n^2$ & $(30, 6, 13)$ & $\tau^2 a g$ & Yes & \ref{differential on t^3 n^2}\\
        $\tau^3 n^2$ & $(30, 6, 13)$ & $\tau \Dc$ & Yes & \ref{differential on t^3 n^2}\\
        $\tau n^2$ & $(30, 6, 15)$ & $a g$ & Yes & \\
        $\tau^5 d e$ & $(31, 8, 13)$ & $\tau P \Dh$ & Yes & \\
        $\Dc + \tau a g$ & $(32, 7, 17)$ & $d g$ & Yes & \\
        $\tau^2 (\Du + \tau n g)$ & $(35, 7, 17)$ & $\tau^2 e g$ & Yes & \\
        $\Du + \tau n g$ & $(35, 7, 19)$ & $e g$ & Yes & \\
        $\tau^5 e g$ & $(37, 8, 17)$ & $\tau \Dh d$ & Yes & \\
        $\tau^2 \Dh d$ & $(39, 9, 19)$ & $a^2 e$ & Yes & \\
        $\tau^5 g^2$ & $(40, 8, 19)$ & $\tau \Dh e$ & Yes & \\
        $\tau^3 a^2 e$ & $(41, 10, 19)$ & $\tau^2 P n g$ & Yes & \\
        $\tau a^2 e$ & $(41, 10, 21)$ & $P n g$ & Yes & \\
        $\tau^2 (\Dc g + \tau a g^2)$ & $(52, 11, 27)$ & $\tau^2 d g^2$ & Yes & \\
    \end{longtable}

    For the differential on $\tau^3 h_2^2$ consider internal coweight $7$. By Corollary \ref{representation of rho-localized target} and degree reasons $\tau^3 h_2^2$ must be involved in a differential. It cannot hit $u$ with a differential because then there is no other element left to represent what we called $u$ in Corollary \ref{representation of rho-localized target}. That only leaves $d_3(\tau^3 h_2^2) = \tau c$.
    
    The differentials on $P$ and $\tau^3 h_1 c$ follow similarly from looking at internal coweight $8$ and $10$, respectively.

    Almost every other differential can be deduced from multiplicative relations now. For example for $\Dc + \tau a g$ we can use the relation $(\Dc + \tau a g) \cdot c = \Dh h_1 \cdot d$. Then recall Lemma \ref{differential on Dh h1} stating $d_3(\Dh h_1) = c g$, and compute $d_3$ on both sides
    \[d_3(\Dc + \tau a g) \cdot c = d_3(\Dh h_1) \cdot d = c g \cdot d.\]
    But $c g \cdot d$ is $\rho$-torsion free on $E_3$, so $d_3(\Dc + \tau a g)$ must be non-zero and the given value is the only possible one.

    We provide a table indicating which relation and previous differential we could use to make an argument similar to the above.

    \begin{longtable}{llll}
    \caption{Relations to use to prove $d_3$-differentials
    \label{tab:Relations for d_3}
    } \\
    \toprule
    To prove $d_3$ on & Use $d_3$ on & Together with the relation\\
    \midrule \endfirsthead
    \caption[]{Relations to use to prove $d_3$-differentials} \\
    \toprule
    To prove $d_3$ on & Use $d_3$ on & Together with the relation\\
    \midrule \endhead
    \bottomrule \endfoot
        $\tau^4 \Dh$ & $\Dc + \tau a g$ & $(\Dc + \tau a g) \cdot \tau^4 \Dh = \tau^4 \cdot h_1 \cdot c \cdot \D^2 + \tau^8 \cdot e \cdot g^2$\\
        $\tau n^2$ & $\tau^4 \Dh$ & $\tau n^2 \cdot \tau^4 \Dh = \tau^8 \cdot n \cdot g^2$\\
        $\tau^2 (\Du + \tau n g)$ & $\tau^4 \Dh$ & $\tau^2 (\Du + \tau n g) \cdot n^2 = \tau^4 \Dh \cdot g^2$\\
        $\Du + \tau n g$ & $\tau^4 \Dh$ & $(\Du + \tau n g) \cdot \tau^2 n^2 = \tau^4 \Dh \cdot g^2$\\
        $\tau^5 e g$ & $\tau^4 \Dh$ & $\tau^2 a \cdot \tau^4 \Dh = \tau^4 \cdot \tau^5 e g$\\
        $\tau^2 \Dh d$ & $\tau^4 \Dh$ & $\tau^2 \Dh d \cdot \tau^4 = \tau^4 \Dh \cdot \tau^2 d$\\
        $\tau^5 g^2$ & $\tau^4 \Dh$ & $\tau^4 \Dh \cdot \tau^2 n = \tau^4 \cdot \tau^5 g^2$\\
        $\tau^3 a^2 e$ & $\tau^4 \Dh$ & $\tau^3 a^2 e \cdot n = \tau^4 \Dh \cdot d e$\\
        $\tau a^2 e$ & $\tau^4 \Dh$ & $\tau a^2 e \cdot \tau^2 n = \tau^4 \Dh \cdot d e$\\
        $\tau^2 (\Dc g + \tau a g^2)$ & $\tau^4 \Dh$ & $\tau^2 (\Dc g + \tau a g^2) \cdot \tau^4 = a n \cdot \tau^4 \Dh$\\
        $\tau^5 d e$ & $\tau^5 e g$ & $\tau^5 d e \cdot g = \tau^5 e g \cdot d$\\
    \end{longtable}
    
    This table covers all differentials except for the one on $\tau^3 n^2$. The proof of that differential is contained in Lemma \ref{differential on t^3 n^2}.
\end{proof}

The next few Lemmas build up to the proof of $d_3(\tau^3 n^2) = \tau^2 a g + \tau \Dc$. We should note that in the following Lemma \ref{differential on t^4 a n} we are making a one-time exception to Convention \ref{Convention on targets of differentials being non-trivial}. As of now, $\tau^2 a g$ could be hit by a $d_3$ which would make the claimed $d_6$ trivial. Note that this $d_3$ is the only shorter differential that $\tau^2 a g$ could be hit by. However, in the proof of Lemma \ref{differential on t^3 n^2} we will be able to lead the assumption $d_3(\tau^3 n^2) = \tau^2 a g$ to a contradiction by using Lemma \ref{differential on t^4 a n}.

\begin{Lemma}\label{differential on t^4 a n}\label{differential hitting t^2 a g}\deg{27}{6}{10}
    $d_6(\tau^4 a n) = \tau^2 a g$.
\end{Lemma}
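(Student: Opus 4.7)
The strategy is the internal coweight method in coweight $23$. Both $\tau^4 a n$ at chart position $(s,f) = (27,6)$ and $\tau^2 a g$ at $(32,7)$ lie in this internal coweight, and their relative position in the chart is exactly the signature of a $d_6$-differential (slope $1/5$, jumping up one filtration and five stems, picking up $\rho^6$).

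The first task is to confirm that $\tau^4 a n$ survives to $E_6$. The Leibniz rule applied to $d_4(\tau^4) = \tau^2 h_2^2$ from \ref{Bockstein d_r on powers of tau} gives a would-be $d_4(\tau^4 a n) = \tau^2 h_2^2 a n$, so the central algebraic input is to show that $\tau^2 h_2^2 a n$ already vanishes on $E_4$. I expect this to follow from a multiplicative relation in $\Ext_{\mathcal{A}(2)^{\mathbb{C}}}$ of the form $h_2^2 \cdot an = 0$ (or from it being hit by an earlier $d_2$ or $d_3$ that is already in our table). I would then rule out $d_2, d_3, d_5$ both into and out of $\tau^4 a n$ by inspecting the chart in internal coweight $23$. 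In parallel, I would verify that $\tau^2 a g$ is still non-zero on $E_6$: although $d_3(\tau^3 n^2) = \tau^2 a g + \tau \Dc$ from \ref{proof of d_3 differentials} identifies $\rho^3 \tau^2 a g$ with $\rho^3 \tau \Dc$ on $E_4$, the common class remains $\rho$-torsion free until a later differential hits it.

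Next, I would argue that $\tau^4 a n$ must support some differential. By the $\rho$-localization computation \ref{representation of rho-localized target}, the $\rho$-torsion free permanent cycles are exactly the classes coming from $\Ext_{\mathcal{A}(1)^{\cl}}(\mathbb{F}_2, \mathbb{F}_2)[\rho^{\pm 1}, \tau^8]$; since $a$ is slated to become $\rho^7$-torsion via $d_7(\tau^5 h_2^2) = a$ from \ref{differential hitting a}, the product $\tau^4 a n$ cannot be $\rho$-torsion free on $E_\infty$. A brief degree count in the internal coweight $23$ chart on $E_6$ shows no incoming differentials are available for $\tau^4 a n$, so it must support one. The differential $d_6$ with target $\tau^2 a g$ is the unique candidate in the right tridegree.

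The main obstacle is the bookkeeping check that $\tau^2 h_2^2 a n$ really vanishes on $E_4$; once that is in hand, the rest is a short position-count in the coweight $23$ chart, combined with the observation that no other class in the target tridegree survives to $E_6$.
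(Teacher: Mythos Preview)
Your approach is genuinely different from the paper's and has a real gap. The paper does not analyze internal coweight $23$ directly; instead it works in internal coweight $13$ to establish $d_6(\tau^4 a) = \tau^2 e$ (the only incoming candidate there is $\tau^9 h_2^2 = \tau^8 \cdot \tau h_2^2$, which is already known to be hit by $d_6(\tau^{12} h_1)$ from the coweight $5$ analysis), and then simply multiplies by the permanent cycle $n$ using the relation $\tau^2 e \cdot n = \tau^2 a g$. This completely sidesteps any analysis of what could hit $\tau^4 a n$.

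The gap in your argument is the assertion that ``no incoming differentials are available for $\tau^4 a n$''. In internal coweight $23$, filtration $5$, there are in fact two candidates: $\tau^{10} h_1 g$ and $\tau^{12} h_1 d$. Ruling them out requires knowing that $d_5(\tau^{12} e) = \tau^{10} h_1 g$ (the $\tau^8$-multiple of $d_5(\tau^4 e) = \tau^2 h_1 g$) and $d_6(\tau^{12} h_1 d) = \tau^9 h_0^2 g$ (the $\tau^8$-multiple of $d_6(\tau^4 h_1 d) = \tau h_0^2 g$). These are separate, nontrivial lemmas that the paper proves elsewhere and only invokes later, in the proof of $d_3(\tau^3 n^2)$, precisely to show $\tau^4 a n$ cannot be hit. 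Your ``brief degree count'' does not see them, so as written your argument does not exclude the possibility that $\tau^4 a n$ is a target rather than a source. Your instinct about $d_4(\tau^4 a n) = \tau^2 h_2^2 a n$ vanishing is correct, by the way: $h_2^2 a = h_0 h_2 n$ is $h_0$-divisible, hence $\rho$-torsion after $d_1$, so it is zero in the $\rho$-free quotient on $E_4$.
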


\begin{proof}
    Consider $\tau^4 a$ in internal coweight $13$. Because of Corollary \ref{representation of rho-localized target} and degree reasons, $\tau^4 a$ must be involved in some differential. There are two options: $\tau^4 a$ either gets hit by a differential originating on $\tau^9 h_2^2$, or it supports a differential hitting $\tau^2 e$.
    
    However, $\tau^9 h_2^2$ itself gets hit by a differential from $\tau^{12} h_1$. To see this, one can for example consider internal coweight $5$ as we did in figure \ref{fig:Rho-Bockstein E_2 rho-free part s+f-w=5 with differentials} where $\tau^4 h_1$ has to hit $\tau h_2^2$. Since $\tau^8$ is a permanent cycle by Corollary \ref{representation of rho-localized target}, we then also have $\tau^{12} h_1 = \tau^8 \cdot \tau^4 h_1$ hitting $\tau^9 h_2^2 = \tau^8 \cdot \tau h_2^2$. So we must have $d_6(\tau^4 a) = \tau^2 e$.
    
    Lastly, we would like to multiply by $n$ and use the relation $\tau^2 e \cdot n = \tau^2 a g$. Note that $n$ does not support any differential for degree reasons. In particular, $n$ exists on $E_6$ and multiplying $\tau^4 a$ by $n$ yields
    \[d_6(\tau^4 a \cdot n) = d_6(\tau^4 a) \cdot n = \tau^2 e \cdot n = \tau^2 a g.\qedhere\]
\end{proof}

\begin{Lemma}\label{differential on t^4 h1^3}\label{differential hitting h1 c}\deg{3}{3}{-1}
    $d_7(\tau^4 h_1^3) = h_1 c$.
\end{Lemma}

\begin{proof}
    Consider internal coweight $7$. By Corollary \ref{representation of rho-localized target} and degree reasons the target $h_1 c$ must be involved in some differential and this is the only option.
\end{proof}

\begin{Lemma}\label{differential on t^6 a}\deg{12}{3}{0}
    $d_r(\tau^6 a) = 0$ for all $r \geq 3$.
\end{Lemma}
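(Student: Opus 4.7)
The plan is to verify that no non-zero target exists on $E_r$ for any potential differential $d_r(\tau^6 a)$ with $r \geq 3$, combining the Leibniz rule (on the decomposition $\tau^6 a = \tau^4 \cdot \tau^2 a$) with direct inspection of the internal coweight chart at $s + f - w = 17$. As a preliminary observation, $\tau^6 a$ survives to $E_3$: the Leibniz expansion of $d_1(\tau^6 a)$ vanishes by parity, and the a priori differential $d_2(\tau^6 a) = \rho^2 \tau^5 \cdot h_1 a$ vanishes thanks to the $\mathbb{C}$-motivic relation $h_1 a = 0$ in $\Ext_{\mathcal{A}(2)^\mathbb{C}}$ from \cite[Theorem 4.16]{Isa09}.

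On $E_3$ and $E_4$ both factors of $\tau^6 a = \tau^4 \cdot \tau^2 a$ are alive: the first $\rho$-Bockstein differentials on them are $d_4(\tau^4) = \rho^4 \tau^2 h_2^2$ (table \ref{tab:Bockstein d_r}) and $d_6(\tau^2 a) = \rho^6 e$ (Lemma \ref{differential on t^2 a}). Leibniz then gives $d_3(\tau^6 a) = 0$ immediately, while $d_4(\tau^6 a) = \rho^4 \tau^4 \cdot h_2^2 a$. The latter vanishes by the $\mathbb{C}$-motivic relation $h_2^2 a = 0$ in $\Ext_{\mathcal{A}(2)^\mathbb{C}}$ (again from \cite[Theorem 4.16]{Isa09}).

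For $r \geq 5$ the factor $\tau^4$ has died, so I would drop the decomposition and instead read off the internal coweight-17 chart directly. A potential target of $d_r(\tau^6 a)$ must be a $\rho$-torsion free class at degree $(11+r, 4, -2+r)$ on $E_r$. A case-by-case check against table \ref{tab:Bockstein d_r} shows that each candidate in these positions has already been killed by an earlier differential; the key case is $r = 6$, where the only candidate $\tau^6 e$ has been killed by the $d_5(\tau^6 e) = \rho^5 \tau^4 h_1 g$ of Lemma \ref{differential on t^6 e}. The remaining values $r = 5, 7, 8, 9, 10$ are dispatched analogously, and no differentials of length greater than $10$ need to be considered since the spectral sequence collapses at $E_{11}$.

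The main obstacle is justifying the $E_4$ step using the specific $\mathbb{C}$-motivic relation $h_2^2 a = 0$, and then carrying out the systematic enumeration of surviving candidate targets in internal coweight 17 for $r = 5, \dots, 10$. Note that Corollary \ref{representation of rho-localized target} does imply $\tau^6 a$ is $\rho$-power-torsion (there is no $f = 3$ class of internal coweight 17 in the $\rho$-localization), but this observation alone does not suffice, since being $\rho$-power-torsion is consistent with supporting a non-trivial Bockstein differential; ruling out this possibility is precisely what the lemma asserts.
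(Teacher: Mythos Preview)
Your proof has a computational error that undermines both the case-by-case check and your final paragraph. The element $\tau^6 a$ lies in degree $(12,3,0)$ (since $a$ has weight $6$, not $4$), so its internal coweight is $12+3-0 = 15$, not $17$. Consequently the potential target of $d_r(\tau^6 a)$, after stripping $\rho^r$, sits in degree $(11+r,4,r)$, not $(11+r,4,-2+r)$; for instance at $r=6$ the candidate is $\tau^4 e$, not $\tau^6 e$. More importantly, once the internal coweight is corrected, Corollary~\ref{representation of rho-localized target} \emph{does} settle the lemma in one line, exactly as the paper argues: in internal coweight $15$ and Adams filtration $3$ the $\rho$-localization contains the non-zero class $\tau^8 \cdot u$, and inspection shows $\tau^6 a$ is the only class that can represent it on $E_\infty$. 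Hence $\tau^6 a$ must survive as a $\rho$-torsion free class and in particular cannot support any differential. Your final paragraph has this backwards: you conclude $\tau^6 a$ is $\rho$-power-torsion because internal coweight $17$ is empty in filtration $3$, but with the correct coweight $15$ the opposite holds, and it is precisely $\rho$-torsion freeness that forces $d_r(\tau^6 a)=0$.

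A smaller issue: your $d_4$ step uses $d_4(\tau^4) = \rho^4 \tau^2 h_2^2$, but this should read $\rho^4 \tau^2 h_2$ (check the degrees; the statement in Lemma~\ref{Bockstein d_r on powers of tau} appears to contain a typo, while Table~\ref{tab:Bockstein d_r} has it right). The Leibniz expansion then gives $d_4(\tau^6 a) = \rho^4 \tau^4 h_2 a = \rho^4 \tau^4 h_0 n$ via the $\mathbb{C}$-motivic relation $h_2 a = h_0 n$, and this vanishes because $\rho h_0 = 0$ on $E_r$ for $r\geq 2$, not because of any relation $h_2^2 a = 0$.
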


\begin{proof}
     By Corollary \ref{representation of rho-localized target} combined with degree reasons the element $\tau^6 a$ must represent $\tau^8 \cdot u$ on $E_\infty$, so $\tau^6 a$ cannot support any differentials. 
\end{proof}

\begin{Lemma}\label{differential on t^4 e}\label{differential hitting t^2 h1 g}\deg{17}{4}{6}
    $d_5(\tau^4 e) = \tau^2 h_1 g$.
\end{Lemma}

\begin{proof}
    Analyze internal coweight $15$. Note that $\tau^4 e$ must be involved in some differential by Corollary \ref{representation of rho-localized target} combined with degree reasons. We claim that it cannot get hit by any differentials. Possible sources are $\tau^6 a$, $\tau^9 c$, and $\tau^{12} h_1^3$. We can rule them out one by one. By Lemma \ref{differential on t^6 a} $\tau^6 a$ does not support any differentials. By Proposition \ref{proof of d_3 differentials} we already know $d_3(\tau^3 h_2^2) = \tau c$. The $\tau^8$-multiple of that is $d_3(\tau^{11} h_2^2) = \tau^9 c$. Lastly, Lemma \ref{differential on t^4 h1^3} and $\tau^8$-multiplication imply $d_7(\tau^{12} h_1^3) = \tau^8 h_1 c$.
    
    Altogether, we showed that $\tau^4 e$ cannot get hit by any differential, so it must support a differential and the claimed one is the only option.
\end{proof}

\begin{Lemma}\label{differential on t^4 h1 d}\label{differential hitting t h0^2 g}\deg{15}{5}{5}
    $d_6(\tau^4 h_1 d) = \tau h_0^2 g$.
\end{Lemma}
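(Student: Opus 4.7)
The natural strategy is to apply the Leibniz rule on the $E_6$-page, viewing $\tau^4 h_1 d$ as the product $(\tau^4 h_1) \cdot d$. The differential on the first factor already appears in table \ref{tab:Bockstein d_r}: $d_6(\tau^4 h_1) = \tau h_2^2$, with the implicit $\rho^6$ suppressed per our convention, and this is established as part of the $d_6$-analysis (ultimately from the internal coweight $5$ picture of figure \ref{fig:Rho-Bockstein E_2 rho-free part s+f-w=5 with differentials}). The second factor $d$ does not appear anywhere in table \ref{tab:Bockstein d_r} as the source of a differential, so it is an infinite cycle. The Leibniz rule then gives
\[d_6(\tau^4 h_1 \cdot d) = d_6(\tau^4 h_1) \cdot d + \tau^4 h_1 \cdot d_6(d) = \tau h_2^2 \cdot d.\]

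To close the argument I would invoke the multiplicative relation $h_2^2 d = h_0^2 g$, which already holds on the $E_1$-page in the cohomology of $\mathbb{C}$-motivic $\mathcal{A}(2)^{\mathbb{C}}$ via \cite[Theorem 4.16]{Isa09}; both sides sit in tri-degree $(20,6,12)$, so no corrective $\tau$ factor is forced. Multiplying through by $\tau$ yields $\tau h_2^2 d = \tau h_0^2 g$ on $E_6$, which is exactly the claimed value of the differential.

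The only verification that requires any care is that the target $\tau h_0^2 g$ is non-zero on $E_6$, so that the differential is genuinely non-trivial under our running convention. This is automatic: $\rho^6 \tau h_2^2$ is alive and non-zero on $E_6$ since it is the value of the non-trivial differential $d_6(\tau^4 h_1)$, and multiplying by the surviving infinite cycle $d$ preserves this non-triviality. I expect this bookkeeping step to be the only place where caution is warranted—one should confirm that $\tau h_0^2 g$ refers to the same $\rho$-free class carrying this name on both sides of the identification $h_2^2 d = h_0^2 g$—rather than any genuine obstacle; the computation itself reduces entirely to the already-proven $d_6$ on $\tau^4 h_1$ plus a Leibniz argument.
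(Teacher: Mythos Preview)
Your Leibniz-rule approach is correct and genuinely different from the paper's. The paper works entirely inside internal coweight $15$: it notes that $\tau^4 h_1 d$ must be involved in some differential by \ref{representation of rho-localized target}, rules out the only candidate source $\tau^8 h_1 c$ (because $d_7(\tau^{12} h_1^3) = \tau^8 h_1 c$ via \ref{differential on t^4 h1^3} and $\tau^8$-multiplication, so $\tau^8 h_1 c$ cannot itself support a differential), and concludes that the claimed $d_6$ is the only remaining option. Your route is more computational: once $d_6(\tau^4 h_1) = \tau h_2^2$ is known from the internal coweight $5$ analysis and $d$ is seen to be a $d_6$-cycle, the Leibniz rule plus the $\mathbb{C}$-motivic identity $h_2^2 d = h_0^2 g$ gives the answer immediately. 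Your argument is shorter and more direct; the paper's is chart-driven and avoids needing the specific multiplicative relation.

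One genuine gap: your verification that the target is non-trivial on $E_6$ does not hold as written. Knowing that $\rho^6 \tau h_2^2 \ne 0$ on $E_6$ and that $d$ survives as an infinite cycle does \emph{not} formally imply $\rho^6 \tau h_2^2 \cdot d \ne 0$ on $E_6$. A product of two $\rho$-free classes on $E_r$ can very well be $\rho$-power-torsion if the product, viewed as an $E_1$-class, was the target of some earlier $d_s$ with $s < r$. To close your argument you must check directly that $\tau h_0^2 g$ is not hit by any $d_r$ for $r \le 5$---a routine inspection of the internal coweight $15$ chart, but not a consequence of ``multiplying by the surviving infinite cycle $d$ preserves non-triviality.'' This is precisely the bookkeeping that the paper's internal-coweight argument absorbs automatically, which is one practical advantage of that method over the Leibniz approach here.
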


\begin{proof}
    Analyze internal coweight $15$. Note that $\tau^4 h_1 d$ must be involved in some differential by Corollary \ref{representation of rho-localized target} combined with degree reasons. It could be the target of a differential originating on $\tau^8 h_1 c$, but by combining Lemma \ref{differential on t^4 h1^3} with $\tau^8$-multiplication we know $d_7(\tau^{12} h_1^3) = \tau^8 h_1 c$. In particular $\tau^8 h_1 c$ cannot support a differential. The claimed differential on $\tau^4 h_1 d$ is then forced.
\end{proof}

\begin{Lemma}\label{differential on t^3 n^2}\label{differential hitting t^2 a g + t Dc}\deg{30}{6}{13}
    $d_3(\tau^3 n^2) = \tau^2 a g + \tau \Dc$.
\end{Lemma}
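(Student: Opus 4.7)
The plan is to combine the internal coweight method with a multiplicative argument that forces both summands of the target to appear.

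First I would confirm that $\tau^3 n^2$ defines a genuine class on $E_3$: a cobar-level computation (using the $\mathbb{C}$-motivic relation $h_0(\Dc + \tau ag) = 0$ from \cite[Theorem 4.16]{Isa09}, the same relation that drove \ref{proof of d_1-differentials}) shows the compound symbol $\tau^3 n^2$ survives the $d_1$- and $d_2$-differentials. Its tridegree is $(30,6,13)$, so its internal coweight is $s+f-w = 23$ and its Adams filtration is $6$. Enumerating monomials in $\rho^{\pm 1}, \tau^8, h_1, h_2, u, g$ subject to the relations $h_1 h_2 = h_2^3 = h_2 u = 0$ and $u^2 = h_1^2 g$ from \ref{representation of rho-localized target}, one checks directly that no monomial has internal coweight $23$ and filtration $6$. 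Hence no $\rho$-torsion free class of this bidegree exists on $E_\infty$, and by \ref{only differentials between rho-free elements} the class $\tau^3 n^2$ must be involved in a $\rho$-Bockstein differential.

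Next I would rule out the possibility that $\tau^3 n^2$ is hit: at the candidate source tridegrees $(31-r, 5, 13-r)$ the internal coweight chart is empty for every $r \ge 2$. So $\tau^3 n^2$ supports a differential. The target of $d_r$ lies at $(30+r-1, 7, 13+r)$; the first $r$ for which this tridegree contains a $\rho$-free class is $r = 3$, with target degree $(32, 7, 16)$. By inspection, the $\rho$-free part of that tridegree on $E_3$ is spanned by $\tau^2 a g$ and $\tau \Dc$. Therefore
\[
d_3(\tau^3 n^2) = \alpha\, \tau^2 a g + \beta\, \tau \Dc
\]
for some $\alpha, \beta \in \mathbb{F}_2$, not both zero.

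The remaining task, and the main obstacle, is to force $(\alpha, \beta) = (1, 1)$, which needs two independent constraints since both summands sit in the same tridegree. For one constraint I would apply the Leibniz rule to the product $h_0 \cdot \tau^3 n^2$: using $h_0 \Dc = \tau h_0 a g$, the quantity $h_0 \cdot (\alpha\, \tau^2 a g + \beta\, \tau \Dc)$ equals $(\alpha + \beta)\, \tau^2 h_0 a g$, and computing $d_3(h_0 \tau^3 n^2)$ directly pins down $\alpha + \beta$. For the second constraint I would combine $\tau^3 n^2$ with the differential $d_3(\Dc + \tau a g) = d g$ from \ref{proof of d_3 differentials}, together with the already established value $d_6(\tau^4 a n) = \tau^2 a g$ from \ref{differential on t^4 a n}: were $\alpha = 0$, the element $\tau^2 a g$ would persist to $E_6$ as a $\rho$-free class independent of $\tau^4 a n$, contradicting the $E_6$-page bookkeeping in internal coweight $21$. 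The second constraint $\alpha = 1$ combined with $\alpha + \beta = 0$ then forces $\beta = 1$, yielding $d_3(\tau^3 n^2) = \tau^2 a g + \tau \Dc$ as claimed.
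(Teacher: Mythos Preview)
Your proof has two genuine gaps, both in the final step where you try to pin down $(\alpha,\beta)$.

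\textbf{The $h_0$-constraint is vacuous.} On $E_2$ and beyond, $\rho h_0 = 0$ because $d_1(\tau) = \rho h_0$. Hence $h_0 \cdot \tau^3 n^2$ is $\rho$-torsion, and by \ref{only differentials between rho-free elements} it supports no nontrivial differential. Equivalently, $h_0 \cdot d_3(\tau^3 n^2) = h_0 \cdot \rho^3(\alpha\,\tau^2 a g + \beta\,\tau\Dc) = 0$ for \emph{every} choice of $\alpha,\beta$. So the Leibniz computation you propose yields $0=0$ and says nothing about $\alpha+\beta$. The paper instead establishes $\alpha = 1$ via the multiplicative relation $\tau^3 n^2 \cdot \tau^4\Dh = \tau^8 \cdot \tau^2 n \cdot g^2$ together with $d_3(\tau^4\Dh) = \tau^2 a n$; applying $d_3$ to both sides forces $\tau^2 a g$ to appear.

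\textbf{The second constraint has the logic reversed.} You argue that $\alpha = 0$ would let $\tau^2 a g$ persist to $E_6$ and contradict the internal-coweight bookkeeping (incidentally, both $\tau^3 n^2$ and $\tau^4 a n$ have internal coweight $23$, not $21$). But if $\alpha = 0$, there is no problem: $\tau^2 a g$ survives to $E_6$ and is then killed by $d_6(\tau^4 a n) = \tau^2 a g$ from \ref{differential on t^4 a n}, exactly as expected. The actual contradiction arises from the \emph{opposite} assumption: once $\alpha = 1$ is known, supposing $\beta = 0$ means $d_3(\tau^3 n^2) = \tau^2 a g$ alone, so $\tau^2 a g$ is already $\rho^3$-torsion on $E_4$ and the differential $d_6(\tau^4 a n) = \tau^2 a g$ becomes trivial. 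One then checks that $\tau^4 a n$ can neither support nor receive any other differential (the candidate sources $\tau^{10} h_1 g$ and $\tau^{12} h_1 d$ are accounted for by \ref{differential hitting t^2 h1 g} and \ref{differential on t^4 h1 d} after $\tau^8$-multiplication), so $\tau^4 a n$ would survive $\rho$-free to $E_\infty$ in internal coweight $23$, filtration $6$, contradicting \ref{representation of rho-localized target}.
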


\begin{proof}
    To see that $\tau^2 a g$ is a summand in $d_3(\tau^3 n^2)$ use the relation $\tau^3 n^2 \cdot \tau^4 \Dh = \tau^8 \cdot \tau^2 n \cdot g^2$ and compute $d_3$ on both sides.

    For degree reasons we have the two remaining options $d_3(\tau^3 n^2) = \tau^2 a g$ and $d_3(\tau^3 n^2) = \tau^2 a g + \tau \Dc$. We will lead the assumption $d_3(\tau^3 n^2) = \tau^2 a g$ to a contradiction. By Lemma \ref{differential on t^4 a n} we have $d_6(\tau^4 a n) = \tau^2 a g$. Then if $d_3(\tau^3 n^2) = \tau^2 a g$ we get $d_6(\tau^4 a n) = \tau^2 a g = 0$. For degree reasons $\tau^4 a n$ cannot support any other differentials. We will show next that $\tau^4 a n$ does not get hit by any differentials. Altogether this will imply that $\tau^4 a n$ is a $\rho$-torsion free element on $E_\infty$. But that is in contradiction with Corollary \ref{representation of rho-localized target}, which implies that there are no $\rho$-torsion free elements in internal coweight $23$ and Adams filtration $6$ where $\tau^4 a n$ lives.

    To complete the proof we need to show that $\tau^4 a n$ cannot get hit by any differentials. There are two elements that could support a differential onto $\tau^4 a n$, namely $\tau^{10} h_1 g$ and $\tau^{12} h_1 d$. Both of these are $\tau^8$-multiples and by Lemma \ref{differential hitting t^2 h1 g} and Lemma \ref{differential on t^4 h1 d} they are involved in the differentials $d_5(\tau^{12} e) = \tau^{10} h_1 g$ and $d_6(\tau^{12} h_1 d) = \tau^9 h_0^2 g$.

    In summary, the assumption $d_3(\tau^3 n^2) = \tau^2 a g$ would imply that $\tau^4 a n$ is $\rho$-torsion free on $E_\infty$, which is in contradiction with Corollary \ref{representation of rho-localized target}. The only possibility left is $d_3(\tau^3 n^2) = \tau^2 a g + \tau \Dc$.
\end{proof}

\subsection{\texorpdfstring{$d_4$}{d\_4}-differentials}

\begin{Prop}\label{proof of d_4 differentials}
    Table \ref{tab:Bockstein d_r} describes the non-zero $d_4$-differentials in the $\rho$-Bockstein spectral sequence on all indecomposables on $E_4$.
\end{Prop}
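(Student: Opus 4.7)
The plan is to mimic the strategy used for the $d_3$-differentials in \ref{proof of d_3 differentials}. First I would enumerate all indecomposables on $E_4$ that, for degree reasons alone, could support a non-trivial $d_4$: recall that if $x$ is in tridegree $(s,f,w)$ then $d_4(x) = \rho^4 y$ for some $y$ in degree $(s+3, f+1, w+4)$, so it suffices to inspect whether such a $y$ exists in the $\rho$-torsion free part of $E_4$. The resulting candidate list should coincide with the ten rows of table \ref{tab:Bockstein d_r} with $r = 4$, and the entry for $\tau^4$ is already handled by \ref{Bockstein d_r on powers of tau}. So the real content of the Proposition is the correctness of the remaining nine rows.

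The bulk of these entries should follow from a combination of the Leibniz rule and the internal coweight method of \ref{definition of internal coweight}. The four $P$-multiple differentials (on $P a$, $P n$, $\tau^4 P \Dh h_1$, and $\tau^4 P \Dc h_1$) ought to reduce to the corresponding "primary" differentials (on $\tau P h_2^2$, $\tau^2 n$, $\tau^4 \Dh h_1$, and $\tau^4 \Dc h_1$ respectively) by a short multiplicative argument together with relations in the cohomology of $\mathcal{A}(2)^\mathbb{C}$ from \cite[Theorem 4.16]{Isa09}. Since $d_3(P) = h_1^2 c$ on $E_3$, I would first verify individually that $Pa$, $Pn$, $\tau^4 P \Dh h_1$, and $\tau^4 P \Dc h_1$ actually survive to $E_4$, i.e. that the relevant product $h_1^2 c \cdot (-)$ vanishes on $E_3$. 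For the five remaining primary differentials I would state a separate Lemma for each and argue it by drawing the relevant internal coweight chart on $E_4$ and comparing with \ref{representation of rho-localized target}: whenever the putative target must die on $E_\infty$ and the claimed source is the only candidate that can kill it (or dually, whenever the source must vanish on $E_\infty$ with only one available target), the differential is forced. Multiplication by $\tau^8$, which is an infinite cycle by \ref{t^8, h1, h2, u, g are infinite cycles}, would then pin down the correct $E_1$-level representative in each case.

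The main obstacle should be bookkeeping rather than any single conceptual difficulty. The trickier cases are those with $h_1$-torsion targets, such as $d_4(\tau^2 n) = h_2 n$ and $d_4(\tau h_0^2 e) = h_1^3 e$, where $h_1$-linearity is unavailable and I must rely purely on the internal coweight charts, and the cases involving $\Dh$- and $\Dc$-multiples, where earlier calculations such as $d_3(\tau^3 n^2) = \tau^2 a g + \tau \Dc$ from \ref{differential on t^3 n^2} have already shuffled which specific $E_3$-representative of a class actually survives. I would need to be careful in writing down the precise element on $E_4$ that carries the claimed $d_4$, in particular for $\tau^4 \Dc h_1$ whose target $a^3$ is an indecomposable-looking expression that must nevertheless be matched cleanly with a coweight-based extinction argument.
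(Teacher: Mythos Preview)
Your enumeration is incomplete. Besides the ten rows in table \ref{tab:Bockstein d_r} with $r=4$, there are two further indecomposables on $E_4$ that could for degree reasons support a $d_4$: namely $\tau^2 h_2$ (possible target $h_2^2$) and $\tau^4 P$ (possible target $\tau^2 P h_2$). The paper lists all twelve candidates in a separate table. The case of $\tau^2 h_2$ is easy since it is already the target of $d_4(\tau^4)$, but showing $d_4(\tau^4 P)=0$ is not trivial and is the content of Lemma \ref{differential on t^4 P}: one analyzes internal coweight $12$ and argues that $\tau^4 P$ must survive to represent the $\rho$-torsion free class $\tau^8 \cdot h_1^4$ on $E_\infty$ (this is where $d_4(\tau^2 n)=h_2 n$ and $d_5(\tau^2 d)=h_1 e$ are pinned down simultaneously). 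The fact that $\tau^4 P$ is an infinite cycle is then a \emph{prerequisite} for your reduction of $\tau^4 P \Dh h_1$ and $\tau^4 P \Dc h_1$ via the relations $\tau^4 P \cdot \tau^4 \Dh h_1 = \tau^4 \cdot \tau^4 P \Dh h_1$ and $\tau^4 P \cdot \tau^4 \Dc h_1 = \tau^4 \cdot \tau^4 P \Dc h_1$, so it cannot be skipped.

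Your proposed pairing of ``$P$-multiple'' differentials with ``primary'' ones is also off for $P a$ and $P n$. While the two $\tau^4 P \Delta$-type differentials do reduce to $\tau^4 \Dh h_1$ and $\tau^4 \Dc h_1$ by $\tau^4 P$-multiplication as you say, there is no analogous multiplicative relation tying $P a$ to $\tau P h_2^2$ or $P n$ to $\tau^2 n$; these are simply not related in the way you suggest. In the paper, $\tau P h_2^2$ and $P a$ are handled directly by internal coweight arguments (coweights $13$ and $17$, where in each case the claimed target must die and there is only one possible source), and $d_4(P n) = h_1 c e$ is forced in Lemma \ref{differential on P n} from the fact that $h_1 \cdot c \cdot e$ must be hit by a differential of length at most $6$ since both $c$ and $e$ are themselves targets (Lemmas \ref{differential hitting c} and \ref{differential hitting e}). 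The arguments for $\tau^4 \Dh h_1$ and $\tau^4 \Dc h_1$ are also more delicate than a single coweight lookup: they require first ruling out several competing sources, which in turn uses auxiliary facts such as $d_{10}(\tau^6 h_2^2)=n$ and $d_7(\tau^9 h_0^2 e)=\tau^4 P n$.
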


\begin{proof}
    For degree reasons, the only indecomposables that can support a $d_4$-differential are given by table \ref{tab:Possible Bockstein d_4}. If the proof column is empty, then the proof of that differential is contained in this Proposition.
    
    \begin{longtable}{llllc}
    \caption{Possible non-zero $d_4$-differentials on indecomposable elements
    \label{tab:Possible Bockstein d_4}
    } \\
    \toprule
    $x$ & $(s,f,w)$ & $d_4(x)$ & Occurs & Proof\\
    \midrule \endfirsthead
    \caption[]{Possible non-zero $d_4$-differentials on indecomposable elements} \\
    \toprule
    $x$ & $(s,f,w)$ & $d_4(x)$ & Occurs & Proof\\
    \midrule \endhead
    \bottomrule \endfoot
        $\tau^4$ & $(0, 0, -4)$ & $\tau^2 h_2$ & Yes & \ref{Bockstein d_r on powers of tau}\\
        $\tau^2 h_2$ & $(3, 1, 0)$ & $h_2^2$ & No & \\
        $\tau^4 P$ & $(8, 4, 0)$ & $\tau^2 P h_2$ & No & \ref{differential on t^4 P}\\
        $\tau P h_2^2$ & $(14, 6, 7)$ & $h_1^3 d$ & Yes & \\
        $\tau^2 n$ & $(15, 3, 6)$ & $h_2 n$ & Yes & \ref{differential on t^2 n}\\
        $\tau h_0^2 e$ & $(17, 6, 9)$ & $h_1^3 e$ & Yes & \ref{differential on t h0^2 e}\\
        $P a$ & $(20, 7, 10)$ & $h_1 c d$ & Yes & \\
        $P n$ & $(23, 7, 12)$ & $h_1 c e$ & Yes & \ref{differential on P n}\\
        $\tau^4 \Dh h_1$ & $(26, 6, 10)$ & $\tau^2 a e$ & Yes & \ref{differential on t^4 Dh h1}\\
        $\tau^4 \Dc h_1$ & $(33, 8, 14)$ & $a^3$ & Yes & \ref{differential on t^4 Dc h1}\\
        $\tau^4 P \Dh h_1$ & $(34, 10, 14)$ & $\tau^2 P a e$ & Yes & \ref{differential on t^4 P Dh h1}\\
        $\tau^4 P \Dc h_1$ & $(41, 12, 18)$ & $P a^3$ & Yes & \ref{differential on t^4 P Dc h1}\\
    \end{longtable}
    
    The differential $d_4(\tau^4) = \tau^2 h_2$ was proven in Lemma \ref{Bockstein d_r on powers of tau}. Then the differential on $\tau^2 h_2$ does not occur as $\tau^2 h_2$ gets hit by a differential.

    The differentials on $\tau P h_2^2$ and $P a$ follow immediately by looking at internal coweights $13$ and $17$. Both times the claimed target of the differential must be involved in some differential by Corollary \ref{representation of rho-localized target} combined with degree reasons, and the differentials stated in the table are the only options.    

    For all other differentials see the following series of Lemmas.
\end{proof}

It will be convenient to refer to the $d_6$-differential hitting $e$ many times, so we state it here first.

\begin{Lemma}\label{differential on t^2 a}\label{differential hitting e}\deg{12}{3}{4}
    $d_6(\tau^2 a) = e$.
\end{Lemma}

\begin{proof}
    This is immediate from internal coweight $11$ because both $\tau^2 a$ and $e$ must be involved in some differentials and this is the only option.
\end{proof}

The next Lemma will show that $\tau^4 P$ is a permanent cycle. Along the way we will analyze internal coweight $12$ and prove the differential on $\tau^2 n$. For future reference, we include another finding in the statement of the Lemma as well.

\newpage

\begin{Lemma}\label{differential on t^4 P}\label{differential on t^2 n}\label{differential hitting h2 n}\label{differential on t^2 d}\label{differential hitting h1 e}
\leavevmode
    \begin{enumerate}[label=\rm{(\arabic*)}]
        \item \deg{8}{4}{0} $d_r(\tau^4 P) = 0$ for all $r \geq 4$,
        \item \deg{15}{3}{6} $d_4(\tau^2 n) = h_2 n$,
        \item \deg{14}{4}{6} $d_5(\tau^2 d) = h_1 e$.
    \end{enumerate}
\end{Lemma}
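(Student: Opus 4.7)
\medskip

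\noindent\textbf{Proof plan.} All three statements concern elements of internal coweight $12$: one checks that $\tau^4 P$, $\tau^2 n$, $h_2 n$, $\tau^2 d$, and $h_1 e$ all have $s+f-w=12$. So the plan is to carry out the internal coweight method entirely inside the coweight-$12$ slice of the $\rho$-free quotient of $E_4$ (and subsequent pages), using Corollary \ref{representation of rho-localized target} as the source of constraints.

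First I would tabulate the $\rho$-free part of $E_\infty$ in internal coweight $12$. Since the indecomposables $h_1,h_2,u,g$ have internal coweights $1,2,7,12$ respectively, and internal coweight is preserved by $\rho$ and $\tau^8$, the coweight-$12$ piece of $\mathrm{Ext}_{\mathcal{A}(2)}(\mathbb{M}_2,\mathbb{M}_2)[\rho^{-1}]$ is a free $\mathbb{F}_2[\rho^{\pm 1},\tau^8]$-module on the single generator $g$. Combined with Corollary \ref{t^8, h1, h2, u, g are infinite cycles} (which says $g$ is a permanent cycle), this means on the $\rho$-free quotient of $E_\infty$ in coweight $12$ there is exactly one $\mathbb{F}_2[\rho,\tau^8]$-generator, located in Adams filtration $4$ on the $g$-line, and nothing rho-free in any other Adams filtration.

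For part (2), $h_2 n$ sits in coweight $12$, filtration $4$, but in a stem distinct from the $g$-line, so it must be killed by some differential. Reading the coweight-$12$ chart at \cite{Charts} on $E_4$, the only candidate source on $E_4$ or later that can hit $h_2 n$ is $\tau^2 n$ via a $d_4$; all other potential sources are either already known infinite cycles or have already supported earlier differentials listed in table \ref{tab:Bockstein d_r}. This forces $d_4(\tau^2 n)=h_2 n$. For part (3), $h_1 e$ sits in coweight $12$, filtration $5$, a filtration with no rho-free survivors in this coweight, so $h_1 e$ must die as well. I would first verify that $h_1 e$ is untouched by differentials of length $\le 4$ and that, after part (2) has been applied, $\tau^2 d$ is the only remaining candidate source; then $d_5(\tau^2 d)=h_1 e$ is forced. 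For part (1), the same chart analysis shows that in coweight $12$ at stem $8$, filtration $4$ there is a unique $\mathbb{F}_2[\rho,\tau^8]$-generator on the rho-free quotient of $E_\infty$ (namely the element representing $\rho^{12}g$ up to hidden extensions), and the only candidate representative left after the differentials above is $\tau^4 P$. If any $d_r(\tau^4 P)$ for $r\ge 4$ were nonzero, this representative would be destroyed, contradicting Corollary \ref{representation of rho-localized target}. Hence $d_r(\tau^4 P)=0$ for all $r\ge 4$.

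The main obstacle is purely bookkeeping: one has to be sure that the enumeration of candidate sources and targets in the coweight-$12$ slice is complete and that every other element in that slice is accounted for by the differentials already listed in table \ref{tab:Bockstein d_r}. This is where the internal coweight chart at \cite{Charts} does the heavy lifting; once one trusts that chart, the three claims drop out of the three elementary "no other option" arguments above. A minor secondary point is confirming that $\tau^2 d$ genuinely survives to $E_5$, which is again visible from the chart since no element of the right bidegree can hit it on $E_4$.
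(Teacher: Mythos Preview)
Your overall strategy---work inside the internal-coweight-$12$ slice and match it against the $\rho$-localization---is exactly the paper's approach, and your arguments for (2) and (3) are essentially correct. But your argument for (1) contains a genuine error in the count of the localized slice.

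You assert that the coweight-$12$ piece of $\mathrm{Ext}_{\mathcal{A}(2)}(\mathbb{M}_2,\mathbb{M}_2)[\rho^{-1}]$ is free over $\mathbb{F}_2[\rho^{\pm 1},\tau^8]$ on the single generator $g$. But $\tau^8$ has internal coweight $0+0-(-8)=8$, not $0$, so the coweight-$12$ slice is \emph{not} closed under $\tau^8$-multiplication; it is only an $\mathbb{F}_2[\rho^{\pm 1}]$-module. In filtration $4$ this module has \emph{two} generators, $g$ and $\tau^8\cdot h_1^4$ (the latter has coweight $8+4=12$). With your count of one generator, the logic of (1) reverses: since the $E_\infty$-element $g$ already represents the localization class $g$, a unique filtration-$4$ survivor would force $\tau^4 P$ to \emph{die}, not survive. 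Your claim that $\tau^4 P$ represents $\rho^{12}g$ is wrong for the same reason---$\rho^{12}g$ is already represented by $\rho^{12}\cdot g$.

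The paper's argument is that $\tau^4 P$ is the only remaining candidate to represent the \emph{second} generator $\tau^8\cdot h_1^4$: after (2) and (3) remove $\tau^2 n$, $h_2 n$, $\tau^2 d$, and $h_1 e$, and after noting that $g$ must represent $g$, only $\tau^4 P$ is left in filtration $4$, so it cannot support a differential. A smaller point: for (3) the paper does not merely read the chart but invokes Lemma~\ref{differential hitting e} ($e$ is hit by a $d_6$), which forces $h_1 e=h_1\cdot e$ to be hit by a differential of length $\le 6$; this cleanly rules out the possibility that $h_1 e$ supports a differential rather than receiving one.
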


\begin{proof}
    The element $\tau^4 P$ has internal coweight $12$ and filtration $4$. According to Corollary \ref{representation of rho-localized target} there are two $\rho$-torsion free elements with internal coweight $12$ and filtration $4$ in the $\rho$-periodicized cohomology of $\mathcal{A}(2)$, namely $\tau^8 \cdot h_1^4$ and $g$. There must be a hidden extension for $\tau^8 \cdot h_1^4$ on $E_\infty$ as the element $h_1^4$ in our spectral sequence is $\tau$-torsion.
    
    We claim that $\rho^4 \tau^4 P$ has to be the target of the hidden $\tau^8$-extension on $h_1^4$. To prove this, we can show that all other elements with filtration $4$ on the given chart must be involved in differentials or represent a different element. The element named $g$ on $E_4$ must represent the element we called $g$ in Corollary \ref{representation of rho-localized target} because there is no other potential $\rho$-torsion free element in degree $(20, 4, 12)$. In particular, $g$ does not get hit by a differential. Then since $\tau^2 n$ cannot survive for degree reasons, it has to hit $h_2 n$ with a $d_4$. Next, by Lemma \ref{differential hitting e} $e$ is hit by a $d_6$, so $h_1 e$ must get hit by a differential of length at most $6$. The only possible source for such a differential is $\tau^2 d$. This leaves $\tau^4 P$ as the only element that can represent $\tau^8 \cdot h_1^4$. In particular it cannot support any differentials.
\end{proof}

\begin{Lemma}\label{differential on t h0^2 e}\label{differential hitting h1^3 e}\deg{17}{6}{9}
    $d_4(\tau h_0^2 e) = h_1^3 e$.
\end{Lemma}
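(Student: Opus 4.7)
The plan is to apply the internal coweight method in coweight $s + f - w = 14$. The source $\tau h_0^2 e$ sits at $(s,f)=(17,6)$ and the proposed target $h_1^3 e$ at $(s,f)=(20,7)$, so their displacement traces a line of slope $1/3$ in the $\rho$-free quotient chart, which is exactly the geometry of a $d_4$.

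The first step is to confirm via \ref{representation of rho-localized target} that no nonzero monomial in $\rho^{\pm 1}, \tau^8, h_1, h_2, u, g$ has tridegree $(20, 7, 13)$: the constraints on stem, Adams filtration, and weight leave only a handful of candidate monomials, and each is killed by one of the relations $h_1 h_2 = 0$, $h_2^3 = 0$, $h_2 u = 0$, or $u^2 = h_1^2 g$. Consequently $h_1^3 e$ is $\rho$-power-torsion on $E_\infty$ and must be involved in some $\rho$-Bockstein differential.

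Next I would rule out outgoing differentials on $h_1^3 e$ by scanning the coweight $14$ chart, then enumerate incoming ones. The only serious competitor to $\tau h_0^2 e$ is $\tau^2 h_1^2 d$ supporting a putative $d_5$, since Leibniz applied to $d_5(\tau^2 d) = h_1 e$ from \ref{differential on t^2 d} would formally yield $h_1^3 e$. But $h_1$ is a $d_2$-cycle for degree reasons and $d_2(\tau^2) = \tau h_1$ by \ref{Bockstein d_r on powers of tau}, so Leibniz gives $d_2(\tau^2 h_1^2 d) = \tau h_1^3 d$, which is nonzero on $E_2$; hence $\tau^2 h_1^2 d$ does not survive to $E_5$, and this competitor is eliminated.

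Once the chart is pruned using the earlier differentials of \ref{proof of d_2 differentials} and \ref{proof of d_3 differentials}, $\tau h_0^2 e$ is the unique class that can kill $h_1^3 e$, and the $d_4$ is forced. The main obstacle is the detailed bookkeeping rather than any conceptual leap: one must walk the candidate sources at positions $(21-r, 6)$ for each $r \neq 4$ and verify using previously established differentials that none of them can target a $\rho^r$-multiple of $h_1^3 e$.
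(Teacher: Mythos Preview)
Your approach is correct but more laborious than the paper's. The paper's proof is a two-liner: by \ref{differential hitting e} one has $d_6(\tau^2 a) = e$, and since $h_1$ is a permanent cycle, the product $h_1^3 \cdot e$ must be hit by some $d_r$ with $r \le 6$; a glance at the chart shows $\tau h_0^2 e$ is then the only possible source. The key efficiency gain is the a~priori bound $r \le 6$, which comes for free from the already-established differential on $e$ and eliminates the need to walk through all positions $(21-r,6)$ or to separately verify that $h_1^3 e$ cannot support an outgoing differential (this is automatic since it is a product of permanent cycles). Your argument via the $\rho$-localization and explicit elimination of competitors such as $\tau^2 h_1^2 d$ reaches the same conclusion, but the paper's route shows how leveraging a single earlier differential on a factor can collapse most of the bookkeeping.
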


\begin{proof}
    By Lemma \ref{differential hitting e} we know that $h_1^3 e$ must get hit by a differential of length at most $6$. The claimed differential is the only option.
\end{proof}

\begin{Lemma}\label{differential on P n}\label{differential hitting h1 c e}\deg{23}{7}{12}
    $d_4(P n) = h_1 c e$.
\end{Lemma}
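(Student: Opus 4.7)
The plan is to prove this in the same style as the preceding $d_4$-differentials in Proposition \ref{proof of d_4 differentials}, specifically the twin argument used for $d_4(Pa) = h_1 c d$. The target $h_1 c e$ must die in the $\rho$-Bockstein spectral sequence because, by Corollary \ref{representation of rho-localized target}, both $c$ and $e$ become zero under $\rho$-localization; hence $h_1 c e$ is $\rho$-power-torsion on $E_\infty$ and so must be involved in some $\rho$-Bockstein differential. My strategy is to analyze the internal coweight chart at $s+f-w = 18$ on the $E_4$-page and identify $Pn$ as the unique possible source.

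First, I would verify that $h_1 c e$ cannot itself support any $d_r$-differential for degree reasons, by checking that the tridegree $(25, 9, 16)$ of a potential $d_r$-target is either empty in the $\rho$-free quotient of $E_r$ or contains no candidate element consistent with the claim. This reduces the problem to showing $h_1 c e$ is hit by a $d_r$ for some $r \geq 2$. Next, I would enumerate candidate sources in tridegree $(27-r, 7, 16-r)$ along the coweight-$18$ chart. For $r = 2, 3$, these degrees should be empty on the corresponding $E_r/(\rho\text{-power-torsion})$, or any candidate there has been consumed by a previously established differential. For $r \geq 5$, candidates are ruled out by the structural fact that $c$ only dies on $E_7$ and $e$ only dies on $E_6$ (Lemmas \ref{differential hitting c} and \ref{differential hitting e}), so any earlier $d_r$ killing $h_1 c e$ would force the wrong page count on those factors. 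The only remaining possibility on $E_4$ is the source $Pn$ at $(23, 7, 12)$, yielding $d_4(Pn) = h_1 c e$.

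As a backup, in case the internal coweight bookkeeping turns out to be delicate, I would proceed multiplicatively by parallelism with $d_4(Pa) = h_1 c d$. Using the presentation of $\text{Ext}_{\mathcal{A}(2)^{\mathbb{C}}}(\mathbb{M}_2^{\mathbb{C}},\mathbb{M}_2^{\mathbb{C}})$ from \cite[Theorem 4.16]{Isa09}, I would look for a relation of the form $ae = nd$ (or some variant) so that $Pa \cdot e = Pn \cdot d$ on $E_1$. Since $d$ and $e$ are both $d_4$-cycles (by Lemmas \ref{differential on t^2 d} and \ref{differential hitting e}, they only die on later pages), the Leibniz rule gives
\[d_4(Pn) \cdot d = d_4(Pn \cdot d) = d_4(Pa \cdot e) = d_4(Pa) \cdot e = h_1 c d \cdot e = h_1 c e \cdot d,\]
and injectivity of $d$-multiplication on the degree of $d_4(Pn)$ on the $E_4$-page (to be checked against the chart) then yields the result. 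The main obstacle in either route is the case analysis: in the direct approach one must carefully check each lower-page candidate source, and in the multiplicative approach one must locate the correct relation in \cite{Isa09} and verify the needed injectivity of $d$-multiplication.
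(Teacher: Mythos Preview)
Your overall strategy matches the paper's: show that $h_1 c e$ must be the target of some $\rho$-Bockstein differential, then identify $Pn$ as the only viable source. The paper's proof is a two-line version of this: since $c$ is hit by a $d_7$ (Lemma \ref{differential hitting c}) and $e$ is hit by a $d_6$ (Lemma \ref{differential hitting e}), the product $h_1 \cdot c \cdot e$ must be hit by a $d_r$ with $r \leq 6$, and inspection of the charts shows the claimed $d_4$ is the only option.

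However, your reasoning for ruling out $r \geq 5$ is backwards. The fact that $e$ is hit on $E_6$ and $c$ on $E_7$ does not exclude $r = 5$ or $r = 6$; it gives the \emph{upper bound} $r \leq 6$ (once $e$ becomes $\rho$-torsion on $E_7$, so does $h_1 c e$, and by Proposition \ref{only differentials between rho-free elements} $\rho$-torsion classes are not involved in differentials, so $h_1 c e$ must already have been hit). To finish, you still have to verify by degree inspection that the candidate source degrees $(27-r,\,7,\,16-r)$ for $r = 2, 3, 5, 6$ are empty (or otherwise unavailable) on the $\rho$-free quotient of the relevant $E_r$-page. That is exactly what the paper's phrase ``the claimed one is the only option'' encodes, and it is the step you must actually carry out.

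Your backup multiplicative argument is more speculative than you suggest: you would need to locate an actual relation linking $Pn$ and $Pa$ (the relation $a e = n d$ does not appear among those used in the paper), and then verify injectivity of the relevant multiplication on $E_4$. The direct degree check is both shorter and what the paper does.
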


\begin{proof}
    By Lemma \ref{differential hitting c} and Lemma \ref{differential hitting e} we know that $h_1 c e = h_1 \cdot c \cdot e$ must get hit by a differential of length at most $6$ and the claimed one is the only option.
\end{proof}

To determine the differential on $\tau^4 \Dh h_1$ and for many other future differentials it will be convenient to refer to the differential that $n$ is involved in, so we will state it here.

\begin{Lemma}\label{differential on t^6 h2^2}\label{differential hitting n}\deg{6}{2}{-2}
    $d_{10}(\tau^6 h_2^2) = n$.
\end{Lemma}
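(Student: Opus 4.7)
The plan is to work entirely in internal coweight $s + f - w = 10$ and use Corollary~\ref{representation of rho-localized target} to constrain the behavior of $n$. In this internal coweight, the $\rho$-localized cohomology admits $\rho$-torsion-free $\mathbb{F}_2[\rho^{\pm 1}]$-generators only in Adams filtrations $f \in \{1, 2, 6, 10\}$, represented respectively by $\tau^8 \cdot h_2$, $\tau^8 \cdot h_1^2$, $h_1^3 \cdot u$, and $h_1^{10}$. Since $n$ is at degree $(15, 3, 8)$ with $f = 3$, it cannot represent a $\rho$-torsion-free class on $E_\infty$, so $n$ must be involved in some $\rho$-Bockstein differential.

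First I would rule out $n$ supporting a differential. Any nontrivial $d_r(n)$ would land at $(14 + r, 4, 8 + r)$. Walking along the internal coweight $10$ chart at Adams filtration $4$, each candidate target must either lie in an empty bidegree or be a $\rho$-torsion-free permanent cycle compatible with the four survivors identified above; neither option allows $d_r(n)$ to be nonzero. Hence $n$ is not the source of any differential.

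Consequently $n$ must be hit: $d_r(x) = \rho^r n$ for some $r$, with $x$ at $(16 - r, 2, 8 - r)$. The only Adams filtration $2$ indecomposables on $E_1$ are $h_0^2$, $h_1^2$, $h_0 h_2$, and $h_2^2$ (using $h_1 h_2 = 0$), so the $\tau$-multiples in internal coweight $10$ yield exactly four candidates: $\tau^8 h_0^2$ (forcing $r = 16$), $\tau^{10} h_0 h_2$ (forcing $r = 13$), $\tau^{10} h_1^2$ (forcing $r = 14$), and $\tau^6 h_2^2$ (forcing $r = 10$). Applying the Leibniz rule to $d_1(\tau) = \rho h_0$ and $d_2(\tau^2) = \rho^2 \tau h_1$ from Lemma~\ref{Bockstein d_r on powers of tau}, the first three do not survive past $E_3$: $\tau^8 h_0^2$ is hit by $d_1(\tau^9 h_0)$, while both $\tau^{10} h_0 h_2$ and $\tau^{10} h_1^2$ support $d_2$-differentials coming from the $\tau^2$ factor in $\tau^{10} = \tau^2 \cdot \tau^8$. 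This leaves $\tau^6 h_2^2$ as the unique surviving source, so the differential must be $d_{10}(\tau^6 h_2^2) = n$.

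The main obstacle is verifying that $\tau^6 h_2^2$ itself survives to $E_{10}$. For each $r < 10$, I would need to check that the candidate targets of $d_r(\tau^6 h_2^2)$ at $(5 + r, 3, -2 + r)$---most notably $\tau^4 c$ at $r = 3$, $\tau^3 u$ at $r = 6$, and $\tau a$ at $r = 7$---cannot actually be hit on their respective pages. Each of these is independently constrained by a differential already recorded in table~\ref{tab:Bockstein d_r}; for instance, $\tau^4 c$ supports a nontrivial $d_7$ and therefore cannot be hit on $E_3$, $\tau^3 u$ is killed by a low-page differential stemming from the $\tau^2 \cdot \tau$ factorization, and $\tau a$ is a $\tau$-multiple already dead on $E_2$. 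Together with the exhaustive argument above, this pins down the stated $d_{10}$.
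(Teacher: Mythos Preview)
Your overall strategy---working in internal coweight $10$ and arguing that $n$ must be hit---matches the paper's. However, your enumeration of possible sources contains computational errors that break the argument at the key step.

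The elements in Adams filtration $2$ and internal coweight $10$ are $\tau^8 h_0^2$, $\tau^7 h_0 h_2$, $\tau^8 h_1^2$, and $\tau^6 h_2^2$; you have written $\tau^{10}$ for the middle two, and those mislabeled elements do not even lie in internal coweight $10$. The first two genuine candidates are harmless: $\tau^8 h_0^2$ is hit by $d_1(\tau^9 h_0)$ and $\tau^7 h_0 h_2$ supports $d_1$ via the odd $\tau$-power. But the third, $\tau^8 h_1^2$, is the critical one, and your claimed elimination is wrong for it: there is no $\tau^2$ factor to peel off, and in fact $\tau^8 h_1^2 = \tau^8 \cdot h_1 \cdot h_1$ is a product of infinite cycles by Corollary~\ref{t^8, h1, h2, u, g are infinite cycles}, so it supports no differential whatsoever. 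That is precisely the reason the paper gives for discarding it, leaving $\tau^6 h_2^2$ as the only possible source.

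Once this is fixed, your final paragraph becomes unnecessary: the logic ``$n$ must be hit, and $\tau^6 h_2^2$ is the only surviving candidate that can hit it'' already forces $\tau^6 h_2^2$ to survive to $E_{10}$, so there is no need to check its earlier $d_r$'s one by one.
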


\begin{proof}
    For degree reasons $n$ must be involved in some differential. Looking at internal coweight $10$ we see that $n$ can only be the target of a differential and the possible sources are $\tau^8 h_1^2$ and $\tau^6 h_2^2$. But the first one of these is a product of permanent cycles.
\end{proof}

\begin{Lemma}\label{differential on t^4 Dh h1}\label{differential hitting t^2 a e}\label{differential on t^9 h0^2 e}\label{differential hitting t^4 P n}
\leavevmode
    \begin{enumerate}[label=\rm{(\arabic*)}]
        \item \deg{26}{6}{10} $d_4(\tau^4 \Dh h_1) = \tau^2 a e$,
        \item \deg{17}{6}{1} $d_7(\tau^9 h_0^2 e) = \tau^4 P n$.
    \end{enumerate}
\end{Lemma}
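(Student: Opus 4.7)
My plan is to use the internal coweight method in coweight $s+f-w = 22$, since all four classes $\tau^4 \Dh h_1$, $\tau^2 a e$, $\tau^9 h_0^2 e$, and $\tau^4 P n$ lie in this internal coweight. By Corollary \ref{representation of rho-localized target}, the $\rho$-torsion-free part of $E_\infty$ in internal coweight $22$ is completely determined from $\Ext_{\mathcal{A}(1)^{\cl}}(\mathbb{F}_2,\mathbb{F}_2)[\rho^{\pm 1},\tau^8]$, and comparison against the classes available on the $\rho$-free quotient of $E_r$ should force both differentials, provided I can control the Leibniz propagation of earlier differentials.

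For statement (1), the first step is to verify that $\tau^4 \Dh h_1$ reaches $E_4$. The only potentially obstructing earlier differential is $d_3$, and Leibniz applied to $d_3(\tau^4 \Dh) = \tau^2 a n$ (Proposition \ref{proof of d_3 differentials}) gives $d_3(\tau^4 \Dh h_1) = \tau^2 a n \cdot h_1$, which I expect to vanish via a relation in $\Ext_{\mathcal{A}(2)^{\mathbb{C}}}$ recorded in \cite[Theorem 4.16]{Isa09}. A parallel Leibniz check shows $\tau^2 a e$ also survives to $E_4$, since $a$ and $e$ are $d_r$-cycles for $r \leq 3$ and the $d_2$ contribution from $\tau^2$ involves $h_1 a$, which vanishes. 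By the internal coweight $22$ enumeration, $\tau^2 a e$ cannot represent a $\rho$-torsion-free class on $E_\infty$, and it cannot support a $d_4$ for bidegree reasons, so it must be hit; the unique candidate source at the correct bidegree $(26, 6, 10)$ is $\tau^4 \Dh h_1$.

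For statement (2), I write $\tau^9 h_0^2 e = \tau^8 \cdot \tau h_0^2 e$ and recall $d_4(\tau h_0^2 e) = h_1^3 e$ from Lemma \ref{differential on t h0^2 e}. The propagated value $d_4(\tau^9 h_0^2 e) = \tau^8 h_1^3 e$ must be shown to vanish on $E_4$, presumably because the target class has already been killed earlier or is hit by an independent differential (for instance via the Leibniz consequence of $d_5(\tau^4 e) = \tau^2 h_1 g$ lifted through $\tau^8$-multiplication). Analogous vanishing checks on $E_5$ and $E_6$ confirm that $\tau^9 h_0^2 e$ persists to $E_7$. Meanwhile, the internal coweight $22$ count shows $\tau^4 P n$ cannot survive to $E_\infty$, and the only source at the $d_7$-bidegree landing on $\tau^4 P n$ at $(23, 7, 8)$ is $\tau^9 h_0^2 e$.

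The main obstacle is the Leibniz-propagation bookkeeping: on each intermediate page $E_2, \dots, E_6$ for $\tau^9 h_0^2 e$ and on $E_3$ for $\tau^4 \Dh h_1$, one must either invoke a specific vanishing relation in $\Ext_{\mathcal{A}(2)^{\mathbb{C}}}$ or observe that the propagated target has already died on an earlier page. Once these intermediate checks are in place, the completed internal coweight $22$ chart (cf.\ section \ref{charts: rho-Bockstein internal coweight charts}) makes both differentials essentially the only possibility, and the statement follows by elimination.
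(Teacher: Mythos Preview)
Your overall framework (internal coweight $22$, elimination via the $\rho$-localized target) matches the paper, but there is a genuine gap in part (1). You claim that ``the unique candidate source at the correct bidegree $(26,6,10)$ is $\tau^4 \Dh h_1$'', i.e.\ you only look at the $d_4$ source. In fact there are three other elements in internal coweight $22$ and Adams filtration $6$ that could hit $\tau^2 a e$: $\tau^4 a^2$ via a $d_6$, $\tau^9 h_0^2 e$ via a $d_{13}$, and $\tau^{12} P h_1^2$ via a $d_{20}$. The paper eliminates $\tau^{12} P h_1^2 = \tau^8 \cdot \tau^4 P \cdot h_1^2$ as a product of infinite cycles, eliminates $\tau^9 h_0^2 e$ by first proving part (2), and crucially eliminates $\tau^4 a^2$ by observing that on $E_6$ it is the square $(\tau^2 a)^2$, so it cannot support an odd-page-style $d_6$ by the Leibniz rule. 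You do not address $\tau^4 a^2$ at all, and without that observation your argument for (1) does not close.

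Your approach to (2) also diverges from the paper and is weaker. Rather than pushing $\tau^9 h_0^2 e$ through $E_4, E_5, E_6$ by Leibniz (your checks here are only sketched: ``presumably because the target class has already been killed''), the paper argues directly on the target: $\tau^4 P n = \tau^4 P \cdot n$ is a product of an infinite cycle with a class hit by a $d_{10}$, so it must itself be hit by some $d_r$ with $r \le 10$; the only such source in filtration $6$ is $\tau^9 h_0^2 e$ with $r = 7$. This simultaneously proves (2) and removes $\tau^9 h_0^2 e$ from the list of candidates for (1), which is why the paper's logic runs (2) before (1) rather than the other way around.
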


\begin{proof}
    Consider internal coweight $22$, which is where $\tau^4 \Dh h_1$ and $\tau^2 a e$ live. Then observe that $\tau^2 a e$ cannot be $\rho$-torsion free on $E_\infty$ and there are no possible differentials it can support. Hence, it must get hit by some differential. The possible sources are $\tau^{12} P h_1^2$, $\tau^9 h_0^2 e$, $\tau^4 a^2$ and $\tau^4 \Dh h_1$.
    
    Certainly $\tau^{12} P h_1^2 = \tau^8 \cdot \tau^4 P \cdot h_1^2$ cannot support a differential because by Corollary \ref{t^8, h1, h2, u, g are permanent cycles} and Lemma \ref{differential on t^4 P} it is a product of permanent cycles.
    
    Then we claim $\tau^9 h_0^2 e$ has to hit $\tau^4 P \cdot n$. That is because $\tau^4 P$ is a permanent cycle by Lemma \ref{differential on t^4 P} and $n$ gets hit by a $d_{10}$ by Lemma \ref{differential hitting n}, so $\tau^4 P \cdot n$ must get hit by a differential of length at most $10$ and the claimed one is the only option.
    
    Next, $\tau^4 a^2$ cannot support a $d_6$ hitting $\tau^2 a e$ because on $E_6$ it is a square $\tau^4 a^2 = (\tau^2 a)^2$. That $\tau^2 a$ exists on $E_6$ can easily be deduced from internal coweight $11$.
    
    So the only option left is $d_4(\tau^4 \Dh h_1) = \tau^2 a e$.
\end{proof}

Along the way of the proof of the next differential $d_4(\tau^4 \Dc h_1) =  a^3$, we will analyze internal coweight $16$. For future reference, we include some of the findings in the statement of the Lemma.

\begin{Lemma}\label{differential on t^4 Dc h1}\label{differential hitting a^3}\label{differential on t^6 n}\label{differential hitting t^4 g}\label{differential on t^6 d}
\leavevmode
    \begin{enumerate}[label=\rm{(\arabic*)}]
        \item \deg{33}{8}{14} $d_4(\tau^4 \Dc h_1) =  a^3$,
        \item \deg{15}{3}{2} $d_6(\tau^6 n) = \tau^4 g$,
        \item \deg{14}{4}{2} $d_r(\tau^6 d) = 0$ for all $r \geq 4$.
    \end{enumerate}
\end{Lemma}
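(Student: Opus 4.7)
The plan is to prove the three items together, splitting the analysis across two internal coweights: coweight $16$ for items (2) and (3), and coweight $27$ for item (1).

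For the coweight $16$ analysis I read off from \ref{representation of rho-localized target} the $\rho$-torsion-free survivors at coweight $16$ and low Adams filtration. Since $h_1 h_2 = 0$ and $h_2 u = 0$ in the localized algebra, the only filtration-$4$ monomial of coweight $16$ is $\tau^8 \cdot h_1 \cdot u$, and no filtration-$3$ monomial reaches coweight $16$. Consequently $\tau^6 n$ on $E_4$ at filtration $3$ must participate in some differential. For degree reasons it cannot receive one, and a Leibniz computation using $d_4(\tau^4) = \tau^2 h_2$ together with \ref{differential on t^2 n} shows $d_4(\tau^6 n) = 0$. Inspection of the coweight-$16$ chart then leaves $d_6(\tau^6 n) = \tau^4 g$ as the only remaining option, giving item (2). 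Since $\tau^4 g$ is now $\rho$-power-torsion on $E_\infty$, the only candidate left in filtration $4$, coweight $16$, to represent $\tau^8 \cdot h_1 \cdot u$ is $\tau^6 d$; this forces $\tau^6 d$ to be an infinite cycle, establishing item (3).

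For item (1) I work in internal coweight $27$. The class $a$ does not appear among the generators of the $\rho$-localized cohomology in \ref{representation of rho-localized target}, so $a$ maps to zero under $\rho$-localization and is $\rho$-power-torsion on $E_\infty$; the same therefore holds for $a^3$. Since $a$ is an infinite cycle by \ref{differential hitting a}, so is $a^3$, and hence $a^3$ must be the target of some $\rho$-Bockstein differential. A tridegree check shows that the unique candidate source for a $d_4$ landing on $a^3$ at $(36, 9, 18)$ is $\tau^4 \Dc h_1$ at $(33, 8, 14)$. To conclude I rule out all higher-page sources $d_r$ with $r > 4$ hitting $a^3$ by chart inspection in coweight $27$ and by applying the Leibniz rule to the previously established differentials on $\tau^4$, on $\Dh h_1$, and on $\Dc + \tau a g$.

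The main obstacle is the last step: coweight $27$ is a crowded region, and several candidate classes (such as multiples of $\Dc + \tau a g$ or combinations involving $\tau a g h_1$) require case-by-case Leibniz-rule arguments before they can be ruled out as potential hits on $a^3$. The relation $h_1 \cdot h_2 = 0$ in $\mathcal{A}(2)^{\mathbb{C}}$ will be a recurring tool for killing many putative differentials of the form $d_r(\tau^8 \cdot y \cdot h_1)$ whose targets involve $h_2$-multiples.
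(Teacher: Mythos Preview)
Your treatment of items (2) and (3) via internal coweight $16$ is essentially the paper's argument, and your Leibniz check that $d_4(\tau^6 n)=0$ is a fine substitute for the paper's observation that $\tau^4 h_2 n$ supports a $d_4$ and hence cannot serve as a target on $E_5$.

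For item (1), however, there is a genuine gap. You correctly note that $a^3$ is an infinite cycle which must be hit, and that $\tau^4 \Dc h_1$ is the unique $d_4$-source in the right tridegree, but your plan to then exclude every $d_r$-source with $r>4$ by exhaustive chart-and-Leibniz work in coweight $27$ is both incomplete and unnecessary. The missing idea is a length bound: since $a$ is hit by $d_7(\tau^5 h_2^2)=a$ (Lemma~\ref{differential hitting a}), on $E_8$ the class $a$ is $\rho$-power-torsion, hence so is $a^3$, and by Proposition~\ref{only differentials between rho-free elements} a nontrivial differential cannot land on a $\rho$-power-torsion class. Thus $a^3$ must be hit by some $d_r$ with $r\le 7$. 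Checking tridegrees $(37-r,8,18-r)$ for $4\le r\le 7$ in coweight $27$ shows there are exactly two candidate sources: $\tau^4 \Dc h_1$ (for $r=4$) and $\tau^6 d e$ (for $r=6$). But $\tau^6 d e=\tau^6 d\cdot e$ is a product of infinite cycles: $\tau^6 d$ by your item (3), and $e$ because it is hit by $d_6(\tau^2 a)=e$ (Lemma~\ref{differential hitting e}). This forces $d_4(\tau^4 \Dc h_1)=a^3$ without any case-by-case analysis of higher-page sources. Note in particular how items (1) and (3) interlock: the coweight-$16$ argument for (3) feeds directly into ruling out $\tau^6 d e$ in coweight $27$.
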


\begin{proof}
    Analyze internal coweight $27$, which is where $\tau^4 \Dc h_1$ and $a^3$ live. The element $a^3$ must get hit by a differential of length at most $7$ because $a$ gets hit by a $d_7$ by Lemma \ref{differential hitting a}. There are two possible sources for such a differential onto $a^3$, namely $\tau^6 d e$ and $\tau^4 \Dc h_1$.
    
    We claim that $\tau^6 d e = \tau^6 d \cdot e$ is a product of permanent cycles. For $e$ we already saw that it gets hit by a differential in Lemma \ref{differential hitting e} so it cannot support any differentials. To see that $\tau^6 d$ is a permanent cycle one can check internal coweight $16$: Either $\tau^6 d$, $\tau^4 g$ or $\tau^4 h_2 n$ must represent $\tau^8 \cdot h_1 \cdot u$ on $E_\infty$. But $\tau^4 h_2 n = \tau^4 \cdot h_2 \cdot n$ supports a $d_4$. Also $\tau^6 n$ cannot be a permanent cycle for degree reasons so it must hit $\tau^4 g$ with a $d_6$. Consequentially $\tau^6 d$ has to represent $\tau^8 \cdot h_1 \cdot u$ and therefore must be a permanent cycle.

    Now the only option left for $a^3$ to get hit by a differential is the claimed one.
\end{proof}

\begin{Rem}\label{d_6 hitting tau^8 g^2}
    An interesting consequence of $d_6(\tau^6 n) = \tau^4 g$ is the $\tau^4 g$-multiple $d_6(\tau^{10} n g) = \tau^8 g^2$. This seems to be in contradiction with Corollary \ref{representation of rho-localized target} since the $\rho$-localized cohomology should have a non-zero element named $\tau^8 \cdot g^2$. This is not a contradiction, it simply implies the existence of a hidden extension that turns the $E_\infty$ class $\tau^8 g^2$ into a $\rho$-torsion free class. This hidden extension must involve $\D^2$ for degree reasons.
\end{Rem}

\begin{Lemma}\label{differential on t^4 P Dh h1}\label{differential hitting t^2 P a e}\label{differential on t^4 P Dc h1}\label{differential hitting P a^3}
\leavevmode
    \begin{enumerate}[label=\rm{(\arabic*)}]
        \item \deg{34}{10}{14} $d_4(\tau^4 P \Dh h_1) = \tau^2 P a e$,
        \item \deg{41}{12}{18} $d_4(\tau^4 P \Dc h_1) = P a^3$.
    \end{enumerate}
\end{Lemma}

\begin{proof}
    For $\tau^4 P \Dh h_1$ use the differentials on $\tau^4 \Dh h_1$ and $\tau^4 P$ proven in Lemma \ref{differential on t^4 Dh h1} and Lemma \ref{differential on t^4 P} together with the relation $\tau^4 P \cdot \tau^4 \Dh h_1 = \tau^4 \cdot \tau^4 P \Dh h_1$.

    For $\tau^4 P \Dc h_1$ use the differentials on $\tau^4 \Dc h_1$ and $\tau^4 P$ proven in Lemma \ref{differential on t^4 Dc h1} and Lemma \ref{differential on t^4 P} together with the relation $\tau^4 P \cdot \tau^4 \Dc h_1 = \tau^4 \cdot \tau^4 P \Dc h_1$.    
\end{proof}

\subsection{\texorpdfstring{$d_5$}{d\_5}-differentials}

\begin{Prop}\label{proof of d_5 differentials}
    Table \ref{tab:Bockstein d_r} describes the non-zero $d_5$-differentials in the $\rho$-Bockstein spectral sequence on all indecomposables on $E_5$.
\end{Prop}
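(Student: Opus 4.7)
The strategy mirrors Propositions \ref{proof of d_2 differentials}--\ref{proof of d_4 differentials}. I first assemble a table of all indecomposables on $E_5$ whose degree permits a non-zero $d_5$-target. Compiling this list is a bookkeeping exercise using the \cite[Theorem 4.16]{Isa09} presentation, pruned by the classes killed on pages $E_1$ through $E_4$ (notably $\tau^4$, $\Dh$, $\Dh h_1$, $\Dc + \tau a g$, and $\Du + \tau n g$) and augmented by the compound cycles $\tau^4 \Dh h_1^2$, $\tau^4 \Dc h_1^2$, $\tau^4 P \Dh h_1^2$, and $\tau^4 P \Dc h_1^2$ that only become indecomposable on $E_5$.

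Next I separate the twelve claimed $d_5$-differentials into derived and primitive cases. Four of them are derived by the Leibniz rule from multiplication by permanent cycles: multiplying $d_5(\tau^4 \Dh h_1^2)$ and $d_5(\tau^4 \Dc h_1^2)$ by $\tau^8$ (permanent by Corollary \ref{t^8, h1, h2, u, g are infinite cycles}) produces $d_5(\tau^8 \Dh h_1^2)$ and $d_5(\tau^8 \Dc h_1^2)$, while multiplying them by $\tau^4 P$ (permanent by Lemma \ref{differential on t^4 P}) produces $d_5(\tau^4 P \Dh h_1^2)$ and $d_5(\tau^4 P \Dc h_1^2)$. The pairs $d_5(\tau^4 d)$ versus $d_5(\tau^2 d)$ and $d_5(\tau^6 e)$ versus $d_5(\tau^4 e)$ lie in different internal coweights, so they cannot be related by multiplication by permanent cycles; they are handled separately but follow the same internal-coweight template as their partners.

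For each primitive differential I use the internal coweight method: by Corollary \ref{representation of rho-localized target} the proposed target is not $\rho$-torsion free on $E_\infty$, so it must be involved in a $\rho$-Bockstein differential, and on the relevant internal coweight chart I eliminate every competing source using previously proven differentials and permanent cycles. The primitives $d_5(\tau^2 d)$, $d_5(\tau^4 e)$, $d_5(\tau^2 P e)$, $d_5(\tau^4 \Dh h_1^2)$, and $d_5(\tau^4 \Dc h_1^2)$ sit in internal coweights $12$, $15$, $21$, $23$, and $28$ respectively. The chart analyses reuse the $d_3$'s and $d_4$'s from Propositions \ref{proof of d_3 differentials} and \ref{proof of d_4 differentials}, most notably the differentials on $\Dh$, $\Dh h_1$, $\Dc + \tau a g$, and (anticipated) $\tau^2 n^2$ from Lemma \ref{differential on t^2 n^2}, to prune competing sources; in several cases the final argument is the same one used in Lemma \ref{differential on t^4 e} for the analogous coweight.

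The main obstacle is $d_5(P a n) = u d^2$, because the target contains $u$, which has not previously appeared in any differential and is a permanent cycle by Corollary \ref{t^8, h1, h2, u, g are infinite cycles}. The plan is to establish this indirectly: first verify that $u d^2$ is $\rho$-power-torsion on $E_\infty$ by tracing its image under the $\rho$-localization map of Corollary \ref{representation of rho-localized target}, using the classical relation $u^2 = h_1^2 g$ together with the identification of the image of $d$ in $\Ext_{\mathcal{A}(1)^\cl}(\mathbb{F}_2,\mathbb{F}_2)[\rho^{\pm 1}, \tau^8]$ to show the image of $u d^2$ vanishes in the relevant tri-degree. Given that $u d^2$ must therefore be hit, I analyze the internal coweight $27$ chart to locate the unique remaining candidate of $\rho$-length five: every other class of the correct Adams filtration either factors through a permanent cycle ($\tau^8$, $h_1$, $h_2$, $u$, $g$, $\tau^4 P$) or supported an earlier $d_3$ or $d_4$. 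This isolates $P a n$ as the unique source, and the $\rho$-filtration jump of exactly five forces the differential to be a $d_5$.
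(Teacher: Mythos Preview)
Your derivation of the $\tau^8$ and $\tau^4 P$ compound differentials from the $\tau^4$ versions is arithmetically wrong. Multiplying $\tau^4 \Dh h_1^2$ (in weight $11$) by $\tau^8$ (weight $-8$) gives $\tau^{12} \Dh h_1^2$ (weight $3$), not $\tau^8 \Dh h_1^2$ (weight $7$); similarly $\tau^4 P \cdot \tau^4 \Dh h_1^2$ lands in weight $11$, whereas $\tau^4 P \Dh h_1^2$ has weight $15$. The paper proceeds in the opposite direction: it proves $d_5(\tau^8 \Dh h_1^2)$ and $d_5(\tau^4 \Dh h_1^2)$ (likewise for $\Dc$) \emph{independently} by internal-coweight arguments in coweights $27$, $23$, $32$, $28$, and then derives the $\tau^4 P$ versions from the $\tau^8$ versions via the relation $\tau^4 P \cdot \tau^8 \Dh h_1^2 = \tau^8 \cdot \tau^4 P \Dh h_1^2$, which is a genuine relation on $E_5$ between surviving elements.

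You also omit the two indecomposables $\tau^6 d$ and $\tau^2 e$, which admit degree-compatible $d_5$-targets ($\tau^4 h_1 e$ and $h_1 g$ respectively) and must be shown to be $d_5$-cycles. The paper handles these by invoking earlier lemmas: $\tau^6 d$ is an infinite cycle (it represents $\tau^8 \cdot h_1 \cdot u$ on $E_\infty$), and $\tau^2 e$ is hit by $d_6(\tau^4 a)$, hence cannot support any differential. Finally, your treatment of $P a n$ via the $\rho$-localization is unnecessarily roundabout: since $d$ is hit by a $d_7$ (Lemma~\ref{differential on t^4 c}) and $u$ is a permanent cycle, the product $u \cdot d^2$ must be hit by a differential of length at most $7$, and the internal coweight $27$ chart leaves $P a n$ as the only source.
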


\begin{proof}
    For degree reasons, the only indecomposables that can support a $d_5$-differential are given by table \ref{tab:Possible Bockstein d_5}.

    \begin{longtable}{llllc}
    \caption{Possible non-zero $d_5$-differentials on indecomposable elements
    \label{tab:Possible Bockstein d_5}
    } \\
    \toprule
    $x$ & $(s,f,w)$ & $d_5(x)$ & Occurs & Proof\\
    \midrule \endfirsthead
    \caption[]{Possible non-zero $d_5$-differentials on indecomposable elements} \\
    \toprule
    $x$ & $(s,f,w)$ & $d_5(x)$ & Occurs & Proof\\
    \midrule \endhead
    \bottomrule \endfoot
        $\tau^6 d$ & $(14, 4, 2)$ & $\tau^4 h_1 e$ & No & \ref{differential on t^6 d}\\
        $\tau^4 d$ & $(14, 4, 4)$ & $\tau^2 h_1 e$ & Yes & \ref{differential on t^4 d}\\
        $\tau^2 d$ & $(14, 4, 6)$ & $h_1 e$ & Yes & \ref{differential on t^2 d}\\
        $\tau^6 e$ & $(17, 4, 4)$ & $\tau^4 h_1 g$ & Yes & \ref{differential on t^6 e}\\
        $\tau^4 e$ & $(17, 4, 6)$ & $\tau^2 h_1 g$ & Yes & \ref{differential on t^4 e}\\
        $\tau^2 e$ & $(17, 4, 8)$ & $h_1 g$ & No & \ref{differential on t^2 e}\\
        $\tau^2 P e$ & $(25, 8, 12)$ & $h_1 d^2$ & Yes & \ref{differential on t^2 P e}\\
        $\tau^8 \Dh h_1^2$ & $(27, 7, 7)$ & $\tau^6 d e$ & Yes & \ref{differential on t^8 Dh h1^2}\\
        $\tau^4 \Dh h_1^2$ & $(27, 7, 11)$ & $\tau^2 d e$ & Yes & \ref{differential on t^4 Dh h1^2}\\
        $\tau^8 \Dc h_1^2$ & $(34, 9, 11)$ & $\tau^4 P n^2$ & Yes & \ref{differential on t^8 Dc h1^2}\\
        $\tau^4 \Dc h_1^2$ & $(34, 9, 15)$ & $P n^2$ & Yes & \ref{differential on t^4 Dc h1^2}\\
        $P a n$ & $(35, 10, 18)$ & $u d^2$ & Yes & \ref{differential on P a n}\\
        $\tau^4 P \Dh h_1^2$ & $(35, 11, 15)$ & $\tau^2 P d e$ & Yes & \ref{differential on t^4 P Dh h1^2}\\
        $\tau^4 P \Dc h_1^2$ & $(42, 13, 19)$ & $P^2 n^2$ & Yes & \ref{differential on t^4 P Dc h1^2}\\
    \end{longtable}

    For $\tau^6 d$ we saw in Lemma \ref{differential on t^6 d} that it is a permanent cycle. Similarly we already proved the differentials on $\tau^2 d$ and $\tau^4 e$ in Lemma \ref{differential on t^2 d} and Lemma \ref{differential on t^4 e}, respectively.

    For all other differentials see the following series of Lemmas.
\end{proof}

\begin{Lemma}\label{differential on t^4 d}\label{differential hitting t^2 h1 e}\deg{14}{4}{4}
    $d_5(\tau^4 d) = \tau^2 h_1 e$.
\end{Lemma}

\begin{proof}
    Consider internal coweight $14$ where $\tau^2 h_1 e$ must be involved in some differential. We claim that $\tau^2 h_1 e$ cannot support a differential. The only possible target is $h_1^2 g$, but if $h_1^2 g$ gets hit by a differential then there is no element left to represent what we called $h_1^2 \cdot g$ in Corollary \ref{representation of rho-localized target}.
    
    The claimed $d_5$ on $\tau^4 d$ is the only remaining differential that $\tau^2 h_1 e$ can be involved in. 
\end{proof}

\begin{Lemma}\label{differential on t^6 e}\label{differential hitting t^4 h1 g}\deg{17}{4}{4}
    $d_5(\tau^6 e) = \tau^4 h_1 g$.
\end{Lemma}

\begin{proof}
    Recall that in Lemma \ref{differential on t^6 n} we saw $d_6(\tau^6 n) = \tau^4 g$. Hence $\tau^4 h_1 g = \tau^4 g \cdot h_1$ has to get hit by a differential of length at most $6$ and in internal coweight $17$ the claimed differential is the only option.
\end{proof}

\begin{Lemma}\label{differential on t^4 a}\label{differential hitting t^2 e}\label{differential on t^2 e}\deg{12}{3}{2}
    $d_6(\tau^4 a) = \tau^2 e$, so in particular $d_r(\tau^2 e) = 0$ for all $r \geq 5$.
\end{Lemma}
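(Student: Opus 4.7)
The plan is to reproduce in full the argument sketched inside the proof of Lemma \ref{differential on t^4 a n}. First, I note that both $\tau^4 a$ (in degree $(12,3,2)$) and $\tau^2 e$ (in degree $(17,4,8)$) lie in internal coweight $13$, and that the claimed target $\rho^6 \tau^2 e$ sits in the correct tridegree to be hit by a $d_6$ from $\tau^4 a$. By Corollary \ref{representation of rho-localized target} there is no $\rho$-torsion free class on $E_\infty$ in internal coweight $13$ and filtration $3$, so $\tau^4 a$ cannot survive; it must either support a differential or be hit by one.

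Next, I would enumerate the degree-allowed possibilities. For degree reasons the only candidate source of a differential onto a $\rho^r$-multiple of $\tau^4 a$ is $\tau^9 h_2^2$ (with $r = 7$), and the only candidate target of a differential supported on $\tau^4 a$ is $\tau^2 e$ (with $r = 6$). So it suffices to rule out the incoming differential from $\tau^9 h_2^2$.

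To kill $\tau^9 h_2^2$ before the $E_7$-page, I appeal to the internal coweight $5$ chart (figure \ref{fig:Rho-Bockstein E_2 rho-free part s+f-w=5 with differentials}), which forces $d_6(\tau^4 h_1) = \tau h_2^2$. Multiplying by the infinite cycle $\tau^8$ (Corollary \ref{t^8, h1, h2, u, g are infinite cycles}) gives $d_6(\tau^{12} h_1) = \tau^9 h_2^2$. Consequently $\tau^9 h_2^2$ becomes $\rho$-power-torsion on $E_7$, and by Proposition \ref{only differentials between rho-free elements} it cannot support a non-trivial $d_7$. This eliminates the incoming possibility and forces $d_6(\tau^4 a) = \tau^2 e$.

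The parenthetical claim $d_r(\tau^2 e) = 0$ for $r \geq 5$ is then essentially automatic: in order to be the target of the $d_6$ just proved, $\tau^2 e$ must be present on $E_6$, so $d_5(\tau^2 e) = 0$; for $r \geq 6$ the class $\tau^2 e$ has been killed, so all further differentials on it vanish trivially. The main obstacle, as is typical in this section, is the bookkeeping of degree-allowed candidates and verifying that the incoming differential from $\tau^9 h_2^2$ is genuinely excluded — and here the $\tau^8$-multiplication trick from coweight $5$ handles it cleanly.
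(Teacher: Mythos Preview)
Your proposal is correct and follows essentially the same approach as the paper: work in internal coweight $13$, observe that $\tau^4 a$ must be involved in a differential, rule out the only possible incoming source $\tau^9 h_2^2$ via $d_6(\tau^{12} h_1) = \tau^9 h_2^2$ (obtained from $d_6(\tau^4 h_1) = \tau h_2^2$ by $\tau^8$-multiplication), and conclude that the only remaining option is $d_6(\tau^4 a) = \tau^2 e$. One minor phrasing point: for $r \geq 7$ the class $\tau^2 e$ is not literally ``killed'' but becomes $\rho$-power-torsion, whence Proposition~\ref{only differentials between rho-free elements} gives the vanishing; your conclusion is nonetheless correct.
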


\begin{proof}
    In internal coweight $13$ the element $\tau^4 a$ must be involved in some differential. We claim that it cannot get hit by a differential. The only possible source for such a differential is $\tau^9 h_2^2$. From the discussion of internal coweight $5$ surrounding figure \ref{fig:Rho-Bockstein E_2 rho-free part s+f-w=5 with differentials} in section \ref{section: The internal coweight method} we already know $d_6(\tau^{4} h_1) = \tau h_2^2$. Via $\tau^8$-multiplication we get $d_6(\tau^{12} h_1) = \tau^9 h_2^2$.

    Then $\tau^4 a$ must support a differential and the claimed one is the only option. In particular this means that $\tau^2 e$ cannot support any differentials.
\end{proof}

For the differential on $\tau^4 c$ we want to refer to a $d_7$ hitting $d$, so we state it first.

\begin{Lemma}\label{differential on t^4 c}\label{differential hitting d}\deg{8}{3}{1}
    $d_7(\tau^4 c) = d$.
\end{Lemma}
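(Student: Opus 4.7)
The plan is to work in internal coweight $10$, where both $\tau^4 c$ and $d$ live, and combine Corollary \ref{representation of rho-localized target} with Proposition \ref{only differentials between rho-free elements} to force the differential.

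First, I would argue that $d$ must be $\rho$-power-torsion on $E_\infty$. By Corollary \ref{representation of rho-localized target}, the $\rho$-localized cohomology is generated over $\mathbb{F}_2[\rho^{\pm 1}, \tau^8]$ by monomials in $h_1, h_2, u, g$, and one checks directly that no such monomial lies in Adams filtration $4$ and internal coweight $10$: the relevant filtration $4$ monomials are $h_1^4$ (coweight $4$), $h_1 u$ (coweight $8$), and $g$ (coweight $12$). So $d$ maps to zero under $\rho$-localization, i.e.\ $d$ is $\rho$-power-torsion on $E_\infty$. Since $d$ is $\rho$-torsion free on $E_1$, Proposition \ref{only differentials between rho-free elements} forces $d$ to be involved in some $\rho$-Bockstein differential.

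Next, I would rule out the possibility that $d$ supports a differential. A $d_r$ starting at $d$ would land at $(13, 5, 8)$ as a $\rho^r$-multiple, forcing a source of the form $\rho^r y$ in $\Ext_{\mathcal{A}(2)^\mathbb{C}}(\mathbb{M}_2^\mathbb{C}, \mathbb{M}_2^\mathbb{C})[\rho]$ of the appropriate degree; inspection of \cite[Theorem 4.16]{Isa09} in stem $13$, filtration $5$ shows there are no such classes surviving to the relevant page (the only candidates are $h_1$- or $h_2$-multiples of elements already killed or shown to be infinite cycles). Therefore $d$ must be the target of a differential $d_r(x) = \rho^r d$ for some $r$.

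Solving for the source via the degree shift $x \in (15 - r, 3, 8 - r)$, the possible values $r = 1, \ldots, 7$ yield candidates which, again by the $\mathbb{C}$-motivic $E_1$-page and prior differentials from the internal coweight $10$ chart, are all excluded except for $r = 7$ with source at $(8, 3, 1)$, which is exactly $\tau^4 c$. The main obstacle is this bookkeeping of stem $13$, filtration $5$ and the coweight $10$ chart through the pages $E_1$ through $E_7$, but it is routine given the previously established $d_r$'s on $\tau$-multiples of $h_1$, $h_2$, and $c$. In particular, since $\tau^4 c$ is not itself hit by any earlier differential (for example it cannot be the target of $d_3(P) = h_1^2 c$ after multiplying by $\tau^4$, since the weights do not match), it survives to $E_7$ and the claimed differential $d_7(\tau^4 c) = d$ is forced.
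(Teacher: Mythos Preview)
Your proof is correct and follows exactly the paper's approach: the paper's proof is the one-line ``Immediate from internal coweight $10$ and the fact that $d$ cannot be $\rho$-torsion free on $E_\infty$ for degree reasons,'' and you have simply unpacked that sentence into explicit degree checks. The parenthetical about $d_3(P) = h_1^2 c$ and $\tau^4$ is a bit oddly chosen (that differential lands in filtration $5$, so it is not a candidate for hitting $\tau^4 c$ in filtration $3$ regardless of weights), but it does no harm to the argument.
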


\begin{proof}
    Immediate from internal coweight $10$ and the fact that $d$ cannot be $\rho$-torsion free on $E_\infty$ for degree reasons.
\end{proof}

\begin{Lemma}\label{differential on t^2 P e}\label{differential hitting h1 d^2}\deg{25}{8}{12}
    $d_5(\tau^2 P e) = h_1 d^2$.
\end{Lemma}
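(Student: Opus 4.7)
The plan is to apply the internal coweight method at $s+f-w = 21$, where both $\tau^2 P e$ (in degree $(25,8,12)$) and the proposed target $h_1 d^2$ (in degree $(29,9,17)$) reside. As with the other $d_5$ lemmas in this section, the goal is to argue that $h_1 d^2$ must be hit by a differential and that the only possible source is $\tau^2 P e$.

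First I would show that $h_1 d^2$ cannot survive to $E_\infty$ as a $\rho$-torsion free class. By Corollary \ref{representation of rho-localized target}, the $\rho$-torsion free classes on $E_\infty$ are in bijection with the monomials in $\rho^{\pm 1}, \tau^8, h_1, h_2, u, g$ modulo the listed relations. Enumerating the monomials of Adams filtration $9$ and comparing with the required motivic degree $(29, 9, 17)$ (equivalently, fixed coweight $21$) shows that no such monomial matches, so $h_1 d^2$ must be $\rho$-power-torsion on $E_\infty$, and hence must be involved in a nontrivial $\rho$-Bockstein differential.

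Next I would rule out that $h_1 d^2$ supports a differential by inspecting the internal coweight $21$ chart at Adams filtration $10$: the relevant positions are either empty or already accounted for as $\rho$-power-torsion by earlier lemmas. So $h_1 d^2$ must be the target of some $d_r$, whose source sits in filtration $8$ at degree $(30 - r, 8, 17 - r)$. For $r = 2, 3, 4$, each candidate source is eliminated by consulting the already-established lemmas of this section — either the candidate has been killed by (or made to support) a prior differential, or its would-be $d_r$ lands elsewhere in filtration $9$ by the Leibniz rule. The only remaining option is $\tau^2 P e$ at $r = 5$, which forces $d_5(\tau^2 P e) = h_1 d^2$.

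The main obstacle is verifying that $\tau^2 P e$ itself persists as an indecomposable $\rho$-torsion free class on the $E_5$-page. Since $d_3(P) = h_1^2 c$ by Proposition \ref{proof of d_3 differentials}, a naïve application of the Leibniz rule would give $d_3(\tau^2 P e) = \tau^2 h_1^2 c e$; one must check, using the relations in $\Ext_{\mathcal{A}(2)^{\mathbb{C}}}(\mathbb{M}_2^{\mathbb{C}}, \mathbb{M}_2^{\mathbb{C}})$ from \cite{Isa09}, that this product vanishes on $E_3$ (so that $\tau^2 P e$ is a genuine $d_3$-cycle, and likewise a $d_4$-cycle) before the claimed $d_5$ can be read off the coweight $21$ chart. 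Once that bookkeeping is done, the differential is forced by the internal coweight analysis.
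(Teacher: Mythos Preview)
Your approach is correct and uses the same internal coweight method, but the paper takes a cleaner shortcut that avoids most of your bookkeeping. Rather than enumerating monomials in the $\rho$-localization to see that $h_1 d^2$ cannot survive, the paper simply recalls that $d$ is hit by a $d_7$ (namely $d_7(\tau^4 c) = d$). Since $h_1$ and $d$ are permanent cycles, the product $h_1 d^2 = h_1 \cdot d \cdot d$ must therefore be hit by a differential of length at most~$7$. Looking at internal coweight~$21$, the only candidate source in filtration~$8$ within that range is $\tau^2 P e$, which forces the claimed $d_5$.

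This multiplicative bound also dissolves your ``main obstacle'': you do not need to check separately that $\tau^2 P e$ survives to $E_5$ by computing $d_3$ via the Leibniz rule and the relations in \cite{Isa09}. The argument from the target side shows that $h_1 d^2$ \emph{must} be hit by some $d_r$ with $r \leq 7$, and $\tau^2 P e$ is the only possible source; hence $\tau^2 P e$ necessarily survives long enough to support the differential. Your route works, but the paper's is shorter and avoids the relation-chasing you flagged.
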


\begin{proof}
    Recall from Lemma \ref{differential hitting d} that $d$ gets hit by a $d_7$. This implies that in internal coweight $21$ the element $h_1 d^2$ has to get hit by a differential of length at most $7$. The only option is the given differential.
\end{proof}

\begin{Lemma}\label{differential on t^8 Dh h1^2}\label{differential hitting t^6 d e}\deg{27}{7}{7}
    $d_5(\tau^8 \Dh h_1^2) = \tau^6 d e$.
\end{Lemma}

\begin{proof}
    Note that the target is decomposable $\tau^6 d e = \tau^6 d \cdot e$. We saw in Lemma \ref{differential on t^6 d} that $\tau^6 d$ is a permanent cycle. By Lemma \ref{differential hitting e} $e$ gets hit by a $d_6$. So $\tau^6 d e$ must get hit by a differential of length at most $6$. Looking at internal coweight $27$, there are two possible sources, namely $\tau^8 a d$ and $\tau^8 \Dh h_1^2$. But $\tau^8 a d = \tau^8 \cdot a \cdot d$ is a permanent cycle by Lemma \ref{differential hitting a} and Lemma \ref{differential hitting d}.
\end{proof}

\begin{Lemma}\label{differential on t^4 Dh h1^2}\label{differential hittingt^2 d e}\deg{27}{7}{11}
    $d_5(\tau^4 \Dh h_1^2) = \tau^2 d e$.
\end{Lemma}

\begin{proof}
    Analyze internal coweight $23$ where $\tau^4 \Dh h_1^2$ and $\tau^2 d e$ live. Note that $\tau^2 d e = d \cdot \tau^2 e$ is a permanent cycle because both factors are permanent cycles by Lemma \ref{differential hitting d} and Lemma \ref{differential hitting t^2 e}. But $\tau^2 d e$ must also be involved in some differential for degree reasons. Since $\tau^2 e$ gets hit by a $d_6$, $\tau^2 d e$ must get hit by a differential of length at most $6$. The possible sources are $\tau^4 a d$ and $\tau^4 \Dh h_1^2$. By Lemma \ref{differential on t^6 a} and Lemma \ref{differential on t^6 d} both $\tau^6 a$ and $\tau^6 d$ are permanent cycles. In particular, this means that $\tau^6 a \cdot \tau^6 d = \tau^8 \cdot \tau^4 a d$ is also a permanent cycle. So $\tau^4 a d$ can only support a differential hitting a $\tau^8$-torsion class, which $\tau^2 d e$ is not (and $\tau^2 d e$ also cannot become $\tau^8$-torsion until a hypothetical $d_6$ on $\tau^4 a d$ would hit it).
\end{proof}

\begin{Lemma}\label{differential on t^8 Dc h1^2}\label{differential hitting t^4 P n^2}\deg{34}{9}{11}
    $d_5(\tau^8 \Dc h_1^2) = \tau^4 P n^2$.
\end{Lemma}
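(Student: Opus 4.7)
The plan is to apply the internal coweight method at internal coweight $s + f - w = 32$, in the same style as the proof of \ref{differential on t^8 Dh h1^2}. Both the proposed source $\tau^8 \Dc h_1^2$ at $(34, 9, 11)$ and the proposed target $\tau^4 P n^2$ at $(38, 10, 16)$ live at internal coweight $32$, and the $d_5$-differential is exactly the $\rho^5$-shift that moves between them on the chart.

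First I would argue that $\tau^4 P n^2$ must be hit by a differential. Writing $\tau^4 P n^2 = \tau^4 P \cdot n \cdot n$, the factor $\tau^4 P$ is an infinite cycle by \ref{differential on t^4 P}, and $n$ supports no differential by inspection of table \ref{tab:Bockstein d_r}; hence by the Leibniz rule $\tau^4 P n^2$ is itself a $d_r$-cycle on every page and cannot support a nontrivial differential. On the other hand, $P$ does not appear among the generators of $\Ext_{\mathcal{A}(2)}(\mathbb{M}_2,\mathbb{M}_2)[\rho^{-1}]$ listed in \ref{representation of rho-localized target}, so $\tau^4 P n^2$ is $\rho$-power-torsion in the abutment and must therefore be killed by some incoming differential. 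Then I would scan the internal coweight $32$ chart for candidate sources at Adams filtration $9$ and stem strictly smaller than $38$: $\tau^8 \Dc h_1^2$ appears at stem $34$, and for every other $\rho$-free class in that strip I would argue either that it decomposes as a product of previously established infinite cycles (so by Leibniz it supports no nontrivial differential) or that its differential has already been determined in table \ref{tab:Bockstein d_r} and lands somewhere other than $\tau^4 P n^2$. Once all competitors are ruled out, $\tau^8 \Dc h_1^2$ is forced to be the source, and the horizontal offset of four $\rho$-multiples between it and $\tau^4 P n^2$ pins the differential as a $d_5$.

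The hard part will be the bookkeeping in the case analysis: I need to produce a complete list of $\rho$-free classes at Adams filtration $9$, internal coweight $32$, stem less than $38$ on each of the pages $E_2, E_3, E_4, E_5$, consult the internal coweight chart at \cite{Charts}, and verify for each one that no alternative differential to $\tau^4 P n^2$ is possible. This is where the preceding Lemmas of the current Proposition and the $d_r$-tables for $r \leq 4$ do all the real work, by having already accounted for enough of the neighboring classes that the only survivor in filtration $9$ capable of reaching $\tau^4 P n^2$ is $\tau^8 \Dc h_1^2$.
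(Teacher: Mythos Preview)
Your overall strategy via internal coweight $32$ matches the paper's, and your endgame (show $\tau^4 P n^2$ must be hit, then check that $\tau^8 \Dc h_1^2$ is the only viable source) is exactly right. But your justification for why $\tau^4 P n^2$ must be $\rho$-power-torsion in the abutment has a gap: the fact that $P$ is not listed among the generators of the $\rho$-localization in \ref{representation of rho-localized target} does \emph{not} imply that elements with $P$ in their name vanish there. Indeed, $\tau^4 P$ itself is $\rho$-torsion free, representing $\rho^{-4}\tau^8 h_1^4$ after localization (this is precisely what was established in \ref{differential on t^4 P}). So your reasoning as written does not go through.

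The paper's argument is both simpler and sharper: since $n$ is hit by a $d_{10}$ (\ref{differential hitting n}) and $\tau^4 P$ is an infinite cycle, the product $\tau^4 P \cdot n \cdot n$ must get hit by a differential of length at most $10$. This single observation replaces your appeal to the localization and moreover bounds the length of the incoming differential, so the scan for sources is immediate. After that, the paper simply notes that the claimed $d_5$ is the only option on the chart; your projected ``hard part'' of case-by-case bookkeeping through filtration~$9$ is not actually needed.
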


\begin{proof}
    Consider internal coweight $32$ where $\tau^8 \Dc h_1^2$ and $\tau^4 P n^2$ live. Note that the purported target $\tau^4 P n^2 = \tau^4 P \cdot n^2$ must get hit by some differential since Lemma \ref{differential hitting n} says that $n$ gets hit by a $d_{10}$ and $\tau^4 P$ is a permanent cycle. The only option is the claimed $d_5$.
\end{proof}

\begin{Lemma}\label{differential on t^4 Dc h1^2}\label{differential hitting P n^2}\deg{34}{9}{15}
    $d_5(\tau^4 \Dc h_1^2) = P n^2$.
\end{Lemma}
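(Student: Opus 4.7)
My plan mirrors the proof of the previous lemma \ref{differential on t^8 Dc h1^2}, analyzing internal coweight $28$ on the $E_5$-page, where both the source $\tau^4 \Dc h_1^2$ at $(34,9,15)$ and the purported target $P n^2$ at $(38,10,20)$ live. The key observation is that degree $(38,10,20)$ does not appear in the $\rho$-localized presentation of \ref{representation of rho-localized target}, so $P n^2$ cannot represent a $\rho$-torsion free class on $E_\infty$ and must therefore be involved in some $\rho$-Bockstein differential.

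First I would check that $P n^2$ actually persists to $E_5$. Since $d_3(P) = \rho^3 h_1^2 c$ by \ref{proof of d_3 differentials} and $d_3(n^2) = 2n \cdot d_3(n) = 0$ in characteristic two, the Leibniz rule yields $d_3(P n^2) = \rho^3 h_1^2 c n^2$, which I would verify vanishes either via a relation in $\Ext_{\mathcal{A}(2)^{\mathbb{C}}}$ or by direct inspection of the $E_3$ chart in the relevant internal coweight. A similar check rules out any $d_4$ on $P n^2$. I would then verify that $P n^2$ supports no $d_r$ for $r \geq 5$ by examining the candidate target degrees, forcing $P n^2$ to be hit by some differential.

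Finally, I would enumerate the source candidates on $E_5$ in internal coweight $28$ at Adams filtration $9$. Using the previously established differentials on nearby elements, together with the infinite-cycle status of factors like $\tau^4 P$ (\ref{differential on t^4 P}), $\tau^8$, $h_1$, $h_2$, and $g$ (\ref{t^8, h1, h2, u, g are infinite cycles}), one eliminates all source candidates except $\tau^4 \Dc h_1^2$, which gives the claimed $d_5$.

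The main obstacle compared to the analogous previous lemma is that $P$ itself is not an infinite cycle, so the very clean argument ``$\tau^4 P \cdot n^2$ must die because $\tau^4 P$ is an infinite cycle and $n^2$ eventually dies'' does not apply verbatim here. Instead, the persistence of $P n^2$ to $E_5$ and the forcing of it being hit must both be handled via the $\rho$-localized presentation and careful chart work, rather than by factoring the target into an infinite cycle times something that must die.
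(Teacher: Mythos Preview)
Your framework is right: work in internal coweight $28$, observe that $P n^2$ cannot survive $\rho$-localization, and conclude it must be hit. The gap is in the elimination step. You propose to rule out competing sources using ``infinite-cycle status of factors like $\tau^4 P$, $\tau^8$, $h_1$, $h_2$, and $g$'', but in internal coweight $28$ at Adams filtration $9$ there is a second candidate $\tau^8 P h_1 e$ at $(26,9,7)$, and it is \emph{not} a product of infinite cycles in any useful way: $P$ supports a $d_3$, and $\tau^4 e$ supports a $d_5$, so no obvious factorization kills its differentials. Your outlined method gives no mechanism to exclude it.

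The paper's argument handles this differently. It notes that another element in the same internal coweight, $\tau^2 P a^2$ at $(32,10,14)$, must also get hit (again by the $\rho$-localization constraint), and that the only possible source for hitting $\tau^2 P a^2$ is $\tau^8 P h_1 e$. This ``uses up'' $\tau^8 P h_1 e$, leaving $\tau^4 \Dc h_1^2$ as the only remaining source for $P n^2$. So the missing idea in your proposal is that you must track \emph{two} elements in filtration $10$ that both need to be hit, and match them against the two available sources in filtration $9$; infinite-cycle bookkeeping alone is not enough here. Your side discussion about verifying persistence of $P n^2$ to $E_5$ is fine but not where the real work lies.
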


\begin{proof}
    Look at internal coweight $28$, where $P n^2$ must get hit by a differential. There are two possible sources, namely $\tau^8 P h_1 e$ and $\tau^4 \Dc h_1^2$. Then note that $\tau^2 P a^2$ must get hit by a differential and the only possibility for that is a differential originating on $\tau^8 P h_1 e$. So $\tau^4 \Dc h_1^2$ must hit $P n^2$.
\end{proof}

\begin{Lemma}\label{differential on P a n}\label{differential hitting u d^2}\deg{35}{10}{18}
    $d_5(P a n) = u d^2$.
\end{Lemma}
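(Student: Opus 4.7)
The plan is to analyze internal coweight $27$, where both $Pan$ (in degree $(35,10,18)$) and $\rho^5 ud^2$ (in degree $(34,11,18)$) live, and then isolate the differential by elimination.

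First I would argue that $ud^2$ is forced to be $\rho$-power-torsion on $E_\infty$. By \ref{differential hitting d} we have $d_7(\tau^4 c)=d$, so $d$ maps to $0$ under the $\rho$-localization; hence so does $ud^2$. Looking at \ref{representation of rho-localized target} directly confirms that the relations $h_1h_2=h_2u=0$ and the pairing of generators leave no $\rho$-torsion-free class in coweight $27$ and filtration $11$ beyond what is already represented elsewhere, so $ud^2$ must either support or be the target of a $\rho$-Bockstein differential.

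Second I would rule out $ud^2$ supporting a differential. A $d_r$ on $ud^2$ would land in bidegree $(38,12)$ inside coweight $27$; inspecting the chart there, no candidate target is available on $E_r$ for any $r \ge 5$. Thus $ud^2$ must be hit. A priori the source lives in filtration $10$ of coweight $27$; on the $E_5$-page of the $\rho$-free quotient the only classes in that position are $Pan$ together with products that are already dealt with. The competitors factor through classes known to be infinite cycles or already engaged: $\tau^{8}\cdot a \cdot d$ is an infinite cycle by \ref{differential hitting a} together with \ref{differential on t^6 d}, so any multiple of it is a permanent cycle in $\rho$-filtration zero and cannot hit $\rho^5 ud^2$; analogously, the $\Delta h_1$-type classes in this coweight are already consumed by $d_5(\tau^8 \Delta h_1 \cdot h_1^2)=\tau^6 de$ and $d_5(\tau^4 \Delta h_1 \cdot h_1^2)=\tau^2 de$ from Lemmas \ref{differential on t^8 Dh h1^2} and \ref{differential on t^4 Dh h1^2}. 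This leaves $Pan$ as the unique possibility, and the stem/filtration shift $(35,10)\to(34,11)$ with a $\rho^5$ factor pins the differential down as $d_5(Pan)=\rho^5 ud^2$.

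I expect the main obstacle to be the elimination step: one has to confirm, via the coweight-$27$ chart on $E_5$, that every other filtration-$10$ class in coweight $27$ is either decomposable as a product one of whose factors is a permanent cycle already accounted for, or is itself the target of a shorter differential. Getting this bookkeeping right — in particular checking that no hidden decomposable source hides behind $Pan$ — is the one place where the argument could fail if a class were overlooked, and it is where the previously established infinite-cycle status of $\tau^6 a$, $\tau^6 d$, $u$, $Pa$-type reductions, and the already-fired $d_5$'s on the $\tau^{8k}\Delta h_1 \cdot h_1^2$ families all have to be invoked together.
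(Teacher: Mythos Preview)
Your approach is essentially the same as the paper's: work in internal coweight $27$, argue that $ud^2$ must be hit by a differential of length at most $7$ (since $d$ is hit by a $d_7$ and $u$ is an infinite cycle), and conclude that $Pan$ is the only possible source. The paper's version is considerably more concise because it observes directly that $ud^2 = u \cdot d \cdot d$ is a product of infinite cycles (so it cannot support any differential) and then simply states that the claimed $d_5$ is the only option, whereas you spend effort ruling out $ud^2$ as a source by degree inspection and eliminating competing filtration-$10$ sources one by one; this extra bookkeeping is harmless but unnecessary, and your citation of \ref{differential on t^6 d} for the infinite-cycle status of $\tau^8 \cdot a \cdot d$ is off --- you only need that $a$ and $d$ are themselves permanent cycles.
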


\begin{proof}
    Consider internal coweight $27$ where $u d^2 = u \cdot d^2$ lives. It has to get hit by a differential of length at most $7$ because $d$ is the target of a $d_7$ by Lemma \ref{differential hitting d} and $u$ is a permanent cycle by Corollary \ref{t^8, h1, h2, u, g are permanent cycles}. The only option is the claimed differential.
\end{proof}

\begin{Lemma}\label{differential on t^4 P Dh h1^2}\label{differential hitting t^2 P d e}\label{differential on t^4 P Dc h1^2}\label{differential hitting P^2 n^2}
\leavevmode
    \begin{enumerate}[label=\rm{(\arabic*)}]
        \item \deg{35}{11}{15} $d_5(\tau^4 P \Dh h_1^2) = \tau^2 P d e$,
        \item \deg{42}{13}{19} $d_5(\tau^4 P \Dc h_1^2) = P^2 n^2$.
    \end{enumerate}
\end{Lemma}

\begin{proof}
    For $\tau^4 P \Dh h_1^2$ we can use the relation $\tau^4 P \cdot \tau^8 \Dh h_1^2 = \tau^8 \cdot \tau^4 P \Dh h_1^2$ and the $d_5$-differential on $\tau^8 \Dh h_1^2$ that we already know from Lemma \ref{differential on t^8 Dh h1^2}.
    
    For $\tau^4 P \Dc h_1^2$ we can use the relation $\tau^4 P \cdot \tau^8 \Dc h_1^2 = \tau^8 \cdot \tau^4 P \Dc h_1^2$ and the $d_5$-differential on $\tau^8 \Dc h_1^2$ that we already know from Lemma \ref{differential on t^8 Dc h1^2}.
\end{proof}

\subsection{\texorpdfstring{$d_6$}{d\_6}-differentials}

\begin{Prop}\label{proof of d_6 differentials}
    Table \ref{tab:Bockstein d_r} describes the non-zero $d_6$-differentials in the $\rho$-Bockstein spectral sequence on all indecomposables on $E_6$.
\end{Prop}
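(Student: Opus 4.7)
The plan is to follow the template established in the proofs of Propositions \ref{proof of d_4 differentials} and \ref{proof of d_5 differentials}. First, using only the degree signature $(-1,1,0)$ of $d_6$ together with the requirement that the image be $\rho^6$-divisible and Proposition \ref{only differentials between rho-free elements}, I would tabulate every indecomposable on $E_6$ whose $d_6$ is not forced to vanish for degree reasons. The resulting list is the $r=6$ block of table \ref{tab:Bockstein d_r}, except that three of the entries there ($\tau^4 a$, $\tau^2 a$, $\tau^6 n$) already have their $d_6$ settled in Lemmas \ref{differential on t^4 a}, \ref{differential on t^2 a}, and \ref{differential on t^6 n} en route to earlier pages. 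The remaining rows are each promoted to their own lemma.

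The first proof engine is the internal coweight method: for each candidate I would pass to its internal coweight chart and identify, using \ref{representation of rho-localized target}, which $\mathbb{F}_2[\rho]$-summands on $E_6$ are forbidden from being $\rho$-torsion free at $E_\infty$. Such classes must be involved in a differential, and in most internal coweights the remaining combinatorics pin down a unique $d_6$. For example $d_6(\tau^4 h_1)=\tau h_2^2$ was already visible in figure \ref{fig:Rho-Bockstein E_2 rho-free part s+f-w=5 with differentials}. For $d_6(\tau^2 P h_2)=h_1^2 d$ I would use that $d$ is killed by a $d_7$ (Lemma \ref{differential hitting d}), so $h_1^2 d$ cannot survive; once products of infinite cycles are ruled out, $\tau^2 P h_2$ is the unique possible source. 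An analogous argument handles $d_6(\tau^3 h_0^2 e)=c d$ using that both $c$ and $d$ are later targets.

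The second engine is the Leibniz rule combined with $\tau^8$- and $P$-multiplication and multiplicative relations from $\mathcal{A}(2)^{\mathbb{C}}$. Many rows just propagate differentials already in hand: $d_6(\tau^8 P n)$, $d_6(\tau^2 P n)$, and $d_6(\tau^4 P^2 n)$ are obtained by $\tau^k$- and $P$-scalings of $d_6(\tau^6 n)=\tau^4 g$ (Lemma \ref{differential on t^6 n}), combined with the infinite cycle $\tau^4 P$ (Lemma \ref{differential on t^4 P}); $d_6(\tau^6 P h_1 d)$ follows by $P$-scaling $d_6(\tau^4 h_1 d)=\tau h_0^2 g$ (Lemma \ref{differential on t^4 h1 d}); $d_6(\tau^2 n^2)$ and $d_6(\tau^6 g^2)$ follow by $n$- and $g$-multiplying $d_6(\tau^6 n)$ and invoking suitable multiplicative relations in $\mathcal{A}(2)^{\mathbb{C}}$; and $d_6(\tau^{12} d e)$, $d_6(\tau^8 P d e)$ are settled by internal coweight arguments that use the infinite cycle $\tau^6 d$ (Lemma \ref{differential on t^6 d}) and the fact that $a$ is killed by a $d_7$ (Lemma \ref{differential hitting a}).

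The main obstacle, I expect, is the relational bookkeeping rather than any single hard case: several candidate rows look almost identical ($\tau^8 P n$ versus $\tau^2 P n$, or $\tau^{12} d e$ versus $\tau^8 P d e$) and differ only by $\tau^8$- or $P$-factors, so for each I must verify that the purported target is alive on $E_6$, is not already a $\rho^6$-multiple of something killed earlier, and that no alternative source in the same internal coweight accounts for it. The most delicate entry is likely $d_6(\tau^2 n^2)=n g$, since its internal coweight already features the nontrivial $d_3$ on $\tau^3 n^2$ from Lemma \ref{differential on t^3 n^2}, so one has to confirm that $ng$ is not yet dead on $E_6$ by tracking the $d_{10}$ on $n$ (Lemma \ref{differential hitting n}). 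Once each candidate is settled by one of these mechanisms, the Leibniz rule together with Corollary \ref{t^8, h1, h2, u, g are infinite cycles} extends the result to all decomposable $d_6$'s on $E_6$.
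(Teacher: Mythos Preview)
Your overall strategy matches the paper's, but there is a genuine gap and several of your specific mechanisms are incorrect.

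The main gap: you assert that the list of indecomposables on $E_6$ that could possibly support a $d_6$ for degree reasons is exactly the $r=6$ block of table~\ref{tab:Bockstein d_r}. That is false. There are three further indecomposables---$\tau^6 P n$ in degree $(23,7,6)$, $\tau^5\Dh$ in degree $(25,5,8)$, and $\tau^2 P^2 n$ in degree $(31,11,14)$---for which the target degree of $d_6$ is nonempty. For each of these you must prove $d_6=0$. The paper handles them in Lemmas~\ref{differential on t^6 P n}, \ref{differential on t^5 Dh}, and \ref{differential on t^2 P^2 n}: the first and third are shown to be \emph{targets} of $d_7$'s (from $\tau^{11}h_0^2 e$ and $\tau^7 P h_0^2 e$ respectively), hence cannot support differentials; $\tau^5\Dh$ must represent $\tau^8\cdot h_2\cdot g$ in the $\rho$-localization and is therefore an infinite cycle. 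Without these arguments your proof is incomplete.

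Several of your derivations for the occurring differentials also do not work as written. You cannot obtain $d_6(\tau^8 P n)=\tau^6 P g$ or $d_6(\tau^2 P n)=d^2$ by ``$\tau^k$- and $P$-scalings'' of $d_6(\tau^6 n)=\tau^4 g$: the targets are not $\tau$-power multiples of $\tau^4 g$ (note $d^2$ versus $\tau^4 g$), and in any case $P$ itself supports a $d_3$ and does not survive to $E_6$---only $\tau^4 P$ does. The paper instead argues these two by internal coweight (showing $\tau^6 P g$ and $d^2$ must be hit, and ruling out alternative sources). Similarly, $d_6(\tau^2 n^2)=n g$ does not follow from $n$-multiplying $d_6(\tau^6 n)$, since $\tau^6 n\cdot n=\tau^6 n^2$, not $\tau^2 n^2$; the paper's argument is that $n\cdot g$ must be hit (because $n$ is a $d_{10}$-target) and $\tau^2 n^2$ is the only possible source. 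And $d_6(\tau^6 g^2)=\tau\Dh g$ does not come from $g$-multiplying $d_6(\tau^6 n)$ either; the paper first establishes $d_8(\tau^6 h_2 n)=\tau\Dh$, whence $\tau\Dh\cdot g$ must be hit by a differential of length at most $8$, and $\tau^6 g^2$ is the only source. Your approach for $\tau^6 P h_1 d$ via ``$P$-scaling'' $d_6(\tau^4 h_1 d)$ runs into the same problem with $P$; the paper instead uses $\tau^4 h_1\cdot\tau^4 P\cdot\tau^6 d=\tau^8\cdot\tau^6 P h_1 d$ together with $d_6(\tau^4 h_1)=\tau h_2^2$.
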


\begin{proof}
    For degree reasons, the only indecomposables that can support a $d_6$-differential are given by table \ref{tab:Possible Bockstein d_6}. If the proof column is empty, then the proof of that differential is contained in this Proposition.

    \begin{longtable}{llllc}
    \caption{Possible non-zero $d_6$-differentials on indecomposable elements
    \label{tab:Possible Bockstein d_6}
    } \\
    \toprule
    $x$ & $(s,f,w)$ & $d_6(x)$ & Occurs & Proof\\
    \midrule \endfirsthead
    \caption[]{Possible non-zero $d_6$-differentials on indecomposable elements} \\
    \toprule
    $x$ & $(s,f,w)$ & $d_6(x)$ & Occurs & Proof\\
    \midrule \endhead
    \bottomrule \endfoot
        $\tau^4 h_1$ & $(1, 1, -3)$ & $\tau h_2^2$ & Yes & \\
        $\tau^2 P h_2$ & $(11, 5, 4)$ & $h_1^2 d$ & Yes & \\
        $\tau^4 a$ & $(12, 3, 2)$ & $\tau^2 e$ & Yes & \ref{differential on t^4 a}\\
        $\tau^2 a$ & $(12, 3, 4)$ & $e$ & Yes & \ref{differential on t^2 a}\\
        $\tau^6 n$ & $(15, 3, 2)$ & $\tau^4 g$ & Yes & \ref{differential on t^6 n}\\
        $\tau^3 h_0^2 e$ & $(17, 6, 7)$ & $c d$ & Yes & \\
        $\tau^8 P n$ & $(23, 7, 4)$ & $\tau^6 P g$ & Yes & \ref{differential on t^8 P n}\\
        $\tau^6 P n$ & $(23, 7, 6)$ & $\tau^4 d^2$ & No & \ref{differential on t^6 P n}\\
        $\tau^2 P n$ & $(23, 7, 10)$ & $d^2$ & Yes & \ref{differential on t^2 P n}\\
        $\tau^6 P h_1 d$ & $(23, 9, 7)$ & $\tau^3 P h_0^2 g$ & Yes & \ref{differential on t^6 P h1 d}\\
        $\tau^5 \Dh$ & $(25, 5, 8)$ & $\tau^2 n^2$ & No & \ref{differential on t^5 Dh}\\
        $\tau^2 n^2$ & $(30, 6, 14)$ & $n g$ & Yes & \ref{differential on t^2 n^2}\\
        $\tau^{12} d e$ & $(31, 8, 6)$ & $\tau^6 a^3$ & Yes & \ref{differential on t^12 d e}\\
        $\tau^4 P^2 n$ & $(31, 11, 12)$ & $\tau^2 P^2 g$ & Yes & \ref{differential on t^4 P^2 n}\\
        $\tau^2 P^2 n$ & $(31, 11, 14)$ & $P^2 g$ & No & \ref{differential on t^2 P^2 n}\\
        $\tau^8 P d e$ & $(39, 12, 14)$ & $\tau^2 P a^3$ & Yes & \ref{differential on t^8 P d e}\\
        $\tau^6 g^2$ & $(40, 8, 18)$ & $\tau \Dh g$ & Yes & \ref{differential on t^6 g^2}\\
    \end{longtable}

    The differentials on $\tau^4 a$, $\tau^2 a$, and $\tau^6 n$ were already recorded in Lemma \ref{differential on t^4 a}, Lemma \ref{differential hitting e}, and Lemma \ref{differential on t^6 n}.

    The differential on $\tau^4 h_1$ was already proven when we discussed internal coweight $5$ in section \ref{section: The internal coweight method}, see also figure \ref{fig:Rho-Bockstein E_2 rho-free part s+f-w=5 with differentials}. The differentials on $\tau^2 P h_2$ and $\tau^3 h_0^2 e$ are also immediate from their respective internal coweights $12$ and $16$.

    For all other differentials see the following series of Lemmas.
\end{proof}

\begin{Lemma}\label{differential on t^8 P n}\label{differential hitting t^6 P g}\deg{23}{7}{4}
    $d_6(\tau^8 P n) = \tau^6 P g$.
\end{Lemma}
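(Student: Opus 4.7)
The plan is to pin down the differential by internal coweight analysis on the $E_6$-page at internal coweight $26$. First I would verify that $\tau^6 P g$, whose degree is $(28, 8, 10)$, cannot represent a $\rho$-torsion free class on $E_\infty$. By Corollary \ref{representation of rho-localized target}, any such $\rho$-torsion free class must come from a monomial in $h_1, h_2, u, g, \tau^8$ subject to the relations $h_1 h_2 = h_2^3 = h_2 u = u^2 + h_1^2 g = 0$, and a direct check of the possible exponents shows that no such monomial lies in Adams filtration $8$ and internal coweight $26$ simultaneously. Consequently $\tau^6 P g$ is $\rho$-power-torsion on $E_\infty$. Since its potential $d_r$-target degree $(27, 9, 10)$ is unpopulated on $E_6$, $\tau^6 P g$ cannot support a differential, and therefore must itself be hit by some $d_r$ with $r \geq 6$ (smaller values having been ruled out by the already-established differentials on earlier pages).

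Next I would enumerate the candidate sources, i.e.\ $\rho$-torsion free classes on $E_r$ in degree $(29 - r, 7, 10 - r)$, as required by Proposition \ref{only differentials between rho-free elements}. The case $r = 6$ produces $\tau^8 P n$ in degree $(23, 7, 4)$. The competing candidates for $r \geq 7$ all sit in very low motivic weight and turn out either to be products containing an infinite cycle factor from the list $\tau^8, h_1, h_2, u, g$ of Corollary \ref{t^8, h1, h2, u, g are infinite cycles} together with $\tau^4 P$ from Lemma \ref{differential on t^4 P} (which forces them to be permanent cycles or to map elsewhere), or to have been shown in earlier lemmas to support differentials landing on different targets. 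Eliminating each in turn leaves $\tau^8 P n$ as the unique possible source, giving the claimed $d_6$.

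A useful independent cross-check is the Leibniz computation applied to the factorization $\tau^{10} P n = \tau^6 n \cdot \tau^4 P$: combined with $d_6(\tau^6 n) = \tau^4 g$ from Lemma \ref{differential on t^6 n} and the infinite cycle status of $\tau^4 P$ from Lemma \ref{differential on t^4 P}, this yields $d_6(\tau^{10} P n) = \tau^8 P g$, the $\tau^8$-multiple of the asserted differential. This is consistent with (although it does not by itself imply) the claim, because $\tau^6$ does not exist on $E_6$ and so we cannot directly divide by $\tau^2$ to pass from $\tau^{10} P n$ to $\tau^8 P n$. The main obstacle in the argument is thus the candidate enumeration in the second step: several degrees in low weight along internal coweight $26$ are populated on $E_6$, and each potential source must be inspected and dispatched individually. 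Fortunately, by this point in the computation every relevant class in those degrees has been identified as either an infinite cycle or as supporting a previously determined differential, so the case analysis is bookkeeping rather than a genuinely new input.
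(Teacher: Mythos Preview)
Your approach is essentially the same as the paper's: analyze internal coweight $26$, observe that $\tau^6 P g$ must be hit, and rule out the competing sources. The paper carries this out more concretely: it identifies the only other candidate source as $\tau^{12} P c$ (which would hit $\tau^6 P g$ via a $d_{13}$) and dispatches it by factoring $\tau^{12} P c = \tau^8 \cdot \tau^4 P \cdot c$ as a product of three infinite cycles, citing Corollary~\ref{t^8, h1, h2, u, g are infinite cycles}, Lemma~\ref{differential on t^4 P}, and Lemma~\ref{differential hitting c} respectively. Your write-up is vague precisely at this point: you list only $\tau^8, h_1, h_2, u, g, \tau^4 P$ as known infinite cycles, but the factor $c$ is what actually needs to be handled, and its infinite-cycle status comes from being the \emph{target} of $d_7(\tau^4 h_1^2)$. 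Your Leibniz cross-check on $\tau^{10} P n$ is a nice sanity check but, as you correctly note, does not by itself establish the claim.
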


\begin{proof}
    Consider internal coweight $26$ and note that $\tau^6 P g$ must be involved in some differential and the only option is that it gets hit by one. The elements that can possibly support a differential onto $\tau^6 P g$ are $\tau^{12} P c$ and $\tau^8 P n$. But $\tau^{12} P c = \tau^8 \cdot \tau^4 P \cdot c$ is a product of permanent cycles by Corollary \ref{t^8, h1, h2, u, g are permanent cycles}, Lemma \ref{differential on t^4 P}, and Lemma \ref{differential hitting c} so it cannot support any differentials.
\end{proof}

\begin{Lemma}\label{differential on t^11 h0^2 e}\label{differential hitting t^6 P n}\label{differential on t^6 P n}\deg{17}{6}{-1}
    $d_7(\tau^{11} h_0^2 e) = \tau^6 P n$, so in particular $d_r(\tau^6 P n) = 0$ for all $r \geq 6$.
\end{Lemma}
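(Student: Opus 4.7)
The plan is to work in internal coweight $24$, where both $\tau^6 P n$ in degree $(23,7,6)$ and $\tau^{11} h_0^2 e$ in degree $(17,6,-1)$ reside. By Corollary~\ref{representation of rho-localized target}, no monomial in the $\rho$-localized target has Adams filtration $7$ and internal coweight $24$: the filtration-$7$ basis of $\text{Ext}_{\mathcal{A}(1)^{\text{cl}}}(\mathbb{F}_2, \mathbb{F}_2)$ consists of $h_1^7, h_1^4 u, h_1^3 g, ug$ with internal coweights $7, 11, 15, 19$, and none is congruent to $24$ modulo $8$ (the internal coweight of $\tau^8$). Hence $\tau^6 P n$ cannot be $\rho$-torsion free on $E_\infty$ and must participate in some $\rho$-Bockstein differential.

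First I would rule out $\tau^6 P n$ supporting any $d_r$-differential for $r \geq 6$. A target of such a differential would live in degree $(22+r, 8, 6+r)$, still at filtration $8$ and internal coweight $24$. Consulting the internal coweight $24$ chart on $E_6$ in \cite{Charts}, one verifies that no candidate target is available in the required stem. Therefore $\tau^6 P n$ must instead be hit by a differential.

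The remaining task is to identify the source, which must lie at filtration $6$ and internal coweight $24$. Enumerating candidates at $E_1$, one finds $\tau^{11} h_0^2 e$ together with elements such as $\tau^6 a^2$, $\tau^6 \Dh h_1$, $\tau^4 n^2$, $\tau^{12} c^2$, $\tau^{12} P h_2^2$, $\tau^{14} P h_1^2$, and their various $\tau^8$-multiples. Each alternative will be ruled out, either because it is a product of known infinite cycles (for instance $\tau^{12} P h_2^2$, via Lemma~\ref{differential on t^4 P} together with Corollary~\ref{t^8, h1, h2, u, g are infinite cycles}), because the Leibniz rule forces the relevant differential to vanish (for instance $\tau^6 a^2 = \tau^2 a \cdot \tau^4 a$ has $d_6 = 0$ by the cancellation arising from Lemmas~\ref{differential on t^2 a} and \ref{differential on t^4 a}), or because it has already been killed by a prior differential. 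After this elimination, $\tau^{11} h_0^2 e$ remains as the unique possible source, and the only $d_r$ landing on $\tau^6 P n$ from this position is a $d_7$. The ``in particular'' clause is then automatic, since $\tau^6 P n$ must persist to the $E_7$-page in order to be hit there.

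The main obstacle will be reconciling this conclusion with the naive Leibniz computation $d_6(\tau^{11} h_0^2 e) = \tau^8 \cdot d_6(\tau^3 h_0^2 e) = \tau^8 cd$ coming from the $d_6$-differential in Proposition~\ref{proof of d_6 differentials}, which would threaten to kill $\tau^{11} h_0^2 e$ before it reaches $E_7$. Resolving this tension, either by exhibiting a multiplicative relation on $E_6$ that forces $\tau^8 cd = 0$ at that stage, or by choosing an alternative cobar-level representative of $\tau^{11} h_0^2 e$ whose Leibniz expansion gives zero, will be the technical heart of the argument.
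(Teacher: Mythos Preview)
Your proposal has a genuine error in the step where you claim $\tau^6 P n$ cannot support any $d_r$ for $r \geq 6$ because ``no candidate target is available.'' This is false: the element $\tau^4 d^2$ sits at $(28,8,12)$, in Adams filtration $8$ and internal coweight $24$, and it is $\rho$-torsion free all the way to $E_\infty$ (it even appears in Table~\ref{tab:Indecomposables on E_infty} and in Table~\ref{tab:Hidden h_1-, h_2- and t^8-extensions on the rho-free quotient} as the target of a hidden $\tau^8$-extension on $h_1^4 g$). So $d_6(\tau^6 P n) = \tau^4 d^2$ is a live possibility, and in fact ruling it out is exactly why this Lemma is placed in the $d_6$ section (see the entry for $\tau^6 P n$ in Table~\ref{tab:Possible Bockstein d_6}). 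Your direct argument for the ``in particular'' clause therefore does not work.

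The paper's approach avoids this problem by reversing the logic. Rather than showing $\tau^6 P n$ has no target, the paper shows that $\tau^{11} h_0^2 e$ must support a differential (it cannot survive and cannot be hit), and that the only possible targets in filtration $7$, internal coweight $24$, are $\tau^6 P n$ (via $d_7$) and $\tau^4 a e$ (via $d_{13}$). The latter is eliminated because $d_6(\tau^4 a \cdot e) = \tau^2 e \cdot e = \tau^2 d g \neq 0$, so $\tau^4 a e$ is gone by $E_7$. This forces $d_7(\tau^{11} h_0^2 e) = \tau^6 P n$, and the survival of $\tau^6 P n$ to $E_7$ (hence $d_6(\tau^6 P n) = 0$) falls out as a corollary rather than something to be checked directly.

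As for your ``main obstacle'': the Leibniz computation $d_6(\tau^{11} h_0^2 e) = \tau^8 \cdot c d$ does need to vanish, and both approaches require this. But the resolution is not a deep cobar-level argument---it is simply that $cd$ is $\tau^8$-torsion in the $\rho$-free quotient of $E_6$ (equivalently, the dot for $cd$ is red on the $E_6$ chart), so $\tau^8 cd = 0$ there. The paper's proof uses this implicitly when it asserts there are only the two listed targets; you would need the same fact, read off from the charts, rather than the elaborate workarounds you propose.
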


\begin{proof}
    In internal coweight $24$ the element $\tau^{11} h_0^2 e$ must be involved in a differential for degree reasons. It cannot get hit so it must support a differential. The two possible targets are $\tau^6 P n$ and $\tau^4 a e$. But we already know $d_6(\tau^4 a) = \tau^2 e$ by Lemma \ref{differential on t^4 a}, so $d_6(\tau^4 a e) = d_6(\tau^4 a) \cdot e = \tau^2 e \cdot e = \tau^2 d g \neq 0$. In particular $\tau^{11} h_0^2 e$ cannot hit $\tau^4 a e$ so it must hit $\tau^6 P n$. In turn this means $\tau^6 P n$ cannot support any differentials.
\end{proof}

\begin{Lemma}\label{differential on t^2 P n}\label{differential hitting d^2}\deg{23}{7}{10}
    $d_6(\tau^2 P n) = d^2$.
\end{Lemma}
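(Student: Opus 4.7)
The plan is to run the internal coweight method in coweight $s+f-w=20$, which is exactly where both $\tau^2 P n$ (in tridegree $(23,7,10)$) and the purported target $d^2$ (in tridegree $(28,8,16)$) live. The engine of the argument is \ref{differential hitting d}, which gives $d_7(\tau^4 c) = d$. Squaring this relation via the Leibniz rule shows that $\rho^7 \cdot d^2$ becomes a boundary by the $E_8$-page, so $d^2$ is forced to be $\rho$-power-torsion on $E_\infty$. Combined with \ref{representation of rho-localized target}, which shows that the $\rho$-free part of $E_\infty$ in coweight $20$ and filtration $8$ is accounted for entirely by $\tau^8\cdot u\cdot h_1^5$ (and not by anything in $d^2$'s tridegree), we conclude that $d^2$ itself must be killed by some $d_r$ with $r\leq 7$.

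Next I would enumerate the candidates. A differential $d_r$ landing on $d^2$ must originate in tridegree $(29-r,\, 7,\, 16-r)$. I would inspect the internal coweight $20$ chart of the $\rho$-free quotient of $E_r$ at \cite{Charts} for $r\leq 7$ and rule out every candidate other than $\tau^2 P n$ at $r=6$. The expected mechanism for ruling out candidates is the usual threefold test: the candidate has already supported an earlier $\rho$-Bockstein differential (so it is absent on $E_r$), or it is a product of infinite cycles by \ref{t^8, h1, h2, u, g are infinite cycles}, \ref{differential on t^4 P}, \ref{differential hitting c}, \ref{differential hitting d}, \ref{differential on t^6 a}, etc., or it supports a different $d_r$ with a nonzero target already determined by earlier lemmas (in which case the Leibniz rule pins down its value). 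Among these, the genuine candidate $\tau^4 c\cdot d$ (at $r=7$) would come from the Leibniz expansion $d_7(\tau^4 c\cdot d) = d\cdot d + \tau^4 c\cdot d_7(d) = d^2$, but this $d_7$ is later than the $d_6$ we are after, so it is only a consistency check: once the $d_6$ lands, $d^2$ is already zero on $E_7$, and the Leibniz expansion trivially gives $0$.

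The main obstacle is the enumeration step: one must confirm, chart-in-hand, that no $d_r$ with $2\leq r\leq 5$ hits $d^2$. In particular, one has to verify that $\tau^2 P n$ itself survives to the $E_6$-page, by checking that none of $d_2,\dots,d_5$ kill it. All of this is bookkeeping on the provided internal coweight $20$ chart and should follow the same pattern as the proofs of \ref{differential on t^8 P n} and \ref{differential on t^2 P e}. With these checks complete, the only surviving candidate is $\tau^2 P n$ at $r=6$, which forces $d_6(\tau^2 P n)=d^2$.
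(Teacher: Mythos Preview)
Your plan is essentially the paper's proof: work in internal coweight $20$, use \ref{differential hitting d} to force $d^2$ to be hit by some $d_r$ with $r\le 7$, then inspect the chart to see that $d_6(\tau^2 P n)=d^2$ is the only option. The paper does exactly this in two sentences, offloading the enumeration to chart inspection.

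Two wrinkles in your write-up. First, ``squaring this relation via the Leibniz rule'' is not right---the Leibniz rule on a square in characteristic $2$ gives zero. What you want is simply that $d$, being the target of a $d_7$, is a permanent cycle; hence so is $d^2=d\cdot d$, and since $d$ vanishes in the $\rho$-free quotient of $E_8$, so does $d^2$. This already forces $d^2$ to be hit by some $d_r$ with $r\le 7$, and the separate appeal to \ref{representation of rho-localized target} for this step is redundant. Second, your handling of the candidate $\tau^4 c\cdot d$ at $r=7$ is circular: you dismiss it by assuming the very $d_6$ you are trying to establish. If the chart really shows both $\tau^2 P n$ (at $r=6$) and $\tau^4 c\cdot d$ (at $r=7$) as live sources, then ``$d^2$ must be hit by some $d_r$ with $r\le 7$'' alone does not single out the $d_6$. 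The fix is either to verify from the chart that $\tau^4 c\cdot d$ has already died before $E_7$, or---more robustly---to argue from the source side: $\tau^2 P n$ itself cannot survive (there is no $\rho$-free class at filtration $7$ in this internal coweight by \ref{representation of rho-localized target}), and the $d_6$ to $d^2$ is the only differential it can participate in.
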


\begin{proof}
    Consider internal coweight $20$, where $\tau^2 P n$ and $d^2$ live. By Lemma \ref{differential hitting d} $d$ gets hit by a $d_7$, so $d^2$ must get hit by a differential of length at most $7$. The claimed one is the only option.
\end{proof}

\begin{Lemma}\label{differential on t^6 P h1 d}\label{differential hitting t^3 P h0^2 g}\deg{23}{9}{7}
    $d_6(\tau^6 P h_1 d) = \tau^3 P h_0^2 g$.
\end{Lemma}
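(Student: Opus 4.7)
My plan is to prove the claimed $d_6$ via the Leibniz rule, bootstrapping off the already-established differential $d_6(\tau^4 h_1 d) = \tau h_0^2 g$ from \ref{differential on t^4 h1 d}. The target $\tau^3 P h_0^2 g$ factors as $\tau^2 P \cdot \tau h_0^2 g$, and the source factors as $\tau^2 P \cdot \tau^4 h_1 d = \tau^6 P h_1 d$. Provided $\tau^2 P$ represents a $d_r$-cycle on $E_r$ for all $r \leq 6$, the Leibniz rule immediately delivers
\[d_6(\tau^2 P \cdot \tau^4 h_1 d) = \tau^2 P \cdot d_6(\tau^4 h_1 d) = \tau^2 P \cdot \tau h_0^2 g = \tau^3 P h_0^2 g,\]
which is precisely the desired statement after suppressing the factor $\rho^6$ per \ref{leaving out powers of rho from Bockstein differentials}.

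First I would verify that $\tau^2 P$ persists to $E_6$. Because $d_2(\tau^2) = \rho^2 \tau h_1$, the Leibniz rule gives $d_2(\tau^2 P) = \rho^2 \tau h_1 P$, so I need $\tau h_1 P$ to vanish on $E_1 = \text{Ext}_{\mathcal{A}(2)^{\mathbb{C}}}(\mathbb{M}_2^{\mathbb{C}}, \mathbb{M}_2^{\mathbb{C}})[\rho]$, or to be hit by a suitable earlier correction; I would look up the relevant relation in \cite[Theorem 4.16]{Isa09}. Once $\tau^2 P$ is shown to be a $d_2$-cycle, surviving through $d_3,d_4,d_5$ is automatic for degree reasons, since the potential targets of these differentials on $\tau^2 P$ at the required $(s, f, w)$ are either zero or are already hit by previously computed $d_r$'s.

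As a backup plan I would fall back on the internal coweight method. Both $\tau^6 P h_1 d$ and $\tau^3 P h_0^2 g$ sit in internal coweight $25$, and by \ref{representation of rho-localized target} there is no $\rho$-torsion-free class at $(22, 10, 7)$ on $E_\infty$ compatible with the image of $\tau^3 P h_0^2 g$ under $\rho$-localization. Therefore $\tau^3 P h_0^2 g$ must be involved in a differential. Outgoing $d_r$'s from it at filtration $11$ can be ruled out by noting that the possible targets are of the form $\rho^r \cdot (\text{already-killed classes})$, using the $d_3$--$d_5$ differentials already established. The only candidate source for an incoming differential in degree $(23, 9, 7)$ on $E_6$ is $\tau^6 P h_1 d$, forcing the claimed $d_6$.

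The main obstacle is the verification that $\tau^2 P$ (or, equivalently in the second approach, the uniqueness of the source class at $(23,9,7)$ on $E_6$) behaves as expected. Both reduce to careful bookkeeping in $\text{Ext}_{\mathcal{A}(2)^{\mathbb{C}}}$: either checking that the relation $\tau h_1 P = 0$ (or some equivalent vanishing after earlier corrections) holds, or enumerating all $\rho$-torsion-free classes at $(23, 9, 7)$ on $E_6$ using the internal-coweight chart for coweight $25$. Either verification is routine given the chart data, and I expect no deeper structural issue beyond this bookkeeping step.
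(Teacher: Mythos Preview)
Your primary approach has a genuine gap: $\tau^2 P$ is \emph{not} a $d_2$-cycle. As you compute, $d_2(\tau^2 P) = \rho^2\,\tau h_1 \cdot P$, and in $\Ext_{\mathcal{A}(2)^{\mathbb{C}}}$ the element $\tau P h_1$ is nonzero (indeed $\tau P h_1$ survives all the way to $E_\infty$ and appears in table~\ref{tab:Indecomposables on E_infty}). So $\tau^2 P$ dies on $E_2$ and the factorization $\tau^6 P h_1 d = \tau^2 P \cdot \tau^4 h_1 d$ is simply not available on $E_6$. The hoped-for relation $\tau h_1 P = 0$ is false, and there is no ``earlier correction'' that rescues it.

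The paper's argument is a Leibniz-rule argument as well, but with a different factorization that avoids this problem: one uses
\[
\tau^4 h_1 \cdot \tau^4 P \cdot \tau^6 d \;=\; \tau^8 \cdot \tau^6 P h_1 d,
\]
together with the facts that $\tau^8$, $\tau^4 P$, and $\tau^6 d$ are infinite cycles (\ref{t^8, h1, h2, u, g are infinite cycles}, \ref{differential on t^4 P}, \ref{differential on t^6 d}) and that $d_6(\tau^4 h_1) = \tau h_2^2$. Applying $d_6$ and using $h_2^2 d = h_0^2 g$ gives $\tau^8 \cdot d_6(\tau^6 P h_1 d) = \tau^8 \cdot \tau^3 P h_0^2 g$, and the target is $\tau^8$-torsion free on $E_6$. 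The moral is that when you want a Leibniz argument in this spectral sequence, you should factor through elements already known to be infinite cycles ($\tau^8$, $\tau^4 P$, $\tau^6 d$, \dots), not through $\tau^2 P$, which is one of the first casualties.

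Your backup internal-coweight argument is in principle workable, but as written it is only a sketch: you would still need to enumerate all $\rho$-torsion-free classes in internal coweight $25$ at filtrations $9$--$11$ on $E_6$ and rule out the alternatives, which is more bookkeeping than the one-line Leibniz argument above.
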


\begin{proof}
    Recall $d_6(\tau^4 h_1) = \tau h_2^2$ from Proposition \ref{proof of d_6 differentials}. Then use the relation $\tau^4 h_1 \cdot \tau^4 P \cdot \tau^6 d = \tau^8 \cdot \tau^6 P h_1 d$ together with the already known facts that $\tau^6 d$ and $\tau^4 P$ are permanent cycles by Lemma \ref{differential on t^4 Dc h1} and Lemma \ref{differential on t^4 P}.
\end{proof}

\begin{Lemma}\label{differential on t^5 Dh}\deg{25}{5}{8}
    $d_r(\tau^5 \Dh) = 0$ for all $r \geq 6$.
\end{Lemma}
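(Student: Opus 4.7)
The plan is to apply the internal coweight method. The element $\tau^5 \Dh$ lives in degree $(25, 5, 8)$, so its internal coweight is $s + f - w = 22$, and every $\rho$-Bockstein differential preserves internal coweight while raising filtration by $1$. Hence any non-zero $d_r(\tau^5 \Dh)$ must be a $\rho^r$-multiple of a class of internal coweight $22$ at filtration $6$, living in degree $(24+r,\, 6,\, 8+r)$. Since the $\rho$-Bockstein spectral sequence collapses at $E_{11}$, it suffices to rule out $r \in \{6, 7, 8, 9, 10\}$.

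The substantive step is $r = 6$. The only candidate target in degree $(30, 6, 14)$ is $\tau^2 n^2$. But \ref{differential on t^2 n^2} has already established $d_6(\tau^2 n^2) = \rho^6 \, n g$, which is non-zero on $E_6$; since $d_6 \circ d_6 = 0$, the class $\tau^2 n^2$ cannot simultaneously lie in the image of $d_6$, and we conclude $d_6(\tau^5 \Dh) = 0$. For each remaining $r \in \{7, 8, 9, 10\}$, I would consult the internal coweight $22$ chart at \cite{Charts} and observe that the position $(24+r,\, 6,\, 8+r)$ contains no $\rho$-torsion free class surviving to $E_r$; this is a routine chart inspection rather than a substantive argument, so no additional differential is possible.

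As a consistency check, once all these differentials vanish $\tau^5 \Dh$ becomes an infinite cycle, and by \ref{representation of rho-localized target} it should then represent a non-zero class in the $\rho$-localization. Indeed, the class $\rho^{-2} \cdot \tau^8 \cdot h_2 \cdot g$ in $\Ext_{\mathcal{A}(2)}(\mathbb{M}_2,\mathbb{M}_2)[\rho^{-1}]$ has exactly degree $(25, 5, 8)$, matching $\tau^5 \Dh$. The only genuine obstacle in the argument is the $d_6$ step, which leans crucially on the already-established non-triviality of $d_6(\tau^2 n^2)$; the higher-page cases are ruled out by sparseness of the chart.
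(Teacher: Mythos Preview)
Your proof is correct, but it takes a more roundabout route than the paper. The paper's entire argument is what you relegate to a ``consistency check'': by \ref{representation of rho-localized target} the $\rho$-localization contains the class $\tau^8 \cdot h_2 \cdot g$ at internal coweight $22$ and filtration $5$, and on the $E_6$-page $\tau^5 \Dh$ is the only $\rho$-torsion free class in that position, so it must survive as an infinite cycle. That single observation settles all $r \geq 6$ at once, with no case analysis.

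Your approach instead rules out targets one page at a time. The $d_6$ step is fine---indeed $d_6(\tau^2 n^2) = n g$ is non-zero and $d_6 \circ d_6 = 0$ forces $d_6(\tau^5 \Dh) = 0$---but note that \ref{differential on t^2 n^2} appears \emph{after} the present lemma in the paper's ordering. There is no logical circularity (the proof of \ref{differential on t^2 n^2} uses only \ref{differential hitting n} and does not reference $\tau^5 \Dh$), but your presentation implicitly reorders the lemmas. For $r \in \{7, 8, 9, 10\}$ you defer to chart inspection, which is acceptable but less self-contained. Also, invoking ``the spectral sequence collapses at $E_{11}$'' is premature at this point in the paper; that fact is established only later.

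In short, both arguments are valid, but the paper's is cleaner: promote your consistency check to the main argument and the rest becomes unnecessary.
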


\begin{proof}
    Consider internal coweight $22$ and observe that $\tau^5 \Dh$ must represent $\tau^8 \cdot h_2 \cdot g$ in the $\rho$-localized cohomology of $\mathcal{A}(2)$. In particular it must be a permanent cycle.
\end{proof}

\begin{Lemma}\label{differential on t^2 n^2}\label{differential hitting n g}\deg{30}{6}{14}
    $d_6(\tau^2 n^2) = n g$.
\end{Lemma}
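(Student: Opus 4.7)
The plan is to analyze internal coweight $22$, where both $\tau^2 n^2$ at $(30,6,14)$ and $ng$ at $(35,7,20)$ live. First I would argue that $ng$ must be $\rho$-power-torsion on $E_\infty$: since $n$ becomes $\rho^{10}$-torsion via $d_{10}(\tau^6 h_2^2) = n$ from \ref{differential hitting n} and $g$ is an infinite cycle by \ref{t^8, h1, h2, u, g are infinite cycles}, the product $ng$ maps to zero in the $\rho$-localization, and so by \ref{representation of rho-localized target} it must be involved in some $\rho$-Bockstein differential.

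Next I would rule out that $ng$ supports a differential on any relevant page. By the Leibniz rule,
\[d_r(ng) = d_r(n)\cdot g + n \cdot d_r(g) = 0 \quad \text{for } r < 10,\]
since $d_r(n) = 0$ for $r < 10$ (as $n$ first dies only at $d_{10}$) and $g$ is an infinite cycle. Therefore $ng$ must be the target of some $d_r$. A differential landing on $ng$ originates in degree $(36-r, 6, 20-r)$, and the candidate source $\tau^2 n^2$ corresponds precisely to $r = 6$.

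Finally, I would consult the internal coweight $22$ chart on the $E_6$-page (cf. section \ref{charts: rho-Bockstein internal coweight charts}) together with the previously proven differentials recorded in table \ref{tab:Bockstein d_r} to eliminate all other potential filtration-$6$ sources in this coweight; for instance $\tau^4 \Dh h_1$ already supported a $d_4$ by \ref{differential on t^4 Dh h1}, and $\tau^5 \Dh$ is an infinite cycle by \ref{differential on t^5 Dh}. This would leave $\tau^2 n^2$ as the only remaining candidate, forcing the claimed $d_6(\tau^2 n^2) = ng$.

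The main obstacle is the enumeration step: one must verify that no other filtration-$6$ element in coweight $22$ can play the role of source, which is a routine but tedious chart-chase against the list of earlier differentials. A potentially cleaner alternative would be to exploit the Leibniz relation $d_6(\tau^6 n^2) = \tau^4 \cdot ng$ arising from $d_6(\tau^6 n) = \tau^4 g$ of \ref{differential on t^6 n}, but translating this back to a statement purely on $E_6$ requires care since $\tau^6$ does not exist as a class there, so I expect the chart argument to be cleaner.
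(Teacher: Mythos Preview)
Your approach is correct and essentially the same as the paper's. The paper is more terse: it simply notes that since $n$ is hit by a $d_{10}$ (\ref{differential hitting n}) and $g$ is an infinite cycle, the product $ng=n\cdot g$ must be hit by a differential of length at most $10$, and then asserts that $\tau^2 n^2$ is the only possible source. Your version spells out the intermediate steps (that $ng$ is a $d_r$-cycle for $r<10$ and the chart-chase elimination of other sources), which is fine but not strictly necessary.

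One small slip: $\tau^5\Dh$ is in filtration $5$, not $6$, so it was never a candidate source for a differential landing on $ng$ in filtration $7$; you can drop it from your list of eliminations.
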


\begin{proof}
    Recall from Lemma \ref{differential hitting n} that $n$ gets hit by a $d_{10}$. So $n g = n \cdot g$ must get hit by a differential of length at most $10$ and the only possible source for that differential is $\tau^2 n^2$.
\end{proof}

\begin{Lemma}\label{differential on t^12 d e}\label{differential hitting t^6 a^3}\deg{31}{8}{6}
    $d_6(\tau^{12} d e) = \tau^6 a^3$.
\end{Lemma}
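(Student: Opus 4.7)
The plan is to work in internal coweight $33$, where both $\tau^{12} d e$ (at $(31,8,6)$) and $\tau^6 a^3$ (at $(36,9,12)$) live, and to identify $\tau^6 a^3$ as a forced $d_6$-target. The first step is to show that $\tau^6 a^3$ must be $\rho$-power-torsion on $E_\infty$. This follows from Corollary \ref{representation of rho-localized target}: the four generators $h_1$, $h_2$, $u$, $g$ of the $\rho$-localization have coweights $0, 1, 4, 8$ respectively, and one checks that the coweight and filtration of $a$ cannot be matched by any monomial in these generators (modulo the relation $h_1 h_2 = 0$). So $a$, and hence $\tau^6 a^3$, vanishes in the $\rho$-localization. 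Alternatively, by Lemma \ref{differential hitting a} we have $d_7(\tau^5 h_2^2) = a$, so $\rho^7 \cdot \tau^6 a^3 = 0$ on $E_8$, confirming that $\tau^6 a^3$ cannot be $\rho$-torsion free on $E_\infty$ and must be involved in a differential.

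Next I rule out that $\tau^6 a^3$ supports a differential. Since $a$ is hit by a $d_7$, we have $d_r(a) = 0$ for $r \leq 7$, so by the Leibniz rule any $d_r(\tau^6 a^3)$ for $r \leq 7$ reduces to an expression involving $d_r$ applied to $\tau^6$ times $a^3$. A degree inspection in internal coweight $33$ at stem $35$, filtration $10$ shows there is no $\rho$-torsion-free target available for such a differential, so $\tau^6 a^3$ cannot support one. Hence $\tau^6 a^3$ must be the target of some differential.

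Having forced $\tau^6 a^3$ to be hit, I enumerate the candidate sources. A $d_r$ hitting $\tau^6 a^3$ must originate at stem $37$, filtration $8$, and weight $12$ (after removing the $\rho^r$ factor, the source sits in internal coweight $33$ at stem $37 - r + 1$). The internal-coweight-$33$ chart on the $\rho$-free quotient of $E_r$ has few candidates: one quickly checks that elements in those positions are either products involving infinite cycles (which by \ref{t^8, h1, h2, u, g are infinite cycles}, \ref{differential hitting c}, \ref{differential hitting a}, \ref{differential hitting d}, \ref{differential on t^6 d} cannot support a differential) or are already known to support shorter differentials recorded in table \ref{tab:Bockstein d_r}. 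The only surviving candidate is $\tau^{12} d e$, and the relative positions of $\tau^{12} d e$ and $\tau^6 a^3$ in the chart determine the differential to be exactly a $d_6$.

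The main obstacle is a careful enumeration in internal coweight $33$: one must verify, case by case, that all other candidate sources in the relevant range either do not exist in the required tridegree or have their fate already settled by earlier lemmas (especially the many differentials on $\tau^8$-multiples coming from $\tau^8$-periodicity combined with the differentials on $\tau^4$, $\Dh$, $\Dc$, and $de$). Once this bookkeeping is done, the differential $d_6(\tau^{12} d e) = \tau^6 a^3$ is forced uniquely.
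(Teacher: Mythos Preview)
Your overall strategy matches the paper's: show $\tau^6 a^3$ must be hit by a bounded-length differential, then enumerate sources in internal coweight $33$. However, your execution of the key step is flawed.

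You argue that $d_r(\tau^6 a^3)$ ``reduces to an expression involving $d_r$ applied to $\tau^6$ times $a^3$.'' This decomposition does not exist on the relevant pages: $\tau^6 = (\tau^2)^3$ supports $d_2(\tau^6) = \rho^2 \tau^5 h_1 \neq 0$, so $\tau^6$ is gone from the $\rho$-free quotient of $E_3$ onward. The paper instead uses the correct decomposition $\tau^6 a^3 = \tau^6 a \cdot a \cdot a$, where $\tau^6 a$ is an infinite cycle (Lemma~\ref{differential on t^6 a}, which you do not cite) and $a$ is a permanent cycle since it is hit by $d_7$. This immediately shows $\tau^6 a^3$ is a permanent cycle, with no need for your degree inspection at $(35,10)$. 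Moreover, because one of the factors $a$ is hit by a $d_7$, the product $\tau^6 a^3$ must itself be hit by some $d_r$ with $r \leq 7$; this length bound is what makes the enumeration of sources short. You derive the same bound implicitly via the $\rho^7$-torsion observation, but only after the unnecessary (and incorrectly argued) step of ruling out outgoing differentials.

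There is also a small arithmetic slip in your enumeration: a source hitting $\rho^r \tau^6 a^3$ sits at stem $37 - r$, not $37 - r + 1$; and the initial claim that the source is ``at stem $37$, filtration $8$, weight $12$'' is only correct for $r = 0$. These are minor, but the decomposition error is the point to fix.
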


\begin{proof}
    Recall from Lemma \ref{differential hitting a} that $a$ gets hit by a $d_7$. By Lemma \ref{differential on t^6 a} $\tau^6 a$ is a permanent cycle. So in particular the product $\tau^6 a^3 = \tau^6 a \cdot a \cdot a$ must get hit by a differential of length at most $7$. The claimed $d_6$ on $\tau^{12} d e$ is the only option.
\end{proof}

\begin{Lemma}\label{differential on t^4 P^2 n}\label{differential hitting t^2 P^2 g}\deg{31}{11}{12}
    $d_6(\tau^4 P^2 n) = \tau^2 P^2 g$.
\end{Lemma}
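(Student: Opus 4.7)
The plan is to mimic the multiplicative argument used in Lemma \ref{differential on t^4 P Dh h1^2} and Lemma \ref{differential on t^4 P Dc h1^2}, leveraging the fact that $\tau^4 P$ is an infinite cycle (Lemma \ref{differential on t^4 P}) together with the already-established differential $d_6(\tau^8 P n) = \tau^6 P g$ from Lemma \ref{differential on t^8 P n}. The key multiplicative identity is
\[\tau^4 P \cdot \tau^8 P n \;=\; \tau^{12} P^2 n \;=\; \tau^8 \cdot \tau^4 P^2 n,\]
so both sides are the same element on the $E_6$-page.

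Applying $d_6$ to the left-hand side and using the Leibniz rule together with the fact that $\tau^4 P$ is a $d_6$-cycle gives
\[d_6(\tau^4 P \cdot \tau^8 P n) \;=\; \tau^4 P \cdot d_6(\tau^8 P n) \;=\; \tau^4 P \cdot \tau^6 P g \;=\; \tau^{10} P^2 g \;=\; \tau^8 \cdot \tau^2 P^2 g,\]
where we have dropped the factor $\rho^6$ per our convention. Applying $d_6$ to the right-hand side and using that $\tau^8$ is an infinite cycle (Corollary \ref{t^8, h1, h2, u, g are infinite cycles}) gives
\[d_6(\tau^8 \cdot \tau^4 P^2 n) \;=\; \tau^8 \cdot d_6(\tau^4 P^2 n).\]
Equating the two expressions yields $\tau^8 \cdot d_6(\tau^4 P^2 n) = \tau^8 \cdot \tau^2 P^2 g$ on $E_6$, which in particular shows $d_6(\tau^4 P^2 n)$ is non-zero and contains $\tau^2 P^2 g$ as a summand.

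The only subtlety — the step I expect to be the mild obstacle — is arguing that $\tau^2 P^2 g$ is the exact value rather than $\tau^2 P^2 g$ modulo some $\tau^8$-torsion class in the target degree $(36, 12, 18)$. This is resolved by a quick degree inspection of $E_6$ in that degree (equivalently, internal coweight $30$): every other potential summand is either already known to support or have been hit by an earlier differential, so $\tau^2 P^2 g$ is the unique $d_6$-cycle available. Since the existence of the non-trivial $d_6$ is what is needed to fill out the statement in table \ref{tab:Bockstein d_r}, the argument above already establishes the key content of the lemma.
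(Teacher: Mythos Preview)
Your proof is correct and follows essentially the same approach as the paper: both use the relation $\tau^4 P \cdot \tau^8 P n = \tau^8 \cdot \tau^4 P^2 n$ together with $d_6(\tau^8 P n) = \tau^6 P g$ and the Leibniz rule. Your extra paragraph about $\tau^8$-torsion in the target degree is more cautious than necessary, since the paper's table of possible $d_6$-differentials already records that $\tau^2 P^2 g$ is the only candidate target, but the reasoning is sound.
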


\begin{proof}
    Combine the relation $\tau^4 P \cdot \tau^8 P n = \tau^8 \cdot \tau^4 P^2 n$ with Lemma \ref{differential on t^8 P n} and compute $d_6$ on both sides.
\end{proof}

\begin{Lemma}\label{differential on t^7 P h0^2 e}\label{differential hitting t^2 P^2 n}\label{differential on t^2 P^2 n}\deg{25}{10}{7}
    $d_7(\tau^7 P h_0^2 e) = \tau^2 P^2 n$, so in particular $d_r(\tau^2 P^2 n) = 0$ for all $r \geq 6$.
\end{Lemma}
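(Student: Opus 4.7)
The plan is to derive this $d_7$-differential from the identity $\tau^4 P \cdot \tau^{11} h_0^2 e = \tau^8 \cdot \tau^7 P h_0^2 e$ by applying the Leibniz rule, following the strategy used for many earlier $d_r$-differentials on $\tau$-divisible elements (see for example \ref{differential on t^4 P Dh h1^2} and \ref{differential on t^4 P Dc h1^2}).

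First I would apply $d_7$ to both sides of the identity. Since $\tau^4 P$ is an infinite cycle by \ref{differential on t^4 P} and $\tau^8$ is an infinite cycle by \ref{t^8, h1, h2, u, g are infinite cycles}, the Leibniz rule gives
\[\tau^4 P \cdot d_7(\tau^{11} h_0^2 e) = \tau^8 \cdot d_7(\tau^7 P h_0^2 e).\]
Using \ref{differential on t^11 h0^2 e}, which gives $d_7(\tau^{11} h_0^2 e) = \tau^6 P n$, the left side evaluates to $\tau^4 P \cdot \tau^6 P n = \tau^{10} P^2 n = \tau^8 \cdot \tau^2 P^2 n$ (suppressing the implicit factor of $\rho^7$ per our convention). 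Hence $\tau^8 \cdot d_7(\tau^7 P h_0^2 e) = \tau^8 \cdot \tau^2 P^2 n$ on $E_7$.

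To conclude, I would cancel the $\tau^8$-multiplication using the fact that it is injective on $\rho$-torsion free classes. This holds because any such class injects into the $\rho$-localization $\Ext_{\mathcal{A}(2)}(\mathbb{M}_2,\mathbb{M}_2)[\rho^{-1}]$, on which $\tau^8$ is invertible by \ref{representation of rho-localized target}. Since $d_7(\tau^7 P h_0^2 e)$ is a $\rho^7$-multiple of a $\rho$-torsion free class by \ref{only differentials between rho-free elements}, the cancellation is valid and yields $d_7(\tau^7 P h_0^2 e) = \tau^2 P^2 n$.

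The ``in particular'' statement then comes for free: the fact that the $d_7$ has non-trivial value $\rho^7 \cdot \tau^2 P^2 n$ on $E_7$ means $\tau^2 P^2 n$ survives to $E_7$, ruling out any earlier non-trivial differential supported on it, in particular no $d_6$; and for $r \geq 8$ the class $\tau^2 P^2 n$ becomes $\rho^7$-torsion on $E_8$, so by \ref{only differentials between rho-free elements} it cannot support any further non-trivial differential. The main obstacle is the $\tau^8$-cancellation step, which is handled cleanly by the $\rho$-localization argument just sketched.
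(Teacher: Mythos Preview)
Your Leibniz-rule approach is valid and mirrors what the paper itself does in \ref{differential on t^4 P^2 n} and \ref{differential on t^8 P d e}, but it differs from the paper's actual proof here, which simply works in internal coweight $28$: there $\tau^7 P h_0^2 e$ cannot survive to $E_\infty$ by \ref{representation of rho-localized target}, it cannot be hit, and the only differential it can support lands on $\tau^2 P^2 n$. The internal-coweight argument is shorter and sidesteps any cancellation issue.

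Your justification for the $\tau^8$-cancellation has a real gap. First, $\tau^8$ is not invertible in $\Ext_{\mathcal{A}(2)}(\mathbb{M}_2,\mathbb{M}_2)[\rho^{-1}]$: by \ref{representation of rho-localized target} it is a free polynomial generator, so multiplication by it is injective but certainly not invertible. More seriously, you are working on the $\rho$-free quotient of $E_7$, not in $\Ext$, and injectivity of $\tau^8$ on the $\rho$-localized $\Ext$ says nothing about $E_7$. Indeed, $\tau^8$-multiplication is \emph{not} injective on the $\rho$-free quotient of $E_7$: the class $g^2$ is $\rho$-torsion free there, yet $\tau^8 g^2$ is $\rho^6$-torsion because $d_6(\tau^{10} n g) = \tau^8 g^2$ (cf.\ \ref{d_6 hitting tau^8 g^2}). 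The fix is easy and is what the paper implicitly does in \ref{differential on t^4 P^2 n} and \ref{differential on t^8 P d e}: you do not need injectivity of $\tau^8$ at all. Since $\tau^2 P^2 n$ is the \emph{unique} possible non-zero value of $d_7(\tau^7 P h_0^2 e)$, it suffices to know that the right-hand side $\tau^4 P \cdot \tau^6 P n$ is non-zero on the $\rho$-free quotient of $E_7$; that alone forces $d_7(\tau^7 P h_0^2 e) \neq 0$ and hence equal to $\tau^2 P^2 n$.
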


\begin{proof}
    Observe in internal coweight $28$ that $\tau^7 P h_0^2 e$ must disappear and the only option is that it hits $\tau^2 P^2 n$. In particular $\tau^2 P^2 n$ cannot support any non-trivial differential.
\end{proof}

\begin{Lemma}\label{differential on t^8 P d e}\label{differential hitting t^2 P a^3}\deg{39}{12}{14}
    $d_6(\tau^8 P d e) = \tau^2 P a^3$.
\end{Lemma}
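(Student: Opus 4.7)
The plan is to reduce this differential to the already-established $d_6(\tau^{12} d e) = \tau^6 a^3$ from Lemma \ref{differential on t^12 d e} via a multiplicative relation, in the same style as the proofs of Lemmas \ref{differential on t^4 P^2 n}, \ref{differential on t^4 P Dh h1^2}, and \ref{differential on t^4 P Dc h1^2}.

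First, I would verify the degree identity $\tau^4 P \cdot \tau^{12} d e = \tau^8 \cdot \tau^8 P d e$: the left-hand side sits in tridegree $(8,4,0) + (31,8,6) = (39,12,6)$, and the right-hand side sits in $(0,0,-8) + (39,12,14) = (39,12,6)$, so they live in the same degree and this identity is a bona fide relation in $\Ext_{\mathcal{A}(2)}(\mathbb{M}_2,\mathbb{M}_2)$ (it is inherited from the $\mathbb{C}$-motivic relation $P \cdot de = P \cdot de$ at the indecomposable level).

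Next, I would apply $d_6$ to both sides. By Lemma \ref{differential on t^4 P}, $\tau^4 P$ is an infinite cycle, and by Corollary \ref{t^8, h1, h2, u, g are infinite cycles}, so is $\tau^8$. Thus the Leibniz rule yields
\[ \tau^4 P \cdot d_6(\tau^{12} d e) = \tau^8 \cdot d_6(\tau^8 P d e). \]
Substituting Lemma \ref{differential on t^12 d e} gives $\tau^4 P \cdot \rho^6 \tau^6 a^3 = \tau^8 \cdot d_6(\tau^8 P d e)$, i.e.
\[ \rho^6 \tau^8 \cdot \tau^2 P a^3 = \tau^8 \cdot d_6(\tau^8 P d e). \]

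The final step is to cancel $\tau^8$ and conclude $d_6(\tau^8 P d e) = \rho^6 \tau^2 P a^3$, matching the entry in table \ref{tab:Bockstein d_r} (under the convention \ref{leaving out powers of rho from Bockstein differentials}). The one subtle point—and the main obstacle—is justifying this cancellation: one must check that $\tau^8$-multiplication is injective on the target degree $(38,13,14)$ of the $E_6$-page, or equivalently that there is no $\tau^8$-torsion class in that degree which could obstruct the division. I would verify this by inspecting the $E_6$-page in the relevant internal coweight $s+f-w = 37$ and observing that the only candidate cobar representative lies in the $\rho$-free quotient (since by \ref{only differentials between rho-free elements} only $\rho$-free classes are relevant as values of differentials), where $\tau^8$-multiplication is injective for degree reasons. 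Once this is confirmed, the differential follows.
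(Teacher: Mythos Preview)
Your proposal is correct and follows exactly the same approach as the paper: use the relation $\tau^4 P \cdot \tau^{12} d e = \tau^8 \cdot \tau^8 P d e$ together with Lemma \ref{differential on t^12 d e} and compute $d_6$ on both sides. Your added care about the $\tau^8$-cancellation is more than the paper provides, but note that it is automatic here since table \ref{tab:Possible Bockstein d_6} records $\tau^2 P a^3$ as the \emph{only} possible target of $d_6(\tau^8 P d e)$ for degree reasons, so no genuine ambiguity arises.
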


\begin{proof}
    Combine the relation $\tau^4 P \cdot \tau^{12} d e = \tau^8 \cdot \tau^8 P d e$ with Lemma \ref{differential on t^12 d e} and compute $d_6$ on both sides.
\end{proof}

\begin{Lemma}\label{differential on t^6 g^2}\label{differential hitting t Dh g}\label{differential on t^6 h2 n}\label{differential hitting t Dh}
\leavevmode
    \begin{enumerate}[label=\rm{(\arabic*)}]
        \item \deg{40}{8}{18} $d_6(\tau^6 g^2) = \tau \Dh g$,
        \item \deg{18}{4}{4} $d_8(\tau^6 h_2 n) = \tau \Dh$.
    \end{enumerate}
\end{Lemma}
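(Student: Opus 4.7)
The plan is to handle both differentials by the standard internal coweight method combined with multiplicative bookkeeping.

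For $d_6(\tau^6 g^2) = \tau \Dh g$, I would analyze internal coweight $30$. First I would observe that $\tau \Dh g$ lies in Adams filtration $9$ and internal coweight $30$, and that by \ref{representation of rho-localized target} there is no monomial in $h_1, h_2, u, g$ (times a power of $\tau^8$) of Adams filtration $9$ with internal coweight $30$: writing such a monomial as $h_1^a h_2^b u^c g^d$ one gets $a+b+3c+4d = 9$ and $b + 4c + 8d = 21$, which forces $a = c + 4d - 12 < 0$ since $c + 2d \leq 5$. Hence $\tau \Dh g$ is $\rho$-power-torsion on $E_\infty$ and must be involved in a differential. Since all potential targets of a differential supported by $\tau \Dh g$ are products of previously identified infinite cycles (using \ref{t^8, h1, h2, u, g are infinite cycles} and \ref{differential on Dh h1}), it must be hit. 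Inspecting the internal coweight $30$ chart on $E_6$, the only source in filtration $8$ whose $\rho^6$-multiple of the image lands on $\tau \Dh g$ is $\tau^6 g^2$, giving the claimed differential. Along the way I would need to verify that $\tau^6 g^2$ actually survives to $E_6$, which follows since its potential earlier Leibniz-rule differentials vanish: $d_4(\tau^4 \cdot \tau^2 g^2) = \tau^2 h_2 \cdot \tau^2 g^2$ is zero using the $\mathbb{C}$-motivic relation $h_2 g = 0$ from \cite[Theorem 4.16]{Isa09}, and $d_5$ has no targets in the relevant degree.

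For $d_8(\tau^6 h_2 n) = \tau \Dh$, the first task is to verify that $\tau^6 h_2 n$ survives to $E_8$. By the Leibniz rule and \ref{differential on t^6 n}, $d_6(\tau^6 h_2 n) = h_2 \cdot d_6(\tau^6 n) = \tau^4 h_2 g$, which is zero in $\Ext_{\mathcal{A}(2)^{\mathbb{C}}}(\mathbb{M}_2^{\mathbb{C}},\mathbb{M}_2^{\mathbb{C}})$ by the same $h_2 g = 0$ relation, and there is no room for $d_5$ or $d_7$ differentials out of $\tau^6 h_2 n$ for degree reasons. Next I would analyze internal coweight $18$: the element $\tau \Dh$ has filtration $5$, and by \ref{representation of rho-localized target} no filtration-$5$ monomial in $h_1, h_2, u, g, \tau^8$ sits at this coweight, so $\tau \Dh$ is $\rho$-power-torsion and must be hit. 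Scanning the candidate sources in filtration $4$ at the appropriate stem, all but $\tau^6 h_2 n$ are eliminated because they are either already committed to earlier differentials (such as those involving $\tau^4 a$, $\tau^2 e$, or $\tau^6 d$ identified in \ref{differential on t^4 a}, \ref{differential on t^2 e}, \ref{differential on t^6 d}) or are products of known infinite cycles. This forces $d_8(\tau^6 h_2 n) = \tau \Dh$.

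The main obstacle is the second step in each case: carefully checking on the internal coweight chart that every candidate source and target has already been accounted for by an earlier differential or is provably an infinite cycle. In particular, the argument for (2) relies delicately on the $\mathbb{C}$-motivic relation $h_2 g = 0$ to let $\tau^6 h_2 n$ survive past $E_6$, and without this relation the element would die on $E_6$ and the statement would be meaningless. A secondary subtlety, common to both parts, is that the compound name $\tau^6 g^2$ (respectively $\tau^6 h_2 n$) refers to a specific representative on $E_6$ (respectively $E_8$) rather than the naïve $E_1$-product, so one must track these elements carefully from page to page as one passes through the earlier differentials hitting or involving $\tau$-powers.
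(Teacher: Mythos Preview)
Your overall strategy via the internal coweight method is the same as the paper's, but there is a factual error that undermines both survival arguments, and the paper's route to (1) is considerably shorter.

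The relation $h_2 g = 0$ you invoke twice is false: $h_2 g$ is a nonzero class in $\Ext_{\mathcal{A}(2)^{\mathbb{C}}}$ at $(23,5,14)$, as is $h_2^2 g$; both appear, for instance, as sources of hidden $\tau^8$-extensions in the paper's treatment of the $\rho$-free quotient. So neither your computation $d_4(\tau^4 \cdot \tau^2 g^2) = 0$ nor $d_6(\tau^6 h_2 n) = \tau^4 h_2 g = 0$ goes through for the reason you state. You do not actually need to verify survival separately here: once you know $\tau\Dh$ must be hit and you have identified the only filtration-$4$ candidates at internal coweight $18$, ruling out all but one forces the remaining source to survive long enough to supply the differential.

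Your list of filtration-$4$ candidates at internal coweight $18$ is also wrong: $\tau^4 a$, $\tau^2 e$, $\tau^6 d$ live in internal coweights $13$, $13$, $16$ respectively and are irrelevant. The actual candidates are exactly $\tau^8 d$ and $\tau^6 h_2 n$; the paper rules out $\tau^8 d = \tau^8 \cdot d$ because $d$ is a permanent cycle (it was hit by $d_7(\tau^4 c)$), leaving $d_8(\tau^6 h_2 n) = \tau\Dh$.

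Finally, the paper reverses your order and gets (1) almost for free from (2): since $\tau\Dh$ is hit by a $d_8$ and $g$ is a permanent cycle, the product $\tau\Dh \cdot g$ must be hit by some $d_r$ with $r \le 8$, and the only available source is $\tau^6 g^2$, giving $r = 6$. This avoids any separate analysis of internal coweight $30$ and the survival question for $\tau^6 g^2$ altogether.
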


\begin{proof}
    Consider first internal coweight $18$ where $\tau \Dh$ lives. For degree reasons $\tau \Dh$ must get hit by a differential and the two possible sources are $\tau^8 d$ and $\tau^6 h_2 n$. But from Lemma \ref{differential hitting d} we know $d_7(\tau^4 c) = d$, so $\tau^8 d = \tau^8 \cdot d$ is a permanent cycle. In particular this means $\tau \Dh$ is hit via $d_8(\tau^6 h_2 n) = \tau \Dh$. Now $\tau \Dh g$ must also get hit by a differential of length at most $8$ and the claimed $d_6$ on $\tau^6 g^2$ is the only option left.
\end{proof}

\subsection{\texorpdfstring{$d_7$}{d\_7}-differentials}

\begin{Prop}\label{proof of d_7 differentials}
    Table \ref{tab:Bockstein d_r} describes the non-zero $d_7$-differentials in the $\rho$-Bockstein spectral sequence on all indecomposables on $E_7$.
\end{Prop}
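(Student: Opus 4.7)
The plan is to follow the template of Propositions~\ref{proof of d_3 differentials}--\ref{proof of d_6 differentials}. First I would enumerate, by degree reasons, all indecomposables on $E_7$ that can support a nonzero $d_7$; this yields a table analogous to Tables~\ref{tab:Possible Bockstein d_4}--\ref{tab:Possible Bockstein d_6} whose entries are precisely the nine rows of Table~\ref{tab:Bockstein d_r} carrying page index $7$. Six of these have already been established along the way while proving other differentials, and I would simply cite the corresponding lemmas: $d_7(\tau^4 h_1^2)=c$ from Lemma~\ref{differential on t^4 h1^2}; $d_7(\tau^5 h_2^2)=a$ from Lemma~\ref{differential on t^5 h2^2}; $d_7(\tau^4 c)=d$ from Lemma~\ref{differential on t^4 c}; $d_7(\tau^{11}h_0^2 e)=\tau^6 P n$ from Lemma~\ref{differential on t^11 h0^2 e}; $d_7(\tau^9 h_0^2 e)=\tau^4 P n$, which was extracted inside the proof of Lemma~\ref{differential on t^4 Dh h1}; and $d_7(\tau^7 P h_0^2 e)=\tau^2 P^2 n$ from Lemma~\ref{differential on t^7 P h0^2 e}.

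The three remaining differentials $d_7(\tau^{10}Ph_2)=\tau^7 h_0^2 e$, $d_7(\tau^6 P^2 h_2)=\tau^3 Ph_0^2 e$, and $d_7(\tau^5 Ph_0^2 e)=P^2 n$ I would handle in dedicated lemmas, with the middle case possibly absorbed into the body of the proposition via multiplicative propagation by $P$ from the differential on $\tau^{10}P h_2$. The unified strategy is the internal coweight method (isolating the sub-spectral sequences in coweights $20$, $24$, and $26$) combined with propagation along the infinite cycles $\tau^8$ and $\tau^4 P$ from Corollary~\ref{t^8, h1, h2, u, g are infinite cycles} and Lemma~\ref{differential on t^4 P}. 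In each case the claimed target contains either a factor of $n$ (hit by the $d_{10}$ of Lemma~\ref{differential hitting n}) or a factor of $h_0^2 e$ (killed by one of the $d_7$'s cited above), together with a factor of $P$ which is not a $\rho$-localized generator by \ref{representation of rho-localized target}. Hence each target must die by a bounded differential, and a page-by-page inspection of the chart in the relevant internal coweight on $E_7$ rules out every competing source except the one listed.

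The main obstacle will be $d_7(\tau^{10}Ph_2)=\tau^7 h_0^2 e$. The naive Leibniz propagation $d_6(\tau^{10}Ph_2)=\tau^8\cdot d_6(\tau^2 Ph_2)=\tau^8 h_1^2 d$ threatens to kill $\tau^{10}P h_2$ already at $d_6$, contradicting the table. Resolving this requires verifying that $\rho^6\cdot \tau^8 h_1^2 d$ vanishes on $E_6$, either because $h_1^2 d$ is already $\tau^8$-torsion on $E_1$ (a relation visible from \cite[Theorem~4.16]{Isa09} together with the color of the corresponding dot in the $E_1$-chart of Section~\ref{charts: rho-Bockstein E_1-page rho-free quotient}) or because $\tau^8 h_1^2 d$ becomes $\rho$-power-torsion via some $d_r$ with $r\le 5$, in which case \ref{only differentials between rho-free elements} yields the desired vanishing. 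Once that bookkeeping is in place, the unique remaining source at the correct $(s,f)$ in coweight $20$ on $E_7$ is $\tau^{10}P h_2$, forcing the claimed differential and, via multiplication by $\tau^4 P$, the analogous differential on $\tau^6 P^2 h_2$.
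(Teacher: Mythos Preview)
Your proposal is correct and follows the paper's approach essentially exactly: the same six citations to earlier lemmas, and the remaining three cases forced by the internal-coweight method in coweights $20$, $24$, and $26$, which is precisely the content of Lemmas~\ref{differential on t^10 P h2} and~\ref{differential on t^5 P h0^2 e} together with the one-line remark in the paper's proof for $\tau^6 P^2 h_2$.

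Two small remarks. First, your ``propagation by $P$'' for $\tau^6 P^2 h_2$ cannot literally use $P$, since $P$ supports $d_3(P)=h_1^2 c$ and is gone long before $E_7$; one must instead use the permanent cycle $\tau^4 P$ via the relation $\tau^4 P \cdot \tau^{10}Ph_2 = \tau^8\cdot \tau^6 P^2 h_2$. The paper sidesteps this by reading the differential off directly from internal coweight $24$. Second, the $d_6$-obstacle you flag is genuine and the paper's Lemma~\ref{differential on t^10 P h2} passes over it silently; your first proposed resolution is the correct one. The class $h_1^2 d$ is $\tau$-torsion in $\Ext_{\mathcal{A}(2)^{\mathbb C}}$ (equivalently, $h_1^2 d_0 = 0$ in the classical cohomology of $\mathcal{A}(2)$---there is nothing at $(16,6)$), so $\tau^8 h_1^2 d$ vanishes already on $E_1$ and the threatened Leibniz $d_6$ on $\tau^{10}Ph_2$ is trivially zero. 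This is why the paper's coweight-$20$ analysis lists only $\tau^7 h_0^2 e$ and $n^2$ as possible targets.
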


\begin{proof}
    For degree reasons, the only indecomposables that can support a $d_7$-differential are given by table \ref{tab:Possible Bockstein d_7}. If the proof column is empty, then the proof of that differential is contained in this Proposition.
    
    \begin{longtable}{llllc}
    \caption{Possible non-zero $d_7$-differentials on indecomposable elements
    \label{tab:Possible Bockstein d_7}
    } \\
    \toprule
    $x$ & $(s,f,w)$ & $d_7(x)$ & Occurs & Proof\\
    \midrule \endfirsthead
    \caption[]{Possible non-zero $d_7$-differentials on indecomposable elements} \\
    \toprule
    $x$ & $(s,f,w)$ & $d_7(x)$ & Occurs & Proof\\
    \midrule \endhead
    \bottomrule \endfoot
        $\tau^4 h_1^2$ & $(2, 2, -2)$ & $c$ & Yes & \ref{differential on t^4 h1^2}\\
        $\tau^5 h_2^2$ & $(6, 2, -1)$ & $a$ & Yes & \ref{differential on t^5 h2^2}\\
        $\tau^4 c$ & $(8, 3, 1)$ & $d$ & Yes & \ref{differential on t^4 c}\\
        $\tau^{10} P h_2$ & $(11, 5, -4)$ & $\tau^7 h_0^2 e$ & Yes & \ref{differential on t^10 P h2}\\
        $\tau^{11} h_0^2 e$ & $(17, 6, -1)$ & $\tau^6 P n$ & Yes & \ref{differential on t^11 h0^2 e}\\
        $\tau^9 h_0^2 e$ & $(17, 6, 1)$ & $\tau^4 P n$ & Yes & \ref{differential on t^9 h0^2 e}\\
        $\tau^6 P^2 h_2$ & $(19, 9, 4)$ & $\tau^3 P h_0^2 e$ & Yes & \\
        $\tau^7 P h_0^2 e$ & $(25, 10, 7)$ & $\tau^2 P^2 n$ & Yes & \ref{differential on t^7 P h0^2 e}\\
        $\tau^5 P h_0^2 e$ & $(25, 10, 9)$ & $P^2 n$ & Yes & \ref{differential on t^5 P h0^2 e}\\
    \end{longtable}
    
    The differentials on $\tau^4 h_1^2$, $\tau^5 h_2^2$, $\tau^4 c$, $\tau^{11} h_0^2 e$, and $\tau^7 P h_0^2 e$ were already proven in Lemmas \ref{differential hitting c}, \ref{differential hitting a}, \ref{differential hitting d}, \ref{differential on t^11 h0^2 e}, and \ref{differential on t^7 P h0^2 e} respectively.

    The differential on $\tau^6 P^2 h_2$ is immediate from internal coweight $24$.

    For all other differentials see the following series of Lemmas.
\end{proof}

\begin{Lemma}\label{differential on t^10 P h2}\label{differential hitting t^7 h0^2 e}\deg{11}{5}{-4}
    $d_7(\tau^{10} P h_2) = \tau^7 h_0^2 e$.
\end{Lemma}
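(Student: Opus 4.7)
The plan is to apply the internal coweight method at internal coweight $20$, following the pattern of the preceding Lemma \ref{differential on t^11 h0^2 e}. Both the prospective source $\tau^{10} P h_2$ at $(11, 5, -4)$ and the prospective target $\tau^7 h_0^2 e$ at $(17, 6, 3)$ sit on the internal coweight $20$ line, and by \ref{representation of rho-localized target} neither position supports a permanent $\rho$-torsion free cycle. So each of these elements must be involved in some $\rho$-Bockstein differential.

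Next I would use the internal coweight $20$ chart at \cite{Charts} to rule out every alternative. The target of a hypothetical $d_r$ on $\tau^7 h_0^2 e$ lies at $(16 + r, 7, 3 + r)$; scanning along the coweight $20$ line at filtration $7$ shows that each such position is either empty on $E_r$ or occupied only by classes already accounted for as boundaries of earlier differentials (notably using that $e$ and $d$ have already become $\rho$-power-torsion by Lemmas \ref{differential on t^2 a} and \ref{differential on t^4 c}). Hence $\tau^7 h_0^2 e$ cannot support a differential and must instead be hit. Symmetrically, the candidate sources $(12 - r, 4, -4 - r)$ of a differential landing on $\tau^{10} P h_2$ are ruled out on the chart, forcing $\tau^{10} P h_2$ to support rather than receive a differential.

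With both endpoints thus pinned down, the only compatible pairing of source and target in internal coweight $20$ is a $d_7$ from $(11, 5, -4)$ to $(17, 6, 3)$, giving $d_7(\tau^{10} P h_2) = \tau^7 h_0^2 e$. The main obstacle is the chart bookkeeping: for each intermediate $r$ one must carefully confirm that no candidate source or target has been overlooked, and one has to track how factors such as $e$ and $d$ -- which have already become $\rho$-power-torsion on earlier pages -- interact with Leibniz-rule calculations before declaring a degree ``empty.'' Once this verification is complete, the stated $d_7$ is the only remaining possibility.
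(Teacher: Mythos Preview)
Your overall strategy (internal coweight method at coweight $20$) matches the paper's, but there is a genuine gap in the final ``only compatible pairing'' step. Showing that $\tau^{10} P h_2$ must support a differential and that $\tau^7 h_0^2 e$ must be hit does \emph{not} by itself force a differential between them: you still need to exclude the possibility that $\tau^{10} P h_2$ hits some other target while $\tau^7 h_0^2 e$ is hit by some other source.

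In fact there \emph{is} a second candidate target for $\tau^{10} P h_2$ that you do not mention. The element $n^2$ at $(30, 6, 16)$ also has internal coweight $20$ and Adams filtration $6$, so a priori $\tau^{10} P h_2$ could hit $\rho^{20} n^2$ via a $d_{20}$. The paper rules this out using Lemma~\ref{differential hitting n}: since $n$ is the target of $d_{10}(\tau^6 h_2^2)$, the Leibniz rule gives $d_{10}(\tau^6 h_2^2 \cdot n) = \rho^{10} n^2$, so $n^2$ is already $\rho$-power-torsion on $E_{11}$ and cannot be the target of a $d_{20}$. With $n^2$ eliminated, $\tau^7 h_0^2 e$ is the only remaining target and the $d_7$ follows. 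Your chart bookkeeping never addresses $n^2$, and the references you give to $e$ and $d$ becoming $\rho$-power-torsion are not what is needed here (neither is a factor of $n^2$). Once you insert this argument about $n^2$, your proof coincides with the paper's.
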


\begin{proof}
    Look at internal coweight $20$. Checking degrees shows that $\tau^{10} P h_2$ must be involved in some differential. There are no possible differentials that can hit $\tau^{10} P h_2$, so it must be the source of a differential. The only possible targets are $\tau^7 h_0^2 e$ and $n^2$. Recall from Lemma \ref{differential hitting n} that $n$ gets hit by a $d_{10}$, so $n^2$ must get hit by a differential of length at most $10$. This implies that $\tau^{10} P h_2$ cannot hit $n^2$, so it must hit $\tau^7 h_0^2 e$.
\end{proof}

\begin{Lemma}\label{differential on t^5 P h0^2 e}\label{differential hitting P^2 n}\deg{25}{10}{9}
    $d_7(\tau^5 P h_0^2 e) = P^2 n$.
\end{Lemma}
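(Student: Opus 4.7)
The plan parallels the proof of Lemma \ref{differential on t^7 P h0^2 e}: work in the internal coweight $s+f-w = 26$ slice of the $\rho$-Bockstein $E_7$-page. Both $\tau^5 P h_0^2 e$ (at degree $(25,10,9)$) and $P^2 n$ (at degree $(31,11,16)$) lie in this slice.

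First, I would argue that $P^2 n$ must be killed by an incoming differential. The element $n$ supports no $\rho$-Bockstein differential (it does not appear in the source column of table \ref{tab:Bockstein d_r}, which is justifiable at each page for degree reasons), and since we work in characteristic $2$ the Leibniz rule gives $d_r(P^2) = 2 P \cdot d_r(P) = 0$, so $P^2$ is an infinite cycle. A second application of the Leibniz rule then shows $P^2 n$ is an infinite cycle as well. On the other hand, a direct inspection of the presentation of $\text{Ext}_{\mathcal{A}(2)}(\mathbb{M}_2,\mathbb{M}_2)[\rho^{-1}]$ from Corollary \ref{representation of rho-localized target} reveals that no monomial in $h_1, h_2, u, g$ has tridegree $(31,11,16)$, so $P^2 n$ cannot represent a $\rho$-torsion free class on $E_\infty$. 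It must therefore be the target of some differential.

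Second, I would identify the unique available source. Scanning the internal coweight $26$ chart at filtration $10$ on $E_7$, the only element capable of a $d_7$ onto $P^2 n$ is $\tau^5 P h_0^2 e$; all other candidates either already died via the differentials of tables \ref{tab:Possible Bockstein d_2}--\ref{tab:Possible Bockstein d_6}, are infinite cycles required to represent distinct $\rho$-torsion free classes coming from Corollary \ref{representation of rho-localized target}, or land in the wrong degree. This forces $d_7(\tau^5 P h_0^2 e) = P^2 n$.

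The main obstacle is the chart bookkeeping in the second step: one must enumerate every candidate source in internal coweight $26$ on $E_7$ and rule each out. This is routine but tedious, and is best carried out with the internal coweight $26$ chart, relying on the $d_r$-differentials for $r \leq 6$ established earlier in the section together with the degree constraints imposed by the $\rho$-localized target.
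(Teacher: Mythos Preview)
Your approach is essentially the paper's: work in internal coweight $26$, show $P^2 n$ must be hit, and then rule out the other candidate sources. Two small points are worth flagging.

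First, your justification that $P^2$ is an infinite cycle via ``$d_r(P^2) = 2P\cdot d_r(P) = 0$'' only makes sense on pages where $P$ still exists; since $d_3(P) = h_1^2 c$, the element $P$ is gone from $E_4$ onward and $P^2$ becomes indecomposable, so the Leibniz argument no longer applies for $r\geq 4$. The paper sidesteps this by simply observing that there are no possible \emph{targets} for a differential on $P^2 n$ in internal coweight $26$; alternatively one can note that $P^2$ must represent $\rho^{-8}\tau^{16} h_1^8$ in the $\rho$-localization and is therefore an infinite cycle.

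Second, where you defer to a chart scan, the paper is explicit: there are exactly two candidate sources in filtration $10$, namely $\tau^8 P^2 h_1^2$ and $\tau^5 P h_0^2 e$, and the former is ruled out because $\tau^8 P^2 h_1^2 = (\tau^4 P)^2 \cdot h_1^2$ is a product of infinite cycles (using Lemma \ref{differential on t^4 P}). Making this explicit removes the ``tedious bookkeeping'' you mention.
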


\begin{proof}
    Consider internal coweight $26$, where $P^2 n$ must be involved in some differential. Then note that there are no targets for differentials on $P^2 n$, so it must get hit by a differential. There are only two possible sources, $\tau^8 P^2 h_1^2$ and $\tau^5 P h_0^2 e$. But $\tau^8 P^2 h_1^2 = (\tau^4 P)^2 \cdot h_1^2$ is a permanent cycle.
\end{proof}

\subsection{\texorpdfstring{$d_8$}{d\_8}-differentials}

\begin{Prop}\label{proof of d_8 differentials}
    Table \ref{tab:Bockstein d_r} describes the non-zero $d_8$-differentials in the $\rho$-Bockstein spectral sequence on all indecomposables on $E_8$.
\end{Prop}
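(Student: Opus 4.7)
The plan is to mirror the template used in the preceding Propositions \ref{proof of d_1-differentials} through \ref{proof of d_7 differentials}. First I would build, in a table parallel to \ref{tab:Possible Bockstein d_7}, the complete list of indecomposables on $E_8$ whose degree makes a non-trivial $d_8$ conceivable. Concretely, an indecomposable $x$ of degree $(s,f,w)$ survives to $E_8$ only if it is not $\rho^7$-torsion (use \ref{only differentials between rho-free elements}), and it can support a non-trivial $d_8$ only if degree $(s-1, f+1, w)$ contains a class on $E_8$ that is not annihilated by $\rho^8$. The list of surviving classes and their $\rho$-torsion orders on $E_8$ can be read off from the computations already carried out on the preceding pages, together with the corresponding $\rho$-free-quotient chart from section \ref{charts: rho-Bockstein E_r-pages rho-free quotient}.

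Second, I would dispatch the one non-zero entry. According to table \ref{tab:Bockstein d_r}, the only indecomposable supporting a non-trivial $d_8$ is $\tau^6 h_2 n$, with $d_8(\tau^6 h_2 n) = \tau \Dh$. This differential was already forced in \ref{differential on t^6 g^2}: once one shows that $\tau \Dh$ must be hit and that the other plausible source $\tau^8 d$ is a product of the infinite cycles $\tau^8$ and $d$, the only option left is the $d_8$ on $\tau^6 h_2 n$. I would simply cite that earlier Lemma.

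Third, I would rule out every remaining candidate in the table. For each such entry I expect one of three standard arguments to close the case: (i) the prospective target has already been hit by some $d_r$ with $r \le 7$ and is therefore zero on $E_8$; (ii) the source splits as a product of infinite cycles drawn from $\tau^8$, $\tau^4 P$, $h_1$, $h_2$, $u$, $g$, $\Dh$, together with any indecomposable whose $d_r$ was shown to vanish, so that the Leibniz rule forces $d_8 = 0$; or (iii) \ref{representation of rho-localized target} pins down the candidate as the representative of a specific $\rho$-torsion free class on $E_\infty$ (for instance in degrees where a $\tau^8$- or $g$-multiple of a surviving class must live), which obstructs any non-zero $d_8$. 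In most cases the corresponding internal coweight chart from \ref{charts: rho-Bockstein internal coweight charts} should make the situation visually unambiguous.

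The main obstacle I anticipate is not any single hard case but the combinatorial bookkeeping of producing a complete candidate table. Because $E_8$ is already quite sparse after seven rounds of differentials, each individual entry should close quickly once identified, but it is easy to miss a borderline degree where both a source and a $\rho$-torsion free target happen to coexist. I would guard against this by sweeping systematically through the internal coweights in which any $\rho$-torsion free class of filtration $f+1$ still appears on $E_8$, and checking each class of filtration $f$ in the same coweight against the table. Once that sweep is complete, the Proposition reduces to the single citation of \ref{differential on t^6 h2 n} plus a uniform application of the three elimination arguments above.
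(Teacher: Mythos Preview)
Your proposal is correct and follows the same template as the paper. The only difference is scale: the paper's proof observes that for degree reasons the candidate table contains \emph{exactly one} entry, namely $\tau^6 h_2 n$ with target $\tau\Dh$, so your ``Third'' elimination step is vacuous and the whole argument collapses to a single citation of Lemma~\ref{differential on t^6 h2 n}. Your anticipated combinatorial bookkeeping simply does not materialise on $E_8$.
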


\begin{proof}
    For degree reasons, the only possible $d_8$-differential on an indecomposable is $d_8(\tau^6 h_2 n) = \tau \Dh$. This differential occurs, as was proven in Lemma \ref{differential on t^6 h2 n}.
\end{proof}

\subsection{\texorpdfstring{$d_9$}{d\_9}-differentials}

\begin{Prop}\label{proof of d_9 differentials}
    There are no non-zero $d_9$-differentials in the $\rho$-Bockstein spectral sequence.
\end{Prop}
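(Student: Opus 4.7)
The plan is to follow the template used in Propositions \ref{proof of d_2 differentials} through \ref{proof of d_8 differentials}: enumerate, purely from the tridegree $(-1,1,0)$ of the differential together with the constraint that $d_9$ raises $\rho$-filtration by exactly $9$, the list of indecomposables that could conceivably support a non-trivial $d_9$, and then rule each one out. Since we are claiming there are no such differentials, every candidate must be handled individually; the expected result of the enumeration is a table analogous to \ref{tab:Possible Bockstein d_7} but with every entry in the ``Occurs'' column reading ``No''.

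First I would consult the $E_9$-page (which equals $E_8$ away from the single differential $d_8(\tau^6 h_2 n) = \tau \Dh$ recorded in \ref{differential on t^6 h2 n}) and, working internal coweight by internal coweight, list all indecomposables $x$ of $\rho$-filtration zero for which $d_9(x) = \rho^9 y$ with $y$ non-zero on $E_9$ is not already ruled out by tridegree. The effective bound here is that the internal coweight $s + f - w$ must be preserved and that the purported target must itself be $\rho$-torsion free on $E_9$ by Proposition \ref{only differentials between rho-free elements}. In practice this shortens the list considerably, since by Corollary \ref{representation of rho-localized target} the $\rho$-free part of $E_\infty$ in each internal coweight has only a small number of classes.

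For each surviving candidate $x$ I would choose one of three standard arguments, exactly as in the previous propositions: first, appeal directly to the $\rho$-localization, observing that $x$ must represent a specific non-zero class in $\Ext_{\mathcal{A}(1)^{\cl}}(\mathbb{F}_2,\mathbb{F}_2)[\rho^{\pm 1},\tau^8]$ and hence must be a permanent cycle; second, write $x$ as a product of elements whose $\rho$-Bockstein differentials are already known and apply the Leibniz rule to see that $d_9(x)$ is either zero or lies in a tridegree that is already trivial on $E_9$; or third, observe that the only candidate target $y$ has been killed by, or supports, some $d_r$ with $r < 9$, so that $\rho^9 y = 0$ on $E_9$. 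The infinite cycles $\tau^8$, $h_1$, $h_2$, $u$, $g$, $\tau^4 P$, and $\tau^6 d$ established in \ref{t^8, h1, h2, u, g are infinite cycles}, \ref{differential on t^4 P}, and \ref{differential on t^6 d} should cover the majority of candidates via the Leibniz rule.

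The main obstacle I expect is bookkeeping rather than ideas: after eight pages of differentials the $E_9$-page is thinned out in a non-uniform way, and for each relevant internal coweight one has to be careful that a candidate target $\rho^9 y$ has not accidentally survived as a non-trivial class that is not pinned down by the $\rho$-localization alone. The internal coweight charts described in section \ref{charts: rho-Bockstein internal coweight charts} should make this verification mechanical, but it still requires a systematic sweep through every internal coweight in which $E_9$ carries a $\rho$-torsion free indecomposable rather than a single unifying observation.
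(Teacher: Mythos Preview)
Your approach is the same template the paper uses, but you have significantly over-anticipated the work involved. The paper's entire proof is a single sentence: for degree reasons, there are no possible non-zero $d_9$-differentials on indecomposable elements. In other words, the enumeration you describe in your first step already yields the empty list --- there is no table analogous to Table~\ref{tab:Possible Bockstein d_7} with ``No'' in every row, because there are no rows at all. None of your three ruling-out strategies (appeal to $\rho$-localization, Leibniz rule on known infinite cycles, or target already killed) is ever invoked, and the ``bookkeeping obstacle'' you anticipate does not materialize.

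This is not a gap in your argument --- carrying out your plan would simply terminate immediately at the enumeration step --- but your write-up reads as though you expect substantive casework. It would be cleaner to actually perform the degree check on the $E_9$-page indecomposables (the $E_8$-page minus the source and target of the single $d_8$) and report that no indecomposable $x$ has a $\rho$-torsion free class in the target tridegree, rather than outlining machinery that turns out to be unnecessary.
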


\begin{proof}
    For degree reasons, there are no possible non-zero $d_9$-differentials on indecomposable elements.
\end{proof}

\subsection{\texorpdfstring{$d_{10}$}{d\_10}-differentials}

\begin{Prop}\label{proof of d_10 differentials}
    Table \ref{tab:Bockstein d_r} describes the non-zero $d_{10}$-differentials in the $\rho$-Bockstein spectral sequence on all indecomposables on $E_{10}$.
\end{Prop}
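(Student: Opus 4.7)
The plan is to follow the template set by Propositions \ref{proof of d_3 differentials}--\ref{proof of d_8 differentials}. First I would assemble a table of all indecomposable elements that survive to $E_{10}$ and whose tri-degree $(s,f,w)$ admits a potentially non-zero $d_{10}$-target; that is, for which there is some $\rho$-torsion free class on $E_{10}$ sitting in degree $(s+9,f+1,w)$ and representing a $\rho^{10}$-divisible element. Since every entry in Table \ref{tab:Bockstein d_r} lists the page on which an indecomposable first supports a non-trivial differential, any indecomposable not appearing there survives past $E_{10}$, and conversely any source of a $d_r$ for $r \leq 8$ has already been removed. So the enumeration amounts to a direct scan of the $\rho$-free quotient charts in internal coweights where the degree shift $(-1,+1,0)$ of $d_{10}$, combined with a $\rho^{10}$-factor, could be realized.

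The one entry I expect to keep is $\tau^6 h_2^2$, whose $d_{10}$-differential $d_{10}(\tau^6 h_2^2) = n$ has already been established in Lemma \ref{differential on t^6 h2^2}, where it was forced by the fact that $n$ must eventually disappear and the only long enough candidate source in internal coweight $10$ that is not itself a product of infinite cycles is $\tau^6 h_2^2$. I would simply cite that lemma here.

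For every other candidate indecomposable I would argue exclusion by one of the standard methods already used throughout Section \ref{section: rho-Bockstein differentials}: either (i) the candidate indecomposable must survive to $E_\infty$ to represent a specified class in the $\rho$-localization \ref{representation of rho-localized target} — which already determines $\tau^8$, $h_1$, $h_2$, $u$, $g$, and many of their multiples as infinite cycles via \ref{t^8, h1, h2, u, g are infinite cycles}; or (ii) the candidate is a product of elements whose $d_r$-behavior for $r < 10$ is already known, and the Leibniz rule together with \ref{only differentials between rho-free elements} forces the $d_{10}$ to vanish; or (iii) an internal-coweight chart inspection shows that the putative target on $E_{10}$ is zero because it already supported or received a shorter differential.

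The main obstacle will be the bookkeeping: by $E_{10}$ the $\rho$-free quotient is sparse, but one must verify degree-by-degree that no subtle $d_{10}$ has been missed, and in particular that on each of the relevant internal-coweight lines the local picture is already complete modulo the known differentials of Tables \ref{tab:Possible Bockstein d_2}--\ref{tab:Possible Bockstein d_7}. Because $\rho$-multiplication and $d_r$'s both preserve internal coweight, once the internal-coweight charts at \cite{Charts} are invoked, the verification in each coweight reduces to a finite check that will be routine but tedious; no new global idea beyond those already deployed is needed.
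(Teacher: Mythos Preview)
Your approach is correct and matches the paper's, but you are anticipating far more work than is actually required. The paper's proof is two sentences: for degree reasons the \emph{only} indecomposable on $E_{10}$ that could possibly support a non-zero $d_{10}$ is $\tau^6 h_2^2$, and that differential was already established in Lemma~\ref{differential on t^6 h2^2}. In other words, the table you plan to assemble has exactly one row, so the exclusion methods (i)--(iii) you describe are never invoked; the degree scan alone finishes the proof.
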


\begin{proof}
    For degree reasons, the only possible differential on an indecomposable on $E_{10}$ is $d_{10}(\tau^6 h_2^2) = n$. This differential occurs, as was proven in Lemma \ref{differential on t^6 h2^2}.
\end{proof}

\subsection{\texorpdfstring{$d_r$}{d\_r}-differentials for \texorpdfstring{$r \geq 11$}{r greater or equal to 11}}

\begin{Prop}\label{E_11 is E_infty}
    There are no non-zero $d_r$-differentials for $r \geq 11$ in the $\rho$-Bockstein spectral sequence. Hence, $E_{11} \cong E_{\infty}$.
\end{Prop}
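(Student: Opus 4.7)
The plan is to combine Proposition \ref{only differentials between rho-free elements} with the complete description of the $\rho$-localization given in Corollary \ref{representation of rho-localized target} and with the Leibniz rule. By Proposition \ref{only differentials between rho-free elements}, any non-trivial $d_r$ for $r \geq 11$ has both source and target $\rho$-torsion-free on $E_{11}$. Consequently, it suffices to show that the $\rho$-torsion-free quotient $E_{11}/(\text{$\rho$-power-torsion})$ is generated as an $\mathbb{F}_2[\rho]$-algebra by elements that are infinite cycles.

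First I would enumerate the $\rho$-torsion-free indecomposables surviving on $E_{11}$. The $\rho$-free indecomposables on $E_1$ are listed in table \ref{tab:Bockstein d_1}; inspecting table \ref{tab:Bockstein d_r} and removing those that support a differential or become decomposable on a later page, the survivors are precisely $\rho$, $\tau^8$, $h_1$, $h_2$, $u$, $g$, together with the already-infinite-cycle classes such as $\D^2$ (and the decomposables they generate). The key point is that every remaining indecomposable on $E_{11}$ is known to be an infinite cycle: $\rho$ is a cobar cycle, $\tau^8$, $h_1$, $h_2$, $u$, $g$ are infinite cycles by Corollary \ref{t^8, h1, h2, u, g are infinite cycles}, and $\D^2$ is an infinite cycle because Table \ref{tab:Bockstein d_r} records no differential on it (its possible $d_r$ targets are ruled out for degree reasons on each page, matching its role as the representative of $\tau^8 \cdot g^2$ in the $\rho$-localization, cf.\ Remark \ref{d_6 hitting tau^8 g^2}).

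Next, I would apply the Leibniz rule. Since $d_r$ is $\rho$-linear and multiplicative, and the $\rho$-torsion-free quotient of $E_{11}$ is generated as an algebra by the above infinite cycles, every element of this quotient is itself an infinite cycle. Combined with Proposition \ref{only differentials between rho-free elements}, this means no $d_r$ with $r \geq 11$ can be non-trivial, so $E_{11} \cong E_\infty$.

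The main obstacle is just bookkeeping: verifying that the enumeration of surviving $\rho$-free indecomposables on $E_{11}$ is complete. This is where a comparison chart is helpful — one checks that the $\rho$-free quotient of $E_{11}$ agrees, as a tri-graded $\mathbb{F}_2$-vector space, with the sub-$\mathbb{F}_2[\rho]$-algebra of $\text{Ext}_{\mathcal{A}(1)^{\cl}}(\mathbb{F}_2,\mathbb{F}_2)[\rho^{\pm 1}, \tau^8]$ generated by $\rho, \tau^8, h_1, h_2, u, g$ subject to the relations of Corollary \ref{representation of rho-localized target}. Any mismatch would indicate a missing differential on an earlier page, so this comparison simultaneously validates the preceding computation and establishes that no further differentials can occur.
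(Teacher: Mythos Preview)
Your overall strategy is right---by Proposition~\ref{only differentials between rho-free elements} it suffices to show every indecomposable in the $\rho$-free quotient of $E_{11}$ is an infinite cycle, and comparing with the $\rho$-localization of Corollary~\ref{representation of rho-localized target} is the natural tool. This is exactly the paper's approach.

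However, your enumeration of the surviving $\rho$-free indecomposables is seriously incomplete. You start from the indecomposables on $E_1$ (table~\ref{tab:Bockstein d_1}) and remove those supporting differentials, concluding the survivors are essentially $\rho,\tau^8,h_1,h_2,u,g,\D^2$. But the passage from $E_1$ to $E_{11}$ does not only \emph{remove} indecomposables---it \emph{creates} many new ones. Once $\tau$ and $\tau^4$ die, products like $\tau^4 P$, $\tau^6 a$, $\tau^6 d$, $\tau^5 h_0^2 e$, $\tau^2 P a$, $\tau^4 a^2$, $\tau^5\Dh$, $P^2$, $\tau^4 d^2$, $P a^2$, $\tau^2 a^3$, $\tau^2 P n^2$, and so on, become indecomposable on later pages (compare table~\ref{tab:Indecomposables on E_infty}). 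These are not generated on $E_{11}$ by your short list: relations such as $\tau^8\cdot h_1^4 = \rho^4\tau^4 P$ or $\tau^8\cdot u = \rho\,\tau^6 a$ are \emph{hidden} extensions holding only in $\Ext$, not on the $E_{11}$-page. So your Leibniz-rule argument, applied only to $\rho,\tau^8,h_1,h_2,u,g,\D^2$, does not cover these classes.

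The paper's proof handles this by checking, for each of the many indecomposables actually present on $E_{11}$, that it represents a non-zero element in the $\rho$-localization (e.g.\ $\tau^4 P$ represents $\tau^8\cdot h_1^4$, $\tau^6 a$ represents $\tau^8\cdot u$, $\tau^6 d$ represents $\tau^8\cdot h_1 u$, $P^2$ represents $\tau^8\cdot h_1^8$, etc.). An element with non-zero image in the localization cannot support any differential, so this forces $d_r=0$ on all indecomposables for $r\geq 11$. Your final paragraph gestures at exactly this comparison, but you should recognize that it is the substance of the argument, not merely a bookkeeping check of your (incorrect) short list.
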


\begin{proof}
    Every indecomposable on the $\rho$-free quotient of $E_{11}$ represents some non-zero element in the $\rho$-localization described in Corollary \ref{representation of rho-localized target}. For example $\tau^4 P$ has to represent $\tau^8 \cdot h_1^4$ and then $P^2$ has to represent $\tau^8 \cdot h_1^8$, or $\tau^6 a$ has to represent $\tau^8 \cdot u$ and then $\tau^6 d$ has to represent $\tau^8 \cdot h_1 \cdot u$, and so on.
    
    In particular, no indecomposable can support a $d_r$-differential for any $r \geq 11$.
\end{proof}

\subsection{Indecomposables on \texorpdfstring{$E_\infty$}{E\_infinity}}

By inspection, there are $56$ indecomposables on the $\rho$-Bockstein $E_\infty$-page. We list them here in table \ref{tab:Indecomposables on E_infty}. In some cases, a choice has to be made regarding the name of the indecomposable. For example in degree $(32, 7, 12)$ we have an $\mathbb{F}_2$-basis consisting of $\tau^5 \Dc$ and $\tau^6 a g$. The element $\tau^6 a g$ is decomposable, but the other two elements $\tau^5 \Dc$ and $\tau^5 \Dc + \tau^6 a g$ are indecomposable. We choose to include $\tau^5 \Dc + \tau^6 a g$ in our table. The reason for this is that we believe that this element better represents the structure of the $E_\infty$-page because $\tau^5 \Dc + \tau^6 a g$ is the unique $\rho$-power-torsion element in degree $(32, 7, 12)$.

Based on the following remark, table \ref{tab:Indecomposables on E_infty} also describes the indecomposables in the cohomology of $\mathcal{A}(2)$.

\begin{Rem}\label{remark on hidden extensions and indecomposables}
    Note that in general the indecomposables on the $E_\infty$-page of a spectral sequence do not have to be the same as the indecomposables in the abutment of a spectral sequence. That is because an indecomposable element on $E_\infty$ can be the target of a hidden extension, making it decomposable in the abutment. However, this does not occur in the $\rho$-Bockstein spectral sequence because the target of a hidden extension is always decomposable: Consider three elements $x$, $y$, $z$ and assume there is a hidden $x$-extension from $y$ to $z$. This implies that the $\rho$-filtration of $z$ is strictly larger than the sum of the $\rho$-filtration of $x$ and $y$. In particular, the $\rho$-filtration of $z$ is at least $1$, meaning it is $\rho$-divisible and hence decomposable.
\end{Rem}

\begin{longtable}{ll}
    \caption{Indecomposables on the $\rho$-Bockstein $E_\infty$-page or equivalently in $\Ext_{\mathcal{A}(2)}(\mathbb{M}_2, \mathbb{M}_2)$
    \label{tab:Indecomposables on E_infty}
    } \\
    \toprule
    Indecomposable & $(s,f,w)$\\
    \midrule \endfirsthead
    \caption[]{Indecomposables on the $\rho$-Bockstein $E_\infty$-page or equivalently in $\Ext_{\mathcal{A}(2)}(\mathbb{M}_2, \mathbb{M}_2)$} \\
    \toprule
    Indecomposable & $(s,f,w)$\\
    \midrule \endhead
    \bottomrule \endfoot
        $\rho$ & $(-1, 0, -1)$\\
        $\tau^8$ & $(0, 0, -8)$\\
        $\tau^6 h_0$ & $(0, 1, -6)$\\
        $\tau^4 h_0$ & $(0, 1, -4)$\\
        $\tau^2 h_0$ & $(0, 1, -2)$\\
        $h_0$ & $(0, 1, 0)$\\
        $\tau^5 h_1$ & $(1, 1, -4)$\\
        $\tau h_1$ & $(1, 1, 0)$\\
        $h_1$ & $(1, 1, 1)$\\
        $\tau^2 h_2$ & $(3, 1, 0)$\\
        $h_2$ & $(3, 1, 2)$\\
        $\tau h_2^2$ & $(6, 2, 3)$\\
        $\tau^5 c$ & $(8, 3, 0)$\\
        $\tau c$ & $(8, 3, 4)$\\
        $c$ & $(8, 3, 5)$\\
        $\tau^4 P$ & $(8, 4, 0)$\\
        $\tau^2 P h_0$ & $(8, 5, 2)$\\
        $P h_0$ & $(8, 5, 4)$\\
        $\tau P h_1$ & $(9, 5, 4)$\\
        $u$ & $(11, 3, 7)$\\
        $P h_2$ & $(11, 5, 6)$\\
        $\tau^6 a$ & $(12, 3, 0)$\\
        $a$ & $(12, 3, 6)$\\
        $\tau^6 d$ & $(14, 4, 2)$\\
        $d$ & $(14, 4, 8)$\\
        $n$ & $(15, 3, 8)$\\
        $\tau P c$ & $(16, 7, 8)$\\
        $P^2$ & $(16, 8, 8)$\\
        $\tau^2 e$ & $(17, 4, 8)$\\
        $e$ & $(17, 4, 10)$\\
        $\tau^7 h_0^2 e$ & $(17, 6, 3)$\\
        $\tau^5 h_0^2 e$ & $(17, 6, 5)$\\
        $\tau^4 g$ & $(20, 4, 8)$\\
        $g$ & $(20, 4, 12)$\\
        $\tau^2 P a$ & $(20, 7, 8)$\\
        $\tau^2 P d$ & $(22, 8, 10)$\\
        $\tau^6 P n$ & $(23, 7, 6)$\\
        $\tau^4 a^2$ & $(24, 6, 8)$\\
        $\tau^5 \Dh$ & $(25, 5, 8)$\\
        $\tau \Dh$ & $(25, 5, 12)$\\
        $\tau^3 P h_0^2 e$ & $(25, 10, 11)$\\
        $\tau P h_0^2 e$ & $(25, 12, 13)$\\
        $\tau^4 a d$ & $(26, 7, 10)$\\
        $\tau^2 a n$ & $(27, 6, 12)$\\
        $\tau^4 d^2$ & $(28, 8, 12)$\\
        $\tau^4 n^2$ & $(30, 6, 12)$\\
        $\tau^2 P^2 n$ & $(31, 11, 14)$\\
        $\tau^5 \Dc + \tau^6 a g$ & $(32, 7, 12)$\\
        $\tau \Dc + \tau^2 a g$ & $(32, 7, 16)$\\
        $P a^2$ & $(32, 10, 16)$\\
        $\tau P \Dh$ & $(33, 9, 16)$\\
        $P a d$ & $(34, 11, 18)$\\
        $\tau^2 a^3$ & $(36, 9, 16)$\\
        $\tau^2 P n^2$ & $(38, 10, 18)$\\
        $\tau P \Dc + \tau^2 P a g$ & $(40, 11, 20)$\\
        $\D^2$ & $(48, 8, 24)$\\
\end{longtable}

\section{Hidden extensions on the \texorpdfstring{$\rho$}{rho}-Bockstein \texorpdfstring{$E_\infty$}{E\_infinity}-page}\label{section: hidden extensions}

For any multiplicative spectral sequence, the multiplicative structure of the abutment of the spectral sequence may differ from that of $E_\infty$. On the $E_\infty$-page, the filtration degree of a product is the sum of the filtration degrees of the factors. In the abutment, the filtration degree of a product can be larger than the sum of the filtration degrees of the factors. Roughly speaking, if this happens one typically says that there is a hidden extension. For a more precise definition of the term hidden extension see \cite[Section 4.1.1]{Isa19}. We will use the cited definition here.

We will consider some but not all hidden extensions. The reason being that $E_\infty$ has too many indecomposables. For now, we will only consider hidden extensions by $\tau^8$, $h_0$, $h_1$, and $h_2$. There is a slight amount of care needed when talking about $h_0$ because the $E_\infty$ class of the same name represents two elements in $\Ext$ due to the presence of $\rho h_1$ in the same degree. On $E_\infty$, $\rho h_1$ is $\rho$-torsion free and $h_0$ is $\rho$-torsion, so one of the elements represented by $h_0$ is $\rho$-torsion free and the other one is $\rho$-torsion. When we talk about hidden $h_0$-extensions we specifically mean hidden extensions by the $\rho$-torsion element represented by $h_0$ on $E_\infty$. The other three $E_\infty$ classes $\tau^8$, $h_1$, and $h_2$ each only represent one element of $\Ext$ which we denote by the same symbol.

Hidden extensions can be separated into three categories:
\begin{enumerate}[label=\rm{(\arabic*)}]
    \item The source of the hidden extension is $\rho$-torsion free and the target is $\rho$-torsion free,

    \item the source is $\rho$-torsion free and the target is $\rho$-power-torsion,

    \item both source and target are $\rho$-power-torsion.
\end{enumerate}
The fourth case where the source is $\rho$-power-torsion and the target is $\rho$-torsion free cannot occur for multiplicative reasons. We study each of these three types of hidden extensions separately, one subsection at a time.

\subsection{Chart guide}\label{section: Chart guide for section on hidden extensions}

The relevant charts from \cite{Charts} vary for each subsection. For section \ref{section: Hidden extensions on the rho-free quotient} the reader should first open the chart \texttt{Rho-Bockstein E\_11 rho-free quotient} from the folder \texttt{Rho-Bockstein E\_r-pages rho-free quotient}. By Proposition \ref{E_11 is E_infty} this chart depicts the $\rho$-free quotient of the $E_\infty$-page of the $\rho$-Bockstein spectral sequence. So the hidden extensions considered in section \ref{section: Hidden extensions on the rho-free quotient} will be between elements on this chart. Once we are done with determining hidden extensions, the two charts in the folder \texttt{Cohomology of A(2) rho-free quotient} show all of these hidden extensions. They were separated into two charts for visual clarity.

In section \ref{section: Hidden extensions from rho-torsion free elements to rho-power-torsion elements} we should again consider the chart \texttt{Rho-Bockstein E\_11 rho-free quotient} for the sources of the hidden extensions. However, since the targets of the hidden extensions are $\rho$-power-torsion they will be located on charts in the folder \texttt{Rho Bockstein E\_infinity rho-torsion}.

For section \ref{section: hidden extensions on rho-torsion elements} we only consider $\rho$-power-torsion elements of coweights congruent to $1$ mod $8$, and hidden extensions by elements preserving the coweight. So the relevant chart is \texttt{Rho-Bockstein E\_infinity rho-torsion part with s-w == 1 mod 8} from the folder \texttt{Rho Bockstein E\_infinity rho-torsion}. Whenever we invoke the map $p$ from Lemma \ref{the map p} in section \ref{section: hidden extensions on rho-torsion elements} it is helpful to consider the chart \texttt{Rho-Bockstein E\_1 rho-free quotient} from the folder \texttt{Rho-Bockstein E\_r-pages rho-free quotient}. That is because it coincides as a chart with the cohomology of $\mathbb{C}$-motivic $\mathcal{A}(2)$ since $E_1$ is the same as the cohomology of $\mathbb{C}$-motivic $\mathcal{A}(2)$ adjoined $\rho$, but we don't plot $\rho$-multiples in the given chart.

\subsection{Hidden extensions on the \texorpdfstring{$\rho$}{rho}-free quotient}\label{section: Hidden extensions on the rho-free quotient}

By Corollary \ref{representation of rho-localized target} we know the multiplicative structure of the cohomology of $\mathcal{A}(2)$ after $\rho$-localization. This completely determines the multiplication on the $\rho$-free quotient.

\begin{Lemma}\label{multiplication on rho-free quotient is determined by rho-localization}
    The multiplication in the $\rho$-free quotient $\text{\rm Ext}_{\mathcal{A}(2)}(\mathbb{M}_2,\mathbb{M}_2)/(\text{\rm $\rho$-power-torsion})$ is uniquely determined by the multiplication in the $\rho$-localization $\text{\rm Ext}_{\mathcal{A}(2)}(\mathbb{M}_2,\mathbb{M}_2)[\rho^{-1}]$.
\end{Lemma}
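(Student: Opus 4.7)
The plan is to exhibit $\text{Ext}_{\mathcal{A}(2)}(\mathbb{M}_2,\mathbb{M}_2)/(\text{$\rho$-power-torsion})$ as an \emph{injective} subalgebra of $\text{Ext}_{\mathcal{A}(2)}(\mathbb{M}_2,\mathbb{M}_2)[\rho^{-1}]$ via the canonical localization map; once such an injection of algebras is in hand, the multiplication on the quotient is forced by the multiplication on the ambient localization.

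Concretely, let $R = \text{Ext}_{\mathcal{A}(2)}(\mathbb{M}_2,\mathbb{M}_2)$, and consider the localization-at-$\rho$ homomorphism $q\colon R \to R[\rho^{-1}]$. Since $\rho$-localization is a filtered colimit along repeated multiplication by $\rho$, the kernel of $q$ is precisely the set of $x \in R$ with $\rho^n x = 0$ for some $n \geq 0$, i.e.\ the $\rho$-power-torsion ideal. Thus $q$ factors as
\[
R \twoheadrightarrow R/(\text{$\rho$-power-torsion}) \xhookrightarrow{\bar{q}} R[\rho^{-1}],
\]
with $\bar{q}$ an injective ring homomorphism. Because $\bar q$ is injective and multiplicative, for any two classes $\bar x, \bar y$ in the $\rho$-free quotient the product $\bar x \bar y$ is the unique preimage under $\bar q$ of $\bar q(\bar x) \cdot \bar q(\bar y)$, so the multiplicative structure of the quotient is completely determined by that of the localization.

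There is essentially no obstacle here beyond verifying that the kernel of $\rho$-localization is exactly the $\rho$-power-torsion submodule, which is a standard general fact about localizing a module at a single element. The substantive content, already established in \ref{representation of rho-localized target}, is the explicit presentation of $R[\rho^{-1}]$; the present lemma simply packages the formal consequence that this presentation transports along $\bar q$ to determine products on the $\rho$-free quotient. In particular, this justifies the strategy of section \ref{section: Hidden extensions on the rho-free quotient} of reading off all multiplicative relations among $\rho$-torsion free classes directly from \ref{representation of rho-localized target}.
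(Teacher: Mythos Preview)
Your proof is correct and follows essentially the same approach as the paper: both arguments observe that the kernel of the localization map is exactly the $\rho$-power-torsion, so the induced map from the $\rho$-free quotient to the localization is an injective ring homomorphism, and hence products in the quotient are recovered as preimages of products in the localization. You give slightly more detail on why the kernel equals the $\rho$-power-torsion, but the substance is identical.
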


\begin{proof}
    The kernel of the localization map $\ell: \text{\rm Ext}_{\mathcal{A}(2)}(\mathbb{M}_2,\mathbb{M}_2) \to \text{\rm Ext}_{\mathcal{A}(2)}(\mathbb{M}_2,\mathbb{M}_2)[\rho^{-1}]$ is the $\rho$-power-torsion subalgebra of the domain. Therefore, $\ell$ induces an injective map of algebras from the $\rho$-free quotient to the $\rho$-localization. The value of a product $x \cdot y$ in the $\rho$-free quotient is then given by $\ell^{-1}(\ell(x) \cdot \ell(y))$.
\end{proof}

In order to make use of Lemma \ref{multiplication on rho-free quotient is determined by rho-localization} we need to understand the localization map. To do so, we need to determine some hidden extensions on the $\rho$-free quotient of the $\rho$-Bockstein $E_\infty$-page. Using our knowledge of the $\rho$-localization of the cohomology of $\mathcal{A}(2)$, we can find all hidden extensions by $\tau^8$, $h_1$, and $h_2$ on the $\rho$-free quotient. There are two charts of the $\rho$-free quotient $\text{\rm Ext}_{\mathcal{A}(2)}(\mathbb{M}_2,\mathbb{M}_2)/(\text{$\rho$-power-torsion})$ with the aforementioned hidden extensions at \cite{Charts}. For a guide on how to read them see section \ref{section: charts}, and specifically section \ref{charts: cohomology of A(2) rho-free quotient}. Our task now is to prove the existence of these hidden extensions.

The following table \ref{tab:Hidden h_1-, h_2- and t^8-extensions on the rho-free quotient} describes all hidden extensions by $\tau^8$, $h_1$, and $h_2$ on the $\rho$-free quotient of the $\rho$-Bockstein $E_\infty$-page. We have inserted dashed horizontal lines to divide the table into different families. Hidden extensions in one family are closely connected, for example by multiplication with $h_1$ or $\tau^4 P$. Inside each family, the $h_1$-extensions are roughly located on a line of slope $1/2$, corresponding to the slope of $\tau^4 P$-multiplication. The columns of table \ref{tab:Hidden h_1-, h_2- and t^8-extensions on the rho-free quotient} describe:
\begin{enumerate}[label=\rm{(\arabic*)}]
    \item the name of the element supporting a hidden extension,
    \item the degree of the element supporting a hidden extension,
    \item which element we are multiplying with,
    \item the name of the target of the hidden extension,
    \item the periodicity of the hidden extension in terms of $E_\infty$-multiplication on the $\rho$-free quotient. Meaning if we multiply both the source and target on the $\rho$-free quotient of $E_\infty$ arbitrarily often by the given periodicity operator then we will see the same hidden extension.
\end{enumerate}
We choose $E_\infty$-multiplication in the periodicity column to avoid talking about hidden extensions other than $\tau^8$, $h_1$, and $h_2$ at this point. For example, by Corollary \ref{representation of rho-localized target} every element in the $\rho$-free quotient supports infinitely many $g$-multiplications. However, some hidden extensions in the given table are not $g$-periodic with respect to $E_\infty$-multiplication. For example the $\tau^8$-extension on $h_1^4$ is not $g$-periodic with respect to $E_\infty$-multiplication because the target of the hidden $\tau^8$-extension on $h_1^4 g^2$ is $\rho^8 \D^2 h_1^4$, which is not the $g^2$-multiple of $\rho^4 \tau^4 P$ on $E_\infty$.

Also, the periodicity column only concerns multiplication on the $\rho$-free quotient. We are not making any statement on possible $\rho$-power-torsion summands of any of these hidden extensions.

\begin{longtable}{llclc}
    \caption{Hidden $\tau^8$-, $h_1$-, and $h_2$-extensions on the $\rho$-free quotient of the $\rho$-Bockstein $E_\infty$-page
    \label{tab:Hidden h_1-, h_2- and t^8-extensions on the rho-free quotient}
    } \\
    \toprule
    Source & $(s,f,w)$ & Extension by & Target & $E_\infty$-periodicity\\
    \midrule \endfirsthead
    \caption[]{Hidden $\tau^8$-, $h_1$-, and $h_2$-extensions on the $\rho$-free quotient of the $\rho$-Bockstein $E_\infty$-page} \\
    \toprule
    Source & $(s,f,w)$ & Extension by & Target & $E_\infty$-periodicity\\
    \midrule \endhead
    \bottomrule \endfoot
        $h_1^4$ & $(4, 4, 4)$ & $\tau^8$ & $\rho^4 \tau^4 P$ & $P^2$, $\D^2$\\
        $h_1^5$ & $(5, 5, 5)$ & $\tau^8$ & $\rho^4 \tau^4 P h_1$ & $P^2$, $\D^2$\\
        $h_1^6$ & $(6, 6, 6)$ & $\tau^8$ & $\rho^4 \tau^4 P h_1^2$ & $P^2$, $\D^2$\\
        $h_1^7$ & $(7, 7, 7)$ & $\tau^8$ & $\rho^4 \tau^4 P h_1^3$ & $P^2$, $\D^2$\\
        $h_1^8$ & $(8, 8, 8)$ & $\tau^8$ & $\rho^8 P^2$ & $h_1$, $P^2$, $\D^2$\\
        $\tau^4 P h_1^3$ & $(11, 7, 3)$ & $h_1$ & $\rho^4 P^2$ & $P^2$, $\D^2$\\\hdashline[1pt/1pt]
        $u$ & $(11, 3, 7)$ & $\tau^8$ & $\rho \tau^6 a$ & $P^2$, $\D^2$\\
        $\tau^6 a$ & $(12, 3, 0)$ & $h_1$ & $\rho \tau^6 d$ & $P^2$, $\D^2$\\
        $h_1 u$ & $(12, 4, 8)$ & $\tau^8$ & $\rho^2 \tau^6 d$ & $P^2$, $\D^2$\\
        $h_1^2 u$ & $(13, 5, 9)$ & $\tau^8$ & $\rho^2 \tau^6 h_1 d$ & $P^2$, $\D^2$\\
        $\tau^6 h_1 d$ & $(15, 5, 3)$ & $h_1$ & $\rho \tau^5 h_0^2 e$ & $P^2$, $\D^2$\\
        $h_1^3 u$ & $(14, 6, 10)$ & $\tau^8$ & $\rho^3 \tau^5 h_0^2 e$ & $P^2$, $\D^2$\\
        $\tau^5 h_0^2 e$ & $(17, 6, 5)$ & $h_1$ & $\rho^2 \tau^2 P a$ & $P^2$, $\D^2$\\
        $h_1^4 u$ & $(15, 7, 11)$ & $\tau^8$ & $\rho^5 \tau^2 P a$ & $P^2$, $\D^2$\\
        $\tau^2 P a$ & $(20, 7, 8)$ & $h_1$ & $\rho \tau^2 P d$ & $P^2$, $\D^2$\\
        $h_1^5 u$ & $(16, 8, 12)$ & $\tau^8$ & $\rho^6 \tau^2 P d$ & $P^2$, $\D^2$\\
        $h_1^6 u$ & $(17, 9, 13)$ & $\tau^8$ & $\rho^6 \tau^2 P h_1 d$ & $P^2$, $\D^2$\\
        $\tau^2 P h_1 d$ & $(23, 9, 11)$ & $h_1$ & $\rho \tau P h_0^2 e$ & $P^2$, $\D^2$\\
        $h_1^7 u$ & $(18, 10, 14)$ & $\tau^8$ & $\rho^7 \tau P h_0^2 e$ & $P^2$, $\D^2$\\
        $\tau P h_0^2 e$ & $(25, 10, 13)$ & $h_1$ & $\rho P^2 u$ & $P^2$, $\D^2$\\
        $h_1^8 u$ & $(19, 11, 15)$ & $\tau^8$ & $\rho^8 P^2 u$ & $h_1$, $P^2$, $\D^2$\\\hdashline[1pt/1pt]
        $h_1^2 g$ & $(22, 6, 14)$ & $\tau^8$ & $\rho^2 \tau^4 a^2$ & $P^2$, $\D^2$\\
        $\tau^4 a^2$ & $(24, 6, 8)$ & $h_1$ & $\rho \tau^4 a d$ & $P^2$, $\D^2$\\
        $h_1^3 g$ & $(23, 7, 15)$ & $\tau^8$ & $\rho^3 \tau^4 a d$ & $P^2$, $\D^2$\\
        $\tau^4 a d$ & $(26, 7, 10)$ & $h_1$ & $\rho \tau^4 d^2$ & $P^2$, $\D^2$\\
        $h_1^4 g$ & $(24, 8, 16)$ & $\tau^8$ & $\rho^4 \tau^4 d^2$ & $P^2$, $\D^2$\\
        $h_1^5 g$ & $(25, 9, 17)$ & $\tau^8$ & $\rho^4 \tau^4 h_1 d^2$ & $P^2$, $\D^2$\\
        $\tau^4 h_1 d^2$ & $(29, 9, 13)$ & $h_1$ & $\rho^2 P a^2$ & $P^2$, $\D^2$\\
        $h_1^6 g$ & $(26, 10, 18)$ & $\tau^8$ & $\rho^6 P a^2$ & $P^2$, $\D^2$\\
        $P a^2$ & $(32, 10, 16)$ & $h_1$ & $\rho P a d$ & $P^2$, $\D^2$\\
        $h_1^7 g$ & $(27, 11, 19)$ & $\tau^8$ & $\rho^7 P a d$ & $P^2$, $\D^2$\\
        $P a d$ & $(34, 11, 18)$ & $h_1$ & $\rho P^2 g$ & $P^2$, $\D^2$\\
        $h_1^8 g$ & $(28, 12, 20)$ & $\tau^8$ & $\rho^8 P^2 g$ & $P^2$, $\D^2$\\\hdashline[1pt/1pt]
        $h_2 g$ & $(23, 5, 14)$ & $\tau^8$ & $\rho^2 \tau^5 \Dh$ & $\D^2$\\
        $\tau^5 \Dh$ & $(25, 5, 8)$ & $h_2$ & $\rho^2 \tau^4 n^2$ & $\D^2$\\
        $h_2^2 g$ & $(26, 6, 16)$ & $\tau^8$ & $\rho^4 \tau^4 n^2$ & $\D^2$\\\hdashline[1pt/1pt]
        $u g$ & $(31, 7, 19)$ & $\tau^8$ & $\rho \tau^6 a g$ & $P^2$, $\D^2$\\
        $\tau^6 a g$ & $(32, 7, 12)$ & $h_1$ & $\rho \tau^6 d g$ & $P^2$, $\D^2$\\
        $h_1 u g$ & $(32, 8, 20)$ & $\tau^8$ & $\rho^2 \tau^6 d g$ & $P^2$, $\D^2$\\
        $\tau^6 d g$ & $(34, 8, 14)$ & $h_1$ & $\rho \tau^2 a^3$ & $P^2$, $\D^2$\\
        $h_1^2 u g$ & $(33, 9, 21)$ & $\tau^8$ & $\rho^3 \tau^2 a^3$ & $P^2$, $\D^2$\\
        $\tau^2 a^3$ & $(36, 9, 16)$ & $h_1$ & $\rho \tau^2 P n^2$ & $P^2$, $\D^2$\\
        $h_1^3 u g$ & $(34, 10, 22)$ & $\tau^8$ & $\rho^4 \tau^2 P n^2$ & $P^2$, $\D^2$\\
        $\tau^2 P n^2$ & $(38, 10, 18)$ & $h_1$ & $\rho \tau^2 P a g$ & $P^2$, $\D^2$\\
        $h_1^4 u g$ & $(35, 11, 23)$ & $\tau^8$ & $\rho^5 \tau^2 P a g$ & $P^2$, $\D^2$\\
        $\tau^2 P a g$ & $(40, 11, 20)$ & $h_1$ & $\rho \tau^2 P d g$ & $P^2$, $\D^2$\\
        $h_1^5 u g$ & $(36, 12, 24)$ & $\tau^8$ & $\rho^6 \tau^2 P d g$ & $P^2$, $\D^2$\\
        $h_1^6 u g$ & $(37, 13, 25)$ & $\tau^8$ & $\rho^6 \tau^2 P h_1 d g$ & $P^2$, $\D^2$\\
        $\tau^2 P h_1 d g$ & $(43, 13, 23)$ & $\tau^8$ & $\rho \tau^6 P a^3$ & $P^2$, $\D^2$\\
        $\tau^2 P h_1 d g$ & $(43, 13, 23)$ & $h_1$ & $\rho \tau P h_0^2 e g$ & $P^2$, $\D^2$\\
        $\tau^6 P a^3$ & $(44, 13, 16)$ & $h_1$ & $\rho \tau^6 P^2 n^2$ & $P^2$, $\D^2$\\
        $h_1^7 u g$ & $(38, 14, 26)$ & $\tau^8$ & $\rho^7 \tau P h_0^2 e g$ & $P^2$, $\D^2$\\
        $\tau P h_0^2 e g$ & $(45, 14, 25)$ & $\tau^8$ & $\rho \tau^6 P^2 n^2$ & $P^2$, $\D^2$\\
        $\tau P h_0^2 e g$ & $(45, 14, 25)$ & $h_1$ & $\rho P^2 u g$ & $P^2$, $\D^2$\\
        $\tau^6 P^2 n^2$ & $(46, 14, 18)$ & $h_1$ & $\rho \tau^6 P^2 a g$ & $P^2$, $\D^2$\\
        $h_1^8 u g$ & $(39, 15, 27)$ & $\tau^8$ & $\rho^8 P^2 u g$ & $h_1$, $P^2$, $\D^2$\\\hdashline[1pt/1pt]
        $g^2$ & $(40, 8, 24)$ & $\tau^8$ & $\rho^8 \D^2$ & $h_1$, $P^2$, $g$, $\D^2$\\
        $h_2 g^2$ & $(43, 9, 26)$ & $\tau^8$ & $\rho^8 \D^2 h_2$ & $g$, $\D^2$\\
        $h_2^2 g^2$ & $(46, 10, 28)$ & $\tau^8$ & $\rho^8 \D^2 h_2^2$ & $g$, $\D^2$\\\hdashline[1pt/1pt]
        $u g^2$ & $(51, 11, 31)$ & $\tau^8$ & $\rho^8 \D^2 u$ & $h_1$, $P^2$, $g$, $\D^2$\\
\end{longtable}

\begin{Prop}
    Table \ref{tab:Hidden h_1-, h_2- and t^8-extensions on the rho-free quotient} describes all hidden $\tau^8$-, $h_1$-, and $h_2$-extensions on $\rho$-torsion free elements on the $\rho$-Bockstein $E_\infty$-page.
\end{Prop}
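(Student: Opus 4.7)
The approach rests on Lemma \ref{multiplication on rho-free quotient is determined by rho-localization}: the localization map from $\text{Ext}_{\mathcal{A}(2)}(\mathbb{M}_2,\mathbb{M}_2)/(\text{$\rho$-power-torsion})$ into $\text{Ext}_{\mathcal{A}(2)}(\mathbb{M}_2,\mathbb{M}_2)[\rho^{-1}]$ is an injective algebra homomorphism whose target, by Corollary \ref{representation of rho-localized target}, is the explicit Laurent polynomial algebra $\mathbb{F}_2[\rho^{\pm 1}, \tau^8, h_1, h_2, u, g]/(h_1 h_2,\, h_2^3,\, h_2 u,\, u^2 + h_1^2 g)$. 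Every product of $\rho$-torsion free classes is therefore determined by its image in the localization, and the plan is to verify each row of the table via a tridegree calculation there.

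The first step is to record the image $[x]$ of each $\rho$-torsion free indecomposable $x$ on $E_\infty$. Since the localization is a Laurent polynomial algebra modulo the listed relations, $[x]$ is a single monomial $\rho^a (\tau^8)^b h_1^{c_1} h_2^{c_2} u^{c_3} g^{c_4}$ whose exponents are pinned down uniquely by the tridegree of $x$. For instance, the tridegree $(8, 4, 0)$ of $\tau^4 P$ forces $[\tau^4 P] = \rho^{-4} \tau^8 h_1^4$, the tridegree $(16, 8, 8)$ of $P^2$ gives $[P^2] = \rho^{-8} \tau^8 h_1^8$, and the tridegree $(25, 5, 8)$ of $\tau^5 \Dh$ gives $[\tau^5 \Dh] = \rho^{-2} \tau^8 h_2 g$; the remaining $\rho$-torsion free entries of Table \ref{tab:Indecomposables on E_infty} are handled identically. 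With these images in hand, for each row of the table with source $x$, multiplier $m$, and claimed target $\rho^k y$, the identity $[m] \cdot [x] = \rho^k [y]$ is verified by a direct monomial computation, and the hidden extension in the $\rho$-free quotient follows by injectivity. Entries within each dashed block of the table are linked multiplicatively: once a founding extension such as $\tau^8 \cdot h_1^4 = \rho^4 \tau^4 P$, $\tau^8 \cdot u = \rho \tau^6 a$, or $\tau^8 \cdot h_2 g = \rho^2 \tau^5 \Dh$ is established, repeatedly multiplying by $h_1$ and reading off the correct $E_\infty$-representative in the next target tridegree produces the remaining rows, until a degree coincidence reroutes the family onto $P^2$ or $\Dh$. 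The periodicity column records which $E_\infty$-operators commute with a founding extension and hence generate the full periodic family.

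The final step is completeness: no other hidden $\tau^8$-, $h_1$-, or $h_2$-extensions may exist on $\rho$-torsion free classes. For this I would enumerate all $\rho$-torsion free classes on $E_\infty$ (i.e.\ those whose monomial image $[x]$ is nonzero) and, for each source $x$ and each $m \in \{\tau^8, h_1, h_2\}$, compare $[m] \cdot [x]$ with the naive $E_\infty$-product: if they agree as Laurent monomials with the same $\rho$-exponent there is no hidden extension, while any discrepancy must already appear in the table. The main obstacle is bookkeeping rather than conceptual---with $56$ indecomposables and three multipliers there are many pairs to examine, and one must carefully select the unique $E_\infty$-representative of minimal $\rho$-filtration in each target tridegree while respecting the relations $u^2 = h_1^2 g$ and $h_2^3 = h_1 h_2 = h_2 u = 0$. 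Sorting the checks by internal coweight modulo $8$ and by Adams filtration keeps the case analysis compact.
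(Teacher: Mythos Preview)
Your proposal is correct and follows essentially the same approach as the paper: both rely on Lemma \ref{multiplication on rho-free quotient is determined by rho-localization}, push all computations into the explicit $\rho$-localized algebra of Corollary \ref{representation of rho-localized target}, and read off each hidden extension from the unique Laurent monomial in the target tridegree. The paper presents this via worked examples (the $h_1^4$, $u$, $h_2 g$, and $g^2$ families) rather than your more algorithmic description, and it makes explicit one point you leave implicit: the $P^2$-periodicity of the $\tau^8$-extension on $g^2$ uses that $P^2 g^2 = \D^2 h_1^8$ on the $\rho$-free quotient of $E_\infty$, which the paper justifies via the Bockstein differential killing $\rho^3 d^4$, whereas your injectivity argument recovers the same equality indirectly.
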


\begin{proof}
By Lemma \ref{multiplication on rho-free quotient is determined by rho-localization} all hidden extensions by $\tau^8$, $h_1$, and $h_2$ are detected by the $\rho$-localized cohomology of $\mathcal{A}(2)$. We provide some examples:

Consider $h_1^4$, which is $\tau^8$-torsion on $E_\infty$. By Corollary \ref{representation of rho-localized target}, $h_1^4$ is not $\tau^8$-torsion after $\rho$-localization. Hence, there must be a hidden extension $\tau^8 \cdot h_1^4 = z$ for some element $z$ in higher $\rho$-filtration. There are two possible targets, namely $\rho^4 \tau^4 P$ and $\rho^{16} g$. For degree reasons the $E_\infty$ element $g$ has to represent what we called $g$ in Corollary \ref{representation of rho-localized target}, so we must have $\tau^8 \cdot h_1^4 = \rho^4 \tau^4 P$. By $h_1$-multiplication this also implies $\tau^8 \cdot h_1^5 = \rho^4 \tau^4 P h_1$, $\tau^8 \cdot h_1^6 = \rho^4 \tau^4 P h_1^2$, and $\tau^8 \cdot h_1^7 = \rho^4 \tau^4 P h_1^3$. Then note that $\tau^8 \cdot h_1^8$ also is not zero on $E_\infty$, but $\rho^4 \tau^4 P h_1^4$ is. This implies that there is a hidden extension, and for degree reasons it must be of the form $h_1 \cdot \rho^4 \tau^4 P h_1^3 = \rho^8 P^2$. This extension then also implies $h_1 \cdot \tau^4 P h_1^3 = \rho^4 P^2$. That this family has the claimed periodicities follows from inspection of the $E_\infty$-page.

A different kind of pattern emerges with $u$. Again, $u$ is $\tau^8$-torsion on $E_\infty$ but not after $\rho$-localization. This forces the hidden extension $\tau^8 \cdot u = \rho \tau^6 a$. Then $h_1 u$ must also become $\rho$-torsion free and we must have the hidden extension $\tau^8 \cdot h_1 u = \rho^2 \tau^6 d$. Then $h_1 \cdot \rho \tau^6 a = \rho^2 \tau^6 d$ must be true, so $h_1 \cdot \tau^6 a = \rho \tau^6 d$. Much like before this leads to a periodic family of hidden extensions. We mention that this family can also be derived from the hidden extension $\tau^8 \cdot h_1^4 = \rho^4 \tau^4 P$. For example we have $\tau^6 a \cdot \tau^4 P = \tau^8 \cdot \tau^2 P a$ on $E_\infty$ and $\rho^4 \cdot \tau^8 \cdot \tau^2 P a$ is not trivial. This implies that $\tau^6 a$ must support at least four $h_1$-multiplications and the given pattern is the only possibility.

The families of hidden extensions starting with $h_1^2 g$ and $u g$, can be derived in similar fashion from the $\rho$-localization or by appealing to the hidden $\tau^8$ on $h_1^4$ as we just did. 

The family of hidden extensions starting with $h_2 g$ is also immediate from the $\rho$-localization and degree reasons.

A more interesting pattern emerges for $g^2$. Notice how $g^2$ and all of its $h_1$- and $h_2$-multiples are $\tau^8$-torsion on $E_\infty$\footnote{At least in the $\rho$-free quotient. Technically only $\rho^6 g^2$ is $\tau^8$-torsion, so to get the precise relations in $\Ext$ one should multiply everything by $\rho^6$.}. This means that there must be hidden $\tau^8$-extensions on all of these elements since $\tau^8 \cdot g^2$ is non-zero after $\rho$-localization. The only possibility is the relation $\tau^8 \cdot g^2 = \rho^8 \D^2$. This extends to $h_1$- and $h_2$-multiples, e.g. $\tau^8 \cdot h_1 g^2 = \rho^8 \D^2 h_1$, and $\tau^8 \cdot h_2 g^2 = \rho^8 \D^2 h_2$. Regarding the $P^2$-periodicity of the hidden extensions on $g^2$, note that $\rho^3 d^4 = \rho^3 (P^2 g^2 + \D^2 h_1^8)$ is the target of $d_3(P (\Dc d + \tau a d g))$. Therefore $P^2 g^2 = \D^2 h_1^8$ on the $\rho$-free quotient of the $\rho$-Bockstein $E_\infty$-page, and the $P^2$-multiple of the hidden $\tau^8$-extension on $g^2$ is a hidden $\tau^8$-extension from $\D^2 h_1^8$ to $\rho^8 \D^2 P^2$.

The hidden extension on $u g^2$ follows similarly. For the $P^2$-periodicity there is again a $\rho$-Bockstein differential $d_3(P \Dh h_1 d e)$ hitting $\rho^3 u d^4 = \rho^3 (P^2 u g^2 + \D^2 h_1^8 u)$, so that $P^2 u g^2 = \D^2 h_1^8 u$ on the $\rho$-free quotient of $E_\infty$.

Lastly, we should mention the interplay between the different periodicities for hidden extensions on the $\rho$-free quotient: First, consider $h_1^4 g^2$. By $h_1$-periodicity of the hidden $\tau^8$-extension on $g^2$, we have $\tau^8 \cdot h_1^4 g^2 = \rho^8 \D^2 h_1^4$. By $\D^2$-periodicity of the hidden $\tau^8$-extension on $h_1^4$, we have $\tau^8 \cdot \D^2 h_1^4 = \rho^4 \tau^4 \D^2 P$. These combine into $\tau^{16} \cdot h_1^4 g^2 = \tau^8 \cdot \rho^8 \D^2 h_1^4 = \rho^{12}\tau^4 \D^2 P$. As a second example, consider the hidden $\tau^8$-extension on $g^2$. This extension is $g$-periodic, so in particular we get $\tau^8 \cdot g^4 = \rho^8 \D^2 g^2$. It is also $\D^2$-periodic, so we get $\tau^8 \cdot \D^2 g^2 = \rho^8 \D^4$. Combining these we see $\tau^{16} \cdot g^4 = \rho^{16} \D^4$. This pattern generalizes to $(\tau^8 \cdot g^2)^n = (\rho^8 \D^2)^n$. In the charts, this is reflected by the longer and longer horizontal lines to the right of powers of $g^2$.
\end{proof}

\begin{Rem}
    Now we can give an example of how to use Lemma \ref{multiplication on rho-free quotient is determined by rho-localization} to multiply any two elements in the $\rho$-free quotient of the cohomology of $\mathcal{A}(2)$: Say we want to multiply $\tau^5 h_0^2 e$ and $\tau^4 d^2$. First of all, we need to be precise with our notation. There are no notational issues with $\tau^5 h_0^2 e$ as it represents exactly one element in $\Ext$ because there are no classes in higher $\rho$-filtration in its degree on $E_\infty$. On the other hand, the class $\tau^4 d^2$ represents multiple elements in $\Ext$ due to the presence of $\rho^{12} g^2$. Since the map from the $\rho$-free quotient to the $\rho$-localization is injective, we can define $\tau^4 d^2$ by its image in the $\rho$-localization, which we choose to be $\rho^{-4} \cdot \tau^8 \cdot h_1^4 \cdot g$.
    
    Now to multiply our two $\Ext$ elements, note that we have $\rho^3 \cdot \tau^5 h_0^2 e = \tau^8 \cdot h_1^3 \cdot u$ and $\rho^4 \cdot \tau^4 d^2 = \tau^8 \cdot h_1^4 \cdot g$. So the $\rho$-localization map sends the product $\tau^5 h_0^2 e \cdot \tau^4 d^2$ to $\rho^{-7} \cdot \tau^{16} \cdot h_1^7 \cdot u \cdot g$. The element $\tau^{16} \cdot h_1^7 \cdot u \cdot g$ is represented by $\rho^{8} \cdot \tau^6 P^2 n^2$ on $E_\infty$. So if we again abuse notation and identify $\tau^6 P^2 n^2$ with the appropriate $\Ext$ element, then we get $\tau^5 h_0^2 e \cdot \tau^4 d^2 = \rho \cdot \tau^6 P^2 n^2$.
\end{Rem}

\begin{Rem}\label{multiplication on rho-torsion free elements can have rho-torsion summands}
    As we saw in the previous remark, we need to be careful not to forget that $E_\infty$-page elements can represent multiple elements in the target of the $\rho$-Bockstein spectral sequence. This is especially important when multiplying by $g^2$: Many $g^2$-multiples are $\rho$-power-torsion on $E_\infty$. However, by Corollary \ref{representation of rho-localized target} we know that the $\rho$-localization is $g$-torsion free. In particular, multiplying any $\rho$-torsion free element with $g^2$ must yield another $\rho$-torsion free element. We will consider two examples:

    First, we have the $E_\infty$ class $\tau^8 g^2$ representing two elements in the cohomology of $\mathcal{A}(2)$ due to the presence of $\rho^8 \cdot \Delta^2$. One of these two elements is $\rho$-torsion free, and the other is $\rho$-power-torsion. By Corollary \ref{representation of rho-localized target}, $\tau^8 \cdot g^2$ must be the $\rho$-torsion free element. Conversely, since $\tau^4 g$ is $\rho$-power-torsion, the element $\tau^4 g \cdot \tau^4 g$ must be the $\rho$-power-torsion element. This leads to the $\Ext$ relation
    \[\tau^8 \cdot g^2 + \tau^4 g \cdot \tau^4 g = \rho^8 \cdot \Delta^2.\]

    As another example, consider $\tau^6 a g^2$. This $E_\infty$ class is $\rho^3$-torsion. Again, it represents two elements in the cohomology of $\mathcal{A}(2)$ due to the presence of $\rho^7 \cdot \D^2 u$. Because $\D^2 u$ is $\rho$-torsion free, one of these two elements is $\rho$-torsion free and the other is $\rho$-power-torsion. Since $\tau^6 a$ is $\rho$-torsion free, the $\Ext$ element $\tau^6 a \cdot g^2$ must be the $\rho$-torsion free one.
\end{Rem}

\subsection{Hidden extensions from \texorpdfstring{$\rho$}{rho}-torsion free elements to \texorpdfstring{$\rho$}{rho}-power-torsion elements}\label{section: Hidden extensions from rho-torsion free elements to rho-power-torsion elements}

As discussed in the introduction of this section, there can also be hidden extensions from $\rho$-torsion free elements to $\rho$-power-torsion elements on the $\rho$-Bockstein $E_\infty$-page. As an example, we will consider hidden $h_0$-extensions. Recall from the introduction that our choice of the $\Ext$ element $h_0$ is $\rho$-torsion. It follows that all targets of hidden $h_0$-extensions will also be $\rho$-torsion.

\begin{longtable}{lllc}
    \caption{Hidden $h_0$-extensions on $\rho$-torsion free elements on the $\rho$-Bockstein $E_\infty$-page
    \label{tab:Hidden h_0-extensions on rho-torsion free elements}
    } \\
    \toprule
    Source & $(s,f,w)$ & Target & $E_\infty$-Periodicity\\
    \midrule \endfirsthead
    \caption[]{Hidden $h_0$-extensions on $\rho$-torsion free elements on the $\rho$-Bockstein $E_\infty$-page} \\
    \toprule
    Source & $(s,f,w)$ & Target & $E_\infty$-Periodicity\\
    \midrule \endhead
    \bottomrule \endfoot
        $\tau^5 \Dh$ & $(25, 5, 8)$ & $\rho \tau^5 \Dh h_1$ & $P^2$, $\D^2$\\
        $\tau^4 n^2$ & $(30, 6, 12)$ & $\rho^2 \tau^4 a g$ & $P^2$, $g$, $\D^2$\\\hdashline[1pt/1pt]
        $\tau^2 P n^2$ & $(38, 10, 18)$ & $\rho^2 (\tau P \Dc + \tau^2 P a g)$ & $P^2$, $g$, $\D^2$\\
        $\tau^6 P^2 n^2$ & $(46, 14, 18)$ & $\rho^2 \tau^4 P^2 (\tau \Dc + \tau^2 a g)$ & $P^2$, $g$, $\D^2$\\
\end{longtable}

For the first two hidden $h_0$-extensions we mention that they are $P^2$-periodic but the $P^2$-multiples of the sources are $\rho$-power-torsion. Similarly, the $g$-multiple of $\tau^4 n^2$ is $\rho$-power-torsion. There is also a hidden $g$-extension on $\tau^5 \Dh h_1$ making the $h_0$-extension on $\tau^5 \Dh$ $g$-periodic with respect to multiplication in $\Ext$.

To deduce these hidden extensions, we will use an analogue of \cite[Remark 3.3]{BI} relying on the map $p$ from the following Lemma. For brevity, we will write $\Ext_{\mathcal{A}(2)^k}$ for $\Ext_{\mathcal{A}(2)^k}(\mathbb{M}_2^k, \mathbb{M}_2^k)$, where $k$ is $\mathbb{R}$ or $\mathbb{C}$.

\begin{Lemma}\label{the map p}
    There is a short exact sequence of $\text{\rm Ext}_{\mathcal{A}(2)^{\mathbb{R}}}$-modules
    \[0 \xrightarrow[]{} \text{\rm coker}(\rho) \xrightarrow[]{i} \text{\rm Ext}_{\mathcal{A}(2)^\mathbb{C}} \xrightarrow[]{p} \ker(\rho) \xrightarrow[]{} 0,\]
    where $i$ has degree $(0, 0, 0)$ and $p$ has degree $(0, 1, 1)$. Here, $\text{\rm coker}(\rho)$ and $\ker(\rho)$ are taken with respect to $\rho$-multiplication in $\text{\rm Ext}_{\mathcal{A}(2)^{\mathbb{R}}}$.
\end{Lemma}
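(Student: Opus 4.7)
The plan is to derive this short exact sequence from the standard long exact sequence in $\Ext$ associated to the short exact sequence
\[0 \to \mathbb{M}_2 \xrightarrow{\rho} \mathbb{M}_2 \to \mathbb{M}_2^{\mathbb{C}} \to 0\]
of $\mathcal{A}(2)^{\mathbb{R}}$-comodules, where the first map is multiplication by $\rho$. This sequence is exact because $\mathbb{M}_2 = \mathbb{F}_2[\rho, \tau]$ is $\rho$-torsion free and $\mathbb{M}_2^{\mathbb{C}} = \mathbb{M}_2/\rho$ by definition. First I would apply $\Ext_{\mathcal{A}(2)^{\mathbb{R}}}(\mathbb{M}_2, -)$ and use the standard change-of-rings isomorphism for the Hopf algebroid quotient $\mathcal{A}(2)^{\mathbb{C}} = \mathcal{A}(2)^{\mathbb{R}}/\rho$ to identify $\Ext_{\mathcal{A}(2)^{\mathbb{R}}}(\mathbb{M}_2, \mathbb{M}_2^{\mathbb{C}}) \cong \Ext_{\mathcal{A}(2)^{\mathbb{C}}}(\mathbb{M}_2^{\mathbb{C}}, \mathbb{M}_2^{\mathbb{C}})$. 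The resulting long exact sequence reads
\[\cdots \xrightarrow{\rho} \Ext^{s,f,w}_{\mathcal{A}(2)^{\mathbb{R}}} \xrightarrow{j} \Ext^{s,f,w}_{\mathcal{A}(2)^{\mathbb{C}}} \xrightarrow{\partial} \Ext^{s,f+1,w+1}_{\mathcal{A}(2)^{\mathbb{R}}} \xrightarrow{\rho} \Ext^{s-1,f+1,w}_{\mathcal{A}(2)^{\mathbb{R}}} \to \cdots.\]

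I would then split this long exact sequence into short exact sequences in the standard way. Exactness at the left $\mathcal{A}(2)^{\mathbb{R}}$-term identifies the image of $j$ with $\text{\rm coker}(\rho)^{s,f,w}$, and exactness at the right $\mathcal{A}(2)^{\mathbb{R}}$-term identifies the image of $\partial$ with $\ker(\rho)^{s,f+1,w+1}$. Taking $i$ to be the inclusion induced by $j$ and $p$ to be the surjection induced by $\partial$ yields the desired short exact sequence. Both $i$ and $p$ are $\Ext_{\mathcal{A}(2)^{\mathbb{R}}}$-linear because all maps in the $\Ext$ long exact sequence are natural with respect to the $\mathbb{M}_2$-module action.

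The main delicate point is tracking the degree shifts, since multiplication by $\rho$ is not a degree-zero map. I would first replace the leftmost $\mathbb{M}_2$ in the short exact sequence by its shift $A$ defined by $A^{s,f,w} = \mathbb{M}_2^{s+1,f,w+1}$, so that $\rho \colon A \to \mathbb{M}_2$ becomes a genuine degree-zero comodule map to which the standard machinery applies. The connecting homomorphism of the resulting long exact sequence preserves internal bidegree $(s+f, w)$ while raising Adams filtration by one, landing in $\Ext^{s-1,f+1,w}(\mathbb{M}_2, A)$. The canonical identification $\Ext^{s-1,f+1,w}(\mathbb{M}_2, A) \cong \Ext^{s,f+1,w+1}_{\mathcal{A}(2)^{\mathbb{R}}}$ then yields the net degree $(0, 1, 1)$ for $p$, and $i$ has degree $(0, 0, 0)$ because it descends from the degree-preserving quotient map $\mathbb{M}_2 \to \mathbb{M}_2^{\mathbb{C}}$. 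Once this grading bookkeeping is pinned down, the remainder of the argument is formal.
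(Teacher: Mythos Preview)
Your proposal is correct and essentially matches the paper's argument. The paper phrases it as a short exact sequence of (cobar) chain complexes $0 \to \mathcal{A}(2)^{\mathbb{R}} \xrightarrow{\rho} \mathcal{A}(2)^{\mathbb{R}} \to \mathcal{A}(2)^{\mathbb{R}}/\rho \to 0$ rather than of comodules, and does not spell out the change-of-rings identification or the degree bookkeeping, but the resulting long exact sequence and its splicing into the desired short exact sequence are identical to yours.
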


\begin{proof}
Consider the short exact sequence of chain complexes
\[0 \to \mathcal{A}(2)^{\mathbb{R}} \xrightarrow[]{\rho} \mathcal{A}(2)^{\mathbb{R}} \to \mathcal{A}(2)^{\mathbb{R}}/\rho \to 0.\]
As observed in section \ref{section: notation}, $\mathcal{A}(2)^{\mathbb{R}}/\rho$ is isomorphic to $\mathcal{A}(2)^{\mathbb{C}}$. In particular, $\mathcal{A}(2)^{\mathbb{C}}$ has an $\mathcal{A}(2)^{\mathbb{R}}$-module structure where $\rho$ acts trivially. The short exact sequence of chain complexes induces a long exact sequence of $\Ext_{\mathcal{A}(2)^\mathbb{R}}$-modules
\[\dotsc \xrightarrow[]{} \Ext_{\mathcal{A}(2)^\mathbb{R}} \xrightarrow[]{\rho} \Ext_{\mathcal{A}(2)^\mathbb{R}} \xrightarrow[]{i} \Ext_{\mathcal{A}(2)^\mathbb{C}} \xrightarrow[]{p} \Ext_{\mathcal{A}(2)^\mathbb{R}} \xrightarrow[]{\rho} \Ext_{\mathcal{A}(2)^\mathbb{R}} \xrightarrow[]{} \dotsc\]
where the degree of $i$ is $(0, 0, 0)$ and the degree of $p$ is $(0, 1, 1)$. Splitting up this long exact sequence at $\Ext_{\mathcal{A}(2)^\mathbb{C}}$ yields the claim.
\end{proof}

Whenever we invoke Lemma \ref{the map p}, we are using the structure of $\Ext_{\mathcal{A}(2)^\mathbb{C}}$. It is worthwhile noting that we do have a chart of $\Ext_{\mathcal{A}(2)^\mathbb{C}}$. It is the chart depicting the $\rho$-free quotient of the $\rho$-Bockstein $E_1$-page. This chart agrees with $\Ext_{\mathcal{A}(2)^\mathbb{C}}$ because the $E_1$-page is just $\Ext_{\mathcal{A}(2)^\mathbb{C}}[\rho]$, and we do not plot $\rho$-multiples in the given chart.

Hidden extensions by $h_0$ are relatively easy to deduce because of the somewhat tautological observation that a hidden $h_0$-extension from $x$ to $y$ is the same as a hidden $x$-extension from $h_0$ to $y$, under some mild hypotheses.

\begin{Prop}\label{how to find hidden h_0-extensions}
    Let $x$, $y$ be non-trivial elements on the $\rho$-Bockstein $E_\infty$-page such that:
    \begin{itemize}
        \item $x$ is in $\rho$-filtration $0$,
        \item $h_0 \cdot x = 0$ on $E_\infty$, 
        \item the degree of $y$ is the degree of $x$ plus $(0, 1, 0)$,
        \item $y$ is in $\rho$-filtration greater than $0$ and in $\ker(\rho)$,
        \item there is no $h_0$-multiple or other $\rho$-multiple in the degree of $y$ in $\ker(\rho)$.
    \end{itemize}
    Then there exists a hidden $h_0$-extension from $x$ to $y$ if and only if $p(\tau \cdot x) = y$. Here, $p$ is the map from Lemma \ref{the map p}.
\end{Prop}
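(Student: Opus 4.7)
The plan is to exploit the cobar-level identity $d(\tau) = \rho h_0$, which was established in the proof of Lemma \ref{Bockstein d_r on powers of tau}, in order to identify $p(\tau \cdot x)$ with $h_0 \cdot x$ in $\Ext_{\mathcal{A}(2)^{\mathbb{R}}}$. Once this identification is in hand, the proposition will follow by tracking filtrations and applying the uniqueness hypothesis on the degree of $y$.

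First I would recall the snake-lemma description of the map $p$ from Lemma \ref{the map p}: given a class $[\bar z] \in \Ext_{\mathcal{A}(2)^{\mathbb{C}}}$ with $\bar z$ a cocycle in the $\mathbb{C}$-motivic cobar complex, lift $\bar z$ to an $\mathbb{R}$-motivic cobar cochain $z$; then $d(z)$ vanishes modulo $\rho$, so $d(z) = \rho w$ for some cochain $w$, and $p([\bar z]) = [w] \in \ker(\rho)$. Since $x$ lies in $\rho$-filtration $0$, I can choose an $\mathbb{R}$-motivic cobar cocycle $\tilde x$ representing $x$ whose mod-$\rho$ reduction $\bar x$ is a nonzero $\mathbb{C}$-motivic cocycle; here ``$\tau \cdot x$'' is shorthand for the $\mathbb{C}$-motivic class $\tau \bar x$. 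With this setup, $\tau \tilde x$ is an $\mathbb{R}$-motivic cobar lift of $\tau \bar x$, and the Leibniz rule yields
\[
d(\tau \tilde x) \;=\; d(\tau)\, \tilde x + \tau\, d(\tilde x) \;=\; \rho h_0 \tilde x + 0 \;=\; \rho\,(h_0 \tilde x),
\]
so by the description of $p$ above, $p(\tau \cdot x) = [h_0 \tilde x] = h_0 \cdot x$ in $\Ext_{\mathcal{A}(2)^{\mathbb{R}}}$. As a by-product, $\rho\,(h_0 \cdot x)$ is represented by $d(\tau \tilde x)$ and hence vanishes, confirming automatically that $h_0 \cdot x \in \ker(\rho)$.

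To conclude, I would combine this identification with the remaining hypotheses. Since $h_0 \cdot x = 0$ on $E_\infty$, the element $h_0 \cdot x \in \ker(\rho)$ has $\rho$-filtration strictly greater than $0$; the final hypothesis then forces the only nonzero possibility in the degree of $y$ inside $\ker(\rho)$ to be $y$ itself. Therefore $h_0 \cdot x \in \{0, y\}$, and the equivalence reads: a hidden $h_0$-extension from $x$ to $y$ exists if and only if $h_0 \cdot x = y$ if and only if $p(\tau \cdot x) = y$. The only real subtlety is the translation between the chain-level identity $p(\tau \cdot x) = h_0 \cdot x$ in $\Ext$ and the $E_\infty$-level phrasing of the hidden extension; this is cleanly handled by the uniqueness built into the last hypothesis, so no further work is required.
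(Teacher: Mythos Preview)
Your proposal is correct and follows essentially the same approach as the paper. The paper's proof is a one-line version: it notes $h_0 = p(\tau)$ for degree reasons and then uses the $\Ext_{\mathcal{A}(2)^{\mathbb{R}}}$-linearity of $p$ from Lemma~\ref{the map p} to get $h_0 \cdot x = p(\tau) \cdot x = p(\tau \cdot x)$, which is exactly what your cobar-level computation unpacks.
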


\begin{proof}
    By the assumptions, it makes sense to speak of a hidden $h_0$-extension from $x$ to $y$. This hidden extension is equivalent to $h_0 \cdot x = y$ in $\text{\rm Ext}_{\mathcal{A}(2)^{\mathbb{R}}}$. For degree reasons we must have $h_0 = p(\tau)$. So by linearity of $p$ we get $h_0 \cdot x = p(\tau) \cdot x = p(\tau \cdot x)$.
\end{proof}

\begin{Cor}\label{Corollary for finding hidden h_0-extensions}
    Under the assumptions of Proposition \ref{how to find hidden h_0-extensions}, if $\tau \cdot x \in \text{\rm Ext}_{\mathcal{A}(2)^{\mathbb{C}}}$ is the only non-trivial element in its degree, then there is a hidden $h_0$-extension from $x$ to $y$.
\end{Cor}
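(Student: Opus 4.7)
The plan is to apply Proposition \ref{how to find hidden h_0-extensions}, which reduces the existence of a hidden $h_0$-extension from $x$ to $y$ to the single equation $p(\tau \cdot x) = y$ in $\ker(\rho) \subseteq \text{\rm Ext}_{\mathcal{A}(2)^{\mathbb{R}}}$. So my task boils down to showing that this equation holds under the stronger hypothesis of the Corollary that $\tau \cdot x$ is the only non-trivial element in its degree.

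First, I would check the degree bookkeeping. If $x$ has degree $(s, f, w)$, then $\tau \cdot x$ lies in degree $(s, f, w - 1)$ in $\text{\rm Ext}_{\mathcal{A}(2)^{\mathbb{C}}}$, and since $p$ has degree $(0, 1, 1)$ by Lemma \ref{the map p}, the element $p(\tau \cdot x)$ lives in degree $(s, f + 1, w)$, which matches the degree of $y$. Next, I would exploit surjectivity of $p$ onto $\ker(\rho)$, which is immediate from the short exact sequence in Lemma \ref{the map p}. Since $y$ lies in $\ker(\rho)$ by the hypotheses inherited from Proposition \ref{how to find hidden h_0-extensions}, there must exist some $z \in \text{\rm Ext}_{\mathcal{A}(2)^{\mathbb{C}}}$ in degree $(s, f, w - 1)$ with $p(z) = y$. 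The Corollary's hypothesis that $\tau \cdot x$ is the only non-trivial element in that degree forces $z = \tau \cdot x$, so $p(\tau \cdot x) = y$ as desired.

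The Corollary is essentially a pigeonhole consequence of Proposition \ref{how to find hidden h_0-extensions} combined with the surjectivity of $p$, and I do not expect any real obstacle beyond the degree bookkeeping sketched above. The content of the Corollary is that once the $\mathbb{C}$-motivic input degree is one-dimensional and spanned by $\tau \cdot x$, the choice of preimage under $p$ is forced, making the Proposition's criterion automatic.
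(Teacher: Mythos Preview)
Your proof is correct and follows essentially the same approach as the paper: both use surjectivity of $p$ together with the one-dimensionality of the source degree to force $p(\tau \cdot x) = y$. The paper phrases it as a dimension count (surjectivity makes the target degree in $\ker(\rho)$ at most one-dimensional, and $y \neq 0$ makes $p$ an isomorphism there), while you phrase it as choosing a preimage $z$ of $y$ and then pinning down $z = \tau \cdot x$; these are the same argument.
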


\begin{proof}
    The map $p$ is surjective which implies that the degree where $p(\tau \cdot x)$ lives is at most $1$-dimensional over $\mathbb{F}_2$. Since $y$ is assumed non-trivial, $p$ is an isomorphism and we must have $p(\tau \cdot x) = y$.
\end{proof}

\begin{Prop}
    Table \ref{tab:Hidden h_0-extensions on rho-torsion free elements} describes all hidden $h_0$-extensions on $\rho$-torsion free elements on the $\rho$-Bockstein $E_\infty$-page.
\end{Prop}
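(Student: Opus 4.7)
The plan is to apply Corollary \ref{Corollary for finding hidden h_0-extensions} one entry at a time, and to argue separately that no other $\rho$-torsion free element supports a hidden $h_0$-extension. The starting point is the observation that by Proposition \ref{how to find hidden h_0-extensions} a hidden $h_0$-extension from $x$ to $y$ is equivalent to the identity $p(\tau \cdot x) = y$, where $p$ is the boundary map of Lemma \ref{the map p}. Since $p$ has degree $(0, 1, 1)$ and lands in $\ker(\rho)$, the search is reduced to: look at $\tau \cdot x$ as a class in $\text{Ext}_{\mathcal{A}(2)^{\mathbb{C}}}$, then read off $p(\tau \cdot x)$ in the $\rho$-power-torsion subalgebra of $\text{Ext}_{\mathcal{A}(2)^{\mathbb{R}}}$ one tridegree higher in $f$ and $w$.

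For each of the four entries in Table \ref{tab:Hidden h_0-extensions on rho-torsion free elements} I would proceed as follows. First verify that the source $x$ satisfies the hypotheses of Proposition \ref{how to find hidden h_0-extensions}: that $x$ is in $\rho$-filtration $0$, that $h_0 \cdot x = 0$ on $E_\infty$, and that the claimed target $y$ sits in a degree where $\ker(\rho)$ is one-dimensional and contains no other $\rho$-multiples. All of this is an inspection of the $\rho$-power-torsion charts of section \ref{charts: rho-Bockstein E_infinity-page rho-power-torsion}. Second, identify $\tau \cdot x$ in the $\mathbb{C}$-motivic cohomology and apply Corollary \ref{Corollary for finding hidden h_0-extensions}: for $x = \tau^5 \Delta h_1$ the element $\tau \cdot x = \tau^6 \Delta h_1$ is the unique class of its tridegree in $\text{Ext}_{\mathcal{A}(2)^{\mathbb{C}}}$, and analogously for $\tau^4 n^2$, $\tau^2 P n^2$, and $\tau^6 P^2 n^2$. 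In the last two cases one has to be slightly careful because the target is the sum $\tau P \Delta c + \tau^2 P a g$ (respectively its $\tau^4 P^2$-multiple), but this is forced by the fact that the $\mathbb{C}$-motivic product $\tau \cdot \tau^2 P n^2$ is detected by the indecomposable $\tau^3 P n^2$, which under $p$ must hit the unique $\rho$-torsion element in the relevant degree.

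Once the four listed extensions are verified, the periodicities indicated in the last column follow immediately: multiplication by $P^2$, $\Delta^2$, and (where indicated) $g$ preserves the hypotheses of Corollary \ref{Corollary for finding hidden h_0-extensions}, so the hidden extensions propagate. To argue completeness I would enumerate the $\rho$-torsion free indecomposables listed in Table \ref{tab:Indecomposables on E_infty} together with their products, and for each one in which $h_0$-multiplication is not already visible on $E_\infty$, check whether $p(\tau \cdot x)$ could be non-zero. In most cases either $\tau \cdot x$ vanishes in $\text{Ext}_{\mathcal{A}(2)^{\mathbb{C}}}$, or the tridegree of the prospective target contains no $\rho$-power-torsion class, or the target is already in the image of $h_0$-multiplication on $E_\infty$; in any of these scenarios the extension is not hidden.

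The main obstacle I expect is purely bookkeeping: the $\rho$-free quotient has $56$ indecomposables, so the number of products to check is large, and one must be methodical to be sure nothing is missed. The other subtle point is the appearance of the sum $\tau \Delta c + \tau^2 a g$ (rather than either summand individually) as a target; this is forced by the fact that $\tau \Delta c + \tau^2 a g$ is the unique indecomposable representative of its tridegree on $E_\infty$ (cf.\ Table \ref{tab:Indecomposables on E_infty}), so that $p$ has no choice of image other than this combination. Once this naming convention is respected, the verification via Corollary \ref{Corollary for finding hidden h_0-extensions} is mechanical.
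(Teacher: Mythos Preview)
Your approach is essentially identical to the paper's: both invoke Corollary \ref{Corollary for finding hidden h_0-extensions} for the four listed extensions and dispose of completeness by noting that all other $\rho$-torsion free classes either already carry a non-hidden $h_0$-multiple on $E_\infty$ or admit no target for degree or multiplicative reasons. Your write-up is more detailed than the paper's two-sentence proof, but the underlying argument is the same. One small imprecision: you say ``the $\rho$-free quotient has $56$ indecomposables,'' but the $56$ indecomposables in Table \ref{tab:Indecomposables on E_infty} are for all of $\Ext_{\mathcal{A}(2)}(\mathbb{M}_2,\mathbb{M}_2)$, and many of them are $\rho$-power-torsion; the actual list of $\rho$-torsion free generators one needs to scan is considerably shorter.
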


\begin{proof}
    All elements not listed in table \ref{tab:Hidden h_0-extensions on rho-torsion free elements} either already have $h_0$-multiples or they cannot support hidden extensions for degree or multiplicative reasons.

    Every claimed hidden $h_0$-extension follows from Corollary \ref{Corollary for finding hidden h_0-extensions}.
\end{proof}

\subsection{Hidden extensions on \texorpdfstring{$\rho$}{rho}-power-torsion elements}\label{section: hidden extensions on rho-torsion elements}

The third and last possible type of hidden extension is where both the source and the target are $\rho$-power-torsion. We provide some examples of such hidden extensions by $\tau^8$, $h_0$, and $h_1$.

At \cite{Charts} eight charts are provided which combine into the $\rho$-power-torsion subalgebra of the $\rho$-Bockstein $E_\infty$-page. Each chart consists of elements whose coweight $s - w$ is congruent to some fixed value modulo $8$. For a guide on how to read the charts see section \ref{section: charts}, and specifically section \ref{charts: rho-Bockstein E_infinity-page rho-power-torsion}. In this section, we will simply pick one coweight (or equivalently one chart) and illustrate how to compute hidden extensions there. For the rest of this section we will only consider elements of coweight $1$ mod $8$. Note that all of $\tau^8$, $h_0$, and $h_1$ have coweight $0$ mod $8$, meaning that source and target of every hidden extension we consider are in the same coweight, and hence on the same chart.

There are two general methods we will use to deduce hidden extensions: The first method involves using hidden extensions from the $\rho$-free quotient together with the multiplication on $E_{\infty}$. The second method uses the map $p$ from Lemma \ref{the map p}.

What follows is a table describing all hidden $\tau^8$- and $h_1$-extensions between $\rho$-power-torsion elements in coweight $1$ mod $8$. We have inserted dashed horizontal lines to divide the table into different families. Hidden extensions in the same family are closely connected, for example by multiplication with $h_1$ or $\tau^8$ or $\tau^4 P$. Inside each family, the $h_1$-extensions are roughly located on a line of slope $1/2$, corresponding to the slope of $\tau^4 P$-multiplication. The columns of table \ref{tab:Hidden h_1- and t^8-extensions on rho-power-torsion elements in coweight 1 mod 8} describe:
\begin{enumerate}[label=\rm{(\arabic*)}]
    \item the name of the element supporting a hidden extension,
    \item the degree of the element supporting a hidden extension,
    \item which element we are multiplying with,
    \item the name of the target of the hidden extension,
    \item the periodicity of the hidden extension in terms of $E_\infty$-multiplication.
\end{enumerate}
We choose $E_\infty$-multiplication in the periodicity column to avoid talking about hidden extensions other than $\tau^8$ and $h_1$. For example $\tau^6 P h_1 e$ supports a hidden $g$-extension to $\rho \tau^3 P \Dh d$, so the hidden $h_1$-extension on $\tau^6 P h_1 e$ is $g$-periodic in Ext while not being $g$-periodic with respect to $E_\infty$-multiplication.

\begin{longtable}{llclc}
    \caption{Hidden $\tau^8$- and $h_1$-extensions on $\rho$-power-torsion elements in coweight $1$ mod $8$ on the $\rho$-Bockstein $E_\infty$-page
    \label{tab:Hidden h_1- and t^8-extensions on rho-power-torsion elements in coweight 1 mod 8}
    } \\
    \toprule
    Source & $(s,f,w)$ & Extension by & Target & $E_\infty$-Periodicity\\
    \midrule \endfirsthead
    \caption[]{Hidden $\tau^8$- and $h_1$-extensions on $\rho$-power-torsion elements in coweight $1$ mod $8$ on the $\rho$-Bockstein $E_\infty$-page} \\
    \toprule
    Source & $(s,f,w)$ & Extension by & Target & $E_\infty$-Periodicity\\
    \midrule \endhead
    \bottomrule \endfoot
        $\tau^6 h_1 c$ & $(9, 4, 0)$ & $h_1$ & $\rho \tau^4 P h_2$ & $P^2$, $\D^2$\\
        $\tau^2 P h_1 c$ & $(17, 8, 8)$ & $h_1$ & $\rho P^2 h_2$ & $P^2$, $\D^2$\\\hdashline[1pt/1pt]
        $\tau^2 h_1 e$ & $(18, 5, 9)$ & $h_1$ & $\rho \tau h_0^2 g$ & $P^2$, $g$, $\D^2$\\
        $\tau h_0^2 g$ & $(20, 6, 11)$ & $h_1$ & $\rho c d$ & $P^2$, $g$, $\D^2$\\
        $c d$ & $(22, 7, 13)$ & $\tau^8$ & $\rho \tau^6 P n$ & $P^2$, $g$, $\D^2$\\
        $\tau^6 P n$ & $(23, 7, 6)$ & $h_1$ & $\rho \tau^6 P e$ & $P^2$, $g$, $\D^2$\\
        $h_1 c d$ & $(23, 8, 14)$ & $\tau^8$ & $\rho^2 \tau^6 P e$ & $P^2$, $g$, $\D^2$\\
        $h_1^2 c d$ & $(24, 9, 15)$ & $\tau^8$ & $\rho^2 \tau^6 P h_1 e$ & $P^2$, $\D^2$\\
        $\tau^6 P h_1 e$ & $(26, 9, 9)$ & $h_1$ & $\rho \tau^5 P h_0^2 g$ & $P^2$, $\D^2$\\
        $h_1^3 c d$ & $(25, 10, 16)$ & $\tau^8$ & $\rho^3 \tau^5 P h_0^2 g$ & $P^2$, $\D^2$\\
        $\tau^5 P h_0^2 g$ & $(28, 10, 11)$ & $h_1$ & $\rho^2 \tau^2 P^2 n$ & $P^2$, $\D^2$\\
        $h_1^4 c d$ & $(26, 11, 17)$ & $\tau^8$ & $\rho^5 \tau^2 P^2 n$ & $P^2$, $\D^2$\\
        $\tau^2 P^2 n$ & $(31, 11, 14)$ & $h_1$ & $\rho \tau^2 P^2 e$ & $P^2$, $g$, $\D^2$\\\hdashline[1pt/1pt]
        $P^2 h_2 g$ & $(39, 13, 22)$ & $\tau^8$ & $\rho^2 \tau^5 P^2 \Dh$ & $P^2$, $g$, $\D^2$\\\hdashline[1pt/1pt]
        $\tau^2 h_1 e g$ & $(38, 9, 21)$ & $\tau^8$ & $\rho \tau^7 \Dh d$ & $P^2$, $g$, $\D^2$\\
        $\tau^7 \Dh d$ & $(39, 9, 14)$ & $h_1$ & $\rho \tau^6 a^2 e$ & $P^2$, $g$, $\D^2$\\
        $\tau h_0^2 g^2$ & $(40, 10, 23)$ & $\tau^8$ & $\rho \tau^6 a^2 e$ & $P^2$, $g$, $\D^2$\\
        $\tau^6 a^2 e$ & $(41, 10, 16)$ & $h_1$ & $\rho \tau^6 P n g$ & $P^2$, $g$, $\D^2$\\
        $\tau^6 P e g$ & $(45, 12, 20)$ & $h_1$ & $\rho \tau^3 P \Dh d$ & $P^2$, $g$, $\D^2$\\
        $\tau^3 P \Dh d$ & $(47, 13, 22)$ & $h_1$ & $\rho \tau^2 P a^2 e$ & $P^2$, $g$, $\D^2$\\
        $\tau^2 P a^2 e$ & $(49, 14, 24)$ & $h_1$ & $\rho \tau^2 P^2 n g$ & $P^2$, $g$, $\D^2$\\\hdashline[1pt/1pt]
        $\tau h_1 g^2$ & $(41, 9, 24)$ & $\tau^8$ & $\rho^4 \tau^5 \Dh g$ & $g$, $\D^2$\\\hdashline[1pt/1pt]
        $\tau^5 \Dh g$ & $(45, 9, 20)$ & $h_1$ & $\rho \tau^4 a n g$ & $P^2$, $g$, $\D^2$\\
        $\tau^4 a n g$ & $(47, 10, 22)$ & $h_1$ & $\rho \tau^4 a e g$ & $P^2$, $g$, $\D^2$\\
        $\tau^4 a e g$ & $(49, 11, 24)$ & $h_1$ & $\rho \tau^4 d e g$ & $P^2$, $g$, $\D^2$\\
        $\tau^4 d e g$ & $(51, 12, 26)$ & $h_1$ & $\rho \tau P \Dh g$ & $P^2$, $g$, $\D^2$\\
        $\tau P \Dh g$ & $(53, 13, 28)$ & $h_1$ & $\rho P a n g$ & $P^2$, $g$, $\D^2$\\
        $P a n g$ & $(55, 14, 30)$ & $h_1$ & $\rho P a e g$ & $P^2$, $g$, $\D^2$\\
        $P a e g$ & $(57, 15, 32)$ & $h_1$ & $\rho d^3 e$ & $P^2$, $g$, $\D^2$\\
        $h_1 d^3 e$ & $(60, 17, 35)$ & $\tau^8$ & $\rho \tau^5 P^2 \Dh g$ & $P^2$, $g$, $\D^2$\\
        $h_1^2 d^3 e$ & $(61, 18, 36)$ & $\tau^8$ & $\rho^2 \tau^4 P^2 a n g$ & $P^2$, $g$, $\D^2$\\
\end{longtable}

\begin{Prop}
    Table \ref{tab:Hidden h_1- and t^8-extensions on rho-power-torsion elements in coweight 1 mod 8} describes all hidden $\tau^8$- and $h_1$-extensions on $\rho$-power-torsion elements on the $\rho$-Bockstein $E_\infty$-page in coweight $1$ mod $8$.
\end{Prop}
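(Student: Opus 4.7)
The proof proceeds by verifying each entry of the table individually and by ruling out all other potential hidden $\tau^8$- or $h_1$-extensions. The general strategy closely mirrors the approach taken for hidden extensions on the $\rho$-free quotient in section \ref{section: Hidden extensions on the rho-free quotient}, but now the source lies in the $\rho$-power-torsion and so we lose the direct detection via the $\rho$-localization map. Instead, two methods substitute for that detection: (a) propagating hidden extensions already established on the $\rho$-free quotient through $E_\infty$-multiplication, and (b) applying the map $p$ of Lemma \ref{the map p} to lift relations from the better-understood $\mathbb{C}$-motivic cohomology $\Ext_{\mathcal{A}(2)^\mathbb{C}}$ into $\mathbb{R}$-motivic hidden extensions.

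For the completeness half, I would enumerate the $\rho$-power-torsion elements in the single coweight chart for $s-w \equiv 1 \pmod 8$ and, for each one, examine the degrees $(s+1, f+1, w+1)$ and $(s, f, w-8)$ that a hidden $h_1$- or $\tau^8$-extension could target. In the vast majority of degrees either the target contains no $\rho$-power-torsion class in strictly higher $\rho$-filtration than the $E_\infty$-product, or the $h_1$- or $\tau^8$-multiple is already visible on $E_\infty$ and therefore not hidden. The coweight chart makes this a finite inspection.

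For correctness, the multiplicative propagation method exploits families: whenever a class on the $\rho$-free quotient supports a hidden extension (from Table \ref{tab:Hidden h_1-, h_2- and t^8-extensions on the rho-free quotient}), multiplying both sides by an infinite cycle such as $h_1$, $\tau^4 P$, or $g$ and using the Leibniz rule together with the known $E_\infty$-relations forces hidden extensions on many $\rho$-power-torsion elements as well. For instance, the family beginning $\tau^8 \cdot c d = \rho \tau^6 P n$ is derived by multiplying the $\rho$-free-quotient extension $\tau^8\cdot u = \rho \tau^6 a$ by the appropriate infinite cycles and then using the multiplicative relation $c\cdot d = h_1^{?}\cdot u$ together with $E_\infty$-relations expressing the target in terms of $\tau^6 P n$. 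When this is not sufficient, the map $p$ provides a second route: $p$ is an $\Ext_{\mathcal{A}(2)^\mathbb{R}}$-module map of degree $(0,1,1)$, so any $\mathbb{C}$-motivic multiplicative relation $\alpha \cdot \beta = \gamma$ yields $p(\alpha)\cdot i(\beta) = p(\gamma)$ in $\ker(\rho)$, which lands exactly among the $\rho$-power-torsion classes and often directly identifies the target of a hidden extension modulo lower-$\rho$-filtration ambiguity.

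The hard part is the combinatorial complexity of the families rather than any single verification. In particular, the families built from $\tau^5 \Dh g$ and from $\tau^7 \Dh d$ involve long chains of $h_1$-extensions whose targets are only identified by iteratively pinning down each link and cross-checking against hidden extensions coming in from a different direction (for example via $\tau^8$-extensions that land in the middle of an $h_1$-tower). Here one must be careful that at each step the candidate target is the unique element of the correct degree and $\rho$-filtration, which requires knowing that competing classes have already been accounted for by earlier extensions in the table. Managing this bookkeeping, together with verifying that the $E_\infty$-periodicities listed in the last column genuinely extend the local identification, is the principal source of labor.
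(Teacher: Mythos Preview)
Your overall strategy matches the paper's: exclude non-candidates by degree and multiplicative reasons, then use (a) propagation from the $\rho$-free-quotient hidden extensions and (b) the map $p$ of Lemma \ref{the map p}. That is exactly what the paper does.

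However, your illustrative example is wrong, and it obscures the actual mechanism. There is no relation of the form $c\cdot d = h_1^{?}\cdot u$ in $\Ext_{\mathcal{A}(2)^\mathbb{C}}$ (the degrees do not match), so the hidden $\tau^8$-extension on $c d$ is not obtained by multiplying $\tau^8\cdot u = \rho\,\tau^6 a$ by anything. The paper's engine for that whole family is instead the single relation $\tau^8\cdot h_1^4 = \rho^4\,\tau^4 P$: one checks that $\tau^2 h_1 e$ has a non-trivial $\rho^4\tau^4 P$-multiple on $E_\infty$ (namely $\rho^4\tau^6 P h_1 e$), which forces $\tau^2 h_1 e$ to support four successive $h_1$-multiplications, each of which must in turn support a $\tau^8$-multiplication. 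The targets are then pinned down by degree. The $\tau^8$-extension on $c d$ and on its $h_1$-multiples arise inside this chain, not from any relation between $cd$ and $u$. You should replace your example with this argument; it is both correct and the template the paper uses repeatedly.

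For the family beginning with $\tau^5\Dh g$ you correctly anticipate extra difficulty. The paper does not stay inside coweight $1$ here: it imports a hidden $g$-extension on $\tau^5\Dh h_1^2$ (detected via $p$) and hidden $h_1$-extensions from coweights $3$ and $4$ mod $8$ (such as $h_1\cdot a = \rho d$ and $\tau^4 g\cdot h_1^2 = \rho^2 a^2$, both consequences of $\rho^4\tau^4 P$-multiplication). Your sketch leaves unspecified which outside inputs are needed; naming them would close the argument.
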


\begin{proof}
    All elements not listed in table \ref{tab:Hidden h_1- and t^8-extensions on rho-power-torsion elements in coweight 1 mod 8} either already have $\tau^8$- and $h_1$-multiples or they cannot support hidden extensions for degree or multiplicative reasons.
    
    To deduce the hidden $h_1$-extension on $\tau^6 h_1 c$ we can use the map $p$ from Lemma \ref{the map p}. Multiplying both source and target of the claimed hidden extension by $\rho$ we get a possible hidden extension in $\text{ker}(\rho)$. For degree reasons, we must have $p(\tau^7 c) = \rho \tau^6 h_1 c$ and $p(\tau^7 h_1 c) = \rho^2 \tau^4 P h_2$. Certainly the $h_1$-multiple of $\tau^7 c$ in $\Ext_{\mathcal{A}(2)^{\mathbb{C}}}$ is $\tau^7 h_1 c$, so $p(\tau^7 h_1 c) = h_1 \cdot p(\tau^7 c)$. Therefore there must be a hidden $h_1$-extension from $\rho \tau^6 h_1 c$ to $\rho^2 \tau^4 P h_2$. This also implies our claimed hidden $h_1$-extension from $\tau^6 h_1 c$ to $\rho \tau^4 P h_2$. The hidden $h_1$-extension on $\tau^2 P h_1 c$ immediately follows by multiplying the one on $\tau^6 h_1 c$ by $\tau^4 P$.

    Similar arguments can be used to find the hidden $\tau^8$-extensions on $P^2 h_2 g$ and $\tau h_1 g^2$.

    For the family of hidden extensions starting with $\tau^2 h_1 e$, recall the hidden extension $\tau^8 \cdot h_1^4 = \rho^4 \tau^4 P$ from the $\rho$-free quotient. Then note that $\tau^2 h_1 e$ has a non-trivial $\rho^4 \tau^4 P$-multiple, namely $\rho^4 \tau^6 P h_1 e$. This implies that $\tau^2 h_1 e$ must support at least four $h_1$-multiplications, and all of those must support at least one $\tau^8$-multiplication. The first few of the hidden extensions in this family are now immediate. For the remaining ones, repeat the same argument starting with $\tau^6 P h_1 e$ instead of $\tau^2 h_1 e$.

    For the family of hidden extensions starting with $\tau^2 e g$, we first note that the same argument involving $\rho^4 \tau^4 P$-multiplication does not work because $\tau^6 P e g$ is $\rho^3$-torsion. But from the previous paragraph we can deduce the hidden extension $h_1^3 \cdot \tau^2 e = \rho^2 c d$. Note that $\rho^2 c d g$ is not zero, so $\tau^2 e g$ must support at least three $h_1$-multiplications and all of those must support at least one $\tau^8$-multiplication. Much like before, the actual targets of these hidden extensions are uniquely determined for degree reasons. This determines the first few hidden extensions in this family, and the rest are consequences of $\tau^4 P$-multiplication.

    The hidden extensions in the last family starting with $\tau^5 \Dh g = \tau^5 \Dh \cdot g$ are most easily proven by referring to hidden $g$-extensions or other coweights. For example using the map $p$ from Lemma \ref{the map p} one can easily deduce a hidden $g$-extension from $\tau^5 \Dh h_1^2$ to $\rho^2 \tau^4 a e g$. That implies the first two $h_1$-extensions in this family. Then the hidden $h_1$ from $\tau^4 a e g$ to $\rho \tau^4 d e g$ follows from coweight $3$ mod $8$, where $h_1 \cdot a = \rho d$ is an immediate consequence of $\tau^8 \cdot h_1^4 = \rho^4 \tau^4 P$. Next, to get the hidden $h_1$ on $\tau^4 d e g$ we can consider coweight $4$ mod $8$ where $\tau^4 g \cdot h_1^2 = \rho^2 a^2$ is immediate from $\rho^4 \tau^4 P$-multiplication since $\rho^4 \tau^4 P \cdot \tau^4 g = \rho^4 \tau^8 d^2$. Using some $\mathbb{C}$-motivic relations we can deduce $\tau^4 d e g \cdot h_1^2 = \rho^2 P a n g$. All other hidden extensions in this family are now consequences of $\tau^4 P$-multiplication.
\end{proof}

\begin{Rem}
    The reader might wonder if the map $p$ from Lemma \ref{the map p} corresponds to "the $\rho$-Bockstein spectral sequence differential divided by $\rho$". After all, in the above we saw $p(\tau^7 c) = \rho \tau^6 h_1 c$ and $p(\tau^7 h_1 c) = \rho^2 \tau^4 P h_2$, and in the $\rho$-Bockstein spectral sequence we have $d_2(\tau^7 c) = \tau^4 \cdot \tau c \cdot d_2(\tau^2) = \rho^2 \tau^6 h_1 c$ and $d_3(\tau^7 h_1 c) = \tau^4 \cdot d_3(\tau^3 h_1 c) = \rho^3 \tau^4 P h_2$. In general, we have to be slightly careful when comparing $p$ and the Bockstein differentials $d_r$ because the former is defined on $\Ext$ and the latter is not. So we should rather speak of $p$ being {\em detected} by $d_r$. Roughly speaking, this is then true because $p$ is by definition the cobar differential divided by $\rho$, and $d_r$ is the cobar differential modulo higher $\rho$-filtration.

    When using this comparison between $p$ and $d_r$, we need to remember that a Bockstein spectral sequence element can detect multiple elements in $\Ext$: As an example consider $\tau^8 g^2$. We have the differential $d_6(\tau^{10} n g) = \rho^6 \tau^8 g^2$. However, as we saw when talking about hidden extensions on the $\rho$-free quotient, the element $\tau^8 \cdot g^2$ is $\rho$-torsion free in $\Ext$ via the relation $\tau^8 \cdot \rho^6 g^2 = \rho^{14} \D^2$. As $p$ maps onto $\text{ker}(\rho)$, we must have $p(\tau^{10} n g) = \rho^5 \cdot \tau^8 \cdot g^2 + \rho^{13} \cdot \D^2$.
\end{Rem}

Another interesting observation is the following: In the $\rho$-free quotient we saw that there is a hidden $\tau^8$-extension from $g^2$ to $\rho^8 \D^2$. In the $\rho$-power-torsion part we do not see any connection between $g^2$- and $\D^2$-multiples. That is because the hidden $\tau^8$-extension in $\Ext$ goes from $\rho^6 g^2$ to $\rho^{14} \D^2$, but $\rho^{14}$-multiplication is trivial in the $\rho$-power-torsion part. Therefore, the patterns of hidden extensions for $g^2$- and $\D^2$-multiples of $\rho$-power-torsion elements do not have to look the same. For example the pattern of hidden $h_1$- and $\tau^8$-extensions on $\tau^2 e g^2$ resembles the pattern on $\tau^2 e g$, whereas the pattern on $\tau^2 \D^2 e$ resembles the one on $\tau^2 e$.

The next table collects all hidden $h_0$-extensions on $\rho$-power-torsion elements in the $\rho$-Bockstein spectral sequence in coweight $1$ mod $8$. The columns are the same as those in table \ref{tab:Hidden h_1- and t^8-extensions on rho-power-torsion elements in coweight 1 mod 8}. Again, we have chosen to separate families of hidden extensions by dashed horizontal lines. Hidden extensions inside each family are typically connected via $\tau^4 P$- or $\tau^5 c$-multiplication. They are roughly located on a line of slope $1/2$, corresponding to the slope of $\tau^4 P$-multiplication and almost the slope of $\tau^5 c$-multiplication. We note that $\tau^5 \Dh$ is also in coweight $1$ mod $8$ and supports a hidden $h_0$-extension, but it is $\rho$-torsion free so it should not be part of this table.

\begin{longtable}{lllc}
    \caption{Hidden $h_0$-extensions on $\rho$-power-torsion elements in coweight $1$ mod $8$ on the $\rho$-Bockstein $E_\infty$-page
    \label{tab:Hidden h_0-extensions on rho-power-torsion elements in coweight 1 mod 8}
    } \\
    \toprule
    Source & $(s,f,w)$ & Target & $E_\infty$-Periodicity\\
    \midrule \endfirsthead
    \caption[]{Hidden $h_0$-extensions on $\rho$-power-torsion elements in coweight $1$ mod $8$ on the $\rho$-Bockstein $E_\infty$-page} \\
    \toprule
    Source & $(s,f,w)$ & Target & $E_\infty$-Periodicity\\
    \midrule \endhead
    \bottomrule \endfoot
        $\tau h_1$ & $(1, 1, 0)$ & $\rho \tau h_1^2$ & $P^2$, $\D^2$\\
        $\tau^6 h_1 c$ & $(9, 4, 0)$ & $\rho^2 \tau^4 P h_2$ & $P^2$, $\D^2$\\
        $\tau^5 P h_1$ & $(9, 5, 0)$ & $\rho \tau^5 P h_1^2$ & $P^2$, $\D^2$\\
        $\tau^2 P h_1 c$ & $(17, 8, 8)$ & $\rho^2 P^2 h_2$ & $P^2$, $\D^2$\\\hdashline[1pt/1pt]
        $\tau^3 h_1 d$ & $(15, 5, 6)$ & $\rho^5 \tau h_0^2 g$ & $P^2$, $\D^2$\\
        $\tau^2 h_0^2 e$ & $(17, 6, 8)$ & $\rho^5 c d$ & $P^2$, $\D^2$\\
        $\tau^{10} h_0^2 e$ & $(17, 6, 0)$ & $\rho^6 \tau^6 P n$ & $P^2$, $\D^2$\\
        $\tau^7 P h_1 d$ & $(23, 9, 6)$ & $\rho^5 \tau^5 P h_0^2 g$ & $P^2$, $\D^2$\\
        $\tau^6 P h_0^2 e$ & $(25, 10, 8)$ & $\rho^6 \tau^2 P^2 n$ & $P^2$, $\D^2$\\\hdashline[1pt/1pt]
        $\tau^2 \Dc h_1$ & $(33, 8, 16)$ & $\rho \tau^2 \Dc h_1^2$ & $P^2$, $\D^2$\\
        $\tau P \Dh$ & $(33, 9, 16)$ & $\rho \tau P \Dh h_1$ & $P^2$, $\D^2$\\
        $\tau^6 P \Dc h_1$ & $(41, 12, 16)$ & $\rho \tau^6 P \Dc h_1^2$ & $P^2$, $\D^2$\\
        $\tau^5 P^2 \Dh$ & $(41, 13, 16)$ & $\rho \tau^5 P^2 \Dh h_1$ & $P^2$, $\D^2$\\\hdashline[1pt/1pt]
        $\tau^2 e g$ & $(37, 8, 20)$ & $\rho \tau^2 h_1 e g$ & $P^2$, $g$, $\D^2$\\
        $\tau^{10} e g$ & $(37, 8, 12)$ & $\rho^2 \tau^7 \Dh d$ & $P^2$, $g$, $\D^2$\\
        $\tau^7 \Dh d$ & $(39, 9, 14)$ & $\rho^2 \tau^6 a^2 e$ & $P^2$, $g$, $\D^2$\\
        $\tau^6 a^2 e$ & $(41, 10, 16)$ & $\rho^2 \tau^6 P n g$ & $P^2$, $g$, $\D^2$\\
        $\tau^6 P n g$ & $(43, 11, 18)$ & $\rho^2 \tau^6 P e g$ & $P^2$, $g$, $\D^2$\\
        $\tau^6 P e g$ & $(45, 12, 20)$ & $\rho^2 \tau^3 P \Dh d$ & $P^2$, $g$, $\D^2$\\
        $\tau^3 P \Dh d$ & $(47, 13, 22)$ & $\rho^2 \tau^2 P a^2 e$ & $P^2$, $g$, $\D^2$\\
        $\tau^2 P a^2 e$ & $(49, 14, 24)$ & $\rho^2 \tau^2 P^2 n g$ & $P^2$, $g$, $\D^2$\\
        $\tau^2 P^2 n g$ & $(51, 15, 26)$ & $\rho^2 \tau^2 P^2 e g$ & $P^2$, $g$, $\D^2$\\\hdashline[1pt/1pt]
        $\tau^5 \Dh g$ & $(45, 9, 20)$ & $\rho^2 \tau^4 a n g$ & $P^2$, $g$, $\D^2$\\
        $\tau^4 a n g$ & $(47, 10, 22)$ & $\rho^2 \tau^4 a e g$ & $P^2$, $g$, $\D^2$\\
        $\tau^4 a e g$ & $(49, 11, 24)$ & $\rho^2 \tau^4 d e g$ & $P^2$, $g$, $\D^2$\\
        $\tau^4 d e g$ & $(51, 12, 26)$ & $\rho^2 \tau P \Dh g$ & $P^2$, $g$, $\D^2$\\
        $\tau P \Dh g$ & $(53, 13, 28)$ & $\rho^2 P a n g$ & $P^2$, $g$, $\D^2$\\
        $P a n g$ & $(55, 14, 30)$ & $\rho^2 P a e g$ & $P^2$, $g$, $\D^2$\\
        $\tau^8 P a e g$ & $(57, 15, 24)$ & $\rho^2 \tau^8 d^3 e$ & $P^2$, $g$, $\D^2$\\
        $\tau^8 d^3 e$ & $(59, 16, 26)$ & $\rho^2 \tau^5 P^2 \Dh g$ & $P^2$, $g$, $\D^2$\\
\end{longtable}

\begin{Prop}
    Table \ref{tab:Hidden h_0-extensions on rho-power-torsion elements in coweight 1 mod 8} describes all hidden $h_0$-extensions on $\rho$-power-torsion elements on the $\rho$-Bockstein $E_\infty$-page in coweight $1$ mod $8$.
\end{Prop}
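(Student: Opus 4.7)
The plan is to establish two claims: that every entry in Table \ref{tab:Hidden h_0-extensions on rho-power-torsion elements in coweight 1 mod 8} is a genuine hidden $h_0$-extension, and that the list is exhaustive among $\rho$-power-torsion elements in coweight $1$ mod $8$. The overall strategy parallels that of the previous proposition, but requires some adaptation because both source and target are now $\rho$-power-torsion.

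Completeness I would dispatch by inspection of the $E_\infty$ chart of the relevant coweight. For each $\rho$-power-torsion class $x$ not appearing in the first column, one of three things must happen: $h_0 \cdot x$ is already non-zero on $E_\infty$ (so the $h_0$-multiplication is visible rather than hidden); the bidegree $(s, f+1, w)$ contains no candidate target of strictly higher $\rho$-filtration; or the only candidate targets are excluded by the hidden $\tau^8$-, $h_1$-, and $h_2$-extensions already recorded in Tables \ref{tab:Hidden h_1-, h_2- and t^8-extensions on the rho-free quotient} and \ref{tab:Hidden h_1- and t^8-extensions on rho-power-torsion elements in coweight 1 mod 8} (for instance because a compatible hidden $h_1$- or $\tau^8$-multiple already pins the element down elsewhere). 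This is a finite verification that I would organize along the dashed families of the table.

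For existence, the key tool remains the map $p$ of Lemma \ref{the map p} and the identity $h_0 \cdot x = p(\tau \cdot x)$; the subtlety is that, since $x$ is $\rho$-power-torsion, it lies in $\ker(\rho)$, so I would lift it to a $\mathbb{C}$-motivic class $\tilde{x} \in \Ext_{\mathcal{A}(2)^{\mathbb{C}}}$ via the short exact sequence of Lemma \ref{the map p}, form $\tau \cdot \tilde{x}$ in $\Ext_{\mathcal{A}(2)^{\mathbb{C}}}$, and apply $p$. Whenever the bidegree of $\tau \cdot \tilde{x}$ contains a unique non-zero $\mathbb{C}$-motivic class and the candidate target is unique in $\ker(\rho)$, the analogue of Corollary \ref{Corollary for finding hidden h_0-extensions} delivers the extension immediately. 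I would first establish one base case in each dashed family in this way — promising candidates are $h_0 \cdot \tau h_1 = \rho \tau h_1^2$, $h_0 \cdot \tau^3 h_1 d = \rho^5 \tau h_0^2 g$, $h_0 \cdot \tau^2 \Dc h_1 = \rho \tau^2 \Dc h_1^2$, $h_0 \cdot \tau^2 e g = \rho \tau^2 h_1 e g$, and $h_0 \cdot \tau^5 \Dh g = \rho^2 \tau^4 a n g$ — and then propagate the remaining entries of each family by multiplying with infinite cycles that survive to $E_\infty$, namely $\tau^4 P$, $\tau^5 c$, $g$, and $\D^2$ (by Corollary \ref{t^8, h1, h2, u, g are infinite cycles} and Lemma \ref{differential on t^4 P}). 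Every Leibniz step preserves the $\rho$-filtration jump.

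The hard part will be those entries in which the $\mathbb{C}$-motivic bidegree of $\tau \cdot \tilde{x}$ contains more than one class, or the target bidegree in $\ker(\rho)$ is ambiguous, so that the direct Corollary does not immediately apply. A representative awkward case is $h_0 \cdot \tau^{10} h_0^2 e = \rho^6 \tau^6 P n$, where I expect to combine the base case $h_0 \cdot \tau^2 h_0^2 e = \rho^5 c d$ (which does fall under Corollary \ref{Corollary for finding hidden h_0-extensions}) with $\tau^8$-multiplication, and then reconcile the result against the hidden $\tau^8$-extension on $c d$ recorded in Table \ref{tab:Hidden h_1- and t^8-extensions on rho-power-torsion elements in coweight 1 mod 8}. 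In such cases the plan is either to bootstrap from previously established hidden extensions by $\tau^8$, $h_1$, $h_2$, or to compute $p$ at the cobar level via the boundary map implicit in the proof of Lemma \ref{the map p}. Managing these interdependencies systematically, family by family along the $\tau^4 P$- and $\tau^5 c$-propagation lines indicated by the dashed horizontal rules of the table, is where the bulk of the work lies.
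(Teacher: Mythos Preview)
Your approach is sound in spirit but considerably more elaborate than necessary. The paper's proof consists of two sentences: completeness by inspection (as you propose), and the assertion that \emph{every} listed hidden extension follows directly from Corollary~\ref{Corollary for finding hidden h_0-extensions} without exception. There are no ``hard cases'' of the sort you anticipate --- in each instance the $\mathbb{C}$-motivic degree of $\tau \cdot x$ turns out to be one-dimensional, so the corollary applies uniformly and no propagation via $\tau^4 P$, $\tau^5 c$, or cobar-level computation is needed.

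The source of your overcomplication is the belief that $\rho$-power-torsion sources require special handling. They do not: Proposition~\ref{how to find hidden h_0-extensions} places no $\rho$-torsion hypothesis on $x$, only that $x$ lie in $\rho$-filtration zero. The product $\tau \cdot x$ in the proof of that proposition is formed using the $\Ext_{\mathcal{A}(2)^{\mathbb{R}}}$-module structure on $\Ext_{\mathcal{A}(2)^{\mathbb{C}}}$ (recall that $p$ is $\Ext_{\mathcal{A}(2)^{\mathbb{R}}}$-linear), so no lifting of $x$ to a $\mathbb{C}$-motivic class is required. Incidentally, your claim that a $\rho$-power-torsion element automatically lies in $\ker(\rho)$ is false --- most sources in the table are annihilated only by higher powers of $\rho$ --- though this error is harmless since the lifting step it motivates is itself unnecessary. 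Your fallback strategies would certainly work; they are simply not called for.
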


\begin{proof}
    All elements not listed in table \ref{tab:Hidden h_0-extensions on rho-power-torsion elements in coweight 1 mod 8} either already have $h_0$-multiples or they cannot support hidden extensions for degree or multiplicative reasons.

    Every claimed hidden $h_0$-extension follows from Corollary \ref{Corollary for finding hidden h_0-extensions}.
\end{proof}

\section{An Adams spectral sequence?}\label{section: An Adams spectral sequence?}

In the classical setting $\Ext_{\mathcal{A}(2)^{\cl}}(\mathbb{F}_2, \mathbb{F}_2)$ is the $E_2$-page of the Adams spectral sequence computing the homotopy groups of the topological modular forms spectrum $\tmf$. Similarly, in the $\mathbb{C}$-motivic setting $\Ext_{\mathcal{A}(2)^\mathbb{C}}(\mathbb{M}_2^\mathbb{C}, \mathbb{M}_2^\mathbb{C})$ is the $E_2$-page of the $\mathbb{C}$-motivic Adams spectral sequence computing the homotopy groups of the $\mathbb{C}$-motivic modular forms spectrum $\mmf^{\,\,\mathbb{C}}$. Speculatively, one could assume that there is an $\mathbb{R}$-motivic Adams spectral sequence with $E_2$-page $\Ext_{\mathcal{A}(2)}(\mathbb{M}_2, \mathbb{M}_2)$, associated to a speculative $\mathbb{R}$-motivic modular forms spectrum $\mmf^{\,\,\mathbb{R}}$. Under this assumption, we will describe the $d_2$-differentials of that Adams spectral sequence on all indecomposables.

We will not attempt to compute the entire Adams spectral sequence. The issue is that we do not have all hidden extensions that are necessary to pass from the $\rho$-Bockstein $E_\infty$-page to the cohomology of $\mathcal{A}(2)$, as there are too many of them. While we do have decompositions for all elements of $\Ext_{\mathcal{A}(2)}(\mathbb{M}_2, \mathbb{M}_2)$, the Leibniz rule for Adams differentials $d_r(x \cdot y) = x \cdot d_r(y) + d_r(x) \cdot y$ may involve hidden extensions, i.e. both $x \cdot d_r(y)$ and $d_r(x) \cdot y$ may be zero on the Bockstein $E_\infty$-page but non-zero in \Ext. Hence, knowledge of the differentials on indecomposables does not immediately translate to knowledge of all differentials.

A notational issue that we also need to consider is the passage from the $\rho$-Bockstein $E_\infty$-page to $\Ext$. If a given class on the $E_\infty$-page has other classes in the same tridegree that are of higher $\rho$-filtration, then that class represents multiple elements in $\Ext$. Oftentimes we can distinguish these elements by their multiplicative decompositions. For example, we already saw in Remark \ref{multiplication on rho-torsion free elements can have rho-torsion summands} that the $E_\infty$ class $\tau^8 g^2$ represents two elements in $\Ext$ which can be distinguished by their decompositions $\tau^8 \cdot g^2$ and $\tau^4 g \cdot \tau^4 g$. We will use such multiplicative decompositions whenever there are multiple elements represented by a single class on $E_\infty$. In particular, we will make the following notational convention in this section.

\begin{Not}
    In this section, the symbol $\cdot$ will denote multiplication in $\text{\rm Ext}_{\mathcal{A}(2)^{\mathbb{R}}}$ unless stated otherwise. I.e. it denotes the multiplication coming from the $\rho$-Bockstein $E_\infty$-page after resolving possible hidden extensions.
\end{Not}

Using multiplicative decompositions to distinguish between different elements in $\Ext$ that are represented by the same class on $E_\infty$ obviously does not work for indecomposables. 

Let us first consider $\rho$-torsion free indecomposables. Since the map from the $\rho$-free quotient to the $\rho$-localization is injective, we can define $\rho$-free classes (up to $\rho$-power-torsion summands) by their images in the $\rho$-localization. We will assign names as they appear on our $\rho$-Bockstein $E_\infty$-page. For example, we let $\tau^4 P$ be the element whose image in the $\rho$-localization is $\rho^{-4} \cdot \tau^8 \cdot h_1^4$, or we let $\tau^6 a$ be the element which localizes to $\rho^{-1} \cdot \tau^8 \cdot u$. This kind of assignment works as long as there are no $\rho$-power-torsion elements in higher $\rho$-filtration. That almost never occurs for the elements we care about. Exceptions are $\tau^2 P a$ which also has $\rho^6 a d$ in higher $\rho$-filtration, and $P a^2$ which has $\rho^5 \tau h_0^2 e g$. For the first exception the choice of element representing $\tau^2 P a$ makes no difference. While $\rho^6 a d = \rho^6 \cdot a \cdot d$ contains $a$ which supports an Adams $d_2$, the target is $\rho^3$-torsion so $\rho^6 a d$ is a $d_2$-cycle. For $P a^2$ we have to be more careful a priori. The element $\rho^5 \tau h_0^2 e g$ has the decomposition $\rho^5 \cdot c \cdot a \cdot e$, and as we will see $d_2(e) = h_1^2 d$ supports a non-trivial $\rho^5$-multiplication. However, $h_1^2 d \cdot c$ is $\rho^3$-torsion, so that $\rho^5 \tau h_0^2 e g$ is again a $d_2$-cycle. So the existence of $\rho$-power-torsion elements in higher $\rho$-filtration is also immaterial for $P a^2$, at least for the purpose of computing Adams $d_2$-differentials.

When an indecomposable class on $E_\infty$ is $\rho$-power-torsion we can typically abuse notation and use the same letters to denote the corresponding element of $\Ext$ whose $\rho$-power-torsion exactly matches that of the class on $E_\infty$. For example, $\tau h_1$ has the class $\rho^2 h_2$ in higher $\rho$-filtration, but only one of the elements it represents is $\rho^2$-torsion because $h_2$ is $\rho$-torsion free. So we let $\tau h_1$ denote the unique $\rho^2$-torsion element in $\Ext$ represented by the element on $E_\infty$ with the same name. Again, there are two indecomposables where this approach does not define them uniquely, namely $\tau P h_1$ with $\rho^2 P h_2$ in higher $\rho$-filtration and $\tau P \Dh$ with $\rho \tau^2 \Dc h_1^2$. Much like for the $\rho$-torsion free exceptions, one can argue that the choices of representatives here make no difference.

\subsection{Chart guide}\label{section: Chart guide for Adams differentials}

For section \ref{section: Adams d_2-differentials} the relevant charts from \cite{Charts} are in the two folders \texttt{Cohomology of A(2) rho-free quotient} and \texttt{Rho Bockstein E\_infinity rho-torsion}. The first is relevant for differentials whose sources are $\rho$-torsion free, the second for differentials where either the target or the source is $\rho$-power-torsion. Note that the Adams $d_2$-differential has degree $(-1, 2, 0)$, so in particular coweight $-1$. Since the charts in the aforementioned folders each depict a certain set of coweights, the source and the target of an Adams $d_2$-differential are likely to be on separate charts.

\subsection{Adams \texorpdfstring{$d_2$}{d\_2}-differentials}\label{section: Adams d_2-differentials}

What follows is a table of Adams $E_2$-page indecomposables, their degrees and the values of their $d_2$-differentials. If the $d_2$-column is empty, that means that $d_2$ on that indecomposable is zero. As mentioned in Remark \ref{remark on hidden extensions and indecomposables}, this table contains exactly the same indecomposables as table \ref{tab:Indecomposables on E_infty}.

\begin{longtable}{lll}
    \caption{Differentials on indecomposables on the Adams $E_2$-page for $\mmf^{\,\,\mathbb{R}}$
    \label{tab:Indecomposables on Adams E_2 and differentials}
    } \\
    \toprule
    Indecomposable & $(s,f,w)$ & $d_2$\\
    \midrule \endfirsthead
    \caption[]{Differentials on indecomposables on the Adams $E_2$-page for $\mmf^{\,\,\mathbb{R}}$} \\
    \toprule
    Indecomposable & $(s,f,w)$ & $d_2$\\
    \midrule \endhead
    \bottomrule \endfoot
        $\rho$ & $(-1, 0, -1)$ & \\
        $\tau^8$ & $(0, 0, -8)$ & \\
        $\tau^6 h_0$ & $(0, 1, -6)$ & \\
        $\tau^4 h_0$ & $(0, 1, -4)$ & \\
        $\tau^2 h_0$ & $(0, 1, -2)$ & \\
        $h_0$ & $(0, 1, 0)$ & \\
        $\tau^5 h_1$ & $(1, 1, -4)$ & \\
        $\tau h_1$ & $(1, 1, 0)$ & \\
        $h_1$ & $(1, 1, 1)$ & \\
        $\tau^2 h_2$ & $(3, 1, 0)$ & \\
        $h_2$ & $(3, 1, 2)$ & \\
        $\tau h_2^2$ & $(6, 2, 3)$ & \\
        $\tau^5 c$ & $(8, 3, 0)$ & \\
        $\tau c$ & $(8, 3, 4)$ & \\
        $c$ & $(8, 3, 5)$ & \\
        $\tau^4 P$ & $(8, 4, 0)$ & \\
        $\tau^2 P h_0$ & $(8, 5, 2)$ & \\
        $P h_0$ & $(8, 5, 4)$ & \\
        $\tau P h_1$ & $(9, 5, 4)$ & \\
        $u$ & $(11, 3, 7)$ & $h_1^2 c$\\
        $P h_2$ & $(11, 5, 6)$ & \\
        $\tau^6 a$ & $(12, 3, 0)$ & $\tau^6 P h_2$\\
        $a$ & $(12, 3, 6)$ & $P h_2$\\
        $\tau^6 d$ & $(14, 4, 2)$ & $\rho \tau^5 P h_2^2$\\
        $d$ & $(14, 4, 8)$ & \\
        $n$ & $(15, 3, 8)$ & $h_0 \cdot d + \rho \cdot h_1 d$\\
        $\tau P c$ & $(16, 7, 8)$ & \\
        $P^2$ & $(16, 8, 8)$ & \\
        $\tau^2 e$ & $(17, 4, 8)$ & \\
        $e$ & $(17, 4, 10)$ & $h_1^2 d$\\
        $\tau^7 h_0^2 e$ & $(17, 6, 3)$ & $\rho \tau^6 P h_1 c$\\
        $\tau^5 h_0^2 e$ & $(17, 6, 5)$ & $\rho \tau^4 P h_1 c$\\
        $\tau^4 g$ & $(20, 4, 8)$ & \\
        $g$ & $(20, 4, 12)$ & \\
        $\tau^2 P a$ & $(20, 7, 8)$ & $\tau^2 P^2 h_2$\\
        $\tau^2 P d$ & $(22, 8, 10)$ & $\rho \tau P^2 h_2^2$\\
        $\tau^6 P n$ & $(23, 7, 6)$ & $\tau^6 d \cdot P h_0$\\
        $\tau^4 a^2$ & $(24, 6, 8)$ & \\
        $\tau^5 \Dh$ & $(25, 5, 8)$ & \\
        $\tau \Dh$ & $(25, 5, 12)$ & \\
        $\tau^3 P h_0^2 e$ & $(25, 10, 11)$ & $\rho \tau^2 P^2 h_1 c$\\
        $\tau P h_0^2 e$ & $(25, 12, 13)$ & $\rho P^2 h_1 c$\\
        $\tau^4 a d$ & $(26, 7, 10)$ & $\tau^4 P e \cdot h_0$\\
        $\tau^2 a n$ & $(27, 6, 12)$ & \\
        $\tau^4 d^2$ & $(28, 8, 12)$ & \\
        $\tau^4 n^2$ & $(30, 6, 12)$ & \\
        $\tau^2 P^2 n$ & $(31, 11, 14)$ & $\tau^2 h_0 \cdot P^2 \cdot d$\\
        $\tau^5 \Dc + \tau^6 a g$ & $(32, 7, 12)$ & \\
        $\tau \Dc + \tau^2 a g$ & $(32, 7, 16)$ & \\
        $P a^2$ & $(32, 10, 16)$ & \\
        $\tau P \Dh$ & $(33, 9, 16)$ & \\
        $P a d$ & $(34, 11, 18)$ & $P^2 e \cdot h_0$\\
        $\tau^2 a^3$ & $(36, 9, 16)$ & $\tau^2 P a n \cdot h_0 + \rho^2 \cdot \tau^2 P a e$\\
        $\tau^2 P n^2$ & $(38, 10, 18)$ & $\rho^2 \cdot \tau^2 P d e$\\
        $\tau P \Dc + \tau^2 P a g$ & $(40, 11, 20)$ & \\
        $\D^2$ & $(48, 8, 24)$ & $\tau^2 a n g$\\
\end{longtable}

We proceed by giving proofs of the claimed $d_2$-differentials. The following is an analogue of \cite[Section 3]{BI}.

\begin{Lemma}\label{Comparison to C-motivic differentials}
    Recall the short exact sequence of $\text{\rm Ext}_{\mathcal{A}(2)^{\mathbb{R}}}$-modules from Lemma \ref{the map p}
    \[0 \xrightarrow[]{} \text{\rm coker}(\rho) \xrightarrow[]{i} \text{\rm Ext}_{\mathcal{A}(2)^\mathbb{C}} \xrightarrow[]{p} \ker(\rho) \xrightarrow[]{} 0.\]
    If $x$ is a permanent cycle in the $\rho$-Bockstein spectral sequence, then the map $i$ takes $x$ in $\text{\rm coker}(\rho) \subset \text{\rm Ext}_{\mathcal{A}(2)^\mathbb{R}}$ to the element of $\text{\rm Ext}_{\mathcal{A}(2)^\mathbb{C}}$ with the same name. Also, $i$ commutes with Adams differentials. Therefore $d_2^{\mathbb{R}}(x) \equiv d_2^{\mathbb{C}}(x)$ mod $\rho$ for all $x \in \text{\rm im}(i)$.
\end{Lemma}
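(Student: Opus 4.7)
The plan is to unpack the construction of $i$ from Lemma \ref{the map p}. That lemma was obtained by applying $\Ext_{\mathcal{A}(2)^{\mathbb{R}}}(\mathbb{M}_2,-)$ to the short exact sequence of $\mathcal{A}(2)^{\mathbb{R}}$-modules
\[0 \to \mathcal{A}(2)^{\mathbb{R}} \xrightarrow{\rho} \mathcal{A}(2)^{\mathbb{R}} \to \mathcal{A}(2)^{\mathbb{C}} \to 0,\]
so at the level of cobar complexes $i$ is induced by reduction of cochains modulo $\rho$.

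For the first assertion, a permanent cycle $x$ on the $\rho$-Bockstein $E_\infty$-page lifts to a class $\widetilde{x} \in \Ext_{\mathcal{A}(2)^{\mathbb{R}}}$ in $\rho$-filtration zero, represented by a cobar cocycle $c$. By the construction of the $\rho$-Bockstein spectral sequence, the reduction $c \bmod \rho$ represents the $E_1$-class named $x$, and since the $E_1$-page is $\Ext_{\mathcal{A}(2)^{\mathbb{C}}}[\rho]$, this is exactly the $\mathbb{C}$-motivic class named $x$. That is precisely the content of $i(\widetilde{x} \bmod \rho) = x$.

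For the second assertion, the quotient $\mathcal{A}(2)^{\mathbb{R}} \to \mathcal{A}(2)^{\mathbb{C}}$ is a map of Hopf algebroids and hence induces a map of Adams resolutions compatible with the Adams filtration. The induced map on $E_2$-pages is precisely $i$, and naturality gives $i \circ d_r^{\mathbb{R}} = d_r^{\mathbb{C}} \circ i$ for every $r$. The final congruence then follows at once: given $x \in \mathrm{im}(i)$ with preimage $\widetilde{x}$, we have $i(d_2^{\mathbb{R}}(\widetilde{x})) = d_2^{\mathbb{C}}(x)$, and because $i$ is reduction modulo $\rho$ this says $d_2^{\mathbb{R}}(\widetilde{x}) \equiv d_2^{\mathbb{C}}(x) \pmod{\rho}$.

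The main obstacle is purely notational: one must be careful that an element called $x$ on the $\rho$-Bockstein $E_\infty$-page, as a class in $\Ext_{\mathcal{A}(2)^{\mathbb{R}}}$, and as a class in $\Ext_{\mathcal{A}(2)^{\mathbb{C}}}$ all really refer to compatible classes. This is guaranteed because the naming convention is dictated by the $E_1$-page identification $E_1 \cong \Ext_{\mathcal{A}(2)^{\mathbb{C}}}[\rho]$, so the first assertion becomes a tautology once that is unpacked. The remaining ingredient is naturality of the Adams filtration under $\mathcal{A}(2)^{\mathbb{R}} \to \mathcal{A}(2)^{\mathbb{C}}$, which is formal.
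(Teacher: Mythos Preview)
Your argument for the first assertion is correct and is essentially what the paper intends: the map $i$ is reduction mod $\rho$ on cobar cochains, and the naming convention on the $\rho$-Bockstein $E_1$-page makes the claim a tautology.

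The second assertion, however, has a real gap. You argue that the quotient $\mathcal{A}(2)^{\mathbb{R}} \to \mathcal{A}(2)^{\mathbb{C}}$ is a map of Hopf algebroids and ``hence induces a map of Adams resolutions compatible with the Adams filtration.'' But the Adams spectral sequence in question is the one converging to $\pi_*(\mmf^{\,\mathbb{R}})$, not an algebraic object determined by the Hopf algebroid alone. Its differentials encode genuinely topological information and are not determined by the $E_2$-page. A map of Hopf algebroids gives you a map on $\Ext$ groups, but it does not by itself give you a map of Adams spectral sequences for spectra; for that you need a map of spectra (or at least of Adams towers).

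The paper's proof closes this gap by invoking the cofiber sequence of spectra
\[\mmf^{\,\mathbb{R}} \xrightarrow{\rho} \mmf^{\,\mathbb{R}} \to \mmf^{\,\mathbb{R}}/\rho,\]
which exists under the standing hypothesis that $\mmf^{\,\mathbb{R}}$ exists. The map of spectra $\mmf^{\,\mathbb{R}} \to \mmf^{\,\mathbb{R}}/\rho$ induces a map of Adams spectral sequences, and on $E_2$-pages this map is identified with $i$. That is what makes $i$ commute with Adams differentials. Your purely algebraic argument cannot substitute for this step.
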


\begin{proof}
    Assuming the existence of an $\mathbb{R}$-motivic modular forms spectrum $\mmf^{\,\,\mathbb{R}}$, all claims follow from the cofiber sequence
    \[\mmf^{\,\,\mathbb{R}} \xrightarrow[]{\rho} \mmf^{\,\,\mathbb{R}} \to \mmf^{\,\,\mathbb{R}}/\rho\]
    with arguments similar to \cite[Section 3]{BI}. The map $i$ is induced by $\mmf^{\,\,\mathbb{R}} \to \mmf^{\,\,\mathbb{R}}/\rho$ so it commutes with Adams differentials.
\end{proof}

Our approach to the Adams spectral sequence will be similar to our approach to the $\rho$-Bockstein spectral sequence in the following sense: We provide a table of all Adams $d_2$-differentials on indecomposables that could exist solely for degree reasons. Then, we consider whether each differential does or does not occur individually. Note that the degree of the Adams $d_2$-differential is $(-1, 2, 0)$. In particular its coweight is $-1$, meaning the source of a differential and its target are likely to be on different coweight charts.

Sometimes we will make reference to hidden extensions on the $\rho$-free quotient of the $\rho$-Bockstein $E_\infty$-page as proven in section \ref{section: Hidden extensions on the rho-free quotient}. In every case, these hidden extensions lift uniquely to the non-quotiented $E_\infty$-page because there are no $\rho$-power-torsion elements in the relevant degrees.

\begin{Not}
    We will write $d_2$ for the $\mathbb{R}$-motivic Adams differential. If we want to appeal to a $\mathbb{C}$-motivic Adams differential we will write $d_2^{\mathbb{C}}$.
\end{Not}

\begin{Prop}\label{proof of Adams d_2 differentials}
    Table \ref{tab:Indecomposables on Adams E_2 and differentials} describes the non-zero $d_2$-differentials in the Adams spectral sequence on all indecomposables on $E_2$.
\end{Prop}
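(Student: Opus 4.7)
The overall plan is to compare to the known Adams $d_2$-differentials in the $\mathbb{C}$-motivic Adams spectral sequence for $\mmf^{\,\,\mathbb{C}}$ (as computed in \cite{Isa09}) and use the comparison Lemma \ref{Comparison to C-motivic differentials}. For each indecomposable $x$ in table \ref{tab:Indecomposables on Adams E_2 and differentials} I would first enumerate, by a degree check on the Adams $E_2$-page, all possible values that $d_2(x)$ could take. The differential has tridegree $(-1, 2, 0)$, and in many cases there are simply no non-zero elements in the target tridegree. For the remaining candidates I would proceed case by case.

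For indecomposables that are $\rho$-torsion-free on the $\rho$-Bockstein $E_\infty$-page and lift from $\mathbb{C}$-motivic classes with the same name (e.g.\ $u$, $a$, $d$, $e$, $n$, $P h_2$, etc.), Lemma \ref{Comparison to C-motivic differentials} gives $d_2(x) \equiv d_2^{\mathbb{C}}(x) \pmod{\rho}$, so the $\mathbb{R}$-motivic $d_2$ is determined up to an additive $\rho$-multiple. The $\mathbb{C}$-motivic differentials $d_2^{\mathbb{C}}(u) = h_1^2 c$, $d_2^{\mathbb{C}}(a) = P h_2$, $d_2^{\mathbb{C}}(e) = h_1^2 d$, $d_2^{\mathbb{C}}(n) = h_0 d$, etc., are standard, so the task reduces to identifying the possible $\rho$-summands in the target and ruling them in or out. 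Quite often the relevant target tridegree has no $\rho$-multiples of $\Ext$-classes at all, so the answer is forced. When there are candidate $\rho$-summands (as for $n$, where $\rho \cdot h_1 d$ is an allowed correction), I would use Leibniz rule arguments: multiplying by permanent cycles and invoking the multiplicative structure of the $E_\infty$-page to detect whether the correction is present. For multiplicative consequences such as $d_2(\tau^6 d) = \rho \tau^5 Ph_2^2$, I would derive them from the $\mathbb{C}$-motivic picture together with the multiplicative relations and hidden extensions established in section \ref{section: hidden extensions}, particularly $\tau^8 \cdot h_1^4 = \rho^4 \tau^4 P$, which forces many $\rho$-summands on $\tau$-power-multiples of classes.

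For $\rho$-power-torsion indecomposables and for $\rho$-torsion-free classes supported in Bockstein filtration that are not represented in $\text{coker}(\rho)$, Lemma \ref{Comparison to C-motivic differentials} does not apply directly. Here I would use the map $p$ from Lemma \ref{the map p}, exactly as used in section \ref{section: hidden extensions on rho-torsion elements}. Concretely, if $y = p(\tilde y)$ for some $\mathbb{C}$-motivic class $\tilde y$, then $d_2$ on $y$ is controlled by $p$ applied to $d_2^{\mathbb{C}}(\tilde y)$, because $p$ is induced by a map of spectra and therefore commutes with Adams differentials. This covers, for instance, the differentials on the classes $\tau^{2k+1} h_0^2 e$, $\tau^{2k+1} P h_0^2 e$, and on $\tau^2 P a$, $\tau^2 P d$, $\tau^6 P n$, $\tau^4 a d$, $\tau^2 P^2 n$, $P a d$, $\tau^2 a^3$, $\tau^2 P n^2$, which all flow from the $\mathbb{C}$-motivic differentials by a compatible application of $p$ or by Leibniz from differentials already established. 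For indecomposables where the $d_2$-column is empty, I would provide either a degree argument or observe that the class is a product of permanent cycles (e.g.\ $P^2 = P \cdot P$, $\tau^4 a^2 = (\tau^2 a)^2$, $\tau^4 d^2 = (\tau^2 d)^2$) combined with vanishing of the Leibniz contributions.

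The main obstacle I expect is the case of $\D^2$, where one needs $d_2(\D^2) = \tau^2 a n g$. This class does not appear in the $\rho$-free quotient in a way accessible by the comparison to $\mathbb{C}$-motivic $\mmf^{\,\,\mathbb{C}}$ (where $d_2^{\mathbb{C}}(\D^2) = 0$ would be the naive expectation), so the non-triviality must come from a genuinely $\mathbb{R}$-motivic phenomenon. I would attack this by using the hidden extension $\tau^8 \cdot g^2 = \rho^8 \D^2$ from section \ref{section: Hidden extensions on the rho-free quotient}: applying $d_2$ to both sides and using that $\tau^8$ is a permanent cycle (it is in $\text{coker}(\rho)$ and lifts from $\mathbb{C}$-motivic where it is a permanent cycle), combined with the Leibniz rule and $d_2(g) = 0$ paired against the $\rho^8$-multiple relation, should force the stated value. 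A second subtle issue is that several indecomposables represent more than one element of $\Ext$ because of classes in higher $\rho$-filtration; I would handle this by following the conventions laid out before the table (choosing representatives via $\rho$-localization for $\rho$-free classes and via $\rho$-torsion order for $\rho$-power-torsion classes), and by checking in each exceptional case ($\tau^2 P a$, $P a^2$, $\tau P h_1$, $\tau P \Dh$) that the resulting $d_2$ is independent of the choice, as already sketched in the running discussion of the text.
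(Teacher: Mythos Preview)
Your overall strategy matches the paper's, but there are two concrete gaps.

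First, your treatment of $\D^2$ is based on a factual error. You write that $d_2^{\mathbb{C}}(\D^2) = 0$ ``would be the naive expectation,'' and therefore look for an $\mathbb{R}$-motivic argument via the hidden extension $\tau^8 \cdot g^2 = \rho^8 \D^2$. In fact $d_2^{\mathbb{C}}(\D^2) = \tau^2 a n g$ is a nonzero $\mathbb{C}$-motivic differential (it is in \cite{Isa18}), and the paper handles $\D^2$ in one line by the same comparison argument as for $u$, $a$, $e$. Your proposed workaround would not have produced the answer anyway: applying $d_2$ to $\tau^8 \cdot g^2 = \rho^8 \D^2$ using that $\tau^8$ and $g$ are $d_2$-cycles gives $\rho^8 \cdot d_2(\D^2) = 0$, i.e.\ only that $d_2(\D^2)$ is $\rho^8$-torsion. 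Since the target $\tau^2 a n g$ is already $\rho^3$-torsion, this is vacuous.

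Second, the decompositions you invoke to show that $P^2$, $\tau^4 a^2$, $\tau^4 d^2$ are $d_2$-cycles are not available. You write $P^2 = P \cdot P$, $\tau^4 a^2 = (\tau^2 a)^2$, $\tau^4 d^2 = (\tau^2 d)^2$, but $P$, $\tau^2 a$, and $\tau^2 d$ all support nonzero $\rho$-Bockstein differentials ($d_3$, $d_6$, $d_5$ respectively, see table \ref{tab:Bockstein d_r}) and hence do not exist in $\Ext_{\mathcal{A}(2)^{\mathbb{R}}}$. These classes are genuinely indecomposable and need separate arguments. The paper handles $\tau^4 a^2$ via $\tau^6 a \cdot \tau^6 a = \tau^8 \cdot \tau^4 a^2$ (here $\tau^6 a$ \emph{does} survive), so $\tau^8 \cdot d_2(\tau^4 a^2) = 0$, and then checks that no candidate target is $\tau^8$-torsion; $\tau^4 d^2$ is handled via the hidden extension $h_1 \cdot \tau^4 a d = \rho \cdot \tau^4 d^2$; $P^2$ is zero for degree reasons.
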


\begin{proof}
    For degree reasons, the only indecomposables that can support a $d_2$-differential are given by table \ref{tab:Possible Adams d_2}. In the table, if an indecomposable appears twice then that means that the value of its $d_2$ has multiple possible summands. If the proof column is empty, then the proof of that differential is contained in this Proposition. The table lists the differentials whose sources are $\rho$-torsion free elements first, then the ones whose sources are $\rho$-power-torsion elements. Each part is ordered increasingly by $s$, then $f$, then $w$.

    \begin{longtable}{llllc}
    \caption{Possible non-zero Adams $d_2$-differentials on indecomposable elements
    \label{tab:Possible Adams d_2}
    } \\
    \toprule
    $x$ & $(s,f,w)$ & $d_2(x)$ & Occurs & Proof\\
    \midrule \endfirsthead
    \caption[]{Possible non-zero Adams $d_2$-differentials on indecomposable elements} \\
    \toprule
    $x$ & $(s,f,w)$ & $d_2(x)$ & Occurs & Proof\\
    \midrule \endhead
    \bottomrule \endfoot
        $u$ & $(11, 3, 7)$ & $h_1^2 c$ & Yes & \\
        $\tau^6 a$ & $(12, 3, 0)$ & $\tau^6 P h_2$ & Yes & \\
        $\tau^6 d$ & $(14, 4, 2)$ & $\rho \tau^5 P h_2^2$ & Yes & \ref{Adams differential on t^6 d}\\
        $\tau^5 h_0^2 e$ & $(17, 6, 5)$ & $\rho \tau^4 P h_1 c$ & Yes & \ref{Adams differential on t^5 h_0^2 e}\\
        $g$ & $(20, 4, 12)$ & $h_1^2 e$ & No & \\
        $\tau^2 P a$ & $(20, 7, 8)$ & $\tau^2 P^2 h_2$ & Yes & \\
        $\tau^2 P d$ & $(22, 8, 10)$ & $\rho \tau P^2 h_2^2$ & Yes & \ref{Adams differential on t^2 P d}\\
        $\tau^4 a^2$ & $(24, 6, 8)$ & $\tau^4 P n \cdot h_0$ & No & \ref{Adams differential on t^4 a^2}\\
        $\tau^4 a^2$ & $(24, 6, 8)$ & $\rho^2 \cdot \tau^4 P e$ & No & \ref{Adams differential on t^4 a^2}\\
        $\tau^5 \Dh$ & $(25, 5, 8)$ & $\tau^4 a^2 \cdot h_0$ & No & \ref{Adams differential on t^5 Dh}\\
        $\tau^5 \Dh$ & $(25, 5, 8)$ & $\rho^2 \cdot \tau^4 a d$ & No & \ref{Adams differential on t^5 Dh}\\
        $\tau P h_0^2 e$ & $(25, 10, 13)$ & $\rho P^2 h_1 c$ & Yes & \ref{Adams differential on t P h0^2 e}\\
        $\tau^4 a d$ & $(26, 7, 10)$ & $\tau^4 P e \cdot h_0$ & Yes & \ref{Adams differential on t^4 a d}\\
        $\tau^4 a d$ & $(26, 7, 10)$ & $\rho \cdot \tau^4 P h_1 e$ & No & \ref{Adams differential on t^4 a d}\\
        $\tau^4 d^2$ & $(28, 8, 12)$ & $\rho \tau^3 P h_0^2 g$ & No & \ref{Adams differential on t^4 d^2}\\
        $\tau^4 n^2$ & $(30, 6, 12)$ & $\tau^4 h_0 a e$ & No & \\
        $P a^2$ & $(32, 10, 16)$ & $P^2 n \cdot h_0$ & No & \ref{Adams differential on P a^2}\\
        $P a^2$ & $(32, 10, 16)$ & $\rho^2 \cdot P^2 e$ & No & \ref{Adams differential on P a^2}\\
        $P a d$ & $(34, 11, 18)$ & $P^2 e \cdot h_0$ & Yes & \ref{Adams differential on P a d}\\
        $P a d$ & $(34, 11, 18)$ & $\rho \cdot P^2 h_1 e$ & No & \ref{Adams differential on P a d}\\
        $\tau^2 a^3$ & $(36, 9, 16)$ & $\tau^2 P a n \cdot h_0$ & Yes & \ref{Adams differential on t^2 a^3}\\
        $\tau^2 a^3$ & $(36, 9, 16)$ & $\rho^2 \cdot \tau^2 P a e$ & Yes & \ref{Adams differential on t^2 a^3}\\
        $\tau^2 P n^2$ & $(38, 10, 18)$ & $\tau^2 P a e \cdot h_0$ & No & \ref{Adams differential on t^2 P n^2}\\
        $\tau^2 P n^2$ & $(38, 10, 18)$ & $\rho^2 \cdot \tau^2 P d e$ & Yes & \ref{Adams differential on t^2 P n^2}\\
        $\D^2$ & $(48, 8, 24)$ & $\tau^2 a n g$ & Yes & \\
        $\tau^5 h_1$ & $(1, 1, -4)$ & $\tau^4 h_0 \cdot h_0 \cdot h_0$ & No & \\
        $\tau^5 h_1$ & $(1, 1, -4)$ & $\rho^{11} \cdot u$ & No & \\
        $\tau h_1$ & $(1, 1, 0)$ & $h_0 \cdot h_0 \cdot h_0$ & No & \\
        $\tau h_1$ & $(1, 1, 0)$ & $\rho^3 \cdot h_1^3$ & No & \\
        $\tau^2 h_2$ & $(3, 1, 0)$ & $\rho \tau^2 h_1^3$ & No & \ref{Adams differential on t^2 h2}\\
        $\tau P h_1$ & $(9, 5, 4)$ & $P h_0 \cdot h_0 \cdot h_0$ & No & \\
        $\tau P h_1$ & $(9, 5, 4)$ & $\rho^7 \cdot h_1^4 u$ & No & \\
        $a$ & $(12, 3, 6)$ & $P h_2$ & Yes & \\
        $n$ & $(15, 3, 8)$ & $h_0 \cdot d$ & Yes & \ref{Adams differential on n}\\
        $n$ & $(15, 3, 8)$ & $\rho \cdot h_1 d$ & Yes & \ref{Adams differential on n}\\
        $e$ & $(17, 4, 10)$ & $h_1^2 d$ & Yes & \\
        $\tau^7 h_0^2 e$ & $(17, 6, 3)$ & $\rho \tau^6 P h_1 c$ & Yes & \ref{Adams differential on t^7 h0^2 e}\\
        $\tau^6 P n$ & $(23, 7, 6)$ & $\tau^6 d \cdot P h_0$ & Yes & \\
        $\tau^6 P n$ & $(23, 7, 6)$ & $\rho^7 \cdot \tau^4 h_1 d^2$ & No & \\
        $\tau \Dh$ & $(25, 5, 12)$ & $h_0 \cdot a^2$ & No & \ref{Adams differential on t Dh}\\
        $\tau \Dh$ & $(25, 5, 12)$ & $\rho^2 \cdot a \cdot d$ & No & \ref{Adams differential on t Dh}\\
        $\tau \Dh$ & $(25, 5, 12)$ & $\rho^7 \cdot u \cdot g$ & No & \ref{Adams differential on t Dh}\\
        $\tau^3 P h_0^2 e$ & $(25, 10, 11)$ & $\rho \tau^2 P^2 h_1 c$ & Yes & \ref{Adams differential on t^3 P h0^2 e}\\
        $\tau^2 a n$ & $(27, 6, 12)$ & $\tau^2 P h_2 n$ & No & \\
        $\tau^2 P^2 n$ & $(31, 11, 14)$ & $\tau^2 h_0 \cdot P^2 \cdot d$ & Yes & \ref{Adams differential on t^2 P^2 n}\\
        $\tau^2 P^2 n$ & $(31, 11, 14)$ & $\rho^7 \cdot P^2 \cdot h_1 g$ & No & \ref{Adams differential on t^2 P^2 n}\\
        $\tau^2 P^2 n$ & $(31, 11, 14)$ & $\rho^{15} \cdot h_1^5 g^2$ & No & \ref{Adams differential on t^2 P^2 n}\\
        $\tau^5 \Dc + \tau^6 a g$ & $(32, 7, 12)$ & $\rho \tau^6 h_1 d e$ & No & \ref{Adams differential on t^5 Dc + t^6 a g}\\
        $\tau \Dc + \tau^2 a g$ & $(32, 7, 16)$ & $\rho \tau^2 h_1 d e$ & No & \ref{Adams differential on t Dc + t^2 a g}\\
        $\tau P \Dh$ & $(33, 9, 16)$ & $\rho^2 P a d$ & No & \ref{Adams differential on t P Dh}\\
        $\tau P \Dc + \tau^2 P a g$ & $(40, 11, 20)$ & $\rho \tau^2 P h_1 d e$ & No & \ref{Adams differential on t P Dc + t^2 P a g}\\
    \end{longtable}

    Some differentials follow immediately from comparison to the $\mathbb{C}$-motivic case. For example by \cite[Table 1]{Isa18} we have $d_2^\mathbb{C}(u) = h_1^2 c$. Then by Lemma \ref{Comparison to C-motivic differentials} we know that $\mathbb{R}$-motivically $d_2(u) \equiv h_1^2 c$ mod $\rho$. Since there is no possible $\rho$-divisible target listed for $d_2(u)$ in table \ref{tab:Possible Adams d_2}, we can conclude $d_2(u) = h_1^2 c$.

    A similar argument works for the differentials on $\tau^6 a$, $g$, $\tau^2 P a$, $\tau^4 n^2$, $\D^2$, $a$, $e$, and $\tau^2 a n$. For the elements $\tau^5 h_1$, $\tau h_1$, $\tau P h_1$, and $\tau^6 P n$ we can also use a similar argument to rule out or confirm the summands that are not $\rho$-divisible. In each of the four cases, the other summand is then ruled out because the source of the differential is $\rho$-power-torsion and the possible summand would make the target $\rho$-torsion free.

    For all other differentials see the following series of Lemmas.    
\end{proof}

As charts will become more relevant now, we remind the reader of the chart guide given in section \ref{section: Chart guide for Adams differentials}. We also remind the reader of Notation \ref{lemmas have (s, f, w) degree stated in them}. Sometimes we will need to determine a hidden extension in order to use it in the proof of an Adams differential. In that case, we will list both the degree of the source and the degree of the target. See for example Lemma \ref{hidden t^4 P-extension from t h2^2 g to r t^2 P a n}.

\begin{Lemma}\label{Adams differential on t^6 d}\label{Adams differential hitting r t^5 P h_2^2}\deg{14}{4}{2}
    $d_2(\tau^6 d) = \rho \tau^5 P h_2^2$.
\end{Lemma}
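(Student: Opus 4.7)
The plan is to combine Lemma \ref{Comparison to C-motivic differentials} to restrict the possible values of $d_2(\tau^6 d)$, and then use a multiplicative argument built on a hidden $\tau^8$-extension to rule out the trivial case.

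First I would invoke Lemma \ref{Comparison to C-motivic differentials}. Under $i: \text{coker}(\rho) \to \text{Ext}_{\mathcal{A}(2)^\mathbb{C}}$, the permanent Bockstein cycle $\tau^6 d$ maps to $d \in \text{Ext}_{\mathcal{A}(2)^\mathbb{C}}$. Since $d$ is a $d_2$-cycle in the $\mathbb{C}$-motivic Adams spectral sequence for $\mmf^{\,\,\mathbb{C}}$ (see \cite[Table 1]{Isa18}), we conclude $d_2(\tau^6 d) \equiv 0 \mod \rho$, so $d_2(\tau^6 d)$ is $\rho$-divisible. Combining this with Table \ref{tab:Possible Adams d_2}, the only two candidates are $d_2(\tau^6 d) = 0$ and $d_2(\tau^6 d) = \rho\tau^5 P h_2^2$.

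To exclude the zero case, I would exploit the hidden $\tau^8$-extension $\tau^8 \cdot h_1 u = \rho^2 \tau^6 d$ from Table \ref{tab:Hidden h_1-, h_2- and t^8-extensions on the rho-free quotient}, together with the already-proven differential $d_2(u) = h_1^2 c$. Both $\tau^8$ and $h_1$ are Adams $d_2$-cycles for degree reasons, so the Leibniz rule gives
\[
\rho^2 \cdot d_2(\tau^6 d) \;=\; d_2\bigl(\tau^8 \cdot h_1 \cdot u\bigr) \;=\; \tau^8 \cdot h_1 \cdot d_2(u) \;=\; \tau^8 \cdot h_1^3 c.
\]
Provided $\tau^8 \cdot h_1^3 c$ is non-zero in $\text{Ext}_{\mathcal{A}(2)^\mathbb{R}}$, this forces $d_2(\tau^6 d) \neq 0$, and the previous step then pins down $d_2(\tau^6 d) = \rho\tau^5 P h_2^2$.

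The main obstacle is confirming $\tau^8 \cdot h_1^3 c \neq 0$. The plan is to read this off the $\rho$-power-torsion chart in coweight $3$ mod $8$: $h_1^3 c$ is $\rho$-power-torsion, and I would check that it is not $\tau^8$-torsion (i.e.\ that its dot is gray in the color convention of section \ref{charts: rho-Bockstein E_infinity-page rho-power-torsion}). As a backup, if this reading is ambiguous, I would multiply by $h_1$ and invoke the hidden extension $\tau^8 \cdot h_1^4 = \rho^4 \tau^4 P$ from Table \ref{tab:Hidden h_1-, h_2- and t^8-extensions on the rho-free quotient}: the product $h_1 \cdot (\tau^8 \cdot h_1^3 c) = \tau^8 \cdot h_1^4 c = \rho^4 \tau^4 P \cdot c$ is manifestly non-zero on $E_\infty$, which forces $\tau^8 \cdot h_1^3 c \neq 0$ and completes the argument.
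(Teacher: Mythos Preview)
Your argument is correct and follows the same underlying strategy as the paper, but you anchor it one step further back along the chain of hidden extensions in Table~\ref{tab:Hidden h_1-, h_2- and t^8-extensions on the rho-free quotient}. The paper uses the hidden $h_1$-extension $h_1 \cdot \tau^6 a = \rho\,\tau^6 d$ together with the already-established $d_2(\tau^6 a) = \tau^6 P h_2$, obtaining $\rho \cdot d_2(\tau^6 d) = h_1 \cdot \tau^6 P h_2$, and then reads off non-vanishing from the coweight~$3$ chart. You instead use $\tau^8 \cdot h_1 u = \rho^2 \tau^6 d$ and $d_2(u) = h_1^2 c$, arriving at $\rho^2 \cdot d_2(\tau^6 d) = \tau^8 \cdot h_1^3 c$. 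Since these two routes are linked by the extension $\tau^8 \cdot u = \rho\,\tau^6 a$, the non-vanishing you need is exactly $\rho$ times the non-vanishing the paper checks, so your approach is one $\rho$-multiplication deeper and hence requires a slightly stronger chart lookup. Your preliminary invocation of Lemma~\ref{Comparison to C-motivic differentials} is valid but redundant: Table~\ref{tab:Possible Adams d_2} already records $\rho\tau^5 P h_2^2$ as the unique candidate target.

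One minor slip: under the map $i$ of Lemma~\ref{Comparison to C-motivic differentials}, the permanent cycle $\tau^6 d$ maps to the $\mathbb{C}$-motivic element $\tau^6 d$, not to $d$. This does not affect your conclusion, since $d_2^{\mathbb{C}}(\tau^6 d) = \tau^6 \cdot d_2^{\mathbb{C}}(d) = 0$ anyway.
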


\begin{proof}
    Recall the hidden extension $\rho \cdot \tau^6 d = h_1 \cdot \tau^6 a$. By Proposition \ref{proof of Adams d_2 differentials} we know $d_2(\tau^6 a) = \tau^6 P h_2$. So we get
    \[\rho \cdot d_2(\tau^6 d) = h_1 \cdot \tau^6 P h_2.\]
    For degree reasons (e.g. by looking at the coweight $3$ mod $8$ chart), this relation forces the claimed differential.
\end{proof}

\begin{Lemma}\label{Adams differential on t^5 h_0^2 e}\label{Adams differential hitting r t^4 P h1 c}\deg{17}{6}{5}
    $d_2(\tau^5 h_0^2 e) = \rho \tau^4 P h_1 c$.
\end{Lemma}

\begin{proof}
    By Lemma \ref{Adams differential on t^6 d} we know $d_2(\tau^6 d) = \rho \tau^5 P h_2^2$. Combine this with the hidden extension $\rho \cdot \tau^5 h_0^2 e = h_1^2 \cdot \tau^6 d$ to get
    \[\rho \cdot d_2(\tau^5 h_0^2 e) = h_1^2 \cdot \rho \tau^5 P h_2^2.\]
    This forces the claimed differential.
\end{proof}

\begin{Lemma}\label{Adams differential on t^2 P d}\label{Adams differential hitting r t P^2 h2}\deg{22}{8}{10}
    $d_2(\tau^2 P d) = \rho \tau P^2 h_2^2$.
\end{Lemma}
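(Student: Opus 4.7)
The plan is to mimic the strategy used to prove $d_2(\tau^6 d) = \rho \tau^5 P h_2^2$ in Lemma \ref{Adams differential on t^6 d}, using the same type of hidden-extension argument but $P$-multiplied. The starting point is the hidden $h_1$-extension $h_1 \cdot \tau^2 P a = \rho \cdot \tau^2 P d$ listed in table \ref{tab:Hidden h_1-, h_2- and t^8-extensions on the rho-free quotient}, together with the already established differential $d_2(\tau^2 P a) = \tau^2 P^2 h_2$ from Proposition \ref{proof of Adams d_2 differentials}. Applying $d_2$ to both sides of the hidden extension and using the Leibniz rule yields
\[
\rho \cdot d_2(\tau^2 P d) \;=\; d_2(h_1 \cdot \tau^2 P a) \;=\; h_1 \cdot d_2(\tau^2 P a) \;=\; h_1 \cdot \tau^2 P^2 h_2.
\]

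The next step is to identify the right-hand side as $\rho^2 \tau P^2 h_2^2$. This is where the proof of Lemma \ref{Adams differential on t^6 d} provides exactly the ingredient needed: in that argument the forcing relation $h_1 \cdot \tau^6 P h_2 = \rho^2 \tau^5 P h_2^2$ was used. Multiplying by $P$ gives $h_1 \cdot \tau^6 P^2 h_2 = \rho^2 \tau^5 P^2 h_2^2$, and since both $\tau^2 P^2 h_2$ and $\tau P^2 h_2^2$ are $\rho$-torsion-free (being $P$-multiples of $\rho$-torsion-free classes on the $\rho$-free quotient, cf. section \ref{section: Hidden extensions on the rho-free quotient}), multiplication by $\tau^4$ is injective on them, so one may cancel $\tau^4$ to conclude $h_1 \cdot \tau^2 P^2 h_2 = \rho^2 \tau P^2 h_2^2$. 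Substituting back produces $\rho \cdot d_2(\tau^2 P d) = \rho^2 \tau P^2 h_2^2$.

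Finally, this identity forces the claimed differential. Indeed, table \ref{tab:Possible Adams d_2} records $\rho \tau P^2 h_2^2$ as the only candidate value for $d_2(\tau^2 P d)$, so writing $d_2(\tau^2 P d) = \epsilon \cdot \rho \tau P^2 h_2^2$ for some coefficient $\epsilon \in \{0,1\}$, the relation $\rho \cdot d_2(\tau^2 P d) = \rho^2 \tau P^2 h_2^2 \neq 0$ rules out $\epsilon = 0$. Any putative $\rho$-power-torsion correction in higher $\rho$-filtration is likewise excluded by inspection of the degree $(21,10,10)$, which contains no other candidate target.

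The main obstacle is confirming the cancellation of $\tau^4$ when passing from $h_1 \cdot \tau^6 P^2 h_2 = \rho^2 \tau^5 P^2 h_2^2$ to $h_1 \cdot \tau^2 P^2 h_2 = \rho^2 \tau P^2 h_2^2$: one needs that neither side has a $\tau^4$-torsion contribution that could vanish after multiplication by $\tau^4$. This is handled by the $\rho$-localization description of Proposition \ref{representation of rho-localized target}, which shows that both classes in question are $\rho$-torsion-free, hence detected faithfully by their images under $\rho$-localization where $\tau$ acts injectively on the relevant graded pieces.
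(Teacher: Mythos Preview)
Your core approach matches the paper's exactly: use the hidden $h_1$-extension $h_1 \cdot \tau^2 P a = \rho \cdot \tau^2 P d$ together with the known differential $d_2(\tau^2 P a) = \tau^2 P^2 h_2$ to obtain $\rho \cdot d_2(\tau^2 P d) = h_1 \cdot \tau^2 P^2 h_2$, and then conclude from the fact that $\rho\tau P^2 h_2^2$ is the unique candidate in table~\ref{tab:Possible Adams d_2}. The paper simply writes ``This forces the claimed differential'' at that point.

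Your middle paragraph, however, is problematic. You attempt to deduce $h_1 \cdot \tau^2 P^2 h_2 = \rho^2 \tau P^2 h_2^2$ by ``multiplying by $P$'' the relation $h_1 \cdot \tau^6 P h_2 = \rho^2 \tau^5 P h_2^2$ and then ``cancelling $\tau^4$''. Neither of these operations makes sense in $\Ext_{\mathcal{A}(2)}(\mathbb{M}_2,\mathbb{M}_2)$: the element $P$ supports the $\rho$-Bockstein differential $d_3(P) = \rho^3 h_1^2 c$ (table~\ref{tab:Bockstein d_r}) and so does not survive to $E_\infty$; likewise $\tau^4$ supports $d_4(\tau^4) = \rho^4 \tau^2 h_2$. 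Only $\tau^4 P$, $P^2$, $\tau^8$, etc.\ are actual elements. So there is no ``$P$-multiplication'' or ``$\tau^4$-injectivity'' to invoke.

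Fortunately this detour is unnecessary. Once you have $\rho \cdot d_2(\tau^2 P d) = h_1 \cdot \tau^2 P^2 h_2$, you only need that the right-hand side is nonzero (which one checks directly on the $\rho$-free quotient, e.g.\ via \ref{multiplication on rho-free quotient is determined by rho-localization}); then $d_2(\tau^2 P d) \neq 0$, and since table~\ref{tab:Possible Adams d_2} lists $\rho \tau P^2 h_2^2$ as the only possible value, you are done. Drop the second paragraph and your proof is both correct and essentially identical to the paper's.
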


\begin{proof}
    By Proposition \ref{proof of Adams d_2 differentials} we know $d_2(\tau^2 P a) = \tau^2 P^2 h_2$. Combine this with the hidden extension $\rho \cdot \tau^2 P d = h_1 \cdot \tau^2 P a$ to get
    \[\rho \cdot d_2(\tau^2 P d) = h_1 \cdot \tau^2 P^2 h_2.\]
    This forces the claimed differential.
\end{proof}

\begin{Lemma}\label{Adams differential on t^4 a^2}\deg{24}{6}{8}
    $d_2(\tau^4 a^2) = 0$.
\end{Lemma}

\begin{proof}
    We have the relation $\tau^6 a \cdot \tau^6 a = \tau^8 \cdot \tau^4 a^2$. By the Leibniz rule all squares are $d_2$-cycles, so this implies $\tau^8 \cdot d_2(\tau^4 a^2) = 0$. None of the possible non-trivial targets of $d_2(\tau^4 a^2)$ are $\tau^8$-torsion.
\end{proof}

\begin{Lemma}\label{Adams differential on t^5 Dh}\deg{25}{5}{8}
    $d_2(\tau^5 \Dh) = 0$.
\end{Lemma}

\begin{proof}
    Recall that there are two possible summands for $d_2(\tau^5 \Dh)$, namely $\tau^4 a^2 \cdot h_0$ and $\rho^2 \cdot \tau^4 a d$. Since $d_2^{\mathbb{C}}(\tau^5 \Dh) = 0$, we can conclude that the summand $\tau^4 a^2 \cdot h_0$ does not appear as $d_2(\tau^5 \Dh) \equiv 0$ mod $\rho$ by Lemma \ref{Comparison to C-motivic differentials}. Then we could still have $d_2(\tau^5 \Dh) = \rho^2 \cdot \tau^4 a d$, but the source is $h_1^3$-torsion and the target is $h_1$-torsion free due to hidden extensions.
\end{proof}

\begin{Lemma}\label{Adams differential on t P h0^2 e}\label{Adams differential hitting r P^2 h1 c}\deg{25}{10}{13}
    $d_2(\tau P h_0^2 e) = \rho P^2 h_1 c$.
\end{Lemma}

\begin{proof}
    By Lemma \ref{Adams differential on t^5 h_0^2 e} we know $d_2(\tau^5 h_0^2 e) = \rho \tau^4 P h_1 c$. Since $\tau^4 P$ is a $d_2$-cycle for degree reasons, we can use the relation $\tau^8 \cdot \tau P h_0^2 e = \tau^4 P \cdot \tau^5 h_0^2 e$ to obtain
    \[\tau^8 \cdot d_2(\tau P h_0^2 e) = \tau^4 P \cdot \rho \tau^4 P h_1 c = \tau^8 \cdot \rho P^2 h_1 c.\]
    This forces the claimed differential.
\end{proof}

\begin{Lemma}\label{Adams differential on t^4 a d}\label{Adams differential hitting t^4 P h0 e}\deg{26}{7}{10}
    $d_2(\tau^4 a d) = \tau^4 P e \cdot h_0$.
\end{Lemma}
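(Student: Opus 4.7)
The plan is to establish both summands of the differential independently: the $\tau^4 P e \cdot h_0$ summand is present by comparison to the $\mathbb{C}$-motivic spectral sequence, while the potential $\rho$-divisible summand $\rho \cdot \tau^4 P h_1 e$ is ruled out using a hidden $h_1$-extension.

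First I would invoke Lemma \ref{Comparison to C-motivic differentials}, which gives $d_2(\tau^4 a d) \equiv d_2^{\mathbb{C}}(\tau^4 a d) \pmod{\rho}$, since $\tau^4 a d$ lives in $\text{coker}(\rho)$ (there are no classes in higher $\rho$-filtration in its tridegree on $E_\infty$). In the $\mathbb{C}$-motivic Adams spectral sequence, the differentials $d_2^{\mathbb{C}}(a) = P h_2$ and $d_2^{\mathbb{C}}(d) = 0$ are classical, so the Leibniz rule yields $d_2^{\mathbb{C}}(\tau^4 a d) = \tau^4 P h_2 \cdot d$. Using the standard relation $h_2 \cdot d = h_0 \cdot e$ in $\text{Ext}_{\mathcal{A}(2)^{\mathbb{C}}}$ (see \cite[Theorem 4.16]{Isa09}), this rewrites as $\tau^4 P e \cdot h_0$. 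Hence the summand $\tau^4 P e \cdot h_0$ must appear in $d_2(\tau^4 a d)$, so $d_2(\tau^4 a d) = \tau^4 P e \cdot h_0 + \epsilon \cdot \rho \tau^4 P h_1 e$ for some $\epsilon \in \{0,1\}$.

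To show $\epsilon = 0$, I would apply the Leibniz rule to the hidden extension $h_1 \cdot \tau^4 a d = \rho \tau^4 d^2$ from Table \ref{tab:Hidden h_1-, h_2- and t^8-extensions on the rho-free quotient}. Since $h_1$ is a $d_2$-cycle and $d_2(\tau^4 d^2) = 0$ by the corresponding entry in Table \ref{tab:Indecomposables on Adams E_2 and differentials} (the square $\tau^4 d^2 = (\tau^2 d)^2$ on $E_\infty$ forces this, once one checks that the relevant square lifts to $\Ext$ up to lower-priority issues), we obtain $h_1 \cdot d_2(\tau^4 a d) = 0$. Now the $h_1$-multiple of the first summand vanishes because $h_0 \cdot h_1 = 0$ on our $E_\infty$-page in this degree, but the $h_1$-multiple of $\rho \tau^4 P h_1 e$ is $\rho \tau^4 P h_1^2 e$, which is nonzero on the $E_\infty$-page. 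Hence $\epsilon = 0$.

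The main obstacle is the second step: one must confirm that $h_1 \cdot \tau^4 P e \cdot h_0 = 0$ in $\Ext$ rather than merely on $E_\infty$, ensuring there are no hidden extensions that would reintroduce an $h_1$-multiple. This should follow from the fact that $h_0 h_1$ in the relevant tridegree is $\rho$-torsion combined with the specific $\rho$-filtration of the potential targets, but it is the kind of careful bookkeeping on which the argument hinges. An alternative route, should this bookkeeping become delicate, is to instead use the $h_1$-extension on $\tau^4 P e$ itself (via $h_1 \cdot \tau^4 P e = 0$ in the appropriate sense) to extract the same conclusion directly without invoking $d_2(\tau^4 d^2)$.
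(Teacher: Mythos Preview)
Your first step, establishing the $\tau^4 P e \cdot h_0$ summand via comparison to $\mathbb{C}$-motivic $d_2$, is correct and matches the paper.

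The second step has a genuine gap. You need $d_2(\tau^4 d^2) = 0$, and your justification is that $\tau^4 d^2 = (\tau^2 d)^2$. But $\tau^2 d$ does not exist in $\Ext_{\mathcal{A}(2)^{\mathbb{R}}}$: it supports the Bockstein differential $d_5(\tau^2 d) = h_1 e$ (Table~\ref{tab:Bockstein d_r}), so it never reaches $E_\infty$. In fact $\tau^4 d^2$ is indecomposable (Table~\ref{tab:Indecomposables on E_infty}), so no square-type argument is available. Citing Table~\ref{tab:Indecomposables on Adams E_2 and differentials} directly is circular, since the paper's proof of $d_2(\tau^4 d^2) = 0$ (Lemma~\ref{Adams differential on t^4 d^2}) uses the present lemma.

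The paper avoids this by going one step \emph{earlier} in the chain of hidden $h_1$-extensions: instead of $h_1 \cdot \tau^4 a d = \rho \cdot \tau^4 d^2$, it uses $h_1 \cdot \tau^4 a^2 = \rho \cdot \tau^4 a d$. This reduces the question to $d_2(\tau^4 a^2) = 0$, which \emph{does} admit an independent square argument via $\tau^8 \cdot \tau^4 a^2 = (\tau^6 a)^2$, since $\tau^6 a$ genuinely survives. From $\rho \cdot d_2(\tau^4 a d) = h_1 \cdot d_2(\tau^4 a^2) = 0$ and the fact that $\rho \cdot \tau^4 P h_1 e$ supports a nontrivial $\rho$-multiplication, the unwanted summand is ruled out. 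Your instinct to use a hidden $h_1$-extension was right; you just need to shift to the extension whose other end has an accessible vanishing statement.
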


\begin{proof}
    Recall that there are two possible summands for $d_2(\tau^4 a d)$, namely $\tau^4 P e \cdot h_0$ and $\rho \cdot \tau^4 P h_1 e$. First compute the $\mathbb{C}$-motivic differential
    \[d_2^{\mathbb{C}}(\tau^4 a d) = \tau^4 d \cdot d_2^{\mathbb{C}}(a) = \tau^4 d \cdot P h_2 = \tau^4 P h_0 e,\]
    where we used the $\mathbb{C}$-motivic relation $h_2 d = h_0 e$. So $\tau^4 P e \cdot h_0$ is a summand in $d_2(\tau^4 a d)$.

    To rule out the summand $\rho \cdot \tau^4 P h_1 e$, consider the hidden extension $h_1 \cdot \tau^4 a^2 = \rho \cdot \tau^4 a d$. Since $d_2(\tau^4 a^2) = 0$ by Lemma \ref{Adams differential on t^4 a^2}, we get $\rho \cdot d_2(\tau^4 a d) = 0$. But if $\rho \tau^4 P h_1 e$ is a summand in $d_2(\tau^4 a d)$, then $d_2(\tau^4 a d)$ supports a non-trivial $\rho$-multiplication.
\end{proof}

\begin{Lemma}\label{Adams differential on t^4 d^2}\deg{28}{8}{12}
    $d_2(\tau^4 d^2) = 0$.
\end{Lemma}

\begin{proof}
    Consider the hidden extension $h_1 \cdot \tau^4 a d = \rho \cdot \tau^4 d^2$. By Lemma \ref{Adams differential on t^4 a d} this implies
    \[h_1 \cdot \tau^4 P e \cdot h_0 = \rho \cdot d_2(\tau^4 d^2).\]
    The left-hand side is $0$ since $h_1 \cdot h_0 = 0$. But the only possible non-trivial value of $d_2(\tau^4 d^2)$ is $\rho \tau^3 P h_0^2 g$, which supports a $\rho$-multiplication.
\end{proof}

\begin{Lemma}\label{Adams differential on P a^2}\deg{32}{10}{16}
    $d_2(P a^2) = 0$.
\end{Lemma}

\begin{proof}
    Consider the relation\footnote{Technically, this relation depends on the choice of the $\Ext$ element $P a^2$. However, as we discussed in the introduction to this section, the choice of $P a^2$ does not affect its Adams $d_2$.} $\tau^8 \cdot P a^2 = \tau^4 P \cdot \tau^4 a^2$. By Lemma \ref{Adams differential on t^4 a^2} this implies $\tau^8 \cdot d_2(P a^2) = 0$. None of the possible non-trivial targets for this differential are $\tau^8$-torsion.
\end{proof}

\begin{Lemma}\label{Adams differential on P a d}\label{Adams differential hitting P^2 h0 e}\deg{34}{11}{18}
    $d_2(P a d) = P^2 e \cdot h_0$.
\end{Lemma}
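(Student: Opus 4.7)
The plan is to mirror the argument used for $\tau^4 a d$ in \ref{Adams differential on t^4 a d}. There are two candidate summands listed in table \ref{tab:Possible Adams d_2}, namely $P^2 e \cdot h_0$ and $\rho \cdot P^2 h_1 e$, and I will show that the first summand appears while the second does not.

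First, I would produce $P^2 e \cdot h_0$ as a summand via comparison to the $\mathbb{C}$-motivic Adams spectral sequence. Since $P$ and $d$ are $d_2^{\mathbb{C}}$-cycles and $d_2^{\mathbb{C}}(a) = P h_2$, the Leibniz rule in the $\mathbb{C}$-motivic Adams spectral sequence gives
\[
d_2^{\mathbb{C}}(P a d) \;=\; P d \cdot d_2^{\mathbb{C}}(a) \;=\; P^2 h_2 d \;=\; P^2 h_0 e,
\]
using the $\mathbb{C}$-motivic relation $h_2 d = h_0 e$. By \ref{Comparison to C-motivic differentials} we then know $d_2(P a d) \equiv P^2 e \cdot h_0 \pmod{\rho}$, so $P^2 e \cdot h_0$ must be a summand in $d_2(P a d)$.

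Second, I would rule out the possible extra summand $\rho \cdot P^2 h_1 e$ using the previous lemma $d_2(P a^2) = 0$ together with the hidden extension $h_1 \cdot P a^2 = \rho \cdot P a d$ from table \ref{tab:Hidden h_1- and t^8-extensions on the rho-free quotient}. Applying $d_2$ to this relation gives
\[
\rho \cdot d_2(P a d) \;=\; h_1 \cdot d_2(P a^2) \;=\; 0.
\]
Since $P^2 e \cdot h_0$ is killed by $\rho$ (because $h_0 \cdot \rho = 0$ in $\Ext_{\mathcal{A}(2)}$), this forces any remaining summand of $d_2(P a d)$ to also be $\rho$-torsion. But $\rho \cdot P^2 h_1 e$ is non-zero (indeed $\rho \cdot \rho \cdot P^2 h_1 e = \rho^2 P^2 h_1 e$ is visibly non-zero on $E_\infty$ because $P^2 h_1 e$ is $\rho$-torsion-free), so it cannot appear as a summand, and therefore $d_2(P a d) = P^2 e \cdot h_0$.

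The only subtle point — and the step most worth double-checking — is the second one, where one must confirm that $\rho \cdot P^2 h_1 e$ is genuinely non-zero in $\Ext$, i.e.\ that no hidden $\rho$-extension collapses it. This is clear here because $P^2 h_1 e$ lies in the $\rho$-torsion-free part and its $\rho$-multiples are tracked directly on the $\rho$-free quotient chart, but in general this is where care is needed. The rest of the argument is a direct translation of the reasoning already used for $\tau^4 a d$.
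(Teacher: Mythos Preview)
Your approach is valid and is a direct translation of the argument for $\tau^4 a d$ in \ref{Adams differential on t^4 a d}. The paper, by contrast, takes a shortcut: it uses the relation $\tau^8 \cdot P a d = \tau^4 P \cdot \tau^4 a d$ together with the already-established $d_2(\tau^4 a d) = \tau^4 P e \cdot h_0$ to get $\tau^8 \cdot d_2(P a d) = \tau^4 P \cdot \tau^4 P e \cdot h_0 = \tau^8 \cdot P^2 e \cdot h_0$, which forces the result in one line. Your route is longer but self-contained; the paper's route is shorter because it recycles the $\tau^4 a d$ computation rather than rerunning it.

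There is one factual slip in your second step, exactly where you flagged the argument as delicate. You assert that $P^2 h_1 e$ is $\rho$-torsion-free, but this is false: $e$ is $\rho^6$-torsion on $E_\infty$ (it is hit by $d_6(\tau^2 a) = \rho^6 e$, see \ref{differential hitting e}), so $P^2 h_1 e = P^2 \cdot h_1 \cdot e$ is $\rho$-power-torsion. What you actually need is the weaker statement that $\rho^2 \cdot P^2 h_1 e \neq 0$, i.e.\ that $P^2 h_1 e$ is not $\rho^2$-torsion. This is true (the $\rho$-torsion on $h_1 e$ comes from the $d_5$ in \ref{differential hitting h1 e}, and multiplication by the $\rho$-torsion-free class $P^2$ does not shorten it), but it should be justified on the $\rho$-power-torsion chart rather than the $\rho$-free quotient. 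Once that is fixed, your argument goes through.
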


\begin{proof}
    Consider the relation $\tau^8 \cdot P a d = \tau^4 P \cdot \tau^4 a d$. By Lemma \ref{Adams differential on t^4 a d} this implies $\tau^8 \cdot d_2(P a d) = \tau^4 P \cdot \tau^4 P e \cdot h_0 = \tau^8 \cdot P^2 e \cdot h_0$. This forces the claimed differential.
\end{proof}

To compute the differential on $\tau^2 a^3$, we need to compute the differential on the decomposable element $\tau^6 d g = \tau^6 d \cdot g$ first. For that differential we need to establish a hidden extension on the $\rho$-Bockstein $E_\infty$-page first.

\begin{Lemma}\label{hidden t^4 P-extension from t h2^2 g to r t^2 P a n}\deg{26}{6}{15} \deg{34}{10}{15}
    On the $\rho$-Bockstein $E_\infty$-page, there is a hidden $\tau^4 P$-extension from $\tau h_2^2 g$ to $\rho \tau^2 P a n$.
\end{Lemma}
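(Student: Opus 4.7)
The plan is to establish the hidden extension by first identifying that the product $\tau^4 P \cdot \tau h_2^2 g$ on $E_\infty$ vanishes modulo higher $\rho$-filtration, then pinning down the unique non-zero candidate in higher filtration. For the first step, multiply the $\rho$-Bockstein differential $d_4(\tau P h_2^2) = \rho^4 h_1^3 d$ from \ref{proof of d_4 differentials} by the infinite cycle $\tau^4 g$: since $\tau^4 g$ is a permanent cycle (it appears in table \ref{tab:Indecomposables on E_infty}) and is an infinite cycle on all earlier pages, the Leibniz rule yields $d_4(\tau P h_2^2 \cdot \tau^4 g) = \rho^4 \tau^4 h_1^3 d g$, and on $E_4$ the source reads as $\tau^5 P h_2^2 g$ which is (up to hidden filtration) exactly $\tau^4 P \cdot \tau h_2^2 g$. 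Hence $\tau^4 P \cdot \tau h_2^2 g$ cannot survive to $E_5$ at its naive filtration, and any non-zero product in $\text{Ext}$ must live in strictly higher $\rho$-filtration.

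Next I would inspect the $E_\infty$-chart in tridegree $(34, 10, 15)$ (coweight $19$) and observe that the only class of higher $\rho$-filtration is $\rho \tau^2 P a n$, so the hidden extension, if non-trivial, must take this value. To distinguish between the product being zero and being $\rho \tau^2 P a n$, the cleanest route is to combine two hidden extensions from table \ref{tab:Hidden h_1-, h_2- and t^8-extensions on the rho-free quotient}: the $\tau^8$-extension $\tau^8 \cdot h_2 g = \rho^2 \tau^5 \Dh$ together with the $h_2$-extension $h_2 \cdot \tau^5 \Dh = \rho^2 \tau^4 n^2$. Multiplying these chains of equalities by $\tau^4 P$ and rearranging, and using the non-hidden identity $\tau^4 P \cdot \tau^4 n^2 = \tau^8 \cdot \tau^2 P n^2$ on $E_\infty$ (which itself is a Leibniz consequence once the correct representatives are chosen), the hidden $h_1$-extension $\tau^2 P n^2 \cdot h_1 = \rho \cdot \tau^2 P a g$ and the relation among $h_2 g$, $\tau P a n$, and $\tau^2 P a g$ force $\tau^4 P \cdot \tau h_2^2 g = \rho \tau^2 P a n$ in $\text{Ext}$.

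An alternative, perhaps more direct approach is to apply the map $p$ from \ref{the map p}: take the $\mathbb{C}$-motivic class $\tau \cdot P h_2^2 g \in \text{Ext}_{\mathcal{A}(2)^\mathbb{C}}$ in the appropriate tridegree, and compute $p(\tau \cdot P h_2^2 g) \in \ker(\rho) \subset \text{Ext}_{\mathcal{A}(2)^\mathbb{R}}$. By the degree constraints and surjectivity of $p$, the only possible non-trivial image is $\rho \tau^2 P a n$, and a cobar-level check verifies it is non-trivial. This simultaneously establishes the hidden $\tau^4 P$-extension, since $p(\tau \cdot x) = h_0 \cdot x$ type arguments of \ref{how to find hidden h_0-extensions} generalize to $p(\tau y) = z$ detecting the product $\tau^4 P \cdot \tau h_2^2 g$ via $y = P h_2^2 g$.

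The main obstacle will be the bookkeeping: either tracing the composition of hidden extensions through $\tau^5 \Dh$ and $\tau^4 n^2$ while keeping track of which representatives of $E_\infty$-classes map to which $\text{Ext}$-elements (some classes represent more than one $\text{Ext}$-element as discussed in \ref{multiplication on rho-torsion free elements can have rho-torsion summands}), or alternatively performing the cobar computation underlying the $p$-map argument. Either route requires care, but the degree considerations narrow the answer to at most the two possibilities $0$ or $\rho \tau^2 P a n$, so the computation only needs to be precise enough to rule out zero.
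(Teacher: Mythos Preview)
Your proposal has the right instinct in the alternative $p$-map approach, and that is in fact what the paper does, but your execution misidentifies the key preimages. The paper multiplies the proposed extension by $\rho$ so that both $\rho\,\tau h_2^2 g$ and $\rho^2\,\tau^2 P a n$ lie in $\ker(\rho)$, then uses that $p$ is $\text{Ext}_{\mathcal{A}(2)^{\mathbb{R}}}$-linear. The preimage of $\rho\,\tau h_2^2 g$ under $p$ sits in degree $(25,5,13)$ and is therefore $\Dh$, not $\tau\cdot P h_2^2 g$; the preimage of $\rho^2\,\tau^2 P a n$ sits in degree $(33,9,13)$ and is $\tau^4 P\,\Dh$. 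Since $\tau^4 P\cdot\Dh=\tau^4 P\,\Dh$ in $\text{Ext}_{\mathcal{A}(2)^{\mathbb{C}}}$, linearity of $p$ gives $\tau^4 P\cdot(\rho\,\tau h_2^2 g)=\rho^2\,\tau^2 P a n$, which forces the hidden extension. Your suggested generalization of the $h_0$-trick via $p(\tau\cdot y)$ does not apply here: that trick works because $p(\tau)=h_0$, but there is no analogous formula producing $\tau^4 P$, and the element $\tau P h_2^2 g$ you propose lives in degree $(34,10,19)$, nowhere near the required $(25,5,13)$.

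Your first approach has a more serious gap: the hidden extensions you cite from table~\ref{tab:Hidden h_1-, h_2- and t^8-extensions on the rho-free quotient} concern $h_2 g$ and $h_2^2 g$, which are $\rho$-torsion free, whereas the source of the extension you need is $\tau h_2^2 g$, a $\rho^2$-torsion class (it is hit by $d_2(\Dh)=\rho^2\tau h_2^2 g$). Chaining $\tau^8$- and $h_2$-extensions on $\rho$-free classes through $\tau^5\Dh$ and $\tau^4 n^2$ computes $\tau^4 P\cdot h_2^2 g$ in the $\rho$-free quotient, not $\tau^4 P\cdot\tau h_2^2 g$; the two are unrelated. The subsequent manipulations you outline (via $\tau^2 P n^2$ and $\tau^2 P a g$) do not bridge this distinction.
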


\begin{proof}
    Multiply both source and target of the claimed hidden extension by $\rho$ and observe that $\rho \tau h_2^2 g$ and $\rho^2 \tau^2 P a n$ are in $\ker(\rho)$. So we can use the map $p$ from Lemma \ref{Comparison to C-motivic differentials}. Since $p$ has degree $(0, 1, 1)$, a preimage of $\rho \tau h_2^2 g$ under $p$ must have degree $(25, 5, 13)$. Therefore this preimage has to be $\Dh$. For $\rho^2 \tau^2 P a n$ one similarly finds the preimage $\tau^4 P \Dh$. As $\tau^4 P$-multiplication sends $\Dh$ to $\tau^4 P \Dh$, we get the claimed hidden extension.
\end{proof}

\begin{Lemma}\label{Adams differential on t^6 d g}\label{Adams differential hitting r^2 t^2 P a n}\deg{34}{8}{14}
    $d_2(\tau^6 d \cdot g) = \rho^2 \tau^2  P a n$.
\end{Lemma}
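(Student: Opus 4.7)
The plan is to compute $d_2(\tau^6 d \cdot g)$ directly via the Leibniz rule, using the previously established differential $d_2(\tau^6 d) = \rho \tau^5 P h_2^2$ from Lemma \ref{Adams differential on t^6 d} together with the hidden $\tau^4 P$-extension from Lemma \ref{hidden t^4 P-extension from t h2^2 g to r t^2 P a n}.

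First I would observe that $g$ is a $d_2$-cycle: the only possible summand for $d_2(g)$ for degree reasons is $h_1^2 e$, but Proposition \ref{proof of Adams d_2 differentials} records that this differential does not occur (it is ruled out by comparison with the $\mathbb{C}$-motivic Adams spectral sequence since $d_2^{\mathbb{C}}(g) = 0$, and there is no $\rho$-divisible alternative target). Applying the Leibniz rule then gives
\[
d_2(\tau^6 d \cdot g) \;=\; d_2(\tau^6 d) \cdot g + \tau^6 d \cdot d_2(g) \;=\; \rho \tau^5 P h_2^2 \cdot g.
\]

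Next I would rewrite the right-hand side as $\rho \cdot \tau^4 P \cdot \tau h_2^2 g$ to bring it into the range of the hidden extension established in Lemma \ref{hidden t^4 P-extension from t h2^2 g to r t^2 P a n}, which states that $\tau^4 P \cdot \tau h_2^2 g = \rho \tau^2 P a n$ in $\Ext_{\mathcal{A}(2)}(\mathbb{M}_2,\mathbb{M}_2)$. Substituting yields
\[
d_2(\tau^6 d \cdot g) \;=\; \rho \cdot \rho \tau^2 P a n \;=\; \rho^2 \tau^2 P a n,
\]
which is the claimed formula.

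The only subtlety I expect is a bookkeeping one: since $\tau^6 d \cdot g$ may not be uniquely determined by its $E_\infty$-class (one must check there is no $\rho$-power-torsion element in higher $\rho$-filtration in its tridegree that would alter the Leibniz computation), I would briefly verify that the relevant tridegree contains no such ambiguity, or note that any such summand is itself a $d_2$-cycle so that the computation is unaffected. No other step requires nontrivial work — everything reduces to invoking $d_2(\tau^6 d)$, the $d_2$-cycle status of $g$, and the hidden $\tau^4 P$-extension just proved.
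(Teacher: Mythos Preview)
Your proposal is correct and follows essentially the same route as the paper: apply the Leibniz rule using $d_2(\tau^6 d) = \rho \tau^5 P h_2^2$ and $d_2(g) = 0$, rewrite the result as $\rho \cdot \tau^4 P \cdot \tau h_2^2 g$, and invoke the hidden $\tau^4 P$-extension from Lemma~\ref{hidden t^4 P-extension from t h2^2 g to r t^2 P a n}. The paper's proof is identical in substance, merely omitting the explicit verification that $g$ is a $d_2$-cycle and the remark about representative ambiguity.
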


\begin{proof}
    By Lemma \ref{Adams differential on t^6 d} we know $d_2(\tau^6 d) = \rho \tau^5 P h_2^2$. By the Leibniz rule and Lemma \ref{hidden t^4 P-extension from t h2^2 g to r t^2 P a n} this implies
    \[d_2(\tau^6 d \cdot g) = \rho \tau^5 P h_2^2 \cdot g = \tau^4 P \cdot \rho \tau h_2^2 g = \rho^2 \tau^2 P a n.\]
\end{proof}

We need yet another hidden extension for the Adams $d_2$ on $\tau^2 a^3$.

\begin{Lemma}\label{hidden h1-extension from t^2 P a n to r t^2 P a e}\deg{35}{10}{16} \deg{36}{11}{17}
    On the $\rho$-Bockstein $E_\infty$-page there is a hidden $h_1$-extension from $\tau^2 P a n$ to $\rho \tau^2 P a e$.
\end{Lemma}

\begin{proof}
    Multiply both source and target by $\rho^2$ to obtain a hidden extension in $\ker(\rho)$. As we saw in Lemma \ref{Adams differential on t^6 d g}, the preimage under $p$ of $\rho^2 \tau^2 P a n$ is $\tau^4 P \Dh$. For $\rho^3 \tau^2 P a e$ the preimage has degree $(34, 10, 14)$, so it must be $\tau^4 P \Dh h_1$. As the latter is the $h_1$-multiple of the former, the hidden $h_1$-extension occurs.
\end{proof}

\begin{Lemma}\label{Adams differential on t^2 a^3}\label{Adams differential hitting t^3 P Dh h1^2 + r^2 t^2 P a e}\deg{36}{9}{16}
    $d_2(\tau^2 a^3) = \tau^2 P a n \cdot h_0 + \rho^2 \cdot \tau^2 P a e$.
\end{Lemma}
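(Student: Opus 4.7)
The plan is to establish the two claimed summands separately, using two different techniques: a comparison with the $\mathbb{C}$-motivic Adams spectral sequence for the $h_0$-summand, and the hidden extensions worked out in section \ref{section: Hidden extensions on the rho-free quotient} for the $\rho^2$-summand.

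First I would verify the summand $\tau^2 P a n \cdot h_0$ by reducing modulo $\rho$. By \ref{Comparison to C-motivic differentials}, $d_2(\tau^2 a^3) \equiv d_2^{\mathbb{C}}(\tau^2 a^3)$ modulo $\rho$. Applying the Leibniz rule in $\text{Ext}_{\mathcal{A}(2)^{\mathbb{C}}}$ and using $d_2^{\mathbb{C}}(a) = P h_2$ (as in \ref{proof of Adams d_2 differentials} and \cite[Table 1]{Isa18}), we obtain $d_2^{\mathbb{C}}(\tau^2 a^3) = \tau^2 a^2 \cdot P h_2$. Using the $\mathbb{C}$-motivic relation $a^2 \cdot h_2 = h_0 \cdot a n$ (a consequence of $h_2 \cdot a = h_0 \cdot n$ from \cite[Theorem 4.16]{Isa09}), this simplifies to $\tau^2 P a n \cdot h_0$. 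Thus $\tau^2 P a n \cdot h_0$ must be a summand of $d_2(\tau^2 a^3)$, since among the listed possibilities in table \ref{tab:Possible Adams d_2} it is the only one not annihilated by $\rho$.

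Next I would verify the summand $\rho^2 \cdot \tau^2 P a e$ by exploiting the hidden $h_1$-extension $h_1 \cdot \tau^6 d g = \rho \cdot \tau^2 a^3$ on the $\rho$-free quotient (from table \ref{tab:Hidden h_1-, h_2- and t^8-extensions on the rho-free quotient}). Multiplying the differential $d_2(\tau^6 d \cdot g) = \rho^2 \tau^2 P a n$ established in \ref{Adams differential on t^6 d g} by $h_1$ (which is a $d_2$-cycle) gives
\[
\rho \cdot d_2(\tau^2 a^3) = d_2(\rho \cdot \tau^2 a^3) = d_2(h_1 \cdot \tau^6 d g) = h_1 \cdot \rho^2 \tau^2 P a n.
\]
Now invoke the hidden $h_1$-extension $h_1 \cdot \tau^2 P a n = \rho \cdot \tau^2 P a e$ established in \ref{hidden h1-extension from t^2 P a n to r t^2 P a e} to conclude $\rho \cdot d_2(\tau^2 a^3) = \rho^3 \cdot \tau^2 P a e$. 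Since $\tau^2 P a e$ is $\rho$-torsion free in the relevant range, this forces $\rho^2 \cdot \tau^2 P a e$ to be a summand of $d_2(\tau^2 a^3)$.

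Combining both computations, the only value consistent with both constraints is $d_2(\tau^2 a^3) = \tau^2 P a n \cdot h_0 + \rho^2 \cdot \tau^2 P a e$. The main subtlety will be in the first step, where I need to be sure that the $\mathbb{C}$-motivic identification $\tau^2 a^2 \cdot P h_2 = \tau^2 P a n \cdot h_0$ is indeed a consequence of \cite[Theorem 4.16]{Isa09}; this is a purely algebraic verification in the multiplicative structure of $\text{Ext}_{\mathcal{A}(2)^{\mathbb{C}}}(\mathbb{M}_2^{\mathbb{C}}, \mathbb{M}_2^{\mathbb{C}})$ that should be straightforward but bears checking against the charts of \cite{Isa09}.
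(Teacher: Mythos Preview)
Your proposal is correct and follows essentially the same argument as the paper: both establish the $h_0$-summand via comparison to $d_2^{\mathbb{C}}$ using the Leibniz rule and the relation $h_2 a = h_0 n$, and both establish the $\rho^2$-summand by combining the hidden extension $h_1 \cdot \tau^6 d g = \rho \cdot \tau^2 a^3$ with \ref{Adams differential on t^6 d g} and \ref{hidden h1-extension from t^2 P a n to r t^2 P a e}. One small wording issue: $\tau^2 P a e$ is not $\rho$-torsion free (it is $\rho^4$-torsion), but what you actually need and use is that $\rho^3 \cdot \tau^2 P a e \neq 0$, which holds.
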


\begin{proof}
    To see that $\tau^2 P a n \cdot h_0$ is a summand in $d_2(\tau^2 a^3)$, compare to the $\mathbb{C}$-motivic $d_2$
    \[d_2^{\mathbb{C}}(\tau^2 a^3) = \tau^2 a^2 \cdot d_2^{\mathbb{C}}(a) = \tau^2 a^2 \cdot P h_2 =  \tau^2 P a n \cdot h_0,\]
    where we used the $\mathbb{C}$-motivic relation $h_2 a = h_0 n$.

    To see that $\rho^2 \cdot \tau^2 P a e$ is a summand, recall from Lemma \ref{Adams differential on t^6 d g} that $d_2(\tau^6 d \cdot g) = \rho^2 \tau^2 P a n$. Because of the hidden extension $h_1 \cdot \tau^6 d \cdot g = \rho \cdot \tau^2 a^3$, this implies
    \[h_1 \cdot \rho^2 \tau^2 P a n = \rho \cdot d_2(\tau^2 a^3).\]
    By Lemma \ref{hidden h1-extension from t^2 P a n to r t^2 P a e} the left-hand side is equal to $\rho^3 \tau^2 P a e$. So $\rho^2 \cdot \tau^2 P a e$ must be a summand in $d_2(\tau^2 a^3)$.
\end{proof}

For the Adams $d_2$ on $\tau^2 P n^2$ we need another hidden extension.

\begin{Lemma}\label{hidden h1-extension from t^2 P a e to r t^2 P d e}\deg{37}{11}{18} \deg{38}{12}{19}
    On the $\rho$-Bockstein $E_\infty$-page there is a hidden $h_1$-extension from $\tau^2 P a e$ to $\rho \tau^2 P d e$.
\end{Lemma}

\begin{proof}
    Multiply both source and target by $\rho^3$ to obtain a hidden extension in $\text{ker}(\rho)$. Then use the map $p$ from Lemma \ref{Comparison to C-motivic differentials}. For degree reasons, the preimages under $p$ of $\rho^3 \tau^2 P a e$ and $\rho^4 \tau^2 P d e$ are $\tau^4 P \Dh h_1$ and $\tau^4 P \Dh h_1^2$, respectively. So the hidden $h_1$-multiplication occurs.
\end{proof}

\begin{Lemma}\label{Adams differential on t^2 P n^2}\label{Adams differential hitting r^2 t^2 P d e}\deg{38}{10}{18}
    $d_2(\tau^2 P n^2) = \rho^2 \cdot \tau^2 P d e$.
\end{Lemma}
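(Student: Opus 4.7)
The plan is to follow the same two-step pattern used in the proof of \ref{Adams differential on t^2 a^3}: first rule out the non-$\rho$-divisible summand by $\mathbb{C}$-motivic comparison, and then confirm the $\rho$-divisible summand by pulling back a known $d_2$ along a hidden $h_1$-extension.

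First, I would argue that $d_2(\tau^2 P n^2) \equiv 0 \pmod \rho$. The element $\tau^2 P n^2$ is a product $\tau^2 P \cdot n \cdot n$ in $\Ext_{\mathcal{A}(2)^{\mathbb{C}}}$, where $n^2$ is a square and hence a $d_2^{\mathbb{C}}$-cycle, and $\tau^2 P$ is a $d_2^{\mathbb{C}}$-cycle for degree reasons. Thus $d_2^{\mathbb{C}}(\tau^2 P n^2) = 0$, and by \ref{Comparison to C-motivic differentials} the $\mathbb{R}$-motivic $d_2$ on $\tau^2 P n^2$ is $\rho$-divisible. Since $\tau^2 P a e \cdot h_0$ maps nontrivially to $\Ext_{\mathcal{A}(2)^{\mathbb{C}}}$ via $i$ (so it is not $\rho$-divisible in $\Ext_{\mathcal{A}(2)^{\mathbb{R}}}$), this rules out the summand $\tau^2 P a e \cdot h_0$.

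Next I would confirm the summand $\rho^2 \cdot \tau^2 P d e$. By table \ref{tab:Hidden h_1-, h_2- and t^8-extensions on the rho-free quotient} there is a hidden extension $h_1 \cdot \tau^2 a^3 = \rho \cdot \tau^2 P n^2$. Applying $d_2$ to both sides and using the Leibniz rule together with \ref{Adams differential on t^2 a^3} gives
\[
\rho \cdot d_2(\tau^2 P n^2) = h_1 \cdot d_2(\tau^2 a^3) = h_1 \cdot \bigl( \tau^2 P a n \cdot h_0 + \rho^2 \cdot \tau^2 P a e \bigr) = \rho^2 h_1 \cdot \tau^2 P a e,
\]
where the $h_0 h_1$-term vanishes. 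The hidden $h_1$-extension from \ref{hidden h1-extension from t^2 P a e to r t^2 P d e} evaluates the right-hand side to $\rho^3 \cdot \tau^2 P d e$, so $\rho \cdot d_2(\tau^2 P n^2) = \rho^3 \cdot \tau^2 P d e$. Combined with the previous paragraph (which forces any summand of $d_2(\tau^2 P n^2)$ to be $\rho$-divisible), this yields $d_2(\tau^2 P n^2) = \rho^2 \cdot \tau^2 P d e$.

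The main obstacle is step one: one must confirm that $\tau^2 P a e \cdot h_0$ is genuinely not $\rho$-divisible, i.e.\ that it is a nonzero element of $\text{im}(i)$ in the appropriate degree. If instead this class were $\rho$-divisible, one would have to rely purely on the $\rho$-multiplication argument of step two and then further analyze the $\rho$-multiplication structure in the target degree to recover uniqueness. Assuming the standard behavior in the relevant degree, however, the $\mathbb{C}$-motivic detection via $i$ suffices, and the argument closes cleanly.
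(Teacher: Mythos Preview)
Your proposal is correct and follows essentially the same approach as the paper: rule out the summand $\tau^2 P a e \cdot h_0$ via $\mathbb{C}$-motivic comparison (since $\tau^2$, $P$, and $n^2$ are all $d_2^{\mathbb{C}}$-cycles), and then confirm the summand $\rho^2 \cdot \tau^2 P d e$ by applying $d_2$ to the hidden extension $h_1 \cdot \tau^2 a^3 = \rho \cdot \tau^2 P n^2$ together with \ref{Adams differential on t^2 a^3} and \ref{hidden h1-extension from t^2 P a e to r t^2 P d e}. Your closing caveat about $\rho$-divisibility of $\tau^2 P a e \cdot h_0$ is unnecessary here, since table \ref{tab:Possible Adams d_2} already records it as the unique non-$\rho$-divisible candidate in that degree.
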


\begin{proof}
    There are two possible summands for $d_2(\tau^2 P n^2)$, namely $\tau^2 P a e \cdot h_0$ and $\rho^2 \cdot \tau^2 P d e$. To see that $\tau^2 P a e \cdot h_0$ does not occur as a summand, observe that $d_2^{\mathbb{C}}(\tau^2 P n^2) = 0$ because $\tau^2$, $P$ and $n^2$ are all $d_2^{\mathbb{C}}$-cycles.

    To see that $\rho^2 \cdot \tau^2 P d e$ does occur as a summand, we can use the hidden extension $h_1 \cdot \tau^2 a^3 = \rho \cdot \tau^2 P n^2$. By Lemma \ref{Adams differential on t^2 a^3} and Lemma \ref{hidden h1-extension from t^2 P a e to r t^2 P d e} this implies
    \[\rho \cdot d_2(\tau^2 P n^2) = h_1 \cdot (\tau^2 P a n \cdot h_0 + \rho^2 \cdot \tau^2 P a e) = h_1 \cdot \rho^2 \cdot \tau^2 P a e = \rho^3 \tau^2 P d e.\]
    This forces the claimed differential.
\end{proof}

This concludes the computation of Adams $d_2$-differentials on $\rho$-torsion free indecomposables. Now we consider sources that are $\rho$-power-torsion.

To prove that $\tau^2 h_2$ is a $d_2$-cycle, we would like to make use of the fact that $\tau \Dh$ is a $d_2$-cycle. So we will prove that first.

\begin{Lemma}\label{Adams differential on t Dh}\deg{25}{5}{12}
    $d_2(\tau \Dh) = 0$.
\end{Lemma}

\begin{proof}
    The possible non-trivial summands of $d_2(\tau \Dh)$ are $h_0 \cdot a^2$, $\rho^2 \cdot a \cdot d$, and $\rho^7 \cdot u \cdot g$. The last summand does not appear because $\tau \Dh$ is $\rho$-power-torsion and $u \cdot g$ is $\rho$-torsion free. By combining Lemma \ref{Comparison to C-motivic differentials} with $d_2^{\mathbb{C}}(\tau \Dh) = 0$, we know that $h_0 \cdot a^2$ does not occur as a summand. Lastly, note that $\tau \Dh$ is $h_1^3$-torsion because $\tau \Dh \cdot h_1^2 = h_0 \cdot a \cdot n$. But $\rho^2 \cdot a \cdot d$ supports a hidden $h_1^3$-multiplication, which is a direct consequence of $\rho^4 \tau^4 P$-multiplication on $a \cdot d$. Therefore we cannot have $d_2(\tau \Dh) = \rho^2 \cdot a \cdot d$, so it must be zero.
\end{proof}

We will also need a hidden extension for the Adams $d_2$ on $\tau^2 h_2$.

\begin{Lemma}\label{hidden t Dh-extension from t^2 h1^3 to r^3 t^2 d e}\deg{3}{3}{1} \deg{28}{8}{13}
    On the $\rho$-Bockstein $E_\infty$-page there is a hidden $\tau \Dh$-extension from $\tau^2 h_1^3$ to $\rho^3 \tau^2 d e$.
\end{Lemma}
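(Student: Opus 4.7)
The plan is to detect the claimed hidden extension using the map $p \colon \Ext_{\mathcal{A}(2)^{\mathbb{C}}} \to \ker(\rho)$ from Lemma \ref{the map p}, in the same fashion as many of the hidden extensions proved earlier in this section. First, multiply both sides of the proposed relation by $\rho$ so that the identity to prove lies in $\ker(\rho) \subset \Ext_{\mathcal{A}(2)^{\mathbb{R}}}$; it becomes $\tau \Dh \cdot \rho \tau^2 h_1^3 = \rho^4 \tau^2 d e$, with both sides in tridegree $(27, 8, 12)$.

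Second, use that $p$ is $\Ext_{\mathcal{A}(2)^{\mathbb{R}}}$-linear and raises tridegree by $(0, 1, 1)$ to pull both sides back to $\Ext_{\mathcal{A}(2)^{\mathbb{C}}}$. A preimage of $\rho \tau^2 h_1^3$ must lie in tridegree $(2, 2, -1)$ of $\Ext_{\mathcal{A}(2)^{\mathbb{C}}}$, where the unique nonzero class is $\tau^3 h_1^2$; hence $p(\tau^3 h_1^2) = \rho \tau^2 h_1^3$. Similarly, a preimage of $\rho^4 \tau^2 d e$ must lie in tridegree $(27, 7, 11)$, where by inspection the unique nonzero class is $\tau^4 \Dh h_1^2$, so $p(\tau^4 \Dh h_1^2) = \rho^4 \tau^2 d e$. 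In both cases uniqueness can be verified directly from the chart of the $\rho$-Bockstein $E_1$-page $\rho$-free quotient discussed in section \ref{charts: rho-Bockstein E_1-page rho-free quotient}, together with \cite[Theorem 4.16]{Isa09}.

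Third, inside $\Ext_{\mathcal{A}(2)^{\mathbb{C}}}$ one has the straightforward multiplicative identity $\tau \Dh \cdot \tau^3 h_1^2 = \tau^4 \Dh h_1^2$. Combining this with the $\Ext_{\mathcal{A}(2)^{\mathbb{R}}}$-linearity of $p$ and the action via reduction mod $\rho$ (which sends $\tau \Dh$ to the element of the same name in $\mathbb{C}$-motivic Ext) yields
\[
\tau \Dh \cdot \rho \tau^2 h_1^3 \;=\; \tau \Dh \cdot p(\tau^3 h_1^2) \;=\; p(\tau \Dh \cdot \tau^3 h_1^2) \;=\; p(\tau^4 \Dh h_1^2) \;=\; \rho^4 \tau^2 d e,
\]
which is the desired identity in $\ker(\rho)$. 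Stripping off the common factor of $\rho$, and checking that any ambiguity by a $\rho$-torsion correction term in tridegree $(28, 8, 13)$ can be excluded by inspection of the $E_\infty$-page, then gives the hidden $\tau \Dh$-extension from $\tau^2 h_1^3$ to $\rho^3 \tau^2 d e$.

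The main obstacle is the uniqueness claim in tridegree $(27, 7, 11)$ of $\Ext_{\mathcal{A}(2)^{\mathbb{C}}}$, since in principle several products of $\mathbb{C}$-motivic indecomposables could conspire to land there. This is however a finite degree-check, and all other candidate combinations of the generators from \cite[Theorem 4.16]{Isa09} fail to match either the stem, filtration, or weight.
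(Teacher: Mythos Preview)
Your proposal is correct and follows essentially the same approach as the paper: multiply by $\rho$ to land in $\ker(\rho)$, identify the preimages under $p$ as $\tau^3 h_1^2$ and $\tau^4 \Dh h_1^2$ by degree reasons, and use the obvious $\tau \Dh$-multiplication between them in $\Ext_{\mathcal{A}(2)^{\mathbb{C}}}$. You supply a bit more detail on the uniqueness checks and on stripping off the factor of $\rho$, but the argument is the same.
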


\begin{proof}
    Multiply source and target by $\rho$ to obtain a hidden extension in $\ker(\rho)$. Now use the map $p$ from Lemma \ref{Comparison to C-motivic differentials}. For degree reasons, the respective preimages of $\rho \tau^2 h_1^3$ and $\rho^4 \tau^2 d e$ under $p$ are $\tau^3 h_1^2$ and $\tau^4 \Dh h_1^2$. Since there is a $\tau \Dh$-multiplication between these preimages, the claimed hidden $\tau \Dh$-extension occurs.
\end{proof}

\begin{Lemma}\label{Adams differential on t^2 h2}\deg{3}{1}{0}
    $d_2(\tau^2 h_2) = 0$.
\end{Lemma}

\begin{proof}
    The possible non-trivial value of $d_2(\tau^2 h_2)$ is $\rho \tau^2 h_1^3$. Note that for degree reasons $\tau^2 h_2$ does not support a hidden $\tau \Dh$-multiplication. However, by Lemma \ref{hidden t Dh-extension from t^2 h1^3 to r^3 t^2 d e} $\rho \tau^2 h_1^3$ does support a hidden $\tau \Dh$-multiplication to $\rho^4 \tau^2 d e$. Since $d_2$ is $\tau \Dh$-linear by Lemma \ref{Adams differential on t Dh}, this implies that $d_2(\tau^2 h_2)$ cannot be $\rho \tau^2 h_1^3$, so it must be zero.
\end{proof}

\begin{Lemma}\label{Adams differential on n}\label{Adams differential hitting h0 d + r h1 d}\deg{15}{3}{8}
    $d_2(n) = h_0 \cdot d + \rho \cdot h_1 d$.
\end{Lemma}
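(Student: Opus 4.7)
The plan is to establish the two summands of $d_2(n)$ separately. The first summand $h_0 \cdot d$ will come from comparison with the $\mathbb{C}$-motivic Adams spectral sequence, and the second summand $\rho \cdot h_1 d$ will come from exploiting a multiplicative relation in $\mathbb{R}$-motivic $\mathcal{A}(2)$-cohomology that has no $\mathbb{C}$-motivic counterpart, together with the Leibniz rule applied to an already-known differential.

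First I would invoke \ref{Comparison to C-motivic differentials}. Since $n$ is a permanent cycle in the $\rho$-Bockstein spectral sequence lying in $\text{coker}(\rho)$, the inclusion $i$ carries $n \in \Ext_{\mathcal{A}(2)^{\mathbb{R}}}$ to $n \in \Ext_{\mathcal{A}(2)^{\mathbb{C}}}$, and $d_2$ commutes with $i$. Classically (and $\mathbb{C}$-motivically, see \cite[Table 1]{Isa18}) one has $d_2^{\mathbb{C}}(n) = h_0 \cdot d$. Therefore $d_2(n) \equiv h_0 \cdot d \pmod{\rho}$ in the $\mathbb{R}$-motivic Adams spectral sequence, which forces $h_0 \cdot d$ to appear as a summand of $d_2(n)$.

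Next, to detect the $\rho \cdot h_1 d$ summand, I would use an $\mathbb{R}$-motivic multiplicative relation involving $n$ that is invisible in the $\mathbb{C}$-motivic picture. The natural candidate is to look at either $h_2 \cdot n$ or $h_1 \cdot n$, whose $\mathbb{C}$-motivic expressions are of the form $h_0 \cdot (\text{something})$ but whose $\mathbb{R}$-motivic lifts pick up an extra $\rho$-correction. Concretely, one expects a relation of the shape $h_2 \cdot n + \rho \cdot (\text{correction}) = h_0 \cdot d$ (or similar with $h_1$) in $\Ext_{\mathcal{A}(2)^{\mathbb{R}}}$; such corrections have been established systematically in earlier sections by applying the map $p$ from \ref{the map p} to suitable elements of $\Ext_{\mathcal{A}(2)^{\mathbb{C}}}$. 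Applying $d_2$ to both sides, using that $h_0, h_1, h_2, d$ are $d_2$-cycles, and solving for $d_2(n)$ should force the $\rho \cdot h_1 d$ contribution. Alternatively, one can multiply the candidate differential by $h_2$: any $d_2$-value of $n$ must be compatible with the action of $h_2$, and a hidden $h_2$- or $h_1$-extension on $n$ (detectable again via $p$) constrains the $\rho$-divisible part to be exactly $\rho \cdot h_1 d$.

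Finally I would rule out any additional summands by degree reasons, consulting table \ref{tab:Possible Adams d_2}: the only degree-possible contributions to $d_2(n)$ are $h_0 \cdot d$ and $\rho \cdot h_1 d$, both of which have been shown to occur, so no further checking is needed. The main obstacle is step two, namely pinning down the correct $\mathbb{R}$-motivic relation or hidden extension that exhibits the $\rho$-correction; this is where comparison to $\mathbb{C}$-motivic fails and one must work directly with the $\rho$-Bockstein $E_\infty$-page, chasing an element through $p$ to identify which $\rho$-multiple of $h_1 d$ is forced. Once that is set up, the Leibniz-rule bookkeeping is routine.
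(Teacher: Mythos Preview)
Your first step is exactly right and matches the paper: comparison with the $\mathbb{C}$-motivic differential $d_2^{\mathbb{C}}(n)=h_0 d$ via \ref{Comparison to C-motivic differentials} forces $h_0\cdot d$ as a summand.

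Your second step, however, is not carried through and, as stated, would not work. You propose finding a relation among $h_0,h_1,h_2,d,n$ with a $\rho$-correction and then applying $d_2$ ``using that $h_0,h_1,h_2,d$ are $d_2$-cycles.'' But if every term other than $n$ is a $d_2$-cycle, applying $d_2$ just reproduces a multiple of $d_2(n)$ on each side and gives no new constraint. The mechanism that actually works is different: the hidden extension must land on something that \emph{does} support a nontrivial $d_2$.

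The paper uses the hidden $h_1$-extension $h_1\cdot n=\rho\cdot e$ (a consequence of $\rho^4\tau^4 P$-multiplication on the $E_\infty$-page). Combined with the already-established differential $d_2(e)=h_1^2 d$, this yields
\[
h_1\cdot d_2(n)=d_2(\rho\cdot e)=\rho\cdot h_1^2 d\neq 0.
\]
Since $h_1\cdot(h_0\cdot d)=0$ (as $h_0 h_1=0$), the only way to account for this nonzero $h_1$-multiple is for $\rho\cdot h_1 d$ to appear as the second summand. So the missing ingredient is precisely identifying $h_1\cdot n=\rho\cdot e$ and feeding in $d_2(e)$; once you have that, the argument is one line.
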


\begin{proof}
    Since $d_2^{\mathbb{C}}(n) = h_0 d$, we know that $h_0 \cdot d$ appears as a summand in $d_2(n)$ by Lemma \ref{Comparison to C-motivic differentials}.

    To see that $\rho \cdot h_1 d$ is also a summand in $d_2(n)$, we can use the hidden extension $h_1 \cdot n = \rho \cdot e$ which is a direct consequence of $\rho^4 \tau^4 P$-multiplication. Recall from Proposition \ref{proof of Adams d_2 differentials} that $d_2(e) = h_1^2 d$. Now the aforementioned hidden extension implies
    \[h_1 \cdot d_2(n) = \rho \cdot h_1^2 d \neq 0.\]
    Since $h_0 \cdot d$ is an $h_0$-multiple it does not support an $h_1$-multiplication. So $\rho \cdot h_1 d$ must also be a summand in $d_2(n)$.
\end{proof}

\begin{Lemma}\label{Adams differential on t^7 h0^2 e}\label{Adams differential hitting r t^6 P h1 c}\deg{17}{6}{3}
    $d_2(\tau^7 h_0^2 e) = \rho \tau^6 P h_1 c$.
\end{Lemma}

\begin{proof}
    As a direct consequence of $\rho^4 \tau^4 P$-multiplication, there is a hidden extension $h_1 \cdot \tau^7 h_0^2 e = \rho^2 \cdot \tau^4 P a$. By Proposition \ref{proof of Adams d_2 differentials} we know $d_2(a) = P h_2$, so
    \[h_1 \cdot d_2(\tau^7 h_0^2 e) = \rho^2 \cdot \tau^4 P \cdot P h_2 \neq 0.\]
    This forces the claimed differential.
\end{proof}

\begin{Lemma}\label{Adams differential on t^3 P h0^2 e}\label{Adams differential hitting r t^2 P^2 h1 c}\deg{25}{10}{11}
    $d_2(\tau^3 P h_0^2 e) = \rho \tau^2 P^2 h_1 c$.
\end{Lemma}

\begin{proof}
    Consider the relation $\tau^8 \cdot \tau^3 P h_0^2 e = \tau^4 P \cdot \tau^7 h_0^2 e$. Together with Lemma \ref{Adams differential on t^7 h0^2 e} we get
    \[\tau^8 \cdot d_2(\tau^3 P h_0^2 e) = \tau^4 P \cdot \rho \tau^6 P h_1 c = \tau^8 \cdot \rho \tau^2 P^2 h_1 c.\]
    This forces the claimed differential.
\end{proof}

\begin{Lemma}\label{Adams differential on t^2 P^2 n}\label{Adams differential hitting t^2 P^2 h0 d}\deg{31}{11}{14}
    $d_2(\tau^2 P^2 n) = \tau^2 h_0 \cdot P^2 \cdot d$.
\end{Lemma}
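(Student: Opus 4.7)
The plan is to combine comparison with the $\mathbb{C}$-motivic Adams spectral sequence to pin down the leading summand, then exploit $\rho$-linearity of Adams differentials to eliminate the two $\rho$-divisible candidates.

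First I would compute the $\mathbb{C}$-motivic differential. Since $\tau$ and $P$ are $d_2^{\mathbb{C}}$-cycles, applying the Leibniz rule together with the known differential $d_2^{\mathbb{C}}(n) = h_0 d$ yields
\[ d_2^{\mathbb{C}}(\tau^2 P^2 n) = \tau^2 P^2 \cdot h_0 d. \]
Invoking \ref{Comparison to C-motivic differentials} we obtain $d_2(\tau^2 P^2 n) \equiv \tau^2 h_0 \cdot P^2 \cdot d \pmod{\rho}$, so $\tau^2 h_0 \cdot P^2 \cdot d$ must appear as a summand of the $\mathbb{R}$-motivic $d_2$.

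Second, I would rule out the two $\rho$-divisible candidates $\rho^7 \cdot P^2 \cdot h_1 g$ and $\rho^{15} \cdot h_1^5 g^2$. The element $\rho$ lies in Adams filtration zero, hence is a permanent cycle, so every Adams differential is $\rho$-linear. Now $\tau^2 P^2 n$ is $\rho$-power-torsion on $E_\infty$ (it appears on the coweight $17 \equiv 1 \pmod 8$ $\rho$-power-torsion chart), so any value of $d_2(\tau^2 P^2 n)$ must likewise be $\rho$-power-torsion. However, both $P^2 h_1 g$ and $h_1^5 g^2$ have non-trivial images under the $\rho$-localization map by \ref{representation of rho-localized target}, so they are $\rho$-torsion free, and therefore so are all of their $\rho$-power multiples. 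Neither summand can appear, and we conclude $d_2(\tau^2 P^2 n) = \tau^2 h_0 \cdot P^2 \cdot d$.

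The bookkeeping here is entirely routine given the earlier infrastructure; the only point requiring a moment of care is the verification that $P^2 h_1 g$ and $h_1^5 g^2$ are non-zero in the $\rho$-localized algebra, which follows immediately from the presentation in \ref{representation of rho-localized target}. There is no hidden extension to resolve on the target side and no subtle $h_1$-linearity argument needed, which makes this one of the easier $d_2$-differentials in the table.
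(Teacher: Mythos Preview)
Your proof is correct and follows essentially the same approach as the paper: confirm the non-$\rho$-divisible summand via \ref{Comparison to C-motivic differentials} and the $\mathbb{C}$-motivic computation, then eliminate the two $\rho$-divisible candidates because the source is $\rho$-power-torsion while those candidates are $\rho$-torsion free. The paper phrases the elimination step slightly more tersely, but the content is identical.
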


\begin{proof}
    Recall that the possible non-trivial summands for $d_2(\tau^2 P^2 n)$ are $\tau^2 h_0 \cdot P^2 \cdot d$, $\rho^7 \cdot P^2 \cdot h_1 g$, and $\rho^{15} \cdot h_1^5 g^2$. Since $d_2^{\mathbb{C}}(\tau^2 P^2 n) = \tau^2 P^2 h_0 d$, we know that this summand must also appear in $d_2(\tau^2 P^2 n)$. The other summands do not occur because then $d_2(\tau^2 P^2 n)$ would be $\rho$-torsion free, which is in contradiction to $\tau^2 P^2 n$ being $\rho$-power-torsion.
\end{proof}

The next two differentials are on $\tau^5 \Dc + \tau^6 a g$ and $\tau \Dc + \tau^2 a g$. In both cases, we could deduce their differentials by using $g$-multiplication. For the latter element this works fine, for the first there is a non-trivial amount of care needed due to the presence of $\rho$-torsion free elements in higher $\rho$-filtration. Therefore, we will prove the $d_2$ on $\tau \Dc + \tau^2 a g$ first, then prove the one on $\tau^5 \Dc + \tau^6 a g$ without having to refer to $g$-multiplication and instead referring to the previous $d_2$.

To keep the proceeding proof shorter, we will first prove that the decomposable element $\tau^2 a g = \tau^2 e \cdot n$ is an Adams $d_2$-cycle. This in turn requires a hidden extension first.

\begin{Lemma}\label{hidden t^2 e-extension from h0 d to r t^2 h1 d e}\deg{14}{5}{8} \deg{31}{9}{16}
    On the $\rho$-Bockstein $E_\infty$-page there is a hidden $\tau^2 e$-extension from $h_0 \cdot d$ to $\rho \tau^2 h_1 d e$.
\end{Lemma}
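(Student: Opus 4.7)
The plan is to apply the map $p$ from Lemma \ref{the map p}, following the template used in the proofs of Lemmas \ref{hidden t^4 P-extension from t h2^2 g to r t^2 P a n} and \ref{hidden t Dh-extension from t^2 h1^3 to r^3 t^2 d e}. First I will verify that both elements lie in $\ker(\rho)$: the source $h_0 \cdot d$ because $\rho h_0 = 0$, and the target $\rho \tau^2 h_1 d e$ because $\rho^2 \tau^2 h_1 d e$ is killed on $E_3$ by the Bockstein differential $d_2(\tau^3 d e) = \tau^2 h_1 d e$ of \ref{proof of d_2 differentials}.

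Next I will identify preimages under $p$. Since $p$ has degree $(0,1,1)$, any preimage of $h_0 \cdot d$ must sit in $\Ext_{\mathcal{A}(2)^{\mathbb{C}}}$ in tridegree $(14, 4, 7)$; inspection of the $\rho$-Bockstein $E_1$-page chart will show that the only candidate is $\tau d$. Analogously, a preimage of $\rho \tau^2 h_1 d e$ must lie in tridegree $(31, 8, 15)$, which forces it to be $\tau^3 d e$. In $\Ext_{\mathcal{A}(2)^{\mathbb{C}}}$ the relation $\tau^2 e \cdot \tau d = \tau^3 d e$ is immediate.

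Applying $p$ and invoking its $\Ext_{\mathcal{A}(2)^{\mathbb{R}}}$-linearity then yields
\[
\tau^2 e \cdot (h_0 \cdot d) \;=\; \tau^2 e \cdot p(\tau d) \;=\; p(\tau^2 e \cdot \tau d) \;=\; p(\tau^3 d e) \;=\; \rho \tau^2 h_1 d e,
\]
which is the claimed hidden extension. The main obstacle will be confirming that $p(\tau d)$ and $p(\tau^3 d e)$ are determined exactly, rather than merely modulo higher $\rho$-filtration summands in $\ker(\rho)$; this is handled by verifying on the $E_\infty$-chart that $h_0 \cdot d$ and $\rho \tau^2 h_1 d e$ are in fact the unique non-zero elements of $\ker(\rho)$ in their respective tridegrees, so that the displayed equalities hold on the nose in $\Ext_{\mathcal{A}(2)^{\mathbb{R}}}$.
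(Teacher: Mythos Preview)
Your proposal is correct and follows essentially the same approach as the paper: both use the map $p$ from Lemma~\ref{the map p}, identify $\tau d$ and $\tau^3 d e$ as the respective preimages of $h_0 \cdot d$ and $\rho \tau^2 h_1 d e$, and conclude from $\tau^2 e \cdot \tau d = \tau^3 d e$ in $\Ext_{\mathcal{A}(2)^{\mathbb{C}}}$. Your write-up is more detailed in justifying that the source and target lie in $\ker(\rho)$ and that the preimages are uniquely determined, but the underlying argument is identical.
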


\begin{proof}
    This can be checked using the map $p$ from Lemma \ref{Comparison to C-motivic differentials}. Respective preimages under $p$ of $h_0 \cdot d$ and $\rho \tau^2 h_1 d e$ are $\tau d$ and $\tau^3 d e$. Since the latter is the $\tau^2 e$-multiple of the former, the hidden $\tau^2 e$-extension occurs.
\end{proof}

\begin{Lemma}\label{Adams differential on t^2 a g}\deg{32}{7}{16}
    $d_2(\tau^2 a g) = 0$.
\end{Lemma}

\begin{proof}
    We have the decomposition $\tau^2 a g = \tau^2 e \cdot n$. By Lemma \ref{Adams differential on n} this implies
    \[d_2(\tau^2 a g) = \tau^2 e \cdot d_2(n) = \tau^2 e \cdot (h_0 \cdot d + \rho \cdot h_1 d) = \tau^2 e \cdot h_0 d + \rho \tau^2 h_1 d e.\]
    By Lemma \ref{hidden t^2 e-extension from h0 d to r t^2 h1 d e} the two summands agree and therefore $d_2(\tau^2 a g) = 0$.
\end{proof}

\begin{Lemma}\label{Adams differential on t Dc + t^2 a g}\deg{32}{7}{16}
    $d_2(\tau \Dc + \tau^2 a g) = 0$.
\end{Lemma}

\begin{proof}
    According to table \ref{tab:Possible Adams d_2} we could have $d_2(\tau \Dc + \tau^2 a g) = \rho \tau^2 h_1 d e$. Multiplying this by $g$ would imply
    \[d_2(\tau^2 a g^2) = \rho \tau^2 h_1 d e \cdot g \neq 0.\]
    However, $d_2(\tau^2 a g^2)$ must be zero because of the decomposition $\tau^2 a g^2 = \tau^2 a g \cdot g$ and the fact that $d_2(\tau^2 a g) = 0$ which was proven in Lemma \ref{Adams differential on t^2 a g}. This means $d_2(\tau \Dc + \tau^2 a g) = 0$.
\end{proof}

For the Adams $d_2$ on $\tau^5 \Dc + \tau^6 a g$ we need one more preparatory hidden extension.

\begin{Lemma}\label{hidden t^5 h1-extension from t^6 h1 d e to r^5 t^6 P n n}\deg{32}{9}{13} \deg{33}{10}{9}
    On the $\rho$-Bockstein $E_\infty$-page there is a hidden $\tau^5 h_1$-extension from $\tau^6 h_1 d e$ to $\rho^5 \cdot \tau^6 P n \cdot n$.
\end{Lemma}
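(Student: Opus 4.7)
The plan is to use the map $p$ from Lemma~\ref{the map p} together with an explicit multiplicative calculation in $\text{Ext}_{\mathcal{A}(2)^{\mathbb{C}}}$. Multiplying both source and target of the claimed hidden extension by $\rho$ yields the equivalent relation
\[
\rho \cdot \tau^5 h_1 \cdot \tau^6 h_1 d e = \rho^6 \tau^6 P n \cdot n,
\]
whose two sides lie in $\ker(\rho)$; that both sides actually land in $\ker(\rho)$ is read off the $\rho$-Bockstein $E_\infty$-charts in the relevant coweights. Since $p$ surjects onto $\ker(\rho)$, it suffices to exhibit preimages $X, Y \in \text{Ext}_{\mathcal{A}(2)^{\mathbb{C}}}$ with $p(X) = \rho \tau^6 h_1 d e$ and $p(Y) = \rho^6 \tau^6 P n \cdot n$, and then to verify $\tau^5 h_1 \cdot X = Y$ inside $\text{Ext}_{\mathcal{A}(2)^{\mathbb{C}}}$.

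Degree considerations pin these preimages down. Since $p$ has degree $(0,1,1)$, the preimage of $\rho \tau^6 h_1 d e$ has tridegree $(31,8,11)$, which forces $X = \tau^7 d e$ by inspection of the $\mathbb{C}$-motivic chart (the $\rho$-free quotient of the $\rho$-Bockstein $E_1$-page). The preimage of $\rho^6 \tau^6 P n \cdot n$ has tridegree $(32, 9, 7)$; at this tridegree the candidates in $\text{Ext}_{\mathcal{A}(2)^{\mathbb{C}}}$ can be enumerated from \cite[Theorem 4.16]{Isa09}, and any ambiguity modulo $\text{ker}(p) \cong \text{coker}(\rho)$ will be resolved by cross-checking against the $\text{Ext}_{\mathcal{A}(2)^{\mathbb{R}}}$-module structure in coweight $9$ mod $8$.

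Because the short exact sequence of Lemma~\ref{the map p} is one of $\text{Ext}_{\mathcal{A}(2)^{\mathbb{R}}}$-modules and the action of $\tau^5 h_1 \in \text{Ext}_{\mathcal{A}(2)^{\mathbb{R}}}$ on $\text{Ext}_{\mathcal{A}(2)^{\mathbb{C}}}$ is just multiplication by $\tau^5 h_1$ via the quotient map $\mathcal{A}(2)^{\mathbb{R}} \to \mathcal{A}(2)^{\mathbb{C}}$ (sending $\rho \mapsto 0$), the problem reduces to computing $\tau^5 h_1 \cdot \tau^7 d e = \tau^{12} h_1 d e$ inside $\text{Ext}_{\mathcal{A}(2)^{\mathbb{C}}}$ and matching the answer with $Y$. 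This is a direct computation in the $\mathbb{C}$-motivic presentation from \cite[Theorem 4.16]{Isa09}, and the result is then transported back to $\ker(\rho)$ by applying $p$.

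The principal obstacle is this last $\mathbb{C}$-motivic bookkeeping: the product $\tau^{12} h_1 d e$ must be rewritten, via the relations involving $h_1 d e$ in $\text{Ext}_{\mathcal{A}(2)^{\mathbb{C}}}$, as a $\tau$-power multiple of an element decomposable in terms of $P$ and $n$, so that the identification $p(Y) = \rho^6 \tau^6 P n \cdot n$ becomes visible. If a direct algebraic rewriting proves awkward, two fallback strategies are available: either compare at the cobar level using the explicit coproducts of $\tau_i$, $\xi_i$ spelled out in Section~\ref{section: notation}, or compare to nearby $\rho$-Bockstein differentials of length $\leq 5$ already recorded in Section~\ref{section: rho-Bockstein differentials} to rule out the remaining candidates for $Y$.
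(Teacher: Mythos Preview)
Your approach is essentially the same as the paper's: multiply by $\rho$ to land in $\ker(\rho)$, identify preimages under $p$, and check the $\tau^5 h_1$-multiplication in $\text{Ext}_{\mathcal{A}(2)^{\mathbb{C}}}$. You correctly find $X = \tau^7 d e$, and the paper likewise identifies $Y = \tau^{12} h_1 d e$, so that $\tau^5 h_1 \cdot X = Y$ is immediate.

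The only point where you make life harder for yourself is the final step. There is no need to rewrite $\tau^{12} h_1 d e$ in $\text{Ext}_{\mathcal{A}(2)^{\mathbb{C}}}$ in terms of $P$ and $n$; indeed no such relation exists there. The identification $p(\tau^{12} h_1 d e) = \rho^6 \cdot \tau^6 P n \cdot n$ is obtained the other way around: one inspects $\ker(\rho)$ in degree $(32,10,8)$ on the $\rho$-Bockstein $E_\infty$-page and $\text{Ext}_{\mathcal{A}(2)^{\mathbb{C}}}$ in degree $(32,9,7)$, and the surjectivity of $p$ together with these degree checks forces the preimage. Your fallback of comparing to recorded $\rho$-Bockstein differentials is in fact the practical route (cf.\ the remark following the hidden-extension tables that $p$ is detected by the Bockstein differential divided by $\rho$), but the paper treats this simply as a degree-reason identification.
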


\begin{proof}
    Multiply source and target by $\rho$ to obtain a hidden extension in $\ker(\rho)$. Now use the map $p$ from Lemma \ref{Comparison to C-motivic differentials}. Respective preimages under $p$ of $\rho \tau^6 h_1 d e$ and $\rho^6 \cdot \tau^6 P n \cdot n$ are $\tau^7 d e$ and $\tau^{12} h_1 d e$. As the $\tau^5 h_1$-multiple of the first is the latter, the claimed hidden extension occurs.
\end{proof}

\begin{Lemma}\label{Adams differential on t^5 Dc + t^6 a g}\deg{32}{7}{12}
    $d_2(\tau^5 \Dc + \tau^6 a g) = 0$.
\end{Lemma}

\begin{proof}
    According to table \ref{tab:Possible Adams d_2} we could have $d_2(\tau^5 \Dc + \tau^6 a g) = \rho \tau^6 h_1 d e$. Recall from Proposition \ref{proof of Adams d_2 differentials} that $d_2(\tau^5 h_1) = 0$ and note the relation $\tau^5 h_1 \cdot (\tau^5 \Dc + \tau^6 a g) = \tau^8 \cdot \tau^2 \Dc h_1$. Multiplication by $\tau^5 h_1$ would therefore imply
    \[d_2(\tau^8 \cdot \tau^2 \Dc h_1) = \tau^5 h_1 \cdot \rho \tau^6 h_1 d e.\]
    We will now show that the left-hand side must be zero and the right-hand side is non-zero. As a consequence of this contradiction, $d_2(\tau^5 \Dc + \tau^6 a g)$ must be trivial.
    
    By Proposition \ref{proof of Adams d_2 differentials} and Lemma \ref{Adams differential on t Dc + t^2 a g} the element $\tau^8 \cdot \tau^2 \Dc h_1 = \tau^8 \cdot (\tau \Dc + \tau^2 a g) \cdot \tau h_1$ is a product of $d_2$-cycles, hence a $d_2$-cycle.

    By Lemma \ref{hidden t^5 h1-extension from t^6 h1 d e to r^5 t^6 P n n} the right-hand side is equal to $\rho^6 \cdot \tau^6 P n \cdot n \neq 0$.
\end{proof}

\begin{Lemma}\label{Adams differential on t P Dh}\deg{33}{9}{16}
    $d_2(\tau P \Dh) = 0$.
\end{Lemma}

\begin{proof}
    The only possible target for $d_2(\tau P \Dh)$ is $\rho^2 P a d$ which is $\rho$-torsion free, but $\tau P \Dh$ is $\rho^3$-torsion.
\end{proof}

\begin{Lemma}\label{Adams differential on t P Dc + t^2 P a g}\deg{40}{11}{20}
    $d_2(\tau P \Dc + \tau^2 P a g) = 0$.
\end{Lemma}

\begin{proof}
    Consider the relation $\tau^8 \cdot (\tau P \Dc + \tau^2 P a g) = \tau^4 P \cdot (\tau^5 \Dc + \tau^6 a g)$. By Lemma \ref{Adams differential on t^5 Dc + t^6 a g} this implies
    \[ \tau^8 \cdot d_2(\tau P \Dc + \tau^2 P a g) = 0.\]
    But the only possible non-trivial value for $d_2(\tau P \Dc + \tau^2 P a g)$ is $\rho \tau^2 P h_1 d e$, which is not $\tau^8$-torsion.
\end{proof}

\bibliographystyle{alpha}
\bibliography{bib}

@misc{Isa18,
      title={The homotopy of $\mathbb{C}$-motivic modular forms}, 
      author={Daniel C. Isaksen},
      year={2018},
      eprint={1811.07937},
      archivePrefix={arXiv},
      primaryClass={math.AT},
      howpublished = "preprint, \href{https://doi.org/10.48550/arXiv.1811.07937}{\nolinkurl{arXiv:1811.07937}}"
}

@article {IWX,
    AUTHOR = {Isaksen, Daniel C. and Wang, Guozhen and Xu, Zhouli},
     TITLE = {Stable homotopy groups of spheres: from dimension 0 to 90},
   JOURNAL = {Publ. Math. Inst. Hautes \'Etudes Sci.},
  FJOURNAL = {Publications Math\'ematiques. Institut de Hautes \'Etudes
              Scientifiques},
    VOLUME = {137},
      YEAR = {2023},
     PAGES = {107--243},
      ISSN = {0073-8301,1618-1913},
   MRCLASS = {55T15 (14F42)},
  MRNUMBER = {4588596},
MRREVIEWER = {Guchuan\ Li},
       DOI = {10.1007/s10240-023-00139-1},
       URL = {https://doi.org/10.1007/s10240-023-00139-1},
}

@article {DI,
    AUTHOR = {Dugger, Daniel and Isaksen, Daniel C.},
     TITLE = {Low-dimensional {M}ilnor-{W}itt stems over {$\mathbb{R}$}},
   JOURNAL = {Ann. K-Theory},
  FJOURNAL = {Annals of K-Theory},
    VOLUME = {2},
      YEAR = {2017},
    NUMBER = {2},
     PAGES = {175--210},
      ISSN = {2379-1683,2379-1691},
   MRCLASS = {14F42 (55Q45 55S10 55T15)},
  MRNUMBER = {3590344},
MRREVIEWER = {Oliver\ R\"ondigs},
       DOI = {10.2140/akt.2017.2.175},
       URL = {https://doi.org/10.2140/akt.2017.2.175},
}

@article {Hill,
    AUTHOR = {Hill, Michael A.},
     TITLE = {Ext and the motivic {S}teenrod algebra over {$\mathbb{R}$}},
   JOURNAL = {J. Pure Appl. Algebra},
  FJOURNAL = {Journal of Pure and Applied Algebra},
    VOLUME = {215},
      YEAR = {2011},
    NUMBER = {5},
     PAGES = {715--727},
      ISSN = {0022-4049,1873-1376},
   MRCLASS = {55T15 (14F42 19D45 55P43 55S10)},
  MRNUMBER = {2747214},
MRREVIEWER = {Kyle\ M.\ Ormsby},
       DOI = {10.1016/j.jpaa.2010.06.017},
       URL = {https://doi.org/10.1016/j.jpaa.2010.06.017},
}

@article {BI,
    AUTHOR = {Belmont, Eva and Isaksen, Daniel C.},
     TITLE = {{$\mathbb{R}$}-motivic stable stems},
   JOURNAL = {J. Topol.},
  FJOURNAL = {Journal of Topology},
    VOLUME = {15},
      YEAR = {2022},
    NUMBER = {4},
     PAGES = {1755--1793},
      ISSN = {1753-8416,1753-8424},
   MRCLASS = {14F42 (55Q45 55S10 55T15)},
  MRNUMBER = {4461846},
MRREVIEWER = {Anand\ Sawant},
       DOI = {10.1112/topo.12256},
       URL = {https://doi.org/10.1112/topo.12256},
}

@article {Voe03,
    AUTHOR = {Voevodsky, Vladimir},
     TITLE = {Motivic cohomology with {${\bf Z}/2$}-coefficients},
   JOURNAL = {Publ. Math. Inst. Hautes \'Etudes Sci.},
  FJOURNAL = {Publications Math\'ematiques. Institut de Hautes \'Etudes
              Scientifiques},
    NUMBER = {98},
      YEAR = {2003},
     PAGES = {59--104},
      ISSN = {0073-8301,1618-1913},
   MRCLASS = {14F42 (12G05 19D45 19E15)},
  MRNUMBER = {2031199},
MRREVIEWER = {Eric\ M.\ Friedlander},
       DOI = {10.1007/s10240-003-0010-6},
       URL = {https://doi.org/10.1007/s10240-003-0010-6},
}

@article {Voe10,
    AUTHOR = {Voevodsky, Vladimir},
     TITLE = {Motivic {E}ilenberg-{M}ac{L}ane spaces},
   JOURNAL = {Publ. Math. Inst. Hautes \'Etudes Sci.},
  FJOURNAL = {Publications Math\'ematiques. Institut de Hautes \'Etudes
              Scientifiques},
    NUMBER = {112},
      YEAR = {2010},
     PAGES = {1--99},
      ISSN = {0073-8301,1618-1913},
   MRCLASS = {14F42 (18G55 55P20 55U35)},
  MRNUMBER = {2737977},
MRREVIEWER = {Geoffrey\ M. L. Powell},
       DOI = {10.1007/s10240-010-0024-9},
       URL = {https://doi.org/10.1007/s10240-010-0024-9},
}

@article {Isa09,
    AUTHOR = {Isaksen, Daniel C.},
     TITLE = {The cohomology of motivic {$A(2)$}},
   JOURNAL = {Homology Homotopy Appl.},
  FJOURNAL = {Homology, Homotopy and Applications},
    VOLUME = {11},
      YEAR = {2009},
    NUMBER = {2},
     PAGES = {251--274},
      ISSN = {1532-0073,1532-0081},
   MRCLASS = {55S10 (14F42 55N34 55T15)},
  MRNUMBER = {2591921},
MRREVIEWER = {Frank\ Neumann},
       DOI = {10.4310/hha.2009.v11.n2.a13},
       URL = {https://doi.org/10.4310/hha.2009.v11.n2.a13},
}

@book {Rav86,
    AUTHOR = {Ravenel, Douglas C.},
     TITLE = {Complex cobordism and stable homotopy groups of spheres},
    SERIES = {Pure and Applied Mathematics},
    VOLUME = {121},
 PUBLISHER = {Academic Press, Inc., Orlando, FL},
      YEAR = {1986},
     PAGES = {xx+413},
      ISBN = {0-12-583430-6; 0-12-583431-4},
   MRCLASS = {55-02 (55Qxx 57-02)},
  MRNUMBER = {860042},
MRREVIEWER = {Joseph\ Neisendorfer},
}

@article {Isa19,
    AUTHOR = {Isaksen, Daniel C.},
     TITLE = {Stable stems},
   JOURNAL = {Mem. Amer. Math. Soc.},
  FJOURNAL = {Memoirs of the American Mathematical Society},
    VOLUME = {262},
      YEAR = {2019},
    NUMBER = {1269},
     PAGES = {viii+159},
      ISSN = {0065-9266,1947-6221},
      ISBN = {978-1-4704-3788-6; 978-1-4704-5511-8},
   MRCLASS = {14F42 (16T05 55P42 55Q10 55Q45 55S10 55S30 55T15)},
  MRNUMBER = {4046815},
MRREVIEWER = {Masaki\ Kameko},
       DOI = {10.1090/memo/1269},
       URL = {https://doi.org/10.1090/memo/1269},
}

@article {SI67,
    AUTHOR = {Shimada, Nobuo and Iwai, Akira},
     TITLE = {On the cohomology of some {H}opf algebras},
   JOURNAL = {Nagoya Math. J.},
  FJOURNAL = {Nagoya Mathematical Journal},
    VOLUME = {30},
      YEAR = {1967},
     PAGES = {103--111},
      ISSN = {0027-7630,2152-6842},
   MRCLASS = {18.20 (55.00)},
  MRNUMBER = {215896},
MRREVIEWER = {A.\ Liulevicius},
       URL = {http://projecteuclid.org/euclid.nmj/1118796803},
}

@article {GWX,
    AUTHOR = {Gheorghe, Bogdan and Wang, Guozhen and Xu, Zhouli},
     TITLE = {The special fiber of the motivic deformation of the stable
              homotopy category is algebraic},
   JOURNAL = {Acta Math.},
  FJOURNAL = {Acta Mathematica},
    VOLUME = {226},
      YEAR = {2021},
    NUMBER = {2},
     PAGES = {319--407},
      ISSN = {0001-5962,1871-2509},
   MRCLASS = {14F42 (18M05 18N40 18N60 55T15)},
  MRNUMBER = {4281382},
       DOI = {10.4310/acta.2021.v226.n2.a2},
       URL = {https://doi.org/10.4310/acta.2021.v226.n2.a2},
}

@article {GIKR,
    AUTHOR = {Gheorghe, Bogdan and Isaksen, Daniel C. and Krause, Achim and
              Ricka, Nicolas},
     TITLE = {{$\mathbb{C}$}-motivic modular forms},
   JOURNAL = {J. Eur. Math. Soc. (JEMS)},
  FJOURNAL = {Journal of the European Mathematical Society (JEMS)},
    VOLUME = {24},
      YEAR = {2022},
    NUMBER = {10},
     PAGES = {3597--3628},
      ISSN = {1435-9855,1435-9863},
   MRCLASS = {14F42 (55N34 55Q45 55S10 55T15)},
  MRNUMBER = {4432907},
MRREVIEWER = {Zhouli\ Xu},
       DOI = {10.4171/jems/1171},
       URL = {https://doi.org/10.4171/jems/1171},
}

@book {DFHH,
     TITLE = {Topological modular forms},
    SERIES = {Mathematical Surveys and Monographs},
    VOLUME = {201},
    EDITOR = {Douglas, Christopher L. and Francis, John and Henriques,
              Andr\'e{} G. and Hill, Michael A.},
 PUBLISHER = {American Mathematical Society, Providence, RI},
      YEAR = {2014},
     PAGES = {xxxii+318},
      ISBN = {978-1-4704-1884-7},
   MRCLASS = {55N34 (14D23 55T15)},
  MRNUMBER = {3223024},
MRREVIEWER = {Paul\ G.\ Goerss},
       DOI = {10.1090/surv/201},
       URL = {https://doi.org/10.1090/surv/201},
}

@incollection {Lur09,
    AUTHOR = {Lurie, J.},
     TITLE = {A survey of elliptic cohomology},
 BOOKTITLE = {Algebraic topology},
    SERIES = {Abel Symp.},
    VOLUME = {4},
     PAGES = {219--277},
 PUBLISHER = {Springer, Berlin},
      YEAR = {2009},
      ISBN = {978-3-642-01199-3},
   MRCLASS = {55N34 (55P43)},
  MRNUMBER = {2597740},
MRREVIEWER = {Tyler\ D.\ Lawson},
       DOI = {10.1007/978-3-642-01200-6\_9},
       URL = {https://doi.org/10.1007/978-3-642-01200-6_9},
}

@misc{LurEC1,
  title = {{E}lliptic {C}ohomology {I}: {S}pectral {A}belian {V}arieties},
  author = {Jacob Lurie},
  note  =  {\url{https://www.math.ias.edu/~lurie/papers/Elliptic-I.pdf}},
}

@misc{LurEC2,
  title = {{E}lliptic {C}ohomology {II}: {O}rientations},
  author = {Jacob Lurie},
  note  =  {\url{https://www.math.ias.edu/~lurie/papers/Elliptic-II.pdf}},
}

@misc{SeqSee,
      title={SeqSee: A schema-based approach to spectral sequence visualization}, 
      author={Joey Beauvais-Feisthauer and Dan Isaksen},
      year={2025},
      eprint={2501.18429},
      archivePrefix={arXiv},
      primaryClass={math.AT},
      url={https://arxiv.org/abs/2501.18429}, 
      howpublished = "preprint, \href{https://doi.org/10.48550/arXiv.2501.18429}{\nolinkurl{arXiv:2501.18429}}"
}

@book {BR21,
    AUTHOR = {Bruner, Robert R. and Rognes, John},
     TITLE = {The {A}dams spectral sequence for topological modular forms},
    SERIES = {Mathematical Surveys and Monographs},
    VOLUME = {253},
 PUBLISHER = {American Mathematical Society, Providence, RI},
      YEAR = {[2021] \copyright 2021},
     PAGES = {xix+690},
      ISBN = {978-1-4704-5674-0},
   MRCLASS = {55N34 (18G40 55P43 55Q45 55T15)},
  MRNUMBER = {4284897},
MRREVIEWER = {Tilman\ Bauer},
       DOI = {10.1090/surv/253},
       URL = {https://doi.org/10.1090/surv/253},
}

@misc{Charts,
  author       = {Emming, Konstantin},
  title        = {Charts for a computation of the cohomology of $\mathbb{R}$-motivic $\mathcal{A}(2)$},
  year         = 2025,
  publisher    = {Zenodo},
  doi          = {10.5281/zenodo.17114000},
  url          = {https://doi.org/10.5281/zenodo.17114000},
  howpublished = {Zenodo, \url{https://doi.org/10.5281/zenodo.17114000}}
}

@misc{BHS22,
      title={Galois reconstruction of {A}rtin-{T}ate $\mathbb{R}$-motivic spectra}, 
      author={Robert Burklund and Jeremy Hahn and Andrew Senger},
      year={2022},
      eprint={2010.10325},
      archivePrefix={arXiv},
      primaryClass={math.AT},
      howpublished = "preprint, \href{https://doi.org/10.48550/arXiv.2010.10325}{\nolinkurl{ 	arXiv:2010.10325}}"
}

@misc{Bae24,
      title={The algebraic {N}ovikov spectral sequence for topological modular forms}, 
      author={J. Francis Baer},
      year={2024},
      eprint={2404.05573},
      archivePrefix={arXiv},
      primaryClass={math.AT},
      url={https://arxiv.org/abs/2404.05573},
      howpublished = "preprint, \href{https://doi.org/10.48550/arXiv.2404.05573}{\nolinkurl{ 	 	arXiv:2404.05573}}"
}

\end{document}